\newtheorem{theorem}{Theorem}[section]
\newtheorem{assumption}{Assumption}[section]
\newtheorem{lemma}[theorem]{Lemma}
\newtheorem{proposition}[theorem]{Proposition}
\theoremstyle{definition}
\newtheorem{definition}[theorem]{Definition}
\newtheorem{notation}{Notation}
\newtheorem{example}{Example}[section]
\newtheorem{remark}{Remark}[section]
\newcommand{\R}{\mathbb{R}}
\newcommand{\C}{\mathbb{C}}
\newcommand{\GT}{\mathbb{GT}}
\newcommand{\Z}{\mathbb{Z}}
\newcommand{\e}{\varepsilon}
\newcommand{\1}{\mathbf{1}}
\newcommand{\bi}{\mathbf{i}}
\newcommand{\bm}{\mathbf{m}}
\renewcommand{\vec}[1]{\boldsymbol{#1}}
\newcommand{\fB}{\mathfrak{B}}
\newcommand{\fD}{\mathfrak{D}}
\newcommand{\fE}{\mathfrak{E}}
\newcommand{\fM}{\mathfrak{M}}
\newcommand{\fO}{\mathfrak{O}}
\newcommand{\fV}{\mathfrak{V}}
\newcommand{\fa}{\mathfrak{a}}
\newcommand{\fc}{\mathfrak{c}}
\newcommand{\fp}{\mathfrak{p}}
\newcommand{\ft}{\mathfrak{t}}
\newcommand{\fu}{\mathfrak{u}}
\newcommand{\fv}{\mathfrak{v}}
\newcommand{\fz}{\mathfrak{z}}
\newcommand{\cA}{\mathcal{A}}
\newcommand{\cB}{\mathcal{B}}
\newcommand{\cD}{\mathcal{D}}
\newcommand{\cE}{\mathcal{E}}
\newcommand{\cF}{\mathcal{F}}
\newcommand{\cG}{\mathcal{G}}
\newcommand{\cH}{\mathcal{H}}
\newcommand{\cK}{\mathcal{K}}
\newcommand{\cL}{\mathcal{L}}
\newcommand{\cM}{\mathcal{M}}
\newcommand{\cQ}{\mathcal{Q}}
\newcommand{\cS}{\mathcal{S}}
\newcommand{\cT}{\mathcal{T}}
\newcommand{\cU}{\mathcal{U}}
\newcommand{\cW}{\mathcal{W}}
\newcommand{\cX}{\mathcal{X}}
\newcommand{\sL}{\mathscr{L}}
\newcommand{\sfI}{\mathsf{I}}
\newcommand{\sfM}{\mathsf{M}}
\DeclareMathOperator{\supp}{supp}
\DeclareMathOperator{\dist}{dist}
\newcommand{\vast}{\bBigg@{4}}
\newcommand{\Vast}{\bBigg@{5}}
\newcommand{\wh}[1]{\widehat{#1}}
\newcommand{\wt}[1]{\widetilde{#1}}
\DeclareMathOperator{\res}{Res}
\DeclareMathOperator{\diag}{diag}
\DeclareMathOperator{\tr}{tr}
\DeclareMathOperator{\E}{\mathbb{E}}
\let\Re\relax
\DeclareMathOperator{\Re}{Re}
\let\Im\relax
\DeclareMathOperator{\Im}{Im}
\numberwithin{equation}{section}
\title{Airy Point Process via Supersymmetric Lifts}
\author{Andrew Ahn}
\DeclareFontFamily{OT1}{pzc}{}
\DeclareFontShape{OT1}{pzc}{m}{it}{<-> s * [1.10] pzcmi7t}{}
\DeclareMathAlphabet{\mathpzc}{OT1}{pzc}{m}{it}
\begin{document}
\maketitle

\begin{abstract}
We study the local asymptotics at the edge for particle systems arising from: (i) eigenvalues of sums of unitarily invariant random Hermitian matrices and (ii) signatures corresponding to decompositions of tensor products of representations of the unitary group. Our method treats these two models in parallel, and is based on new formulas for observables described in terms of a special family of lifts, which we call supersymmetric lifts, of Schur functions and multivariate Bessel functions. We obtain explicit expressions for a class of supersymmetric lifts inspired by determinantal formulas for supersymmetric Schur functions due to \cite{MJ03}. Asymptotic analysis of these lifts enable us to probe the edge. We focus on several settings where the Airy point process arises.
\end{abstract}

\setcounter{tocdepth}{2}
\tableofcontents

\section{Introduction}

\subsection{Preface}

It is known due to Weyl that the isomorphism classes of irreducible representations of the $N\times N$ unitary group $\cU(N)$ are in bijection with the set $\cW_{\Z}^N := \{\lambda = (\lambda_1 \ge \cdots \ge \lambda_N) \in \Z^N \}$ of \emph{signatures}, where $\lambda$ corresponds to the highest weight of an irreducible, see e.g. \cite{Wey97}. Thus given a representation $V$ of $\cU(N)$, we have a decomposition
\[ V = \bigoplus_{\lambda \in \cW_{\Z}^N} V_\lambda^{\oplus c_\lambda} \]
into irreducible components.

There is an analogous decomposition for random $N\times N$ complex Hermitian matrices with unitarily invariant distributions, that is $UXU^{-1} \overset{d}{=} X$ for any $U \in \cU(N)$. Such a random matrix $X$ induces a probability measure $\varrho_X$ on its eigenvalues, supported in $\cW_{\R}^N := \{\vec{x} = (x_1 \ge \cdots \ge x_N) \in \R^N\}$. Tautologically, we have the decomposition
\begin{align} \label{eq:X_decomp}
\varrho_X = \int_{\cW_{\R}^N} \delta_{\vec{x}} \, d\varrho_X(\vec{x})
\end{align}
To make the analogy with Hermitian matrices direct, we may encode the data of the decomposition of $V$ into a probability measure $\rho_V$ on $\cW_{\Z}^N$ where $\rho_V(\lambda) \propto c_\lambda \dim V_\lambda$, or alternatively
\begin{align} \label{eq:V_decomp}
\rho_V = \sum_{\lambda \in \cW_{\Z}^N} \frac{c_\lambda \dim V_\lambda}{\dim V} \delta_\lambda
\end{align}
In this form, the irreducible representations become the extremal measures $\delta_\lambda$ on $\cW_{\Z}^N$, corresponding to the extremal measures $\delta_{\vec{x}}$ on $\cW_{\R}^N$, and the weights $\rho_V(\lambda)$ correspond to the density of $d\varrho_X$ at points.

We study the effect of certain operations on these decompositions. We study tensor products for representations and sums for matrices. More concretely, we consider the decompositions \eqref{eq:X_decomp} and \eqref{eq:V_decomp} with
\begin{enumerate}[(i)]
    \item\label{item:V} $V = V^{(1)} \otimes \cdots \otimes V^{(n)}$ for $V^{(i)}$ an irreducible representation of $\cU(N)$, and
        
    \item\label{item:X} $X = X^{(1)} + \cdots + X^{(n)}$ for independent unitarily invariant $N \times N$ complex Hermitian matrices $X^{(i)}$ with deterministic eigenvalues.
\end{enumerate}

The measures $\rho_V$ are quantized analogues of $\varrho_X$. By a semiclassical limit, $\rho_V$ recovers $\varrho_X$, mapping $\otimes$ to $+$, see \cite[Proposition 1.5]{BuG15}. In the other direction, decompositions of irreducible representations of $\cU(N)$ may be viewed as the geometric quantization of addition of random Hermitian matrices, see \cite[\S 1.3 \& Appendix D]{CNS18}.

Furthermore, the operation $\otimes$ may be viewed as a discrete equivalent of $+$. If $\vec{a},\vec{b} \in \cW_{\R}^N$ are the eigenvalues of independent unitarily invariant $N\times N$ Hermitian matrices $A,B$, the set of possible eigenvalues $\vec{c}$ of $A+B$ is a polytope $P_{\vec{a},\vec{b}} \subset \cW_{\R}^N$ carved out by a set of relations shown to be necessary to satisfy by \cite{Hor62} and sufficient by the combined works \cite{Kly98} and \cite{KT99}; the problem of sufficiency is known as Horn's problem. In particular, \cite{Kly98} reduced Horn's problem to the \emph{saturation conjecture}, solved by \cite{KT99}, which states that the multiplicity of $V_{\vec{c}}$ in the decomposition of $V_{\vec{a}}\otimes V_{\vec{b}}$ is nonzero if and only if $\vec{c} \in P_{\vec{a},\vec{b}}\cap\Z$ whenever $\vec{a},\vec{b} \in \cW_{\Z}^N = \cW_{\R}^N \cap \Z^N$. One can further relate the multiplicity of $V_{\vec{c}}$ in $V_{\vec{a}} \otimes V_{\vec{b}}$ with the density of $\varrho_{A+B}$ at $\vec{c}$ in terms of the number or density of certain honeycomb configurations given boundary data prescribed by $(\vec{a},\vec{b},\vec{c})$, see \cite[Theorems 4 \& 7]{KT01}.

In this article, we study the measures $\rho_V$ and $\varrho_X$ in the language of particle systems. Naturally, the eigenvalues of $X$ may be viewed as a random $N$-particle system on $\R$. Similarly, the pushforward of $\rho_V$ by the mapping $\lambda \in \cW_{\Z}^N$ to $(\lambda_1 + N - 1 > \lambda_2 + N - 2 > \ldots > \lambda_N)$ defines a random $N$-particle system on $\Z$. Thus the above operations on representations and matrices may be viewed as operations on particle systems.

We explore the limiting $N \to \infty$ behavior of the particle systems arising from (\ref{item:V}) and (\ref{item:X}), with the motivation of understanding the effects of the operations above at the \emph{local level}. Our goal is to present a unified moment-based method to access the local fluctuations of the largest particles in these systems. We remark that while our local results were not previously accessible, the complementary global results which capture the entire particle system are well known, see \cite{V85,Bia95,BuG15}.

Our main results (\Cref{thm:samelimit,thm:multisum,thm:two_sum,thm:multitensor}) establish, under several regimes, that the largest particles are given by the Airy point process, the local limit of the spectral edge of the Gaussian Unitary Ensemble (GUE) as the matrix size tends to infinity, see \Cref{intro:matrix} for a formal description. These results demonstrate that the operations $\otimes$ and $+$ tend to have a regularizing effect in the $N \to \infty$ limit, in the sense that sufficient applications of these operations lead to GUE edge fluctuations. We state our main theorems and review existing results for the random matrix and quantized models in separate sections (\Cref{intro:matrix,intro:quantized}).

Although we focus on regimes where the Airy point process arises in this work, we expect that our method can probe alternative limit settings, such as outliers at the edge or growing order of summands and tensor products. We plan to explore this in future works.

The key input to these results are novel expressions for a family of observables of these particle systems in terms of special lifts --- which we call \emph{supersymmetric lifts} --- of Schur functions and multivariate Bessel functions (continuous analogues of Schur functions), see \Cref{ssec:ssym}. Intriguingly, the \emph{supersymmetric Schur functions}, certain irreducible characters of the Lie superalgebra $\mathfrak{gl}(m/n)$\footnote{Recall that the ordinary Schur functions are irreducible characters of the Lie algebra $\mathfrak{gl}(n)$}, provide an example of these lifts. Inspired by determinantal formulas for the supersymmetric Schur functions due to \cite{MJ03}, we find a family of supersymmetric lifts of Schur and multivariate Bessel functions with explicit contour integral formulas amenable for asymptotic analysis. The connection with observables consequently allows us to study local extremal asymptotics of our particle systems. A nice feature of the approach is that the proofs of the local theorems in the quantized and continuous particle systems parallel each other almost perfectly.

\subsection{Main Results: Unitarily Invariant Random Matrices} \label{intro:matrix}

We study the eigenvalues of
\begin{align} \label{rmt_model}
X^{(1)}_N + \cdots + X^{(n)}_N
\end{align}
where the $X^{(i)}_N$ are independent $N\times N$ unitarily invariant complex Hermitian matrices with deterministic eigenvalues. 

In general, the $N\to\infty$ limiting edge behavior of \eqref{rmt_model} depends on the spectrum of $X_N^{(i)}$ for $1 \le i \le n$. We focus on several settings where the Airy point process appears, which is a determinantal point process with correlation kernel
\[ K_{\mathrm{Airy}}(x,y) = \frac{\operatorname{Ai}(x) \operatorname{Ai}'(y) - \operatorname{Ai}(y) \operatorname{Ai}'(x)}{x - y} \]
where $\operatorname{Ai}$ is the Airy function. This is the $N\to\infty$ limiting point process at the edge of the spectrum of the $N\times N$ GUE \cite{TW94}; recall that a GUE is distributed as $\tfrac{X+X^*}{2}$ where $X$ is an $N\times N$ matrix of i.i.d. standard complex Gaussians, see e.g. \cite[Chapter 2.5]{AGZ10}. 

The Airy point process is a universal object appearing within and beyond random matrix theory. In the context of random matrix theory, the seminal works \cite{Sos99,TV10,EYY12b} establish universality of the Airy point process for Wigner matrices, which have i.i.d. (up to being Hermitian) centered, variance $1$ entries, and additional moment assumptions which typically vary among works. We note that local GUE statistics in the bulk of the spectrum are also known to be universal among Wigner matrices, due to \cite{EPRSY10,TV11} and further extended by \cite{EYY12a,Agg19}.

Let $\cW_{\R}^N := \{\vec{x} \in \R^N: x_1 \ge \cdots \ge x_N\}$ and $\cM$ be the set of compactly supported Borel probability measures on $\R$. 

\begin{assumption} \label{assum:rmt_conv}
For each positive integer $N$, let $X^{(1)}_N,X^{(2)}_N,\ldots$ be a sequence of independent $N\times N$ unitarily invariant, random Hermitian matrices with deterministic eigenvalues $\vec{\ell}^{(1)} := \vec{\ell}^{(1)}(N),\vec{\ell}^{(2)} := \vec{\ell}^{(2)}(N),\ldots \in \cW_{\R}^N$ respectively. Assume that there exists $\bm^{(i)} \in \cM$ such that
\[ \lim_{N\to\infty} \frac{1}{N} \sum_{j=1}^N \delta_{\ell_j^{(i)}} := \bm^{(i)} \quad \mbox{weakly}, \quad \quad \lim_{N\to\infty} \sup_{1 \le j \le N} \dist(\ell_j^{(i)}, \supp \bm^{(i)}) = 0 \]
for each $i \ge 1$.
\end{assumption}

We say that a sequence of point processes $\cX_N$ converges to a point process $\cX$ if for each $k \ge 1$, the $k$-point correlation functions of $\cX_N$ converge weakly to the $k$-point correlation function of $\cX$ as $N\to\infty$, see \cite[Chapter 1]{HKPV09} for background on point processes and $k$-point correlation functions.

In the statements below, $\tau(\mu)$ is a constant depending only on $\mu \in \cM$, given by \eqref{eq:tau} in \Cref{sec:applications}.

\begin{theorem} \label{thm:samelimit}
Suppose $\mu \in \cM$ and $n \ge \tau(\mu)$. Under \Cref{assum:rmt_conv}, if $\vec{\ell} \in \cW_{\R}^N$ are the eigenvalues of
\[ X^{(1)}_N + \cdots + X^{(n)}_N \]
such that $\mu = \bm^{(1)} = \cdots = \bm^{(n)}$, then there exist explicit $\fE_N \in \R$, $\fV_N > 0$ such that
\[  N^{2/3} \frac{\ell_i - \fE_N}{\fV_N} \quad i = 1,\ldots,N\]
converges to the Airy point process as $N\to\infty$.
\end{theorem}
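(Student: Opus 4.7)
The approach follows the moment-based strategy outlined in the introduction: use the supersymmetric lifts of multivariate Bessel functions as a generating device for observables of the eigenvalue process, obtain explicit contour integral formulas from the determinantal structure, and perform a saddle-point analysis at the edge to extract the Airy kernel. Because convolution of unitarily invariant Hermitian matrices corresponds to multiplication of Harish--Chandra--Itzykson--Zuber integrals, the Bessel function associated to $X_N^{(1)}+\cdots+X_N^{(n)}$ factorizes as a product over the $n$ summands, and the same factorization will be inherited by the associated supersymmetric lift.

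The first step is to write the $k$-point correlation functions of the rescaled process in terms of integrals of the supersymmetric lift. Concretely, I would use the lift machinery developed earlier in the paper to express quantities such as $\E\bigl[\prod_{j=1}^k \sum_i \phi_j(\ell_i)\bigr]$, for suitable analytic test functions $\phi_j$, as $k$-fold contour integrals in auxiliary variables $(z_1,\ldots,z_k)$, where the integrand contains
\[
\prod_{i=1}^n \prod_{j=1}^N \frac{1}{z_a - \ell_j^{(i)}} \qquad (a=1,\ldots,k)
\]
along with a Cauchy-like factor $\prod_{a<b}(z_a-z_b)^{-1}$ or similar coming from the determinantal MacMahon--type representation inspired by \cite{MJ03}. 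This converts the probabilistic question into one of asymptotic analysis of contour integrals.

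The second step is the asymptotic analysis. Under \Cref{assum:rmt_conv}, $\frac{1}{N}\sum_j \log(z-\ell_j^{(i)}) \to G_\mu(z):=\int \log(z-x)\,d\mu(x)$ uniformly on compact sets away from $\supp\mu$. Thus each integrand has the form $e^{N S(z)}$ with $S(z)=n\, G_\mu(z) - (\text{contribution from other factors})$, and the leading asymptotics will be governed by the critical points of $S$. The value $\fE_N$ should be taken to be the right edge of the limiting measure $\mu^{\boxplus n}$ (or its finite-$N$ counterpart expressed through $\bm^{(i)}$), and $\fV_N$ the associated edge scale; the constant $\tau(\mu)$ is precisely the threshold beyond which the edge of $\mu^{\boxplus n}$ has the square-root vanishing density, guaranteeing that the two real critical points of $S$ collide to form a cubic critical point exactly at $\fE_N$. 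Rescaling $z_a = \fE_N + \fV_N N^{-2/3} w_a$ collapses $S$ to its cubic Taylor expansion, the local integrand becomes a product of factors $\exp(\tfrac{1}{3}w_a^3 - \xi_a w_a)$, and after contour deformation each one-variable integral converges to the standard contour integral representation of the Airy function. This yields the Airy kernel determinantal structure for the limit of the $k$-point correlation functions.

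The main obstacle is the edge asymptotic analysis. There are three substantive points: (i) showing that the condition $n\ge\tau(\mu)$ guarantees the cubic-collision of saddles (equivalently, that the density of $\mu^{\boxplus n}$ vanishes as a square root at its right endpoint for all such $n$), which requires free-probabilistic input on the regularity of free convolutions; (ii) deforming the contours into steepest-descent paths passing through the collided saddle while remaining disjoint from the spectral support $\bigcup_i \supp\bm^{(i)}$ and ensuring exponentially small error from the far parts of the contour (the second line of \Cref{assum:rmt_conv}, ruling out outliers, is essential here); and (iii) promoting the pointwise convergence of integrands to uniform control strong enough for convergence of correlation functions. Once these are established, the identification with the Airy kernel is mechanical and parallels classical GUE edge derivations.
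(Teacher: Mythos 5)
Your overall outline---supersymmetric lifts as a generating device, HCIZ factorization of the Bessel generating function, contour integral expressions for observables, and a saddle-point analysis at the spectral edge---does match the high-level architecture of the paper, and the factorization claim in your first paragraph is exactly what \Cref{bessel_gf} and \Cref{ex:algebra} deliver. But there are two genuine misstatements that would derail the argument if followed literally.

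First, $\tau(\mu)$ is \emph{not} the precise threshold beyond which $\mu^{\boxplus n}$ has a square-root edge. The actual threshold for square-root vanishing is governed by $G_\mu(E_+)^2/G_\mu'(E_+)$ (see \Cref{rmk:power_law}), whereas the paper's $\tau(\mu)$ is built from the Cauchy transform evaluated at the remote point $x(\mu) = 4(E_+ - E_-) + E_+$. The extra factor of $4(E_+-E_-)$ is a purely technical margin: \Cref{thm:high_compression} shows that $n \ge \tau(\mu)$ guarantees the critical point $G_\mu^{[-1]}(\fz)$ sits at distance $\ge 4(E_+-E_-)$ from $\supp\mu$, which is needed so that \Cref{thm:domain}~(ii) can verify the inclusion property \eqref{eq:set_inclusion} and allow the contour deformations. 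The paper explicitly notes (see the remark after \Cref{thm:samelimit}) that $\tau(\mu)$ is suboptimal---for the Rademacher measure $\tau(\mu)=82$ while $n=3$ is expected to suffice. So your sentence ``$\tau(\mu)$ is precisely the threshold'' conflates the regularity requirement with a technical distance requirement, and a proof built on that belief would not supply the contour estimates the paper needs.

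Second, the mechanism you sketch---two real saddles of a $z$-space phase $S$ colliding into a cubic critical point, each single-variable integral collapsing to the Airy function's contour representation, producing the Airy kernel---is not the route the paper takes, and it would not work directly here because the eigenvalue process of $\sum_i X_N^{(i)}$ is not a determinantal point process, so there is no finite-$N$ kernel to send to the Airy kernel. Instead the paper works in the $u$-variable of the inverse Cauchy transform: it shows that $\cA(u) = G_{\bm}^{[-1]}(u)$ has a \emph{quadratic} critical point at $\fz = G_\bm(E_+)$ with $\cA''(\fz)>0$, and the $N^{2/3}/N^{1/3}$ scaling of the exponential moments in $\E\bigl[\prod_i \sum_j e^{N^{2/3}\fc_i\ell_j}\bigr]$ converts that quadratic saddle into the Laplace transform of the Airy point process (matching \Cref{airy_moments}); no cubic collision and no per-variable Airy-function integral arises. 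Moreover you omit the two structural ingredients that make the verification tractable: (i) the abstract reduction \Cref{thm:master} $\to$ \Cref{thm:submaster} $\to$ \Cref{thm:application}, which repackages everything in terms of ``Airy edge appropriateness,'' and (ii) the use of analytic subordination (and, for the existence and positivity of the critical point, the elementary Cauchy--Schwarz monotonicity \Cref{variance}, not a ``regularity of free convolutions'' black box). Without these, the plan's step (ii) (contour deformation away from the support) and step (iii) (uniformity) would both be stuck.
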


\begin{remark}
An explicit description of $\fE_N,\fV_N$ is given in \Cref{thm:master}.
\end{remark}

\begin{remark}
The dependence $\tau(\mu)$ on $\mu$ should be necessary --- this is evidenced by the fact that for arbitrarily large $n$, one can find a measure $\mu$ such that the $N\to\infty$ limiting spectral measure of $X_N^{(1)} + \cdots + X_N^{(n)}$ (under the hypotheses of \Cref{thm:samelimit}) does not exhibit square root behavior at the right edge, a global indicator of the Airy point process at the edge. For example, if $\mu = (1 - x)^p\1_{[0,1]}(x) \, dx$ for $p > 1$, then the aforementioned decay is not square root for $n < p^2$, see \Cref{rmk:power_law}.

On the other hand, in general, $\tau(\mu)$ is not the optimal threshold for which the conclusion of \Cref{thm:samelimit} holds. For example, if $\mu$ is the Rademacher distribution $\frac{1}{2}(\delta_{-1} + \delta_1)$, we can compute that $\tau(\mu) = 82$, and with minor modifications to our proofs (see \Cref{rmk:tau_optimize}) we can lower this to approximately $68$. However, we expect $n = 3$ is the minimal $n$ for which the conclusion of \Cref{thm:samelimit} holds in this case. See \Cref{rmk:tau_optimal} for a conjecturally optimal statement for general $\mu$.
\end{remark}

\begin{remark}
Although \Cref{thm:samelimit} is stated for $n$ fixed, our methods can consider the regime where $n$ tends to infinity with $N$. We consider this in future works.
\end{remark}

We can generalize \Cref{thm:samelimit} so that the limiting measures are not all identical:

\begin{theorem} \label{thm:multisum}
Suppose $\mu_1,\ldots,\mu_k \in \cM$, $n_1,\ldots,n_k$ are integers such that $n_i \ge \tau_i(\mu_i)$ for $1 \le i \le k$, and $n = n_1 + \cdots + n_k$. Under \Cref{assum:rmt_conv}, if $\vec{\ell} \in \cW_{\R}^N$ are the eigenvalues of
\[ X^{(1)}_N + \cdots + X^{(n)}_N \]
such that $\mu_1 = \bm^{(1)} = \cdots = \bm^{(n_1)}, \ldots, \mu_k = \bm^{(n_1 + \cdots + n_{k-1}+1)} = \cdots = \bm^{(n)}$, then there exist explicit $\fE_N \in \R$, $\fV_N > 0$ such that
\[  N^{2/3} \frac{\ell_i - \fE_N}{\fV_N} \quad i = 1,\ldots,N\]
converges to the Airy point process as $N\to\infty$.
\end{theorem}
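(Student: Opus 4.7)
The plan is to extend the moment-based machinery developed for Theorem \ref{thm:samelimit} by tracking contributions from each of the $k$ distinct spectral groups separately. Since the $X^{(i)}_N$ are independent and unitarily invariant, the supersymmetric lift observable formula for the sum $X^{(1)}_N + \cdots + X^{(n)}_N$ should factorize over summands, producing a contour integral whose integrand is a product over individual matrices. Grouping by common limiting spectral measure, the integrand takes the form of a product $\prod_{i=1}^k \Psi_{\mu_i}(z)^{n_i}$ of factors associated to single-matrix contributions, yielding, after taking logarithms, a phase function $\sum_{i=1}^k n_i \Phi_{\mu_i}(z)$. The single-measure case $k = 1$ is precisely the regime treated by Theorem \ref{thm:samelimit}.

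Next I would perform steepest descent on the resulting contour integral for the $k$-point correlation functions, probing particles at the edge. The edge location $\fE_N$ and scale $\fV_N$ are set by the real critical point of $\sum_{i=1}^k n_i \Phi_{\mu_i}$ and by the vanishing rate of its second derivative there. Emergence of the Airy point process requires the phase function to possess a double critical point (vanishing first and second derivatives) with nonvanishing third derivative; this is the same structural input used in the proof of Theorem \ref{thm:samelimit}. Once cube-root critical behavior is in place, the extraction of the Airy kernel is routine: deform contours to pass through the critical point along steepest descent rays, rescale by $N^{2/3}$, and recognize the limiting integrand as a contour representation of $K_{\mathrm{Airy}}$. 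Convergence of all $k$-point correlation functions then follows, and hence convergence of the point process itself.

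The main obstacle will be verifying that the group conditions $n_i \ge \tau_i(\mu_i)$ suffice to produce the required critical point structure for the combined phase function. In the single-measure setting, $\tau(\mu)$ is engineered so that $n \Phi_\mu$ admits a real double critical point with the correct sign on the third derivative; in the multi-measure case, one must ensure that the individual constraints $n_i \ge \tau_i(\mu_i)$ on each group rule out degenerate configurations of the sum. The natural approach is to exploit that each $\Phi_{\mu_i}'$ and $\Phi_{\mu_i}''$ add linearly and have signs dictated by Stieltjes-transform-type monotonicity on the relevant portion of the real axis; then, provided each $n_i$ already exceeds the threshold needed to produce the right monotonicity in group $i$, the sum inherits a unique real candidate critical point with matching second derivative and a strict sign on the third derivative, precluding cancellation between groups. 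This reduction of the multi-measure critical point analysis to the per-group analysis already handled for Theorem \ref{thm:samelimit} is what I expect will carry the proof through.
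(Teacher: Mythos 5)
Your high-level architecture is right: deduce everything from the moment/supersymmetric-lift machinery, use multiplicativity of the generating functions over the $n$ summands, and reduce to verifying a critical-point condition for a phase function associated with $\bm^{(1)}\boxplus\cdots\boxplus\bm^{(n)}$. But the heart of the theorem is precisely the step you dismiss as "natural," and the proposed mechanism does not deliver it.

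You argue that because each group separately satisfies $n_i \geq \tau(\mu_i)$ and "derivatives add linearly with monotone sign," the combined $\cA(u)=\sum_i n_i G_{\mu_i}^{[-1]}(u)+(1-n)/u$ inherits a well-behaved critical point. This is not enough. The critical point $\fz$ of $\cA$ sits at $G_\bm(E_+(\bm))$ for $\bm=\boxplus_i\mu_i^{\boxplus n_i}$, which is a different point from each individual $\mathsf z_i=G_{\mu_i^{\boxplus n_i}}(E_+(\mu_i^{\boxplus n_i}))$, and there is no a priori reason that $\fz$ lands in the domain $(0,\min_i G_{\bm^{(i)}}(E_+^{(i)}))$ where all the inverse Cauchy transforms are defined, nor that $\cA''(\fz)>0$. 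The paper's proof of \Cref{thm:application} closes this gap by an induction on the groups, applying a \emph{stability-under-free-convolution} lemma (\Cref{thm:free_convolution_cp}) at each step: if two measures each have $G'/G^2\to-\infty$ at their right edge, then their free additive convolution inherits a critical point of the combined inverse Cauchy transform with positive second derivative, \emph{and} $G'/G^2\to-\infty$ persists for the convolution, so the induction can continue. The mechanism behind \Cref{thm:free_convolution_cp} is analytic subordination: the subordination functions $\omega_i$ satisfy the strict inequality $\omega_i(E_+(\bm))>E_+(\bm^{(i)})$ (proved via the variational identity $I(z)=\prod_i(1-\Im z/\Im\omega_i(z))\le 1$ and its saturation at the edge), which is what places $\fz$ in the correct domain and yields $\cA''(\fz)>0$. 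This is fundamentally nonlinear in the summands; the "derivatives add linearly" heuristic cannot see it.

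Two secondary points. First, your description of the critical point as "double (vanishing first and second derivatives) with nonvanishing third derivative" does not match the paper's structure: in the $u$-variable, $\cA$ has an ordinary critical point with $\cA'(\fz)=0$ and $\cA''(\fz)>0$, and on the $z$-side this corresponds to $G_\bm'(E_+)=-\infty$ (square-root edge decay), not to a cubic degeneracy of the steepest-descent phase. Second, you never address the technical inclusion condition \eqref{eq:set_inclusion} in \Cref{thm:submaster}(iii); this is where the explicit buffer $x(\mu)=4(E_+-E_-)+E_+$ inside $\tau(\mu)$ enters, via \Cref{thm:domain}(ii) and the bound $G_{\mu_i}^{[-1]}(\fz)-E_+(\mu_i)>4(E_+(\mu_i)-E_-(\mu_i))$ from \Cref{thm:high_compression}. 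Without this, one cannot construct the contour required for Airy edge appropriateness.
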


\begin{remark} \label{rmk:projection}
Our methods can also treat matrix projections. More specifically, we can study eigenvalues of matrices of the form
\[ \pi_N X_{M_1}^{(1)} + \cdots + \pi_N X_{M_n}^{(n)} \]
where $\pi_N X_M$ denotes the principal $N\times N$ top-left corner submatrix of $X_M$. Then the Airy point process appears at the spectral edge as long as the limiting ratios $\lim_{N\to\infty} M_i/N$ are suitably large (depending on the limiting spectral measures of the $X_{M_i}^{(i)}$). We do not pursue this direction for simplification and due to the availability of methods to treat edge asymptotics of projections (e.g. \cite{DM18}), in contrast with the lack of methods for general additive models. Below, we provide a more detailed discussion on special cases of additive models with accessible local edge asymptotics.
\end{remark}

We interpret \Cref{thm:samelimit,thm:multisum} further. It is known due to Voiculescu \cite{V91} that if the empirical measures of $X^{(1)}_N,\ldots,X^{(n)}_N$ converge (e.g. under \Cref{assum:rmt_conv}), then the empirical measure of the eigenvalues of \eqref{rmt_model} converges to a probability measure $\bm^{(1)}\boxplus \cdots \boxplus \bm^{(n)}$ as $N\to\infty$. Here, $\boxplus$ is a commutative and associative binary operation on probability measures known as the \emph{free additive convolution}.

The free central limit theorem \cite{V85,Spe90} states that as $n \to \infty$, after translation and dilation, $\bm^{\boxplus n}$ weakly converges to the semicircle law, which is the large $N$ global spectral limit of the GUE.

The appearance of GUE statistics in the $n\to\infty$ limit can be understood by the following heuristic. Suppose $X_N$ is a $N\times N$ unitarily invariant, random Hermitian matrix with deterministic eigenvalues and $X_N^{(1)},X_N^{(2)},\ldots$ is a sequence of i.i.d. copies of $X_N$. Using moment formulas for Haar unitary matrix elements and the classical central limit theorem, it is shown in \cite[Appendix A]{GS} that for fixed $N$
\[ \lim_{n\to\infty} \frac{1}{\sqrt{n}}\left( X_N^{(1)} + \cdots + X_N^{(n)} - \frac{n}{N} \tr(X_N) \cdot \mathrm{Id}_N \right) = \sqrt{\frac{N}{N^2-1} \sigma_N} \left( Y_N - \frac{1}{N} \tr(Y_N) \cdot \mathrm{Id}_N \right) \]
where $\sigma_N := \tfrac{1}{N} \tr(X_N^2) - (\tfrac{1}{N} \tr(X_N))^2$ and $Y_N$ is an $N\times N$ GUE matrix. Since $\tfrac{1}{N} \tr(Y_N) \to 0$ in probability as $N\to\infty$, we get the heuristic approximation
\begin{align} \label{GUE_approx_sum}
X^{(1)}_N + \cdots + X^{(n)}_N \sim \frac{n}{N} \tr(X_N) \cdot \mathrm{Id}_N + \sqrt{\frac{n}{N} \sigma_N} \cdot Y_N
\end{align}
for $n$ and $N$ large, ignoring commutation of limit issues.

Thus the limits of self-convolutions to the semicircle law may be viewed as a realization of the approximation \eqref{GUE_approx_sum} at the level of global statistics. These heuristic approximations also suggest that local GUE edge statistics should appear for large $n$ with $N$ tending to $\infty$. \Cref{thm:samelimit} shows that a large but finite $n$ is sufficient to see these statistics, and gives a lower bound $\tau(\mu)$ on how large $n$ needs to be.

For $k = 1$, \Cref{thm:multisum} reduces to \Cref{thm:samelimit}. For general $k$, \Cref{thm:multisum} states that once you hit the threshold set by $\tau(\cdot)$ for GUE edge statistics by summing, these statistics are stable under further sums.

\Cref{thm:multisum} demonstrates that addition has a regularizing effect: adding enough yields GUE edge statistics. In another direction, we can show GUE edge statistics for sums of two matrices under technical assumptions. The general result is \Cref{thm:general_twosum} (see also \Cref{thm:master}). While these assumptions are non-optimal and nontrivial to verify, we find a family of Jacobi measures with varying power law decay at the spectral edges which satisfy these assumptions, leading to the following special case of \Cref{thm:general_twosum}:

\begin{theorem} \label{thm:two_sum}
Under \Cref{assum:rmt_conv}, suppose $\vec{\ell} \in \cW_{\R}^N$ are the eigenvalues of
\[ X^{(1)}_N + X^{(2)}_N \]
and the density of $\bm^{(i)}$ is proportional to
\[ (x - \alpha_i)^{a_i}(\beta_i - x)^{b_i} \1_{(\alpha_i,\beta_i)} \, dx \]
for some $\alpha_i < \beta_i$, $a_i \ge -1/2$, $-1 < b_i \le -1/2$, for $i = 1,2$. Then there exist explicit $\fE_N \in \R$, $\fV_N > 0$ such that
\[ N^{2/3} \frac{\ell_i - \fE_N}{\fV_N} \quad i =1,\ldots,N \]
converges to the Airy point process as $N\to\infty$.
\end{theorem}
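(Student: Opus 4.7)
The strategy is to deduce \Cref{thm:two_sum} from the general two-summand result \Cref{thm:general_twosum} (equivalently from \Cref{thm:master}), so the task reduces to verifying that Jacobi densities with parameters $a_i \ge -1/2$ and $-1 < b_i \le -1/2$ satisfy its technical hypotheses.

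First, I would record the Stieltjes and $R$-transforms of a Jacobi density on $(\alpha_i,\beta_i)$ with exponents $(a_i,b_i)$; these are classical and expressible via standard hypergeometric functions, and from them one obtains an explicit description of the free additive convolution $\bm^{(1)} \boxplus \bm^{(2)}$, of its right edge $\fE_\infty$, and of its local profile there. The key analytic fact to be established is that, under the constraint $b_i \le -1/2$ on both marginals, the convolution density vanishes as a square root at its right edge; this is the global indicator of Airy behaviour and is typically the first hypothesis of \Cref{thm:general_twosum}. Heuristically, free convolution regularizes a mild power-law singularity of order $b_i \in (-1,-1/2]$ into a generic square-root edge, while the complementary bound $a_i \ge -1/2$ ensures that the left endpoints do not introduce competing singularities that could spoil the picture near the right edge.

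Second, I would invoke the supersymmetric-lift moment formulas developed earlier in the paper to express the relevant observables of $X^{(1)}_N + X^{(2)}_N$ as multivariate contour integrals whose integrands have the form $e^{N(S_1(z)+S_2(z))}$ times a rational kernel, where each $S_i$ is built from the logarithmic transform of $\bm^{(i)}$. The right edge of $\bm^{(1)} \boxplus \bm^{(2)}$ coincides with a double critical point of $S_1 + S_2$, and the square-root edge behaviour confirmed in the previous step is equivalent to this critical point being of Airy type (nonvanishing third derivative). Standard steepest-descent analysis near a double saddle then produces the Airy correlation kernel on the $N^{2/3}$ scale, with explicit centring $\fE_N$ and scaling $\fV_N$ read off from the location and local Taylor expansion of the saddle.

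The main obstacle --- and the reason \Cref{thm:general_twosum} carries nontrivial technical assumptions in the first place --- is the global validity of the steepest-descent contour deformation. One must show that the contours in the supersymmetric-lift formula can be deformed to pass through the double critical point along the correct descent direction while remaining on the admissible side of the branch cuts produced by the Jacobi weights $(x-\alpha_i)^{a_i}(\beta_i - x)^{b_i}$. For the Jacobi family, the explicit hypergeometric form of the Stieltjes transform makes the level curves of $\Re(S_1 + S_2)$ tractable enough to verify admissibility of the needed deformation; the parameter ranges $a_i \ge -1/2$ and $-1 < b_i \le -1/2$ are precisely those under which the descent path clears the branch cuts without obstruction. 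Once admissibility is verified, \Cref{thm:general_twosum} applies and yields convergence to the Airy point process.
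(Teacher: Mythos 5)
Your high-level plan (reduce \Cref{thm:two_sum} to \Cref{thm:general_twosum} by verifying its hypotheses for Jacobi densities, using explicit hypergeometric formulas for the Cauchy transform) matches the paper's route, but the proposal mischaracterizes what those hypotheses actually are and never pins down the verification that is the real content of the reduction.

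The hypothesis of \Cref{thm:general_twosum} is that each \emph{marginal} $\bm^{(i)}$ lies in the class $\fM$: this requires (a) the power-law edge condition \eqref{jacobi_hypothesis} with exponent $t_i = -b_i \in [0,1)$ on the marginal's density near $E_+(\bm^{(i)})$, which is immediate for Jacobi, and (b) the set-inclusion property \eqref{eq:set_inclusion_xi}, $G_{\bm^{(i)}}(\{\Re z \ge \xi\}) \subset \fO_{\fp}$ for all $\xi > E_+(\bm^{(i)})$. Your Step 2 instead identifies ``square-root edge of the convolution $\bm^{(1)}\boxplus\bm^{(2)}$'' as the first hypothesis; in the paper that square-root behaviour is a \emph{conclusion} derived inside \Cref{thm:general_twosum} via \Cref{thm:free_convolution_cp} (using $-G_i'(\omega)/G_i(\omega)^2 \to \infty$ at the edge of each marginal plus subordination), not something you check in advance about the convolution. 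This confusion matters because it points your effort at the wrong object: you never actually attempt to verify \eqref{eq:set_inclusion_xi}, which is where the Jacobi parameter restrictions $a_i \ge -1/2$, $-1 < b_i \le -1/2$ really enter.

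The missing key step is \Cref{ex:jacobi} together with \Cref{thm:nonpositive_cauchy} and \Cref{ex:nonpositive}: one computes the principal-value extension of $\Re G_{\bm^{(i)}}$ to the open support $(\alpha_i,\beta_i)$ via the classical formula of Erd\'elyi et al., $\Re G(z) = -z^a(1-z)^b\pi\cot(b\pi) + \tfrac{\Gamma(a+1)\Gamma(b)}{\Gamma(a+b+1)}\,{}_2F_1(-a-b,1;1-b;1-z)$ on $(0,1)$ after normalization, and checks that both terms are nonpositive precisely when $a\ge -1/2$ and $b\in(-1,-1/2]$. Nonpositivity of $\Re G$ on the support forces $\partial_x \Re\cS_{G(z)}(x) = \Re(G(z)-G(x)) > 0$ on the support for $\Re z > E_+$, so $\mathrm{D}_{G(z)}^+ \cap \supp \bm$ is a single interval containing $E_+$, which by \Cref{thm:nonpositive_cauchy} yields membership in $\fM$. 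Your Steps 3--4 merely restate the general steepest-descent machinery already packaged in \Cref{thm:master}--\Cref{thm:submaster} and so are redundant once you appeal to \Cref{thm:general_twosum}; the vague claim that the given parameter range is ``precisely'' where the contour ``clears the branch cuts'' is not a proof and is also not literally true (the paper notes uniform measure and other Jacobi parameters can be handled by a separate unimodality argument). The proposal therefore has a genuine gap: it never identifies or carries out the nonpositivity-of-$\Re G$ verification that actually establishes the hypothesis of \Cref{thm:general_twosum} for Jacobi measures.
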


This result is closely related to those of \cite{BES18,BES20} who showed eigenvalue rigidity at the spectral edge of \eqref{rmt_model} for sums of $X_N^{(1)} + X_N^{(2)}$ where $\bm^{(1)}, \bm^{(2)}$ exhibit power law behavior with exponents between $-1$ and $1$ at the edge.

It would be interesting to identify the family of measure $\bm^{(1)},\bm^{(2)}$ for which the conclusion of \Cref{thm:two_sum} continues to hold. At our current stage, such a general result is out of reach.

Let us remark on several special cases and relatives of \eqref{rmt_model} studied previously. If $X_N$ is an $N\times N$ unitarily invariant random Hermitian matrix, then the eigenvalues of $X_N+G_N$ where $G_N$ is GUE form a determinantal point process with explicit correlation kernel \cite{BH96,Joh01}. In these cases, the local asymptotics are accessible via analysis of the correlation kernel, and have been extensively studied in a variety of regimes, see \cite{BK04,ABK05,Shc11,CW14,CP16}). Under regularity assumptions of $X_N$, \cite{LS15} go beyond the determinantal setting, showing convergence of the largest eigenvalues of $X_N + W_N$ to the Airy point process where $W_N$ is Wigner \cite{LS15} using Dyson Brownian motion. 

Beyond these particular cases, exact local asymptotic results at the spectral edge of additive Hermitian matrix models have not been accessed. We note, however, the work of \cite{CL19} who establish local bulk universality for $X_N^{(1)} + X_N^{(2)}$, where the summands are unitarily invariant with deterministic spectra, under mild assumptions on the spectral limits.

Our approach is based on formulas for the expected exponential moments, where we note the eigenvalues of \eqref{rmt_model} are no longer determinantal point processes. Using these formulas, we show convergence of the Laplace transforms of the $k$-point correlation functions. We defer to \Cref{ssec:outline} an outline of our argument and pointers to sections for proofs.

\subsection{Main Results: Decompositions of Representations of the Unitary Group} \label{intro:quantized}

We now discuss our main results on measures arising from the decomposition of representations of the unitary group. We study $\lambda \sim \rho^V$ where
\begin{align} \label{q_model}
V := V_N := V^{(1)}_N \otimes \cdots \otimes V^{(n)}_N
\end{align}
and $V^{(i)}_N$ is an irreducible representation of $\cU(N)$ for $1 \le i \le n$.

Suppose $\lambda^{(i)} := \lambda^{(i)}(N)$ is the signature associated to $V_N^{(i)}$. It is known \cite[Corollary 1.2 \& Corollary 1.4]{CS} (see also \cite{Bia95}) that if
\[ \frac{1}{N} \sum_{j=1}^N \delta_{\e(N)(\lambda^{(i)}_j + N - j)} \]
converges weakly to some $\bm^{(i)}$ and $\e(N) = o(1/N)$, then
\begin{align} \label{eq:lambda_conv}
\frac{1}{N} \sum_{j=1}^N \delta_{\e(N)(\lambda_j + N - j)} \to \bm^{(1)} \boxplus \cdots \boxplus \bm^{(n)}
\end{align}
weakly in probability, for $\lambda \sim \rho^V$ as in \eqref{q_model}. The idea here is that for $\e(N) = o(1/N)$, as $N$ tends to infinity, the semiclassical limit is approached quickly enough to recover random matrix asymptotics.

For $\e(N) = 1/N$, Bufetov-Gorin \cite{BuG15} found that the asymptotic behavior changes due to the persistence of quantum effects in the limit. In this regime, the right hand side of \eqref{eq:lambda_conv} is replaced by a measure $\bm^{(1)} \otimes \cdots \otimes \bm^{(n)}$, where $\otimes$ is a binary operation on measures with density bounded by $1$, known as the \emph{quantized free convolution}. The operations $\otimes$ are nontrivial twists of $\boxplus$, and can be defined in terms of the latter by a variant of the Markov-Krein correspondence, introduced by Kerov \cite[Chapter IV]{Ker03}, see also \cite[Theorem 1.10]{BuG15} and \Cref{thm:MK_correspondence} in this article. We note that Bufetov-Gorin \cite{BuG18} also determined the global fluctuations under this regime.

While the global asymptotics of \eqref{q_model} are understood, previous works do not address local edge asymptotics. Using our methods, we are able to access the local behavior at the edge, and prove an analogue of \Cref{thm:multisum} in the quantized setting. We now present our main result for the quantized model.

Let $\cM_1 \subset \cM$ be the set of Borel probability measures with density bounded by $1$.

\begin{assumption} \label{assum:m_conv_q}
For each positive integer $N$, let $V^{(1)}_N, V^{(2)}_N,\ldots$ be a sequence of irreducible representations of $\cU(N)$ corresponding to signatures $\lambda^{(1)} := \lambda^{(1)}(N),\lambda^{(2)} := \lambda^{(2)}(N),\ldots \in \cW_{\Z}^N$ respectively. Assume that there exists $\bm^{(i)} \in \cM_1$ such that
\[ \lim_{N\to\infty} \frac{1}{N} \sum_{j=1}^N \delta_{\frac{\lambda^{(i)}_j+N-j}{N}} = \bm^{(i)} \quad \mbox{weakly}, \quad \quad \lim_{N\to\infty} \sup_{1 \le j \le N} \dist\left(\frac{\lambda_j^{(i)} + N - j}{N}, \supp \bm^{(i)}\right) = 0 \]
for each $i \ge 1$.
\end{assumption}

For the next theorem, $\tau_q(\mu)$ is a constant depending only on $\mu \in \cM_1$, given by \eqref{eq:tau_q} in \Cref{sec:applications_q}.

\begin{theorem} \label{thm:multitensor}
Suppose $\mu_1,\ldots\mu_k \in \cM_1$, $n_1,\ldots,n_k$ are integers such that $n_i \ge \tau_q(\mu_i)$ for $1 \le i \le k$, and $n = n_1 + \cdots + n_k$. Under \Cref{assum:m_conv_q}, if $\lambda \sim \rho^V$ for
\[ V := V^{(1)}_N \otimes \cdots \otimes V^{(n)}_N \]
such that $\mu_1 = \bm^{(1)} = \cdots = \bm^{(n_1)}, \ldots, \mu_k = \bm^{(n_1 + \cdots + n_{k-1} + 1)} = \cdots = \bm^{(n)}$, then there exist explicit $\fE_N \in \R$, $\fV_N > 0$ such that
\[ N^{2/3} \frac{\frac{\lambda_i + N - i}{N} - \fE_N}{\fV_N} \quad i = 1,\ldots,N \]
converges to the Airy point process as $N\to\infty$.
\end{theorem}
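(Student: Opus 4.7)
The strategy is to mirror the proof of the random matrix analogue \Cref{thm:multisum}, exchanging multivariate Bessel functions for Schur functions and integrals over eigenvalues for sums over signatures. The first step is to identify a family of observables of the particle system $\{\lambda_i + N - i\}_{i=1}^{N}$ under $\rho^V$ that admits a tractable formula. Since characters factorize over tensor products, the normalized Schur character $s_V(x_1,\ldots,x_N)/s_V(1^N)$ decomposes as a product over the $n$ tensor factors. Applying supersymmetric lifts of Schur functions --- in particular the family with explicit determinantal formulas inspired by \cite{MJ03} --- this factorization transfers to the observables, yielding expressions for $\E\prod_{j=1}^{k}\Phi(\lambda;z_j)$, where $\Phi$ is designed so that, after a suitable rescaling of the $z_j$ near the edge, the expectation recovers in the limit the Laplace transform of a $k$-point correlation function of the rescaled particles.

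Next I would recast these observables as $k$-fold contour integrals in complex variables $w_1,\ldots,w_k$. The determinantal structure of the supersymmetric lifts produces an integrand consisting of a product over the $n$ tensor factors of Cauchy-type quantities built from $(w_j,\lambda^{(a)}_i + N - i)$, together with a Cauchy-like interaction among the $w_j$'s responsible for the eventual determinantal correlations. Under \Cref{assum:m_conv_q}, each tensor-factor contribution admits a large-$N$ asymptotic of the form $\exp(N\cdot g_{\bm^{(a)}}(w_j))$, so the integrand takes the schematic shape $\exp\bigl(N\sum_{j=1}^{k} G_N(w_j)\bigr)\cdot R_N(w_1,\ldots,w_k)$, where $G_N$ is a single-variable action assembled from the $n$ measures $\bm^{(a)}$. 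Classical steepest descent in each $w_j$ variable then applies.

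The critical step is the saddle analysis. The equation $G_N'(w)=0$ determines the centering $\fE_N$, and the hypothesis $n_i \ge \tau_q(\mu_i)$ in each group is calibrated precisely so that $G_N$ admits a double critical point on the real axis --- both $G_N'$ and $G_N''$ vanish at a real saddle while $G_N'''$ is nonzero. Rescaling $w_j = \fE_N + \fV_N N^{-2/3}\zeta_j$ and Taylor expanding to cubic order produces limiting contour integrals with an Airy-type cubic action, from which the Airy kernel $K_{\mathrm{Airy}}$ emerges via the standard identification. Matching with the Laplace transforms of $k$-point correlation functions of the Airy point process concludes the argument.

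The main obstacle will be constructing a steepest descent contour that simultaneously achieves maximal descent for all $n$ tensor factors while navigating the pole structure dictated by the signatures: since the $\lambda^{(a)}_i + N - i$ are integers, the integrand has poles on a discrete lattice, and contours must be deformed to avoid them while still capturing the correct residues. The uniform approach condition in \Cref{assum:m_conv_q} is essential here for controlling the integrand near the edge of $\supp \bm^{(a)}$. Once a workable contour is in hand and the double saddle is pinned down, the remaining estimates and the final identification of the Airy point process should transfer from the proof of \Cref{thm:multisum} with only technical modifications, since the author has emphasized that the quantized and continuous arguments parallel each other almost perfectly.
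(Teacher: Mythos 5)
Your high-level scaffolding — Schur generating functions factorizing over tensor factors via \Cref{schur_gf}, supersymmetric lifts à la \cite{MJ03}, contour-integral moment formulas, steepest descent — does match the paper's outline. But there are two genuine gaps at the points where the argument would have to actually run.

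First, the saddle structure you describe is not the one the paper uses, and the difference matters. You posit a phase function $G_N$ with a \emph{double} critical point ($G_N'=G_N''=0$, $G_N'''\ne 0$), an $N^{-2/3}$ rescaling of the contour variables, and an Airy-type cubic action — the classical Fredholm/Airy-kernel steepest-descent picture. The paper instead works with $A_N(u)=\sum_i G_{\mathrm{m}^{(i)}}^{[-1]}(u)+\tfrac{1-n}{1-e^{-u}}$, which has a \emph{simple} critical point $\fz_N$ with $A_N'(\fz_N)=0$ and $A_N''(\fz_N)>0$, and the contour variables near $\fz_N$ are rescaled by $N^{-1/3}$, not $N^{-2/3}$. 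The Airy point process is then produced by matching the limiting observables with the \emph{Laplace transform} of the $k$-point correlations of the Airy process (\Cref{airy_moments}), not by a Fredholm expansion of the Airy kernel. The square-root edge behavior is encoded indirectly: $A_N'(\fz_N)=0$ with $\fz_N=G_\bm(E_+)$ is equivalent to $G_\bm'(E_+)=-\infty$, so the "degenerate" saddle you expect to find in some function is hiding inside the assertion that $G^{[-1]}$ has a genuine (not degenerate) critical point. The threshold $\tau_q$ is calibrated for exactly this: ensuring $\cA'(\fz)=0$, $\cA''(\fz)>0$, together with a technical inclusion (\Cref{eq:set_inclusion_q}). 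If you try to locate a double critical point of a single phase function, the hypothesis will not match what you need.

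Second, and more fundamentally, you assert that the estimates from the proof of \Cref{thm:multisum} "transfer with only technical modifications." This is where the argument would actually fail, because \Cref{thm:multisum} rests on analytic subordination for the free additive convolution $\boxplus$, whereas the quantized model is governed by the quantized free convolution $\otimes$, which is a different binary operation. The paper's essential ingredient for the quantized case — which your proposal never mentions — is the Markov–Krein-type bijection $\cQ:\cM_1\to\cM$ satisfying $e^{-G_\bm(z)}=1-G_{\cQ\bm}(z)$, through which $\bm^{(1)}\otimes\bm^{(2)}=\cQ^{-1}(\cQ\bm^{(1)}\boxplus\cQ\bm^{(2)})$. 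This is what makes the subordination functions $\omega_i$ available (\Cref{omega:subord_q}), lets \Cref{thm:high_restriction} and \Cref{thm:q_to_semi} import the critical-point propagation result \Cref{thm:free_convolution_cp} from the $\boxplus$ setting, and makes $\cA_\cQ(1-e^{-u})=\cA(u)$ so that the two inverse-Cauchy-transform pictures agree on $(0,\fz]$. Without this bijection, there is no subordination theory to transfer, and the stability argument for iterating over the tensor groups $\mu_1^{\otimes n_1},\ldots,\mu_k^{\otimes n_k}$ does not go through.

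A smaller misdirection: you anticipate that poles "on a discrete lattice" from the integer entries $\lambda^{(a)}_i+N-i$ will be a main obstacle in the contour deformations. In the paper's formulation (\Cref{thm:moment_sgf} plus the lift formula \eqref{eq:normalized_bessel_schur}) the $u_i$-integrand's singularities come from $(e^{u_i}-1)^{-N}$ at $u_i=0$ and from the determinant entries, and the $\lambda^{(a)}_j$ appear only through entire exponentials $e^{u\ell_j}$; the discreteness of the signatures never creates lattice poles for the steepest-descent contours.
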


\begin{remark}
We obtain a quantized analogue of \Cref{thm:samelimit} by taking $k = 1$.
\end{remark}

\begin{remark}
A quantized analogue of \Cref{thm:two_sum} can be obtained using ideas from the proof of \Cref{thm:multitensor}. In the interest of space and avoiding repetition, we do not state any formal results in this direction.
\end{remark}

\begin{remark}
Just as our methods can treat projections for random matrices (see \Cref{rmk:projection}), our methods can also treat restrictions of representations. We note that \cite{Pet14,DM20} studied local edge asymptotics of $\rho_V$ where $V$ is a restriction of an irreducible representation, and \cite[Corollary 1.6]{Gor17} established bulk universality of $\rho_V$ where $V$ is a restriction of tensor products of representations. We do not consider restrictions in this article in the interest of simplicity and due to the existence of local results in this direction.
\end{remark}

An interesting feature of our approach is that the proof of \Cref{thm:multitensor} runs almost parallel with that of \Cref{thm:multisum}. Thus the approach clearly indicates homologous features in the two settings at the level of methods, see \Cref{ssec:outline} for further details.

\subsection{Supersymmetric Lifts} \label{ssec:ssym}
Our main results on edge fluctuations are obtained through new formulas for expected exponential moments for the particle systems arising from \eqref{rmt_model} and \eqref{q_model}. The fundamental objects behind these formulas are the \emph{Schur generating functions} for discrete particle systems on $\Z$, and the \emph{multivariate Bessel generating functions} for particle systems on $\R$, which were used in \cite{GS,BuG15,BuG18,BuG19} to study global fluctuations of a generality of particle systems, including those studied in this article. We introduce these generating functions below, and state our moment formulas (\Cref{thm:moment_bgf,thm:moment_sgf}).

Given $\vec{\ell} \in \cW_{\R}^N$ and $\lambda \in \cW_{\Z}^N$, the multivariate Bessel function and rational Schur function are respectively defined by
\[ \cB_{\vec{\ell}}(z_1,\ldots,z_N) := \frac{\det \begin{pmatrix} e^{z_i \ell_j} \end{pmatrix}_{i,j=1}^N}{\prod_{1 \le i < j \le N} (z_i - z_j)}, \quad \quad \quad \quad  s_\lambda(z_1,\ldots,z_N) := \frac{\det\begin{pmatrix} z_i^{\lambda_j + N - j}\end{pmatrix}_{i,j=1}^N}{\prod_{1 \le i < j \le N} (z_i - z_j)}. \]
Given a random $\vec{\ell} \in \cW_{\R}^N$ and a random $\lambda \in \cW_{\Z}^N$, the multivariate Bessel generating function of $\vec{\ell}$ and the Schur generating function of $\lambda$ are functions in $(z_1,\ldots,z_N)$ defined by
\[ \E \left[ \frac{\cB_{\vec{\ell}}(z_1,\ldots,z_N)}{\cB_{\vec{\ell}}(0,\ldots,0)} \right], \quad \quad \quad \quad \E\left[ \frac{s_\lambda(e^{z_1},\ldots,e^{z_N})}{s_\lambda(1,\ldots,1)} \right]\]
respectively, given that the expectations are absolutely convergent, uniformly in a neighborhood of $(0,\ldots,0)$. We note that our definition of Schur generating function differs from that of Bufetov-Gorin in that theirs is a function of $x_i = e^{z_i}$. The reason for our convention is to facilitate parallel treatment of the Schur and multivariate Bessel generating functions.

\begin{definition}
Let $\Omega \subset \C$ be a neighborhood and $S$ be an analytic symmetric function on $\Omega^N$. We say that a family $\{\wt{S}_k\}_{k\ge 0}$ is a \emph{supersymmetric lift of $S$ on the domain $\Omega$} if $\wt{S}_k(x_1,\ldots,x_{N+k}/y_1,\ldots,y_k)$ is analytic and symmetric in $(x_1,\ldots,x_{N+k}) \in \Omega^{N+k}$ and in $(y_1,\ldots,y_k) \in \Omega^k$, and satisfies
\begin{align*}
\wt{S}_k(x_1,\ldots,x_{N+k}/y_1,\ldots,y_k) \Big|_{x_{N+k} = y_k} &= \wt{S}_{k-1}(x_1,\ldots,x_{N+k-1}/y_1,\ldots,y_{k-1}), \\
\wt{S}_0(x_1,\ldots,x_N) &= S(x_1,\ldots,x_N)
\end{align*}
We drop the subscript and say $\wt{S}$ is a supersymmetric lift of $S$.
\end{definition}

Below, we use the notation $(a^k)$ to denote the $k$-vector with every component equal to $a$.

\begin{theorem} \label{thm:moment_bgf}
Suppose that $\vec{\ell}$ is a random element of $\cW_{\R}^N$ with multivariate Bessel generating function $S:\Omega^N \to \C$ for some domain $\Omega \ni 0$. If $\wt{S}$ is a supersymmetric lift of $S$ on $\Omega$, then for $c_1,\ldots,c_k \in \C$ we have
\begin{align*}
\begin{multlined}
\E\left[ \prod_{i=1}^k \sum_{j=1}^N e^{c_i \ell_j} \right] = \frac{1}{(2\pi\bi)^k} \oint \cdots \oint \wt{S}(u_1 + c_1,\ldots,u_k + c_k,0^N/u_1,\ldots,u_k) \\
\times \prod_{1 \le i < j \le k} \frac{(u_j + c_j - u_i - c_i)(u_j - u_i)}{(u_j - u_i - c_i)(u_j + c_j - u_i)} \prod_{i=1}^k \left( \frac{u_i + c_i}{u_i} \right)^N \frac{du_i}{c_i}
\end{multlined}
\end{align*}
where both the $u_i$- and $u_i + c_i$-contours are contained in $\Omega$ and positively oriented around $0$ for $1 \le i \le k$ and the $u_j$-contour contains $u_i + c_i$ whenever $i < j$, assuming such a set of contours in $\Omega$ exists.
\end{theorem}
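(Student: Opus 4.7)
The plan is to reduce to the case of deterministic $\vec\ell$, evaluate the right-hand side by residues for a canonical supersymmetric lift, and then argue the result is unchanged for an arbitrary lift. By conditioning on $\vec\ell$ (via Fubini, justified by uniform absolute convergence of $S$ near the origin), it suffices to prove the formula for deterministic $\vec\ell$, in which case $S = \cB_{\vec\ell}/\cB_{\vec\ell}(0^N)$ and the left-hand side becomes $\prod_{i=1}^k \sum_{j=1}^N e^{c_i\ell_j}$.

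For the concrete verification I would take the canonical supersymmetric lift $\wt\cB_{\vec\ell}(x/y)$ of the multivariate Bessel function, constructed by a bialternant determinantal formula in the spirit of \cite{MJ03}. Substituting into the right-hand side, I would evaluate the nested contour integrals iteratively by peeling off the outermost $u_k$-contour. At each residue $u_k = u_i + c_i$ for $i < k$, the cancellation property triggers $\wt\cB_{\vec\ell, k}|_{y_k = x_i} = \wt\cB_{\vec\ell, k-1}$, and the cross-term singularities in the kernel rearrange into the $(k-1)$-dimensional kernel. The remaining order-$N$ residue at $u_k = 0$, combined with the determinantal structure (via Cauchy--Binet), contributes precisely $\sum_{j=1}^N e^{c_k\ell_j}$. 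An induction on $k$ then delivers the full product of power sums.

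To extend to an arbitrary lift, I would argue the right-hand side is lift-independent. The difference $f_k = \wt{S}_k - \wt{S}'_k$ of two lifts satisfies, by the cancellation property and symmetry, $f_k|_{x_i=y_j} = f_{k-1}$ with the relevant variables removed; in particular, since $f_0 = 0$, the function $f_1$ vanishes on every $\{x_i = y_1\}$, so $f_1 = \prod_i (x_i - y_1) \cdot g$. Substituting $x = (u+c, 0^N)$, $y = u$ gives $f_1 = c(-u)^N g$, which exactly cancels the $c^{-1}u^{-N}(u+c)^N$ pole structure in the kernel and yields a holomorphic integrand, so $\oint f_1 \cdot \text{kernel} = 0$. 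Induction on $k$ extends this to all moments, using that the analogous vanishing of $\prod_{i,j}(x_i - y_j)$ on the specialization produces factors $c_i$, $u_j^N$, and $(u_i + c_i - u_j)$ for $i\ne j$ that consume every singularity of the kernel. The main technical obstacle is the residue bookkeeping for the canonical lift: tracking signs, the nested contour condition ($u_j$-contour contains $u_i + c_i$ for $i<j$), and the combinatorial structure of the bialternant expansion so that all residues assemble into $\prod_{i=1}^k \sum_{j=1}^N e^{c_i\ell_j}$.
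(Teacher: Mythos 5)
Your proposal diverges from the paper's proof in a structural way, and the divergence creates a genuine gap in the residue bookkeeping.

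The paper does not compute the contour integral for a specific lift and does not separately prove lift-independence. Instead it observes that the operators
\[ \fD_c^N := \sum_{i=1}^N \prod_{j \ne i} \frac{z_i + c - z_j}{z_i - z_j}\, \cT_{c,z_i} \]
have $\cB_{\vec\ell}$ as an eigenfunction with eigenvalue $\sum_j e^{c\ell_j}$, so the left-hand side equals $\fD_{c_1}^N\cdots\fD_{c_k}^N S(0^N)$. It then proves (\Cref{thm:contour_mbgf}) that $\fD_{c_1}^N\cdots\fD_{c_k}^N S(a_1,\ldots,a_N)$ equals the contour integral for \emph{generic distinct} $a_j$, where every pole in $u_i$ is simple; the cancellation property of the lift is the only input at each residue. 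Lift-independence is then automatic, because the value of the integral is identified with $\fD_{c_1}^N\cdots\fD_{c_k}^N S$, a quantity defined without reference to any lift. Finally one sets $a_j=0$ by continuity. You skip the difference operators entirely and try to integrate at $a_j=0$ directly, which forces you to confront an order-$N$ pole; that is precisely the confluence the paper avoids.

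Your lift-independence argument is correct as a standalone observation: the difference of two lifts is divisible by $\prod_{i,j}(x_i-y_j)$, and under the specialization $x=(u_1+c_1,\ldots,u_k+c_k,0^N)$, $y=(u_1,\ldots,u_k)$ that product exactly cancels every pole of the kernel, giving a holomorphic integrand and zero contour integral. This is a nice alternative to the paper's implicit handling. However, it is only useful if you can compute the integral for \emph{some} lift, and there the sketch has an error. The claim that ``the remaining order-$N$ residue at $u_k=0$ \ldots contributes precisely $\sum_{j=1}^N e^{c_k\ell_j}$'' is false. Take $N=1$, $k=2$: a direct calculation with $\cB_{\vec\ell,p}$ shows the residue at $u_2=0$ integrates to $0$ in $u_1$, and the entire answer $e^{(c_1+c_2)\ell_1}$ comes from the residue at $u_2=u_1+c_1$, where the cancellation collapses the integrand to the $k=1$ integrand with $c\mapsto c_1+c_2$. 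For general $N$ the $u_k=0$ residue contributes the ``distinct-index'' part $\sum_{j_1\ne j_2}e^{c_1\ell_{j_1}+c_2\ell_{j_2}}$ while the merged residues at $u_k=u_i+c_i$ each collapse to a $(k-1)$-fold integral with $c_i$ replaced by $c_i+c_k$; the full answer is recovered only by an inclusion--exclusion assembling these contributions, exactly as in the recursion for $\sfM_n$ in \Cref{airy_moments}. Your description neither tracks the $c_i\mapsto c_i+c_k$ merging nor the inclusion--exclusion structure, so ``an induction on $k$'' as stated will not close. You also gloss over the confluent limit needed to make sense of the bialternant formula when $N$ of the $x_i$ coincide at $0$. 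If you want to pursue your route, you must set up the residue recursion as in the Airy appendix; but the paper's route through the eigenrelations of $\fD_c^N$ at generic $(a_1,\ldots,a_N)$ is substantially cleaner and is the argument you should aim for.
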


\begin{theorem} \label{thm:moment_sgf}
Suppose that $\lambda$ is a random element of $\cW_{\Z}^N$ with Schur generating function $S:\Omega^N \to \C$ for some domain $\Omega \ni 0$. If $\wt{S}$ is a supersymmetric lift of $S$ on $\Omega$, then for $c_1,\ldots,c_k \in \C$ we have
\begin{align*}
\begin{multlined}
\E\left[ \prod_{i=1}^k \sum_{j=1}^N e^{c_i (\lambda_j + N - j)} \right] =  \frac{1}{(2\pi \bi)^k} \oint \cdots \oint \wt{S}(u_1 + c_1,\ldots, u_k + c_k, 0^N/u_1,\ldots,u_k) \\
\times \prod_{1 \le i < j \le k} \frac{(e^{u_j + c_j} - e^{u_i + c_i})(e^{u_j} - e^{u_i})}{(e^{u_j} - e^{u_i + c_i})(e^{u_j + c_j} - e^{u_i})} \prod_{i=1}^k \left( \frac{e^{u_i + c_i} - 1}{e^{u_i} - 1} \right)^N \frac{du_i}{e^{c_i} - 1}
\end{multlined}
\end{align*}
where both the $u_i$- and $u_i + c_i$-contours is contained in $\Omega$ and positively oriented around $0$ for $1 \le i \le k$ and the $u_j$-contour contains $u_i + c_i$ whenever $i < j$, assuming such a set of contours in $\Omega$ exists.
\end{theorem}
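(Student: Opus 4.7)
The plan is to mirror the strategy for Theorem \ref{thm:moment_bgf}, replacing additive with multiplicative shifts. Introduce the Macdonald-type difference operator
\[
\cA_c f(z_1,\ldots,z_N) := \sum_{j=1}^N f(z_1,\ldots,z_j+c,\ldots,z_N)\prod_{i\neq j}\frac{e^{z_j+c}-e^{z_i}}{e^{z_j}-e^{z_i}},
\]
which is the $q=t=e^c$ specialization of the Macdonald difference operator acting on symmetric polynomials in $e^{z_1},\ldots,e^{z_N}$. It diagonalizes the normalized Schur polynomials $s_\lambda(e^{z_1},\ldots,e^{z_N})/s_\lambda(1^N)$ with eigenvalue $\sum_{j=1}^N e^{c(\lambda_j+N-j)}$, so for the Schur generating function $S$,
\[
\cA_{c_1}\cdots\cA_{c_k}S(\vec z)\bigr|_{\vec z=0}=\E\!\left[\prod_{i=1}^k\sum_{j=1}^N e^{c_i(\lambda_j+N-j)}\right].
\]
The theorem then reduces to realizing the left-hand side as the stated $k$-fold contour integral.

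For the base case $k=1$, I would invoke the Cauchy-type residue identity
\[
\sum_{j=1}^N g(z_j)\prod_{i\neq j}\frac{e^{z_j+c}-e^{z_i}}{e^{z_j}-e^{z_i}}=\frac{1}{2\pi\bi}\oint g(u)\prod_{i=1}^N\frac{e^{u+c}-e^{z_i}}{e^u-e^{z_i}}\frac{du}{e^c-1},
\]
with $u$-contour enclosing $\{z_1,\ldots,z_N\}$. Taking $g(u):=\wt S_1(z_1,\ldots,z_N,u+c/u)$, the collapse relation for $\wt S_1$ combined with symmetry in the $x$-variables yields $g(z_j)=S(z_1,\ldots,z_j+c,\ldots,z_N)$, so the right-hand side computes $\cA_cS(\vec z)$; specializing $\vec z=0$ gives the $k=1$ formula.

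For $k\ge 2$, I would iterate by applying $\cA_{c_k}$ first (innermost) and $\cA_{c_1}$ last (outermost). The induction hypothesis is that after applying $\cA_{c_{j+1}}\circ\cdots\circ\cA_{c_k}$, the resulting function of $\vec z$ equals a $(k-j)$-fold contour integral whose integrand is
\[
\wt S_{k-j}(u_{j+1}+c_{j+1},\ldots,u_k+c_k,z_1,\ldots,z_N/u_{j+1},\ldots,u_k)\cdot\mathrm{Cr}_{>j}(\vec u)\cdot\prod_{\ell>j}\prod_{i=1}^N\frac{e^{u_\ell+c_\ell}-e^{z_i}}{e^{u_\ell}-e^{z_i}},
\]
where $\mathrm{Cr}_{>j}(\vec u):=\prod_{j<i<\ell\le k}\frac{(e^{u_\ell+c_\ell}-e^{u_i+c_i})(e^{u_\ell}-e^{u_i})}{(e^{u_\ell}-e^{u_i+c_i})(e^{u_\ell+c_\ell}-e^{u_i})}$, the measure is $\prod_{\ell>j}\frac{du_\ell}{e^{c_\ell}-1}$, and the contours match the theorem's nesting restricted to indices $>j$. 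To pass from level $j+1$ to level $j$, I would construct a supersymmetric lift of this integrand as a symmetric function of $\vec z$: replace $\wt S_{k-j}$ by $\wt S_{k-j+1}$ (adding one $y$-slot $y_1$), extend the products $\prod_{i=1}^N$ to $\prod_{i=1}^{N+1}$, and insert correction factors $\prod_{\ell>j}\frac{e^{u_\ell}-e^{y_1}}{e^{u_\ell+c_\ell}-e^{y_1}}$. The collapse relation at $z_{N+1}=y_1$ holds because these corrections telescope against the extra $z_{N+1}$ factors and $\wt S_{k-j+1}$ collapses to $\wt S_{k-j}$. Applying the base-case identity to this lift produces the new integration variable $u_j$; substituting $y_1=u_j$ converts the correction factors into exactly the new cross factors $\prod_{\ell>j}\frac{(e^{u_\ell+c_\ell}-e^{u_j+c_j})(e^{u_\ell}-e^{u_j})}{(e^{u_\ell}-e^{u_j+c_j})(e^{u_\ell+c_\ell}-e^{u_j})}$, and the $u_j$-contour, initially around $\{z_1,\ldots,z_{N+1}\}$, becomes (after the final $\vec z=0$) a contour around $\{0\}\cup\{u_\ell+c_\ell:\ell>j\}$, matching the nesting in the statement.

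The main technical obstacle is precisely this inductive bookkeeping: identifying the correction factors that preserve supersymmetry at each step and verifying algebraically that they combine with the residue to produce the exact cross factors of the theorem. Two observations help discipline the argument: first, the integral is independent of the chosen supersymmetric lift, because two lifts differ by a function vanishing on all diagonals $x_i=y_j$, which contributes no residue on the nested contours; second, for $|c_i|$ small the nesting and domain constraints can simultaneously be satisfied since $0\in\Omega$ and $\Omega$ is open. The parallelism with Theorem \ref{thm:moment_bgf} is structural — the only change is the replacement of additive factors like $(u_j+c_j-u_i)$ by their multiplicative analogues $(e^{u_j+c_j}-e^{u_i})$.
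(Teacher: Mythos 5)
Your overall approach matches the paper's: the operator $D_c^N$ (your $\cA_c$), the eigenrelation, the $k=1$ residue identity, and the inductive step in which the single-variable integrand is given a supersymmetric lift by (a) bumping $\wt S_{k-j}$ to $\wt S_{k-j+1}$, (b) extending the multiplicative factor $\prod_i \frac{e^{u_\ell+c_\ell}-e^{z_i}}{e^{u_\ell}-e^{z_i}}$ to $N+1$ ordinary slots with a correction $\frac{e^{u_\ell}-e^{y_1}}{e^{u_\ell+c_\ell}-e^{y_1}}$ (the paper's \Cref{ex:multiplicative,ex:algebra}). Your computation that the correction factors, evaluated at $z_{N+1}=u_j+c_j$ and $y_1=u_j$, produce exactly the cross factors is correct. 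The paper gives only a one-line version of this induction, so spelling it out is worthwhile.

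There is, however, a genuine (if localized) error in the contour bookkeeping. You claim that the newly introduced $u_j$-contour ends up encircling $\{0\}\cup\{u_\ell+c_\ell:\ell>j\}$. That is backwards. When you apply the $k=1$ formula at step $j$, the new $u_j$-contour must encircle only the specialization points $a_1,\ldots,a_N$ (which become $0$); it is the \emph{pre-existing} $u_\ell$-contours for $\ell>j$ --- which inside the lift encircle $z_1,\ldots,z_{N+1}$ --- that must be enlarged to encircle $u_j+c_j$ after the substitution $z_{N+1}=u_j+c_j$, while remaining clear of the new pole at $u_j-c_\ell$. With your ordering ($\cA_{c_k}$ first, $\cA_{c_1}$ last), the $u_j$-contour is introduced later for smaller $j$, so the $u_1$-contour is the smallest and the $u_k$-contour the largest; concretely, concentric circles of radii $r_1<\cdots<r_k$ with gaps exceeding the $|c_i|$'s work. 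After the induction finishes, the $u_j$-contour encircles $0$ and $u_i+c_i$ precisely for $i<j$ --- matching the theorem's condition, and opposite to what you wrote. If instead you kept the new $u_j$-contour large enough to encircle $u_\ell+c_\ell$ for $\ell>j$, the base-case residue identity would fail, because $g(u_j)=\wt T(\vec a,u_j+c_j/u_j)$ has poles at $u_j=u_\ell+c_\ell$ and $u_j=u_\ell-c_j$ that must lie \emph{outside} the $u_j$-contour. Separately, the claim that lift-independence follows because "two lifts differ by a function vanishing on all diagonals" is not justified as stated (the cancellation property constrains only the sequential diagonals $x_{N+k}=y_k$, etc.), but that observation is not needed for the proof.
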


The inspiration behind \Cref{thm:moment_bgf,thm:moment_sgf} comes from the work \cite{BC14} where Borodin-Corwin study Macdonald processes to obtain fluctuations of the free energy of the O'Connell-Yor polymer. In particular, they observed that the action of a distinguished family of difference operators, whose eigenfunctions are given by the Macdonald symmetric functions, yield contour integral formulas for observables of Macdonald processes.  We adapt this approach to our setting in which the systems are not Macdonald processes, where the main new ingredient is the formalism of supersymmetric lifts.

To analyze the particle systems arising from \eqref{rmt_model} and \eqref{q_model}, we seek analyzable formulas for supersymmetric lifts of their multivariate Bessel generating functions and Schur generating functions respectively. For the eigenvalues of \eqref{rmt_model}, the multivariate Bessel generating function is given by
\begin{align} \label{eq:model_mbgf}
\prod_{i=1}^n \frac{\cB_{\vec{\ell^{(i)}}}(z_1,\ldots,z_N)}{\cB_{\vec{\ell^{(i)}}}(0^N)}.
\end{align}
Here, the operation of summation corresponds to products of multivariate Bessel functions. We describe the connection between operations and generating functions in detail in \Cref{ssec:examples_lifts}. Similarly, for the particles arising from \eqref{q_model}, the Schur generating function is given by
\begin{align} \label{eq:model_sgf}
\prod_{i=1}^n \frac{s_{\lambda^{(i)}}(e^{z_1},\ldots,e^{z_N})}{s_{\lambda^{(i)}}(1^N)}.
\end{align}

It can be easily seen that supersymmetric lifts respect product (see \Cref{ssec:examples_lifts}). Thus to analyze the eigenvalues of \eqref{rmt_model} through \Cref{thm:moment_bgf}, we seek a supersymmetric lift of $\cB_{\vec{\ell}}$ with a form suitable for asymptotic analysis, and likewise for $s_\lambda$ to study \eqref{q_model}.

Fix $p \in \C$. For $m = N+k$, define
\begin{align*}
& \cB_{\vec{\ell},p}(u_1,\ldots,u_m/v_1,\ldots,v_k) \\
& \quad = \frac{(-1)^{Nk} \prod_{i=1}^m \prod_{j=1}^k (u_i - v_j)}{\prod_{1\le i < j \le m} (u_i - u_j) \prod_{1 \le i < j \le k} (v_j - v_i)}
\det \begin{pmatrix}
\frac{e^{p(u_1 - v_1)}}{u_1 - v_1} & \cdots & \frac{e^{p(u_1 - v_k)}}{u_1 - v_k} &  e^{u_1\ell_1} & \cdots & e^{u_1 \ell_N} \\
\vdots & \ddots & \vdots & \vdots & \ddots & \vdots \\
\frac{e^{p(u_m - v_1)}}{u_m - v_1} & \cdots & \frac{e^{p(u_m - v_k)}}{u_m - v_k} &  e^{u_m\ell_1} & \cdots & e^{u_m \ell_N}
\end{pmatrix}
\end{align*}
Then $\{\cB_{\vec{\ell},p}(u_1,\ldots,u_{N+k}/v_1,\ldots,v_k)\}_{k=0}^\infty$ defines a supersymmetric lift of $\cB_{\vec{\ell}}(u_1,\ldots,u_N)$, stated formally in \Cref{thm:ssym_lift}.

\begin{theorem} \label{cor:ssym_bessel}
Fix $p \in \C \setminus \{\ell_1,\ldots,\ell_N\}$. We have
\[ \frac{\cB_{\vec{\ell},p}(u_1,\ldots,u_k,0^N/v_1,\ldots,v_k)}{\cB_{\vec{\ell}}(0^N)} = \frac{\prod_{i,j=1}^k (u_i - v_j)}{\prod_{1 \le i < j \le k} (u_i - u_j)(v_j - v_i)} \det \left( \frac{1}{u_i - v_j} \frac{\cB_{\vec{\ell},p}(u_i,0^N/v_j)}{\cB_{\vec{\ell}}(0^N)} \right)_{i,j=1}^k. \]
If $u,v \in \C \setminus \{0\}$ such that $u \ne v$, then
\[ \frac{\cB_{\vec{\ell},p}(u,0^N/v)}{\cB_{\vec{\ell}}(0^N)} = (u - v) \left( \frac{v}{u} \right)^N \left( \frac{e^{p(u - v)}}{u - v} + \int_p^\infty \! dw \oint \frac{dz}{2\pi\bi} \cdot \frac{1}{w-z} \frac{e^{zu}}{e^{wv}} \prod_{i=1}^N \frac{w - \ell_i}{z - \ell_i} \right) \]
where the $z$-contour is positively oriented around $\ell_1,\ldots,\ell_N$, and the $w$-contour is a ray from $p$ to $\infty$ which is disjoint from the $z$-contour such that $\Re(w v) > 0$ for $|w|$ large.
\end{theorem}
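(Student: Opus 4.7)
My plan is to apply the Schur complement formula to the $(N+k) \times (N+k)$ determinant defining $\cB_{\vec{\ell}, p}$, after extracting the $u_{k+1}, \ldots, u_{k+N} \to 0$ confluent limit, in order to simultaneously prove the determinantal reduction (first identity) and the $k = 1$ contour formula (second identity).

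For the confluent limit, I parametrize $u_{k+j} = \varepsilon \alpha_j$ with distinct $\alpha_j$ and let $\varepsilon \to 0$. The standard Wronskian/Cauchy--Binet expansion gives (up to an explicit factor $(-1)^{\binom{N}{2}}/\prod_{s=0}^{N-1} s!$) a limiting matrix with block structure $\bigl(\begin{smallmatrix} A & B \\ C & D \end{smallmatrix}\bigr)$ whose top $k$ rows record the original rows at $u_1, \ldots, u_k$ and whose bottom $N$ rows are the Taylor-coefficient rows of orders $0, 1, \ldots, N-1$ at $y = 0$. Explicitly, $A_{ij} = e^{p(u_i - v_j)}/(u_i - v_j)$, $B_{ij} = e^{u_i \ell_j}$, $C_{sj} = \partial_y^s [e^{p(y - v_j)}/(y - v_j)]|_{y=0}$, and $D$ is the $N \times N$ Vandermonde $(\ell_j^s)$. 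The $\alpha$-Vandermonde from Cauchy--Binet cancels against the analogous factor in the supersymmetric-lift denominator, while $\det D = \Delta(\ell)$ cancels against the $\Delta(\ell)$ inside $\cB_{\vec{\ell}}(0^N)$ upon normalization.

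The Schur complement reduces $\det M$ to $\Delta(\ell) \det(A - B D^{-1} C)$. Crucially, each entry of the $k \times k$ matrix $A - B D^{-1} C$ depends only on the single pair $(u_i, v_j)$: writing $(D^{-1})_{j', s}$ as the coefficients of the Lagrange polynomial $L_{j'}(y) = \prod_{j'' \ne j'}(y - \ell_{j''})/(\ell_{j'} - \ell_{j''})$, the $(i, j)$-entry equals $h_{v_j}(u_i) - \sum_{j'} e^{u_i \ell_{j'}} \sum_s (D^{-1})_{j', s} h_{v_j}^{(s)}(0)$, where $h_v(y) = e^{p(y-v)}/(y - v)$. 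This is (up to the explicit factor $(v_j / u_i)^N$) the kernel $\Phi(u, v) := \tfrac{1}{u-v}\cB_{\vec{\ell}, p}(u, 0^N / v) / \cB_{\vec{\ell}}(0^N)$ evaluated at $(u_i, v_j)$, as will be seen by the $k = 1$ instance below. Pulling $\prod_j v_j^N / \prod_i u_i^N$ out of the determinant matches the prefactor inherited from the supersymmetric-lift definition, yielding the first identity.

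For the $k = 1$ contour integral, I use the representation $h_v(y) = -\int_p^\infty e^{w(y - v)}\,dw$, valid when $\Re(v) > \Re(y)$ and extended by analytic continuation; this yields $h_v^{(s)}(0) = -\int_p^\infty w^s e^{-w v}\,dw$. Exchanging sum and integral and using $\sum_s (D^{-1})_{j', s} w^s = L_{j'}(w)$ converts $\sum_{j'} e^{u \ell_{j'}} (D^{-1} C)_{j', 1}$ into $-\int_p^\infty e^{-wv}[\sum_{j'} e^{u \ell_{j'}} L_{j'}(w)]\,dw$. The inner Lagrange interpolant of $z \mapsto e^{uz}$ at the nodes $\ell_1, \ldots, \ell_N$ admits the residue representation $\oint \frac{dz}{2\pi\bi}\frac{e^{uz}}{w - z}\prod_i \frac{w - \ell_i}{z - \ell_i}$ (with contour encircling $\ell_1, \ldots, \ell_N$ but not $w$), as seen by summing residues at $z = \ell_{j'}$. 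Combining recovers precisely the contour-integral formula in the second identity.

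The main technical obstacle is arranging compatible contours: the $z$-contour must strictly separate $\{\ell_i\}$ from every $w$ on the ray from $p$ to $\infty$, and $\Re(w v) > 0$ must hold at infinity to justify the $h_v$ integral representation. These are precisely the compatibility assumptions stated in the theorem, and ensure the Fubini-type exchange of the outer $w$-integral with the inner contour integral. All other steps, including tracking signs and factorials through the confluent limit and justifying the Schur complement when the $\ell_i$ are distinct, are routine but require care.
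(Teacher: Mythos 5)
Your proposal is correct and is essentially the same argument as the paper's, modulo the order in which limits are taken. The paper first proves a general formula (its Proposition 3.9) for the Schur complement of the $(N+k)\times(N+k)$ matrix with \emph{distinct} $\vec{\xi}$, then specializes to the geometric choice $\xi_i = (N-i)\alpha$ where the Vandermonde matrix $A_{\vec\ell}(\vec\xi)$ is a Vandermonde in $e^{\ell_i\alpha}$ (invertible by the elementary-symmetric-polynomial formula), derives a contour integral for that case, and finally lets $\alpha\to0$ in the resulting identity of analytic functions. You instead perform the confluent limit $\xi_i\to0$ inside the determinant directly, arriving at the power-Vandermonde block $D=(\ell_j^s)$ and the Taylor-coefficient rows of $h_v(y)=e^{p(y-v)}/(y-v)$, and then run the Schur complement and the Lagrange/residue computation at $\alpha=0$. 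The two routes are computationally parallel: your integral representation $h_v^{(s)}(0)=-\int_p^\infty w^s e^{-wv}\,dw$ together with $\sum_s(D^{-1})_{j',s}w^s=L_{j'}(w)$ and the residue identity for the Lagrange interpolant is exactly the $\alpha\to0$ shadow of the paper's manipulation via $e_b$'s and $\int_p^\infty e^{w(N-b)\alpha-wv}\,dw$. What the paper's ordering buys is that the Schur complement is applied before any degeneration, so one never has to justify the Wronskian/confluent limit of the determinant (with its factorial and sign bookkeeping); the limit $\alpha\to0$ is then taken at the level of an already-established identity between two functions analytic in $\alpha$. Your route is more direct (one step rather than three) but carries the burden of the Cauchy--Binet/confluent expansion, which you acknowledge but defer. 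Both are valid; no gap.
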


Supersymmetric lifts are not unique, and here we have a supersymmetric lift for each $p \in \C$. These lifts were motivated by formulas for the supersymmetric Schur functions from \cite{MJ03} and are closely related to finite rank perturbations of normalized multivariate Bessel and Schur functions studied in \cite{GS,CG} where formulas similar to that of \Cref{cor:ssym_bessel} were obtained. We elaborate further on these connections and alternative choices of lifts in \Cref{appendix:supersymmetric}.

With \Cref{cor:ssym_bessel} we can asymptotically analyze these supersymmetric lifts via steepest descent. In the analysis, we must choose $p \in \C$ in such a way that the contours are steepest descent curves.

For the quantized model, we can similarly find that we require supersymmetric lifts of the rational Schur functions. We find that
\begin{align*}
\begin{split}
& \frac{s_{\lambda,p}(e^{u_1},\ldots,e^{u_k},1^N/e^{v_1},\ldots,e^{v_k})}{s_\lambda(1^N)} := \prod_{i=1}^k \frac{1}{e^{v_i}} \left( \frac{u_i}{e^{u_i} - 1} \cdot \frac{e^{v_i} - 1}{v_i} \right)^N \\
& \quad \times \prod_{i,j=1}^k \frac{e^{u_i} - e^{v_j}}{u_i - v_j} \prod_{1 \le i < j \le k} \frac{(u_i - u_j)(v_i- v_j)}{(e^{u_i} - e^{u_j})(e^{v_i} - e^{v_j})} \frac{\cB_{\lambda+\delta_N,p}(u_1,\ldots,u_k,0^N/v_1,\ldots,v_k)}{\cB_{\lambda+\delta_N,p}(0^N)}
\end{split}
\end{align*}
provide a family of supersymmetric lifts of the rational Schur functions, see \Cref{ssec:det_lifts}. It follows that a significant portion of the analysis in the quantized model is joint with that of the random matrix model.

\subsection{Outline} \label{ssec:outline}

Our starting point is the set of formulas \Cref{thm:moment_bgf,thm:moment_sgf} which link our models to supersymmetric lifts of their corresponding generating functions. Through these formulas, we establish \Cref{thm:master} which gives a set of asymptotic conditions under which the largest particles of a particle system converge to the Airy point process. These asymptotic conditions are general and abstract, and essentially say that the supersymmetric lifts asymptotically behave in a manner such that the observables converge to the Laplace transform of the $k$-point correlation functions of the Airy point process. The proofs of these results are contained in \Cref{sec:keystone}.

The remainder of the article is devoted to showing that supersymmetric lifts of the generating functions in our setting (which have the form \eqref{eq:model_mbgf} and \eqref{eq:model_sgf}) satisfy the hypotheses of \Cref{thm:master}. In \Cref{sec:ssym_lifts}, we show that $\cB_{\vec{\ell},p}$ and $s_{\lambda,p}$ (defined in \Cref{ssec:ssym}) give supersymmetric lifts of the multivariate Bessel and Schur functions, and prove \Cref{cor:ssym_bessel}. We study the asymptotics of $\cB_{\vec{\ell},p}$ by steepest descent under the limit where $\tfrac{1}{N} \sum_{i=1}^N \delta_{\ell_i}$ weakly converges to some $\bm \in \cM$. These asymptotics provide the bridge between \Cref{thm:master} and our setting. The core object in our analysis is the function
\[ \cS_u(z) = uz - \int \log(z - x) d\bm(x) - \log u. \]
The steepest descent analysis in \Cref{sec:ssym_lifts} assumes that $\cS_u(z)$ satisfies desired asymptotic properties. This leaves the subsequent sections to verifying that these conditions are met.

In \Cref{sec:submaster}, we show that the hypotheses of \Cref{thm:master} may be simplified in our setting. We combine the asymptotics from \Cref{sec:ssym_lifts} with \Cref{thm:master} so that our main results are reduced to establishing the existence of a critical point at a natural boundary point of the inverse Cauchy transform of the limiting measure, along with a technical inclusion property. An important ingredient in this section is analytic subordination. The subordination phenomenon for addition of random matrices was first observed in \cite{V93}, and has been an important tool in the study of these models, see e.g. \cite{Bel08,Bel14,BES18}. Our development of analytic subordination in the quantized setting is based on a bijective connection between the random matrix and quantized settings. This connection is given by a variant of the Markov-Krein correspondence which was first observed in \cite{BuG15}.

\Cref{sec:applications} is devoted to the proofs of our main results \Cref{thm:multisum,thm:two_sum}, where we check the simplified hypotheses from \Cref{sec:submaster}. The proof of \Cref{thm:multisum} relies on two major ingredients. First, we require monotonicity properties about the Cauchy transform and the fact that the level sets of $\cS_u(z)$ are easily understood for $|z|$ large. Here, $|z|$ large corresponds to the setting where $n$ (the number of summands or tensor factors) is large. Second, we require stability of the hypotheses of \Cref{sec:submaster} under sums. We note that the ideas behind this stability result were inspired by the techniques of \cite{BES18}. To prove \Cref{thm:two_sum}, we prove a more general result \Cref{thm:general_twosum} which follows from controlling the level sets of $\cS_u(z)$ with a monotonicity assumption on $\Re \cS_u(z)$.

We conclude with the proof of \Cref{thm:multitensor} in \Cref{sec:applications_q}. The proof is based on the importation of the stability result used in the proof of \Cref{thm:multisum} into the quantized setting via a variant of the Markov-Krein correspondence.

\begin{notation}
Given a function $f(w_1,\ldots,w_N)$ we write
\[ f(w_1,\ldots,w_k,a^{N-k}) := f(w_1,\ldots,w_k,a,\ldots,a), \]
rather than the usual indication where $a^{N-k}$ is exponentiation. Let $B_r(z) := \{w \in \C: |z - w| < r\}$, $\C^\pm := \{z \in \C: \pm \Im z > 0\}$, $\cW_{\R}^N := \{\vec{x} \in \R^N: x_1 \ge \cdots \ge x_N\}$, and $\cW_{\Z}^N := \{\lambda \in \Z^N: \lambda_1 \ge \cdots \ge \lambda_N\}$. We denote by $\cM$ the set of compactly supported Borel probability measures on $\R$ and $\cM_1$ the subset of $\cM$ of measures with density $\le 1$.
\end{notation}

\section*{Acknowledgements}

I am indebted to Vadim Gorin for his immense role in this work including his original suggestion of the problem of local extremal asymptotics for additive models and their quantized analogues, numerous stimulating discussions, encouragement, and the idea of using the Markov-Krein correspondence as a means to import analytic subordination to the quantized setting, which greatly simplified this work. I also thank Alexei Borodin, Leonid Petrov, and Yi Sun for helpful suggestions and conversations. The author
was partially supported by National Science Foundation Grant DMS-1664619.

\section{Conditions for Airy Fluctuations} \label{sec:keystone}

The main result in this section gives a set of asymptotic conditions on the multivariate Bessel or Schur generating functions, which we recall below, under which Airy fluctuations appear at the edge of the particle system. The asymptotic conditions are given in terms of supersymmetric lifts of these generating functions which we also recall below.

Given variables $\vec{z} = (z_1,\ldots,z_N)$, $\vec{\ell} \in \cW_{\R}^N$, and $\lambda \in \cW_{\Z}^N$, the \emph{multivariate Bessel function} and \emph{rational Schur function} are respectively defined by 
\begin{align} \label{eq:alternant}
\cB_{\vec{\ell}}(z_1,\ldots,z_N) := \frac{\det \begin{pmatrix} e^{z_i \ell_j} \end{pmatrix}_{i,j=1}^N}{\Delta(\vec{z})}, \quad \quad \quad \quad  s_\lambda(\vec{z}) := \frac{\det\begin{pmatrix} z_i^{\lambda_j + N - j}\end{pmatrix}_{i,j=1}^N}{\Delta(\vec{z})}
\end{align}
where $\Delta(\vec{z}) := \prod_{1 \le i < j \le N} (z_i - z_j)$ is the Vandermonde determinant. Both are symmetric functions in $\vec{z}$, and the latter is a homogeneous rational function of degree $\sum_{i=1}^N \lambda_i$ with possible poles at $x_i = 0$.

The \emph{multivariate Bessel generating function} of a random particle system $\vec{\ell} = (\ell_1,\ldots,\ell_N)$ on $\R$ is the map
\[ (z_1,\ldots,z_N) \mapsto \E \left[ \frac{\cB_{\vec{\ell}}(z_1,\ldots,z_N)}{\cB_{\vec{\ell}}(0,\ldots,0)} \right], \]
given that the expectation is absolutely convergent, uniformly in a neighborhood of $(0,\ldots,0)$. If $\lambda$ is a random element of $\cW_{\Z}^N$, then the \emph{Schur generating function} of $\lambda$ is the map
\[ (z_1,\ldots,z_N) \mapsto \E \left[ \frac{s_\lambda(e^{z_1},\ldots,e^{z_N})}{s_\lambda(1,\ldots,1)} \right] \]
given that the expectation is absolutely convergent, uniformly in a neighborhood of $(0,\ldots,0)$.

\begin{definition} \label{def:supersymmetric}
Let $\Omega \subset \C$ be a neighborhood and $S$ be an analytic symmetric function on $\Omega^N$. We say that a family $\{\wt{S}_k\}_{k\ge 0}$ is a \emph{supersymmetric lift of $S$ (on the domain $\Omega$)} if $\wt{S}_k(x_1,\ldots,x_{N+k}/y_1,\ldots,y_k)$ is analytic and symmetric in $(x_1,\ldots,x_{N+k}) \in \Omega^{N+k}$ and in $(y_1,\ldots,y_k) \in \Omega^k$, and satisfies
\begin{align} \label{eq:cancellation}
\begin{split}
\wt{S}_k(x_1,\ldots,x_{N+k}/y_1,\ldots,y_k) \Big|_{x_{N+k} = y_k} &= \wt{S}_{k-1}(x_1,\ldots,x_{N+k-1}/y_1,\ldots,y_{k-1}), \\
\wt{S}_0(x_1,\ldots,x_N) &= S(x_1,\ldots,x_N)
\end{split}
\end{align}
In this case, we drop the subscript and say $\wt{S}$ is a supersymmetric lift of $S$.
\end{definition}

These lifts are not unique, and this freedom of choice will be relevant for us later.

\begin{definition}
Given a domain $\Omega \subset \C$ containing $0$ and a point real point $\fz \in \Omega \setminus \{0\}$, let $\cH_{\fz}(\Omega)$ denote the set of meromorphic functions $\cA$ on $\Omega$ with a unique pole at $0$ such that $\cA'(\fz) = 0$, $\cA''(\fz) > 0$, and $\overline{\cA(z)} = \cA(\overline{z})$.
\end{definition}

\begin{definition}
Given $\cA \in \cH_{\fz}(\Omega)$, let $\Gamma_{\cA}(\Omega,\fz)$ denote the set of all simple closed curves $\gamma$ in $\Omega$ such that $\fz \in \gamma$, $\gamma$ is positively oriented around $0$, and
\[ \Re \cA(\fz) > \Re \cA(z), \]
for every $z \in \gamma \setminus \{\fz\}$.
\end{definition}

\begin{figure}[ht]
    \centering
    \includegraphics[width=0.5\linewidth]{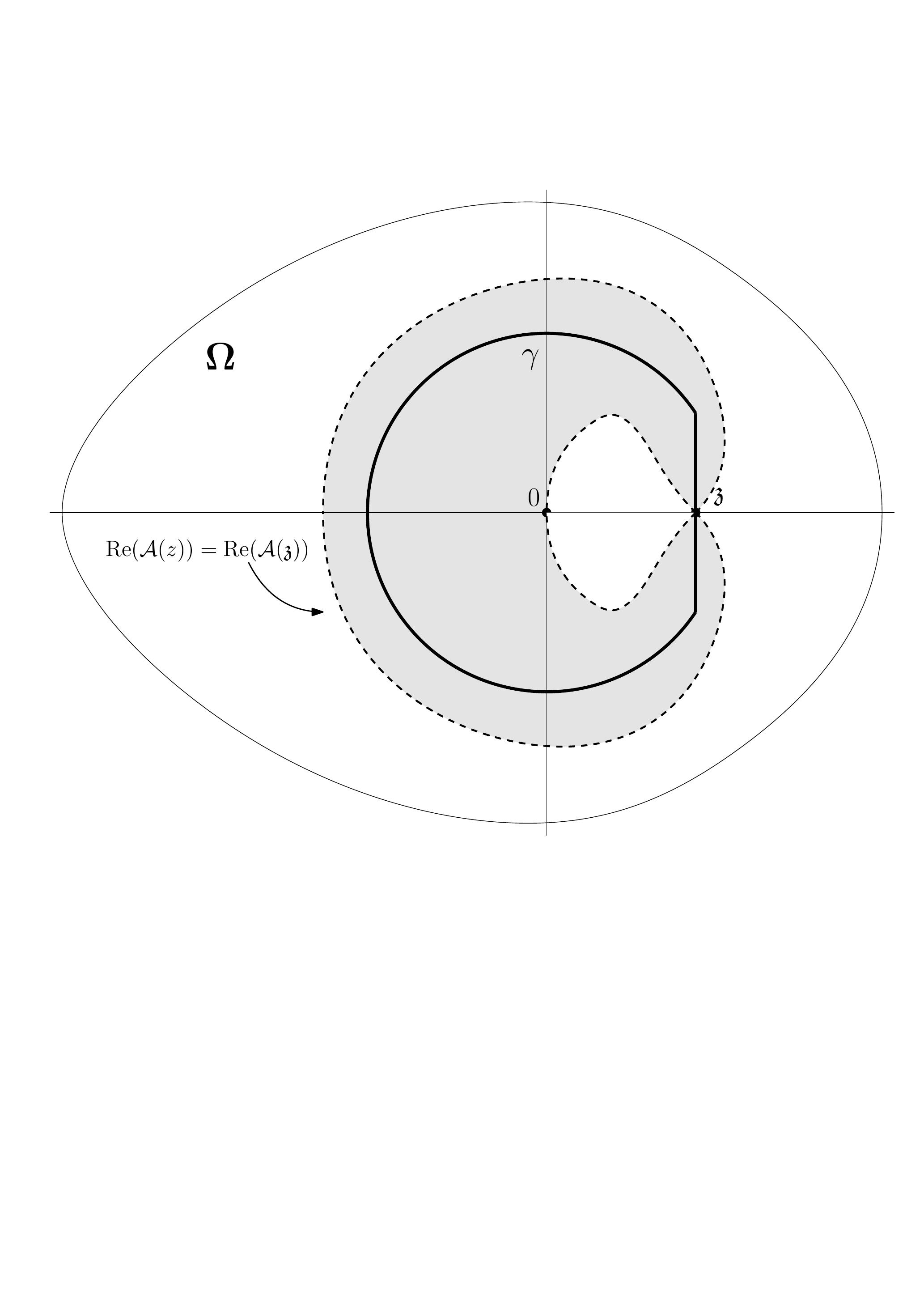}
    \caption{Illustration of an element $\gamma \in \Gamma_{\cA}(\Omega,\fz)$. Shaded region corresponds to $\{ \Re \cA(z) < \Re \cA(\fz)\}$.}
    \label{fig:AEapprop}
\end{figure}

\begin{definition} \label{def:airy_edge}
A sequence of multivariate Bessel generating functions $S_N$ in $N$ is \emph{Airy edge appropriate} if there exists $\cA \in \cH_{\fz}(\Omega)$ for some $\Omega \subset \C$ containing $0$ and a real point $\fz \in \Omega \setminus \{0\}$, a sequence $A_N \in \cH_{\fz}(\Omega)$, and supersymmetric lifts $\wt{S}_N$ of $S_N$ on $N\Omega$ such that
\begin{enumerate}[(i)]
    \item $A_N \to \cA$ uniformly on compact subsets of $\Omega \setminus \{0\}$,
    \item $\Gamma_\cA(\Omega,\fz)$ is nonempty,
    \item for each $k \in \Z_+$ and $c_1,\ldots,c_k > 0$, we have
    \begin{align} \label{ssym_mbgf_limit}
    \begin{split}
    & \wt{S}_N(N u_1 + N^{2/3} c_1,\ldots,N u_k + N^{2/3} c_k, 0^N/ N u_1,\ldots, N u_k) \prod_{i=1}^k \left( \frac{u_i + N^{-1/3} c_i}{u_i} \right)^N \\
    & \quad \quad = F_N(u_1,\ldots,u_k) \exp\left( N \sum_{i=1}^k \int_{u_i}^{u_i + N^{-1/3} c_i} A_N(z) \, dz \right)
    \end{split}
    \end{align}
    where $F_N$ is uniformly bounded over compact subsets of $(\Omega \setminus\{0\})^k$, and if $z_1(N),\ldots,z_k(N) \to \fz$ as $N\to\infty$, then
    \begin{align} \label{FN_approx}
    \lim_{N\to\infty} F_N\left(z_1(N), \ldots, z_k(N) \right) = 1.
    \end{align}
\end{enumerate}
\end{definition}

\begin{definition}
A sequence of Schur generating functions $S_N$ in $N$ is \emph{Airy edge appropriate} if there exists $\cA \in \cH_{\fz}(\Omega)$ for some $\Omega \subset \C$ containing $0$ and a real point $\fz \in \Omega \setminus \{0\}$, a sequence $A_N \in \cH_{\fz}(\Omega)$, and supersymmetric lifts $\wt{S}_N$ of $S_N$ on $\Omega$ satisying (i), (ii) in \Cref{def:airy_edge}, and
\begin{itemize} 
    \item[(iii')] $S_N$ is defined on $\Omega^k$, and for each $k \in \Z_+$ and $c_1,\ldots,c_k > 0$, we have
    \begin{align} \label{schur_version_lim}
    \begin{split}
    & \wt{S}_N(u_1 + N^{-1/3} c_1,\ldots,u_k + N^{-1/3} c_k, 0^N/ u_1,\ldots, u_k) \prod_{i=1}^k \left( \frac{e^{u_i + N^{-1/3} c_i} - 1}{e^{u_i} - 1} \right)^N \\
    & \quad \quad = F_N(u_1,\ldots,u_k) \exp\left( N \sum_{i=1}^k \int_{u_i}^{u_i + N^{-1/3} c_i} A_N(z) \, dz \right)
    \end{split}
    \end{align}
    where $F_N$ is uniformly bounded over compact subsets of $(\Omega \setminus\{0\})^k$, and \eqref{FN_approx} holds.
\end{itemize}
\end{definition}

\begin{theorem} \label{thm:master}
Assume that either
\begin{enumerate}[(i)]
    \item $\vec{\ell} \in \cW_{\R}^N$ is a random particle system varying in $N$ with an Airy edge appropriate sequence of multivariate Bessel generating functions, or
    \item $\vec{\ell} := \tfrac{1}{N}(\lambda_1 + N - 1,\ldots,\lambda_N)$ where $\lambda$ is a random element of $\cW_{\Z}^N$ varying in $N$ with an Airy edge appropriate sequence of Schur generating functions.
\end{enumerate}
Then there exists a sequence $\fz_N$ of critical points of $A_N$ converging to $\fz$ such that
\[  N^{2/3} \frac{\ell_i - A_N(\fz_N)}{(A_N''(\fz_N)/2)^{1/3}} \quad i = 1,\ldots,N\]
converges to the Airy point process as $N\to\infty$.
\end{theorem}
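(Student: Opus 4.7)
The plan is to compute the rescaled exponential moments
\[
\E\left[ \prod_{i=1}^{k} \sum_{j=1}^{N} \exp\!\left( c_i \cdot N^{2/3}\, \frac{\ell_j - A_N(\fz_N)}{(A_N''(\fz_N)/2)^{1/3}} \right) \right]
\]
for arbitrary $k$ and $c_1,\ldots,c_k > 0$, and show that they converge to the corresponding exponential moments of the Airy point process. Since exponential moments uniquely determine (by analytic continuation in $c_i$) the Laplace transforms of the $k$-point correlation functions of the largest particles, this is sufficient for the claimed weak convergence of correlation functions. The starting point is Theorem~\ref{thm:moment_bgf} in case (i), applied with the substitution $c_i \mapsto \tilde c_i := c_i N^{2/3}/(A_N''(\fz_N)/2)^{1/3}$, followed by multiplication by $\exp(-\tilde c_i A_N(\fz_N))$ to implement the centering. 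In case (ii) I would use Theorem~\ref{thm:moment_sgf} analogously. The critical points $\fz_N$ exist and converge to $\fz$ because $A_N \to \cA$ uniformly on compacts (hence $A_N' \to \cA'$ by Cauchy), $\cA'(\fz) = 0$, and $\cA''(\fz) > 0$, so Hurwitz/Rouché places a simple zero of $A_N'$ near $\fz$ for large $N$.

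The next step is a change of variables that matches the structure of Definition~\ref{def:airy_edge}. In case (i), substitute $u_i = N w_i$ so that $u_i + \tilde c_i = N(w_i + N^{-1/3}\tilde c_i')$, reducing the product $\left(\tfrac{u_i+\tilde c_i}{u_i}\right)^N$ and the supersymmetric-lift factor to precisely the left-hand side of \eqref{ssym_mbgf_limit}, with the contour variables $w_i$ now lying in $\Omega$. Applying (iii) replaces the full integrand by
\[
F_N(w_1,\ldots,w_k) \exp\!\Bigl( N \sum_{i=1}^{k} \int_{w_i}^{w_i + N^{-1/3} c_i'} A_N(z)\,dz \Bigr)
\]
multiplied by the rational cross-terms $\prod_{i<j} \tfrac{(w_j + c_j' N^{-1/3} - w_i - c_i' N^{-1/3})(w_j - w_i)}{(w_j - w_i - c_i' N^{-1/3})(w_j + c_j' N^{-1/3} - w_i)}$ and $\prod_i \tfrac{dw_i}{c_i'}$, where $c_i'$ is the rescaled constant. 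The Schur case proceeds identically using \eqref{schur_version_lim}, with $e^{u_i}$-factors replacing power factors in the cross-ratios.

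Now I would perform steepest descent by deforming each $w_i$-contour to a curve $\gamma_i \in \Gamma_\cA(\Omega,\fz)$, with the nesting condition enforced by taking tiny outward parallel translates $\gamma_i \subset \gamma_{i+1}$; condition (ii) guarantees such contours exist, and the required residues picked up during the deformation are controllable (and in fact vanish in the limit, because the relevant pole configurations come from $w_j = w_i + N^{-1/3} c_i'$, which retreat into a vanishing neighborhood of the diagonal). Since $A_N'(\fz_N) = 0$, a Taylor expansion gives
\[
N \int_{w_i}^{w_i + N^{-1/3} c_i'} A_N(z)\,dz = N^{2/3} c_i' A_N(\fz_N) + \tfrac{1}{6} (c_i')^3 A_N''(\fz_N) + o(1)
\]
when $w_i$ is within an $N^{-1/3}$-neighborhood of $\fz_N$; the linear term cancels with the centering factor $e^{-\tilde c_i A_N(\fz_N)}$, and the scaling choice $\fV_N = (A_N''(\fz_N)/2)^{1/3}$ converts the cubic into $c_i^3/3$. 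The contribution from the portion of $\gamma_i$ at distance $\gg N^{-1/3}$ from $\fz_N$ is exponentially small because $\Re \cA(z) < \Re \cA(\fz)$ strictly off $\fz$ on $\gamma_i$ and $A_N \to \cA$ uniformly; the factor $F_N$ is uniformly bounded on compacts and satisfies \eqref{FN_approx} at $\fz$, so it does not spoil the localization and contributes a factor of $1$ in the limit.

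After this localization, the further change of variables $w_i = \fz_N + N^{-1/3} \zeta_i / (A_N''(\fz_N)/2)^{1/3}$ turns the cross-ratio product into $\prod_{i<j} \tfrac{(\zeta_j + c_j - \zeta_i - c_i)(\zeta_j - \zeta_i)}{(\zeta_j - \zeta_i - c_i)(\zeta_j + c_j - \zeta_i)}$ in the limit, and the exponent becomes $\sum_i c_i^3/3$. Thus the limiting $k$-th exponential moment is
\[
\frac{1}{(2\pi\bi)^k} \oint\cdots\oint \exp\!\Bigl(\sum_{i=1}^{k} \tfrac{c_i^3}{3}\Bigr) \prod_{i<j} \frac{(\zeta_j + c_j - \zeta_i - c_i)(\zeta_j - \zeta_i)}{(\zeta_j - \zeta_i - c_i)(\zeta_j + c_j - \zeta_i)} \prod_{i=1}^{k} \frac{d\zeta_i}{c_i},
\]
with nested contours. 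This is exactly the known contour-integral representation of the $k$-th exponential moment of the Airy point process (obtained, e.g., by starting from the determinantal formula with kernel $K_{\mathrm{Airy}}$ and substituting the standard double-contour representation of $K_{\mathrm{Airy}}$). The main technical obstacle is justifying the steepest-descent tail estimates uniformly in $N$: one must verify that $\Re A_N$ inherits from $\Re \cA$ the monotonicity property along $\gamma$ for all sufficiently large $N$ (so that the non-local portion of each contour is exponentially suppressed), and that the product of cross-ratios, though singular when contour segments collide, is controlled by the nesting arrangement. Both follow from compactness plus the uniform convergence in (i), together with careful bookkeeping of the size of the perturbations $\gamma_i$ relative to $N^{-1/3}$.
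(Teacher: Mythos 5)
Your overall plan matches the paper's: compute rescaled exponential moments via \Cref{thm:moment_bgf}/\Cref{thm:moment_sgf}, change variables $u_i \mapsto Nw_i$ to expose \eqref{ssym_mbgf_limit}, localize by steepest descent along contours in $\Gamma_\cA(\Omega,\fz)$, and identify the limit with Laplace transforms of the Airy $k$-point correlation functions. However, there are two concrete gaps.

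First, the Taylor expansion you display is only correct at the critical point itself, not uniformly over the $N^{-1/3}$-neighborhood. Writing $w_i = \fz_N + N^{-1/3}t$, one has
\[
N\int_{w_i}^{w_i + N^{-1/3}c_i'} A_N(z)\,dz = N^{2/3}c_i' A_N(\fz_N) + \frac{A_N''(\fz_N)}{6}\bigl[(t+c_i')^3 - t^3\bigr] + o(1),
\]
and the bracket expands to $3c_i' t^2 + 3(c_i')^2 t + (c_i')^3$; you kept only the $(c_i')^3$ piece. As a consequence, after the substitution $t = \zeta_i/(A_N''(\fz_N)/2)^{1/3}$ your exponent is missing the $\zeta_i$-dependent quadratic and linear terms $c_i\zeta_i^2 + c_i^2\zeta_i$. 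The final contour integral you write down, $\frac{1}{(2\pi\bi)^k}\oint\cdots\oint \exp(\sum c_i^3/3)\prod_{i<j}\cdots\,\prod d\zeta_i/c_i$, has no Gaussian factor and therefore does not converge over the vertical contours; this renders it meaningless. Compare with the paper's \eqref{A_expanded}, which carefully tracks the $z_i^2$-term $\frac{1}{2}\fc_i A_N''(\fz_N)z_i^2$ and arrives at the convergent formula \eqref{Airy_limit}.

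Second, the final step — "this is exactly the known contour-integral representation of the $k$-th exponential moment of the Airy point process" — is not a known off-the-shelf fact in the form you need. The paper proves this identity in \Cref{airy_moments} by a nontrivial induction that interpolates between the formula for $\E\bigl[\sum e^{\sum_i c_i \fa_{j_i}}\bigr]$ over distinct indices (quoted from the literature, with all contours on $\bi\R$) and the product-of-sums $\E\bigl[\prod_i \sum_j e^{c_i\fa_j}\bigr]$ you want, picking up residues when contours are moved to the nested configuration. Asserting this is derivable "from the determinantal formula with kernel $K_{\mathrm{Airy}}$" is not a proof and omits precisely the residue bookkeeping that matches your nested-contour output to the Airy Laplace transform. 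Without either repairing the Gaussian factor or proving the contour identity, the argument is incomplete.
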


\begin{remark}
Note that our discrete particle systems on $\Z$ must have supports growing of order $N$, whereas our continuous particle systems do not have this restriction. The difference in scaling between Airy appropriateness for the multivariate Bessel and Schur generating functions reflects the fact that under our scaling limits, we preserve the size of the support for continuous particle systems and consider supports growing $O(N)$ for discrete particle systems.
\end{remark}

\subsection{Examples of Supersymmetric Lifts} \label{ssec:examples_lifts}

\begin{example} \label{ex:multiplicative}
For multiplicative functions, there is a natural multiplicative supersymmetric lift. If
\[ F(x_1,\ldots,x_N) = \prod_{i=1}^N f(x_i) \]
for some analytic $f$ on $\Omega$, then
\[ \wt{F}(x_1,\ldots,x_{N+k}/y_1,\ldots,y_k) = \prod_{i=1}^{N+k} f(x_i) \prod_{j=1}^k \frac{1}{f(y_j)} \]
is a supersymmetric lift of $F$ on $\Omega \setminus \{z:f(z) = 0\}$.
\end{example}

\begin{example} \label{ex:algebra}
If $F$ and $G$ are symmetric functions with supersymmetric lifts $\wt{F}$ and $\wt{G}$ on $\Omega$ respectively, then $\wt{F}\cdot \wt{G}$ is a supersymmmetric lift of $F\cdot G$ on $\Omega$.
\end{example}

The key fact which links \Cref{thm:master} with our random matrix and quantized models is the coherency of multivariate Bessel generating functions with the operations of matrix products.

\begin{lemma} \label{bessel_gf}
Let $X$ and $Y$ be independent $N\times N$ unitarily invariant, Hermitian random matrices. If the multivariate Bessel generating functions $S_X,S_Y$ of the eigenvalues of $X,Y$ (resp.) are defined in some neighborhood of $\vec{0}$, then the multivariate Bessel generating function of the eigenvalues of $X + Y$ is given by
\[ S_X(z_1,\ldots,z_N) S_Y(z_1,\ldots,z_N). \]
\end{lemma}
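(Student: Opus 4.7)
The plan is to identify the multivariate Bessel generating function of a unitarily invariant Hermitian matrix $W$ with its matrix Laplace transform $\vec{z} \mapsto \E[\exp \tr(ZW)]$, where $Z = \diag(z_1,\ldots,z_N)$, via the Harish-Chandra--Itzykson--Zuber (HCIZ) formula; once this identification is in place, the lemma is immediate from independence of $X$ and $Y$.

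First I would condition on the eigenvalues $\vec{\ell}^W$ of $W$ and write $W \overset{d}{=} U\,\diag(\vec{\ell}^W)\, U^{-1}$ with $U$ Haar on $\cU(N)$. The HCIZ integral then yields
\[
\E\!\left[\exp \tr(ZW) \,\middle|\, \vec{\ell}^W\right] = \prod_{k=0}^{N-1} k! \cdot \frac{\det(e^{z_i \ell_j^W})_{i,j=1}^N}{\Delta(\vec{z})\,\Delta(\vec{\ell}^W)},
\]
and a Cauchy--Binet expansion of $\det(e^{z_i\ell_j})$ at $\vec{z}=0$ both supplies the normalization $\cB_{\vec{\ell}}(0^N) = \Delta(\vec{\ell})/\prod_k k!$ and identifies the right-hand side above with $\cB_{\vec{\ell}^W}(\vec{z})/\cB_{\vec{\ell}^W}(0^N)$. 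Taking expectation over $\vec{\ell}^W$ and applying Fubini, justified by the absolute convergence of $S_W$ near $\vec{0}$, then gives the key identity $\E[\exp \tr(ZW)] = S_W(\vec{z})$.

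Second, the matrix $X+Y$ is itself unitarily invariant: for any $U \in \cU(N)$, independence and invariance of the summands yield $U(X+Y)U^{-1} = UXU^{-1} + UYU^{-1} \overset{d}{=} X+Y$. By independence of $X$ and $Y$ together with linearity of the trace,
\[
\E[\exp \tr(Z(X+Y))] = \E[\exp \tr(ZX)]\cdot \E[\exp \tr(ZY)] = S_X(\vec{z})\, S_Y(\vec{z}),
\]
where the second equality applies the first-step identity to $X$ and $Y$ separately. Applying that identity once more to the unitarily invariant matrix $X+Y$ turns the left-hand side into $S_{X+Y}(\vec{z})$, which proves the lemma.

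The only real point of care is convergence bookkeeping rather than any substantive obstacle: the hypothesis that $S_X$ and $S_Y$ are absolutely convergent near $\vec{0}$ is precisely what makes the conditional Laplace transforms integrable against the distributions of the spectra, licensing the Fubini step used to recognize each matrix Laplace transform as a Bessel generating function; it simultaneously forces $\E[\exp \tr(Z(X+Y))]$ to be finite in a common neighborhood of $\vec{0}$, so the identification applied with $W = X+Y$ is legal there.
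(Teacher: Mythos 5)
Your proof is correct and follows essentially the same route as the paper's: both invoke the Harish-Chandra--Itzykson--Zuber formula to identify the multivariate Bessel generating function of a unitarily invariant $W$ with the matrix Laplace transform $\E[\exp\tr(ZW)]$, after which multiplicativity follows from independence of the summands and linearity of the trace. Your write-up merely spells out the conditioning/Fubini bookkeeping that the paper leaves implicit.
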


\begin{proof}
It suffices to check these properties for deterministic $X,Y$ with deterministic eigenvalues. This is an immediate consequence of the Harish-Chandra-Itzykson-Zuber integral formula \cite{HC57,IZ80} which states
\[ \frac{\cB_{\vec{\ell}}(z_1,\ldots,z_N)}{\cB_{\vec{\ell}}(0,\ldots,0)} = \prod_{i=1}^{N-1} i! \cdot \frac{\det\left( e^{z_i \ell_j} \right)_{i,j=1}^N}{\Delta(\vec{z}) \Delta(\vec{\ell})} = \int_{\cU(N)} e^{\tr \diag(z_1,\ldots,z_N) U \diag(\ell_1,\ldots,\ell_N) U^*} \, dU. \]
\end{proof}

The rational Schur functions are identified with the irreducible characters of $\cU(N)$. Observe that any class function on $\cU(N)$ is a symmetric function of the eigenvalues of $U \in \cU(N)$. More precisely, let $u_1,\ldots,u_N$ be the eigenvalues of $U$, then a class function on $\cU(N)$ is of the form
\[ U \mapsto f(u_1,\ldots,u_N) \]
for some function $f$ which is symmetric in its variables.

\begin{proposition}[{\cite{Wey97}}] \label{thm:irreducible}
The irreducible character of $\cU(N)$ associated to $\lambda \in \cW_{\Z}^N$ is the symmetric function $s_\lambda(u_1,\ldots,u_N)$ on the eigenvalues of $U \in \cU(N)$.
\end{proposition}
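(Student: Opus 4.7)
The plan is to invoke Weyl's character formula for compact connected Lie groups and specialize it to $\cU(N)$, then match the resulting expression with $s_\lambda$ using the Vandermonde determinant. Since both the character $\chi_\lambda(U) = \tr|_{V_\lambda}(U)$ and $s_\lambda$ evaluated on the eigenvalues of $U$ are class functions, it suffices to check the identity on the maximal torus $T \subset \cU(N)$ of diagonal unitary matrices $\diag(u_1,\ldots,u_N)$, where both sides are automatically symmetric under the Weyl group $W = S_N$ acting by permutation of $(u_1,\ldots,u_N)$.

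First I would recall the standard classification: irreducible representations of $\cU(N)$ are indexed by dominant weights relative to $T$ with the usual positive root system, giving a bijection with $\cW_{\Z}^N$ in which $\lambda = (\lambda_1,\ldots,\lambda_N)$ is the highest weight of $V_\lambda$. Writing $\rho = (N-1,N-2,\ldots,0)$, the Weyl character formula asserts
\[
\chi_\lambda(u_1,\ldots,u_N) = \frac{\sum_{w \in W} \sgn(w)\, u^{w(\lambda + \rho)}}{\sum_{w \in W} \sgn(w)\, u^{w \rho}} = \frac{\det\bigl(u_i^{\lambda_j + N - j}\bigr)_{i,j=1}^N}{\det\bigl(u_i^{N-j}\bigr)_{i,j=1}^N}.
\]
The Weyl denominator identity $\det\bigl(u_i^{N-j}\bigr)_{i,j=1}^N = \Delta(\vec{u})$ is precisely the Vandermonde determinant, so the right-hand side coincides with $s_\lambda(u_1,\ldots,u_N)$ as defined in \eqref{eq:alternant}.

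The main obstacle is the Weyl character formula itself, though a self-contained route for $\cU(N)$ proceeds as follows. The Weyl integration formula reduces $L^2$ inner products of class functions on $\cU(N)$ to integrals over $T$ with density $\tfrac{1}{N!}|\Delta(\vec{u})|^2$. Under this measure the alternants $a_\mu(\vec{u}) := \det(u_i^{\mu_j})_{i,j=1}^N$ for strictly decreasing integer tuples $\mu$ are manifestly orthogonal, which implies that the $s_\lambda = a_{\lambda+\rho}/a_\rho$ form an orthonormal system. On the other hand, each $s_\lambda$ is an integer combination of characters via the Jacobi--Trudi identity, expressing $s_\lambda$ as a polynomial in the elementary symmetric functions $e_k(\vec{u})$, which are the characters of exterior powers $\bigwedge^k \C^N$ of the standard representation. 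An orthonormal virtual character which contains the weight $\lambda$ with multiplicity one and no higher weight must be the character of the irreducible $V_\lambda$, identifying $s_\lambda = \chi_\lambda$. For the purposes of this article, invoking \cite{Wey97} as the author does is sufficient.
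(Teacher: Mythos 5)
The paper states this proposition as a citation to Weyl \cite{Wey97} and gives no proof of its own, so there is no internal argument to compare against. Your sketch is the standard and correct derivation: the Weyl character formula together with the Weyl denominator identity immediately identifies $\chi_\lambda$ with the bialternant $a_{\lambda+\rho}/a_\rho = s_\lambda$, and the self-contained route you outline (orthonormality of the $s_\lambda$ under the Weyl integration measure $\tfrac{1}{N!}|\Delta(\vec{u})|^2\,d\vec{u}$, expressibility of $s_\lambda$ as a virtual character via the dual Jacobi--Trudi identity in the $e_k$'s, and highest-weight considerations forcing $s_\lambda = \chi_\lambda$) is a correct and well-known way to establish the Weyl character formula for $\cU(N)$ without appealing to the general compact group machinery. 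One small point worth making explicit: for signatures $\lambda \in \cW_\Z^N$ with negative parts, $s_\lambda$ is a symmetric Laurent polynomial rather than a polynomial, so the Jacobi--Trudi step should first be reduced to the polynomial case by twisting by a sufficiently high power of the determinant character $u_1\cdots u_N$; this is routine but should be mentioned if the argument is spelled out in full.
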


\begin{lemma} \label{schur_gf}
Let $V$ and $W$ be finite dimensional representations. If the Schur generating functions $S_V,S_W$ of $\rho_V,\rho_W$ (resp.) are defined in some neighborhood of $\vec{0}$, then the Schur generating function of $\rho^{V \otimes W}$ is given by
\[ S_V(z_1,\ldots,z_N) S_W(z_1,\ldots,z_N). \]
\end{lemma}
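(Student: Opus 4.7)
The plan is to express each Schur generating function as a normalized character of the underlying representation, and then invoke the two multiplicative properties that tensor product of representations enjoys: characters multiply and dimensions multiply.

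First, I would decompose $V = \bigoplus_{\lambda \in \cW_{\Z}^N} V_\lambda^{\oplus c_\lambda^V}$ and recall from \eqref{eq:V_decomp} that $\rho_V(\lambda) = c_\lambda^V \dim V_\lambda / \dim V$. Using $\dim V_\lambda = s_\lambda(1,\ldots,1)$ together with \Cref{thm:irreducible}, which identifies $s_\lambda(e^{z_1},\ldots,e^{z_N})$ with the value of the irreducible character $\chi_{V_\lambda}$ on a diagonal matrix $\diag(e^{z_1},\ldots,e^{z_N})$, I would compute
\[
S_V(z_1,\ldots,z_N) \;=\; \sum_{\lambda} \frac{c_\lambda^V \dim V_\lambda}{\dim V}\cdot \frac{s_\lambda(e^{z_1},\ldots,e^{z_N})}{s_\lambda(1,\ldots,1)} \;=\; \frac{\chi_V(e^{z_1},\ldots,e^{z_N})}{\dim V},
\]
where $\chi_V = \sum_\lambda c_\lambda^V \chi_{V_\lambda}$ by linearity of characters. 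An identical identity holds for $W$.

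Next, I would apply the standard facts $\chi_{V \otimes W} = \chi_V \cdot \chi_W$ (characters of tensor products multiply pointwise) and $\dim(V \otimes W) = \dim V \cdot \dim W$. Combining these,
\[
S_{V\otimes W}(z_1,\ldots,z_N) \;=\; \frac{\chi_{V\otimes W}(e^{z_1},\ldots,e^{z_N})}{\dim(V\otimes W)} \;=\; \frac{\chi_V(e^{z_1},\ldots,e^{z_N})}{\dim V}\cdot \frac{\chi_W(e^{z_1},\ldots,e^{z_N})}{\dim W} \;=\; S_V(z_1,\ldots,z_N)\, S_W(z_1,\ldots,z_N).
\]
The hypothesis that $S_V$ and $S_W$ are defined in a neighborhood of $\vec 0$ ensures the defining series converge absolutely there, so the rearrangement above is legitimate and the resulting product is itself an absolutely convergent Schur generating function.

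There is essentially no obstacle: the whole argument is a one-line character-theoretic identity once one recognizes that the Schur generating function is nothing other than the normalized character evaluated at $\diag(e^{z_1},\ldots,e^{z_N})$. The only point to mention with care is the interchange of expectation and the sum defining the character, but this is immediate from the uniform absolute convergence of $S_V,S_W$ in a neighborhood of $\vec 0$.
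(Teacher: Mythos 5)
Your argument is exactly the one the paper uses: identify the Schur generating function with the normalized character $\chi_V/\dim V$ and invoke multiplicativity of characters and dimensions under tensor product. You have simply spelled out the identification more explicitly via \eqref{eq:V_decomp} and \Cref{thm:irreducible}, which is a fine and correct expansion of the paper's one-line proof.
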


\begin{proof}
For a general finite dimensional representation $V$, the Schur generating function $S_V$ of $\rho_V$ is the normalized character of the representation $V$. Then $S_V\cdot S_W$ is the normalized character of $V\otimes W$, hence the Schur generating of $V\otimes W$.
\end{proof}

The remainder of this section is devoted to the proof of \Cref{thm:master}. We first prove \Cref{thm:moment_bgf,thm:moment_sgf} which we recall below as \Cref{thm:contour_mbgf,thm:contour_sgf}. The idea is to use operators which have the Schur and multivariate Bessel functions as eigenfunctions, resulting in formulas in terms of supersymmetric lifts of the corresponding generating function. The proof of \Cref{thm:master} is then a steepest descent analysis of this formula, under the assumption of Airy appropriateness.

\subsection{Operators and Supersymmetric Lifts}

In this subsection, we prove \Cref{thm:moment_bgf,thm:moment_sgf}.

For $t \in \C$, define the operators $D_c^N$, $\fD_c^N$ on functions in $(z_1,\ldots,z_N)$ by
\[ D_c^N := \sum_{i=1}^N \prod_{j \ne i} \frac{e^{z_i + c} - e^{z_j}}{e^{z_i} - e^{z_j}} \cT_{c,x_i}, \quad \quad \fD_c^N := \sum_{i=1}^N \prod_{j \ne i} \frac{z_i + c - z_j}{z_i - z_j} \cT_{c,z_i} \]
where $\cT_{c,x_i}$ maps $g(x_1,\ldots,x_i,\ldots,x_N)$ to $g(x_1,\ldots,x_i + c,\ldots,x_N)$. By direct computation, we have the eigenrelations
\begin{align*}
D_t^N s_\lambda(e^{z_1},\ldots,e^{z_N}) &= \left( \sum_{i=1}^N e^{c(\lambda_i + N - i)} \right) s_\lambda(e^{z_1},\ldots,e^{z_N}), \quad \lambda \in \cW_{\Z}^N \\
\fD_c^N \cB_{\vec{\ell}}(z_1,\ldots,z_N) &= \left( \sum_{i=1}^N e^{c\ell_i} \right) \cB_{\vec{\ell}}(z_1,\ldots,z_N), \quad \vec{\ell} \in \cW_{\R}^N
\end{align*}
Thus, if $S(z_1,\ldots,z_N)$ is the Schur generating function of a random $\lambda \in \cW_{\Z}^N$, then
\begin{align} \label{eq:obs_sgf}
D_{c_1}^N \cdots D_{c_k}^N S(z_1,\ldots,z_N) \Big|_{z_i = 0} = \E\left[ \prod_{i=1}^k \prod_{j=1}^N e^{c_i(\lambda_j + N - j)} \right]
\end{align}
and if $S(z_1,\ldots,z_N)$ is the multivariate Bessel generating function of a particle system $\{\ell_i\}_{i=1}^N$, then
\begin{align} \label{eq:obs_mbgf}
\fD_{c_1}^N \cdots \fD_{c_k}^N S(x_1,\ldots,x_N) \Big|_{z_i = 0} = \E \left[ \prod_{i=1}^k \sum_{j=1}^N e^{c_i \ell_j} \right]
\end{align}
We require $c_1,\ldots,c_k$ to be sufficiently small, based on the domain of analyticity $\Omega$ for the function above. It suffices to take $|c_1| + \cdots + |c_k| < r$ where $B_r(0) \subset \Omega$.

The action of $D_c^N$ on a function can be described in terms of a supersymmetric lift.

\begin{proposition} \label{thm:contour_sgf}
Let $a_1,\ldots,a_N \in \C$ and $\Omega \subset \C$ be a neighborhood of $\{a_1,\ldots,a_N\}$. If $S: \Omega^N \to \C$ is an analytic symmetric function, $\wt{S}$ is a supersymmetric lift of $S$ on $\Omega$, and $c_1,\ldots,c_k \in \C$ such that $B_{|c_1| + \cdots + |c_k|}(a_i) \subset \Omega$ for $1 \le i \le k$, then
\begin{align} \label{eq:contour_sgf}
\begin{multlined}
D_{c_1}^N \cdots D_{c_k}^N S(a_1,\ldots,a_N) =  \frac{1}{(2\pi \bi)^k} \oint \cdots \oint \wt{S}(a_1,\ldots,a_N, u_1 + c_1,\ldots, u_k + c_k/u_1,\ldots,u_k) \\
\times \prod_{1 \le i < j \le k} \frac{(e^{u_j + c_j} - e^{u_i + c_i})(e^{u_j} - e^{u_i})}{(e^{u_j} - e^{u_i + c_i})(e^{u_j + c_j} - e^{u_i})} \prod_{i=1}^k \left( \prod_{j=1}^N \frac{e^{u_i + c_i} - e^{a_j}}{e^{u_i} - e^{a_j}} \right) \frac{du_i}{e^{c_i} - 1}
\end{multlined}
\end{align}
where the $u_i$-contour is contained in $\Omega$ and positively oriented around the poles $a_1,\ldots,a_N$ for $1 \le i \le k$, and the $u_j$-contour contains $u_i + c_i$ whenever $i < j$.
\end{proposition}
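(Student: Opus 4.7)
The plan is to argue by induction on $k$. For the base case $k=1$, the integrand has no singularities inside the $u_1$-contour apart from simple poles at $u_1=a_1,\ldots,a_N$ coming from $\prod_{j=1}^N(e^{u_1}-e^{a_j})^{-1}$, since $\wt{S}$ is analytic on $\Omega^{N+1}$ by definition. The residue at $u_1=a_i$ contributes $e^{-a_i}$ from the simple pole, the factor $(e^{u_1+c_1}-e^{a_i})/(e^{c_1}-1)$ simplifies to $e^{a_i}$, the remaining product over $j\ne i$ yields the $D_{c_1}^N$-weight $\prod_{j\ne i}(e^{a_i+c_1}-e^{a_j})/(e^{a_i}-e^{a_j})$, and $\wt{S}(a_1,\ldots,a_N,a_i+c_1/a_i)$ collapses to $S(a_1,\ldots,a_{i-1},a_i+c_1,a_{i+1},\ldots,a_N)$ after swapping $a_i$ into the last $x$-slot (using symmetry of $\wt{S}$ in its $x$-arguments) and applying the cancellation axiom of \Cref{def:supersymmetric}. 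Summing over $i$ reproduces $D_{c_1}^N S(a_1,\ldots,a_N)$.

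For the inductive step, set $F(a_1,\ldots,a_N):=D_{c_2}^N\cdots D_{c_k}^N S(a_1,\ldots,a_N)$, so by the inductive hypothesis $F$ equals the $(k-1)$-fold integral in the statement, involving $\wt{S}_{k-1}$ and an auxiliary product $P$. I will build a supersymmetric lift of $F$ in the $a$-variables by
\[
\wt{F}_1(a_1,\ldots,a_{N+1}/b_1):=\frac{1}{(2\pi\bi)^{k-1}}\oint\cdots\oint\wt{S}_k(a_1,\ldots,a_{N+1},u_2+c_2,\ldots,u_k+c_k/u_2,\ldots,u_k,b_1)\cdot\wt{P},
\]
where $\wt{P}$ is obtained by lifting each $a$-dependent factor $\prod_{j=1}^N(e^{u_i+c_i}-e^{a_j})/(e^{u_i}-e^{a_j})$ of $P$ via the multiplicative lift of \Cref{ex:multiplicative}: extend the product to $j=N+1$ and insert the denominator factor $(e^{u_i+c_i}-e^{b_1})/(e^{u_i}-e^{b_1})$. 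Setting $a_{N+1}=b_1$, the multiplicative factors at $a_{N+1}$ and $b_1$ cancel (recovering $P$) while the cancellation property of $\wt{S}_k$ reduces it to $\wt{S}_{k-1}$, so $\wt{F}_1$ is indeed a lift of $F$.

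Applying the base case to $F$ with this lift and substituting $a_{N+1}=u_1+c_1$, $b_1=u_1$, the multiplicative lift contributes, for each $i=2,\ldots,k$, the extra factor $(e^{u_i+c_i}-e^{u_1+c_1})(e^{u_i}-e^{u_1})/[(e^{u_i}-e^{u_1+c_1})(e^{u_i+c_i}-e^{u_1})]$; these are precisely the factors of $\prod_{1\le i<j\le k}$ with lower index equal to $1$, and combined with the $2\le i<j\le k$ factors from the IH they assemble the full product. Symmetry of $\wt{S}_k$ in its $y$-variables reshuffles $(\ldots/u_2,\ldots,u_k,u_1)$ into $(\ldots/u_1,u_2,\ldots,u_k)$, the $a$-dependent multiplicative factors combine with the base-case factor $\prod_{j=1}^N (e^{u_1+c_1}-e^{a_j})/(e^{u_1}-e^{a_j})$ to give $\prod_{i=1}^k\prod_{j=1}^N(e^{u_i+c_i}-e^{a_j})/(e^{u_i}-e^{a_j})$, and the measures combine into $\prod_{i=1}^k du_i/(e^{c_i}-1)$. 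For contours I take the $u_1$-contour tight around $\{a_1,\ldots,a_N\}$ and inside each $u_j$-contour ($j\ge 2$) inherited from the IH, small enough that $u_1+c_1$ still lies inside each $u_j$-contour, matching the nesting prescription of the statement.

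The main obstacle is the combinatorial matching in the last step: verifying that the multiplicative lift of \Cref{ex:multiplicative}, upon the substitution $b_1=u_1$, $a_{N+1}=u_1+c_1$, reproduces exactly the missing $i=1$ row of $\prod_{1\le i<j\le k}$ rather than a permuted or off-by-shift variant. This matching is what forces the particular form of the statement and is where the supersymmetric formalism does the real work: the lift is engineered so that base-case residue reconstruction exactly upgrades the $(k-1)$-formula to the $k$-formula. A secondary concern is ruling out spurious poles of $\wt{F}_1$ inside the $u_1$-contour (for instance from factors of $\wt{P}$ at $u_1=u_i-c_1$), which is controlled by keeping the $c_i$ small and the $u_1$-contour tight around $\{a_1,\ldots,a_N\}$ in $\Omega$.
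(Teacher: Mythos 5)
Your proof is correct and takes essentially the same route as the paper: a residue computation plus the cancellation axiom for $k=1$, then induction with the multiplicative and product lift constructions (\Cref{ex:multiplicative}, \Cref{ex:algebra}) supplying a lift of $D_{c_2}^N\cdots D_{c_k}^N S$ to which the base case is applied. The paper's proof states the inductive step in one line, so your write-up fills in exactly what the paper intends; the only trivial addition worth making explicit is the reduction to distinct $a_1,\ldots,a_N$ by continuity (so the poles in the base case are simple), which the paper flags at the outset.
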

\begin{proof}
By continuity, we may assume $a_1,\ldots,a_N$ are distinct. For $k = 1$, we have
\[ D_c^N S (a_1,\ldots,a_N) = \frac{1}{2\pi \bi} \oint \wt{S}(a_1,\ldots,a_N,u + c/u)\prod_{j=1}^N \frac{e^{u+c} - e^{a_j}}{e^u - e^{a_j}} \, \frac{du}{e^c - 1} \]
by the residue theorem and the cancellation property \eqref{eq:cancellation}. The general case follows by induction and \Cref{ex:multiplicative,ex:algebra}.
\end{proof}

\begin{remark}
The specialization of \Cref{thm:contour_sgf} to the case of multiplicative functions as in \Cref{ex:multiplicative} was first observed in \cite[Proposition 2.2.11]{BC14} in the general context of Macdonald difference operators. The application of the operators $D_c^N$ to general symmetric functions has not been considered before in the literature.
\end{remark}

Analogously, we have a description for the action of $\fD_c^N$.

\begin{proposition} \label{thm:contour_mbgf}
Let $a_1,\ldots,a_N \in \C$ and $\Omega \subset \C$ be a neighborhood of $\{a_1,\ldots,a_N\}$. If $S: \Omega^N \to \C$ is an analytic symmetric function, $\wt{S}$ is a supersymmetric lift of $S$ on $\Omega$, and $c_1,\ldots,c_k \in \C$ such that $B_{|c_1| + \cdots + |c_k|}(a_i) \subset \Omega$ for $1 \le i \le k$, then
\begin{align} \label{eq:contour_mbgf}
\begin{multlined}
\fD_{c_1}^N \cdots \fD_{c_k}^N S(a_1,\ldots,a_N) =  \frac{1}{(2\pi \bi)^k} \oint \cdots \oint \wt{S}(a_1,\ldots,a_N,u_1 + c_1,\ldots,u_k + c_k/u_1,\ldots,u_k) \\
\times \prod_{1 \le i < j \le k} \frac{(u_j + c_j - u_i - c_i)(u_j - u_i)}{(u_j - u_i - c_i)(u_j + c_j - u_i)} \prod_{i=1}^k \left( \prod_{j=1}^N \frac{u_i + c_i - a_j}{u_i - a_j} \right) \frac{du_i}{c_i}
\end{multlined}
\end{align}
where the $u_i$-contour is contained in $\Omega$ and positively oriented around the poles $a_1,\ldots,a_N$ for $1 \le i \le k$. Moreover, the $u_j$-contour contains $u_i + c_i$ whenever $i < j$. 
\end{proposition}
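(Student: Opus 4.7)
The proof runs in close parallel with that of Proposition~\ref{thm:contour_sgf}, with only superficial modifications accounting for the replacement of $e^{z_i}$ by $z_i$ and $e^c-1$ by $c$ (this is precisely the difference between $D_c^N$ and $\fD_c^N$). As there, by continuity I may assume $a_1,\ldots,a_N$ are pairwise distinct.

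For the base case $k=1$, the cancellation property \eqref{eq:cancellation} gives $\wt{S}(a_1,\ldots,a_N, a_i+c/a_i) = S(a_1,\ldots,a_{i-1},a_i+c,a_{i+1},\ldots,a_N)$, so that by the definition of $\fD_c^N$,
$$\fD_c^N S(a_1,\ldots,a_N) \;=\; \sum_{i=1}^N \prod_{j\ne i}\frac{a_i+c-a_j}{a_i-a_j}\,\wt{S}(a_1,\ldots,a_N, a_i+c/a_i).$$
The integrand of \eqref{eq:contour_mbgf} for $k=1$ has simple poles inside the $u$-contour only at $u=a_1,\ldots,a_N$, and a direct computation shows its residue at $u=a_i$ is exactly the $i$-th summand above. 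The residue theorem closes the case.

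For the inductive step, apply the inductive hypothesis to $\fD_{c_2}^N \cdots \fD_{c_k}^N S(a_1,\ldots,a_N)$. Viewed as a function of $(a_1,\ldots,a_N)$, the resulting $(k-1)$-fold integrand is the product of $T(a) := \wt{S}(a_1,\ldots,a_N, u_2+c_2,\ldots,u_k+c_k/u_2,\ldots,u_k)$ and the rational factor $Q(a) := \prod_{i=2}^k\prod_{j=1}^N \frac{u_i+c_i-a_j}{u_i-a_j}$. I claim $T$ admits the supersymmetric lift
$$\wt{T}_m(a_1,\ldots,a_{N+m}/y_1,\ldots,y_m) := \wt{S}_{k-1+m}\!\left(a_1,\ldots,a_{N+m}, u_2+c_2,\ldots,u_k+c_k/y_1,\ldots,y_m, u_2,\ldots,u_k\right),$$
whose cancellation property is inherited from $\wt{S}$. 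The factor $Q$ is multiplicative in $a$, and so admits the natural lift of \Cref{ex:multiplicative}; by \Cref{ex:algebra}, $\wt{T}\wt{Q}$ is a supersymmetric lift of $TQ$. Applying the $k=1$ identity with this combined lift converts $\fD_{c_1}^N$ to a further integral over $u_1$.

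It remains to check that the resulting $k$-fold integral matches \eqref{eq:contour_mbgf}. The $\wt{T}$ slot specializes to $\wt{S}(a_1,\ldots,a_N, u_1+c_1, u_2+c_2,\ldots,u_k+c_k/u_1, u_2,\ldots,u_k)$, as desired. The multiplicative lift $\wt{Q}$ contributes, for each $i=2,\ldots,k$, the additional factor $\frac{(u_i+c_i-u_1-c_1)(u_i-u_1)}{(u_i-u_1-c_1)(u_i+c_i-u_1)}$, which together with the interaction product from the inductive step reconstructs the full $\prod_{1\le i<j\le k}\tfrac{(u_j+c_j-u_i-c_i)(u_j-u_i)}{(u_j-u_i-c_i)(u_j+c_j-u_i)}$ in \eqref{eq:contour_mbgf}. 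The main subtlety is the contour arrangement: the new $u_1$-contour encloses $a_1,\ldots,a_N$, while the $u_j$-contours for $j\ge 2$, which via the multiplicative lift acquire an additional ``$a_{N+1} = u_1+c_1$'' slot, must also enclose $u_1+c_1$. Nesting the $u_1$-contour innermost and the $u_2,\ldots,u_k$-contours slightly outward, all within $\Omega$, realizes exactly the contour condition in the proposition, and the induction goes through.
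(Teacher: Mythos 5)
Your proof is correct and takes essentially the same approach as the paper: the paper establishes the $k=1$ case by the residue theorem and the cancellation property, and dispatches the general case by induction using \Cref{ex:multiplicative,ex:algebra}, exactly the structure you follow (the paper only writes this out for the Schur-operator analogue, \Cref{thm:contour_sgf}, and notes \Cref{thm:contour_mbgf} is analogous). Your proposal simply spells out the details the paper leaves implicit — the explicit residue computation, the construction of the auxiliary lift $\wt{T}$, and the contour nesting.
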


We therefore obtain \Cref{thm:moment_bgf,thm:moment_sgf}.

\subsection{Proof of Theorem \ref{thm:master}}

We first prove the theorem for multivariate Bessel generating functions, then point out the minor modifications required to adapt the argument for Schur generating functions. We begin with a lemma which will be useful for the proof and later parts of this paper. Given $\vec{x} = (x_1,\ldots,x_m)$ and $\vec{y} = (y_1,\ldots,y_n)$, let
\[ D(\vec{x},\vec{y}) = \prod_{i=1}^m \prod_{j=1}^n (x_i + y_j). \]

\begin{lemma} \label{cauchy_det_bd}
Let $\vec{\fc} := (\fc_1,\ldots,\fc_k) \in \R^k$. Given $\xi \ge 0$ and $c > 0$, there exist constants $C_2 > C_1 > 0$ such that
\begin{align} 
\begin{gathered}
C_1 N^{-k\xi} \le \left| \frac{D\left(\vec{u} + N^{-\xi}\vec{\fc}; -\vec{u}\right)}{\Delta\left(\vec{u} + N^{-\xi} \vec{\fc}\right) \Delta(-\vec{u})} \right| \le C_2 N^{-k\xi}
\end{gathered}
\end{align}
for $\vec{u} \in \C^k$ such that
\begin{align} \label{cv_v_dist}
\min_{i \ne j} \left( |u_i - u_j|, |u_i + N^{-\xi} \fc_i - u_j - N^{-\xi} \fc_j|, |u_i + N^{-\xi} \fc_i - u_j| \right) \ge c N^{-\xi}
\end{align}
\end{lemma}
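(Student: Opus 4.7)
The approach is a direct algebraic reduction: extract the factor $N^{-k\xi}$ from the diagonal of the double product in $D(\vec{u}+N^{-\xi}\vec{\fc};-\vec{u})$, and bound the remaining rational expression uniformly by a compactness argument on the Riemann sphere.

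First, writing $d_{ij}:=u_j-u_i$, I would split off the diagonal terms of
\begin{align*}
D(\vec{u}+N^{-\xi}\vec{\fc};-\vec{u})=\prod_{i,j=1}^k(u_i+N^{-\xi}\fc_i-u_j),
\end{align*}
which contribute $\prod_{i=1}^k N^{-\xi}\fc_i$ in total. Pairing the remaining off-diagonal factors (those for $(i,j)$ and $(j,i)$ with $i<j$) against $\Delta(\vec{u}+N^{-\xi}\vec{\fc})\Delta(-\vec{u})=\prod_{i<j}(-d_{ij}+N^{-\xi}(\fc_i-\fc_j))\,d_{ij}$ expresses the ratio in the lemma as
\begin{align*}
N^{-k\xi}\prod_{i=1}^k\fc_i\cdot\prod_{1\le i<j\le k}\frac{(-d_{ij}+N^{-\xi}\fc_i)(d_{ij}+N^{-\xi}\fc_j)}{d_{ij}(-d_{ij}+N^{-\xi}(\fc_i-\fc_j))}.
\end{align*}

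Second, I would rescale $\tilde{d}_{ij}:=N^\xi d_{ij}$ to eliminate the $N$-dependence. Each factor of the product becomes
\begin{align*}
f(\tilde{d};\fc_i,\fc_j)\;:=\;\frac{(-\tilde{d}+\fc_i)(\tilde{d}+\fc_j)}{\tilde{d}(-\tilde{d}+\fc_i-\fc_j)}\;=\;1+\frac{\fc_i\fc_j}{\tilde{d}(\fc_i-\fc_j-\tilde{d})},
\end{align*}
a rational function of $\tilde{d}$ free of $N$, and the hypothesis \eqref{cv_v_dist} translates directly into the four lower bounds $|\tilde{d}_{ij}|,\,|\tilde{d}_{ij}-\fc_i|,\,|\tilde{d}_{ij}+\fc_j|,\,|\tilde{d}_{ij}-\fc_i+\fc_j|\ge c$.

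Third, I would finish by compactness. Let $R_{ij}\subset\C$ denote the closed set cut out by those four inequalities. Each numerator and denominator factor of $f(\,\cdot\,;\fc_i,\fc_j)$ has absolute value at least $c$ on $R_{ij}$, so $f$ is a continuous $\C\setminus\{0\}$-valued function there. The expansion $f=1+O(\tilde{d}^{-1})$ at infinity shows that $f$ extends continuously and nonvanishingly to the one-point compactification $R_{ij}\cup\{\infty\}\subset\widehat{\C}$ with $f(\infty)=1$. Since this set is compact, $|f|$ attains strictly positive finite extrema, producing uniform bounds $0<c_{ij}\le|f(\tilde{d}_{ij};\fc_i,\fc_j)|\le C_{ij}$ depending only on $c$ and $\fc_i,\fc_j$. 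Multiplying over the $\binom{k}{2}$ pairs and combining with $|N^{-k\xi}\prod_i\fc_i|$ yields the asserted inequality, with constants depending on $c$, $\vec{\fc}$, and $k$. I do not anticipate any substantive obstacle; the only ingredient beyond routine algebra is the compactification at infinity, which precisely handles the fact that \eqref{cv_v_dist} imposes no upper bound on the separations of the $u_i$'s.
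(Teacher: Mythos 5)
Your decomposition is identical to the paper's: you pull out the diagonal factor $N^{-k\xi}\prod_i\fc_i$ and then bound the remaining product of pairwise cross-ratios uniformly. The only difference is that the paper's proof asserts the uniform bounds on each pairwise factor directly from \eqref{cv_v_dist} (the implicit argument being a triangle-inequality estimate), whereas you make the boundedness explicit by rescaling $\tilde d_{ij}=N^{\xi}(u_j-u_i)$ to kill all $N$-dependence and then invoking compactness of $R_{ij}\cup\{\infty\}$ in $\widehat{\C}$ — a clean way to fill in the detail, but the same underlying approach.
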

\begin{proof}
Note that
\[ \frac{D\left(\vec{u} + N^{-\xi} \vec{\fc};-\vec{u}\right)}{\Delta\left(\vec{u} + N^{-\xi} \vec{\fc}\right) \Delta(-\vec{u})} = N^{-k\xi} \prod_{i=1}^k \fc_i \cdot \prod_{1 \le i < j \le k} \frac{u_i + N^{-\xi} \fc_i - u_j}{u_i + N^{-\xi} \fc_i - u_j - N^{-\xi} \fc_j} \frac{u_i - u_j - N^{-\xi} \fc_j}{u_i - u_j}. \]
By \eqref{cv_v_dist}, we have constants $C_2' > C_1' > 0$ such that
\begin{align*}
C_1' < \left| \frac{u_i + N^{-\xi} \fc_i - u_j}{u_i + N^{-\xi} \fc_i - u_j - N^{-\xi} \fc_j} \frac{u_i - u_j - N^{-\xi} \fc_j}{u_i - u_j} \right| < C_2'
\end{align*}
for every $1 \le i < j \le k$. The lemma now follows.
\end{proof}

There exists a sequence of critical points $\fz_N$ of $A_N$ such that $\fz_N \to \fz$, by the uniform convergence $A_N \to \cA$ on compact subsets of $\Omega \setminus \{0\}$. We show that for any $\fc_1,\ldots,\fc_k > 0$,
\begin{align} \label{Airy_limit}
\begin{multlined}
\E \left[ \prod_{i=1}^k \sum_{j=1}^N e^{N^{2/3} \fc_i \ell_j} \right] = \Bigg[ e^{N^{2/3} A_N(\fz_N) \sum \fc_i} \cdot \frac{e^{\frac{1}{2} A_N''(\fz_N) \sum \fc_i^3/12}}{(2\pi\bi)^k} \oint_{\Re z_1 = \fv_1} \frac{dz_1}{\fc_1} \cdots \oint_{\Re z_k = \fv_k} \frac{dz_k}{\fc_k} \\
\times \prod_{1 \le i < j \le k} \frac{(z_j + \frac{\fc_j}{2} - z_i - \frac{\fc_i}{2})(z_j - \frac{\fc_j}{2} - z_i + \frac{\fc_i}{2})}{(z_j - \frac{\fc_j}{2} - z_i - \frac{\fc_i}{2})(z_j + \frac{\fc_j}{2} - z_i + \frac{\fc_i}{2})} \cdot \prod_{i=1}^k \exp\left( \frac{\fc_i}{2} A_N''(\fz_N) z_i^2 \right)\Bigg] \cdot (1 + o(1))
\end{multlined}
\end{align}
where the contours are oriented so that the imaginary parts increase and $\fv_1,\ldots,\fv_k \in \R$ satisfy
\begin{align} \label{thm_contour_dist}
\fv_i + \frac{\fc_i}{2} < \fv_j - \frac{\fc_j}{2}, \quad 1 \le i < j \le k.
\end{align}
Indeed, by changing variables $w_i = [A_N''(\fz_N)/2]^{-1/3}z_i$, \Cref{airy_moments} implies that the right hand side of \eqref{Airy_limit} is
\[ e^{N^{2/3} A_N(\fz_N) \sum \fc_i} \E \left[ \prod_{i=1}^k \sum_{j=1}^\infty e^{\fc_i (A_N''(\fz_N)/2)^{1/3} \fa_i} \right] (1 + o(1)) \] 
where $\fa_1,\fa_2,\ldots$ denotes the Airy point process. Thus, \eqref{Airy_limit} implies \Cref{thm:master}. Throughout this proof, we use $C$ as a positive constant independent of $N$ which may vary from line to line.

Our starting point is an integral formula for the left hand side of \eqref{Airy_limit} given by \eqref{eq:obs_mbgf} and \Cref{thm:contour_mbgf}. After changing variables so that $u_i$ is replaced by $Nu_i$, we have
\begin{align*}
& \E \left[ \prod_{i=1}^k \sum_{j=1}^N e^{N^{2/3} \fc_i \ell_j} \right] \\
& \quad \quad = \frac{1}{(2\pi\bi)^k} \oint \cdots \oint \frac{\Delta(\vec{u} + N^{-1/3} \vec{\fc}) \Delta(-\vec{u})}{D(\vec{u} + N^{-1/3} \vec{\fc}; - \vec{u})} \wt{S}_N(N \vec{u} + N^{2/3} \vec{\fc},0^N/N \vec{u}) \prod_{i=1}^k \left( \frac{u_i + N^{-1/3} \fc_i}{u_i} \right)^N du_i.
\end{align*}

\textbf{Step 1.} In this step, we construct the contours for our steepest descent analysis. By assumption, we have the existence of some $\gamma^\infty \in \Gamma_\cA(\Omega,\fz)$. We can choose $\gamma^\infty$ to have bounded arc length and to be a linear segment through $\fz$ in the positive imaginary direction near the critical point $\fz$ since $\cA''(\fz) > 0$. By the uniform convergence $A_N \to \cA$ on compact subsets of $\Omega$, there exists a sequence of curves $\gamma := \gamma^N$ converging to $\gamma^\infty$ such that $\gamma \in \Gamma_{A_N}(\Omega,\fz_N)$ for $N$ large with uniformly bounded arc length in $N$. Furthermore, we may assume that $\gamma$ is a line segment through $\fz_N$ in the positive imaginary direction in a constant order neighborhood of $\fz_N$.

The contours we use are microscopic variations of $\gamma$. Fix $\ft_1,\ldots,\ft_k \in \R$ so that
\begin{align} \label{contour_dist}
\ft_i + \fc_i < \ft_j, \quad 1 \le i < j \le k.
\end{align} 
By perturbing $\gamma$ on the order of $N^{-1/3}$, we can choose contours $\gamma_1 := \gamma_1^N,\ldots,\gamma_k := \gamma_k^N$ so that
\begin{itemize}
    \item $\gamma_i$ is a linear segment in the positive imaginary direction through $\fz_N + N^{-1/3}\ft_i$ in a neighborhood of $\fz_N + N^{-1/3} \ft_i$;
    \item for each $1 \le i < j \le k$, $\gamma_i$ is encircled by the $\gamma_j$ contour such that
    \[ \dist(\gamma_i,\gamma_j) \ge \max(\fc_i,\fc_j) N^{-1/3}, \]
    \item $\gamma_i$ has uniformly bounded arc length in $N$;
    \item $\gamma_i \to \gamma^\infty$ as $N\to\infty$ for $1 \le i \le k$.
\end{itemize}

Choose $\delta > 0$ so that
\begin{align} \label{delta_jm}
N^{-1/3 + \e} \le \delta \le N^{-\e} \end{align}
for some small $\e > 0$. We decompose the contours $\gamma_i$ as
\[ \gamma_i = \gamma_{i,1} \cup \gamma_{i,2} \]
where
\begin{align} \label{gamma_i}
\gamma_{i,1} = B_\delta(\fz + N^{-1/3} \ft_i) \cap \gamma_i, \quad \gamma_{i,2} = \gamma_i \setminus \gamma_{i,1}.
\end{align}
By \eqref{delta_jm} and the fact that $\gamma \in \Gamma_{A_N}(\Omega)$, we have for sufficiently large $N$ 
\begin{align} \label{gammai_bd}
\Re[ A_N(\fz) - A_N(u) ] &> \frac{1}{4} |A_N''(\fz)| \delta^2, \quad \quad u \in \bigcup_{i=1}^k \gamma_{i,2}.
\end{align}

\textbf{Step 2.} In this step, we rewrite our expectation in a form suitable for steepest descent. By \eqref{ssym_mbgf_limit} in our Airy appropriate assumption, we have
\begin{align} \label{moment_proof}
\begin{split}
& \E \left[ \prod_{i=1}^k \sum_{j=1}^N e^{N^{2/3} \fc_i s_j} \right] \\
& \quad \quad = \frac{1}{(2\pi\bi)^k} \oint \cdots \oint F_N(\vec{u}) \frac{\Delta(\vec{u} + N^{-1/3} \vec{\fc}) \Delta(-\vec{u})}{D(\vec{u} + N^{-1/3} \vec{\fc}; - \vec{u})} \exp\left( N \sum_{i=1}^k \int_{u_i}^{u_i + N^{-1/3} \fc_i} A_N(z) \, dz \right) \, du_i
\end{split}
\end{align}
where the $u_i$-contour is $\gamma_i$. By Taylor expanding, the above becomes
\begin{align} \label{A_moment}
\frac{1}{(2\pi\bi)^k} \oint \cdots \oint \wt{F}_N(\vec{u}) \frac{\Delta(\vec{u} + N^{-1/3} \vec{\fc}) \Delta(-\vec{u})}{D(\vec{u} + N^{-1/3} \vec{\fc}; - \vec{u})} \prod_{i=1}^k \exp\left( N^{2/3} \fc_i A_N(u_i) + N^{1/3} \frac{\fc_i^2}{2} A_N'(u_i) + \frac{\fc_i^3}{6} A_N''(u_i) \right)  \, du_i
\end{align}
where $\wt{F}_N$ is a product of $F_N$ and the $e^{o(1)}$ from the Taylor expansion. By Airy appropriateness, $\wt{F}_N$ is uniformly bounded on $\gamma_1\times \cdots \times \gamma_k$ and
\[ \lim_{N\to\infty} \wt{F}_N(\fz_N + N^{-1/3}v_1,\ldots,\fz_N + N^{-1/3}v_k) = 1, \]
for $v_1,\ldots,v_k \in \C$.
Recalling the decomposition $\gamma_i = \gamma_{i,1} \cup \gamma_{i,2}$, write
\begin{align} \label{expand_by_I}
\E \left[ \prod_{i=1}^k \sum_{j=1}^N e^{N^{2/3} \fc_i \ell_j} \right] = \sum_{\Upsilon \in \{1,2\}^k} I_\Upsilon
\end{align}
where $I_\Upsilon$ is as in \eqref{A_moment} except with the $u_i$-contour taken to be $\gamma_{i,\Upsilon_i}$ for $\Upsilon = (\Upsilon_1,\ldots,\Upsilon_k)$.

\textbf{Step 3.} We analyze each $I_\Upsilon$ in \eqref{expand_by_I}. We show that $I_{(1^k)}$ is the dominating term and exhibits the desired asymptotic behavior. For $u_i \in \gamma_{i,1}$, set
\[ u_i = \fz_N + N^{-1/3} \left( z_i - \frac{\fc_i}{2} \right), \quad \quad \Re z_i = \ft_i + \frac{\fc_i}{2},~ \Im z_i \in( -N^{1/3} \delta, N^{1/3} \delta) \]
Let $\gamma_{z_i}$ denote the $z_i$-contour obtained from this transformation on $\gamma_{i,1}$.

Using the fact that $A_N'(\fz_N) = 0$ and \eqref{delta_jm}, we have
\begin{align} \label{A_expanded}
\begin{split}
& N^{2/3} \fc_i A_N(u_i) + \frac{1}{2} N^{1/3} \fc_i^2 A_N'(u_i) + \frac{1}{6} \fc_i^3 A_N''(u_i) \\
& \quad = N^{2/3} \fc_i \left( A_N(\fz_N) + \frac{N^{-2/3}}{2} A_N''(\fz_N) (z_i - \fc_i/2)^2 \right) + \frac{1}{2} \fc_i^2 A_N''(\fz_N) (z_i - \fc_i/2)  + \frac{1}{6} \fc_i^3 A_N''(\fz_N) + o(1) \\
& \quad = N^{2/3} \fc_i A_N(\fz_N) + \frac{1}{2} A_N''(\fz_N) \cdot \frac{\fc_i^3}{12} + \frac{1}{2} \fc_i A_N''(\fz_N) z_i^2 + o(1), \quad \quad \mbox{for}~u_i \in \gamma_{i,1}.
\end{split}
\end{align}

\underline{$I_{(1^k)}$.} Set $\sL_i$ to be the infinite vertical contour $\Re z_i = \fv_i := \ft_i + \frac{\fc_i}{2}$ oriented upwards. Note that \eqref{contour_dist} implies that $\fv_1,\ldots,\fv_k$ satisfy \eqref{thm_contour_dist}. We have
\begin{align} \label{I_leading}
\begin{split}
I_{(1^k)} &= \Biggl[ e^{N^{2/3} A_N(\fz_N) \sum \fc_i} \frac{e^{\frac{1}{2} A_N''(\fz_N) \sum \fc_i^3/12}}{(2\pi\bi)^k} \oint_{\sL_1} dz_1 \cdots \oint_{\sL_k} dz_k \\
& \quad \quad \times \frac{\Delta(\vec{z}+\vec{\fc}/2) \Delta(-\vec{z}+\vec{\fc}/2)}{D(\vec{z}+\vec{\fc}/2;-\vec{z}+\vec{\fc}/2)} \cdot \prod_{i=1}^k \exp\left( \frac{1}{2} \fc_i A_N''(\fz_N) z_i^2 \right)\Biggr] \cdot (1 + o(1))
\end{split}
\end{align}
Indeed, by \eqref{A_expanded},
\begin{align*}
\begin{split}
I_{(1^k)} &= e^{N^{2/3} A_N(\fz_N) \sum \fc_i} \frac{e^{\frac{1}{2} A_N''(\fz_N) \sum \fc_i^3/12}}{(2\pi\bi)^k} \oint_{\gamma_{z_1}} dz_1 \cdots \oint_{\gamma_{z_k}} dz_k \\
& \quad \quad \times \cF_N(\vec{z}) \frac{\Delta(\vec{z}+\vec{\fc}/2) \Delta(-\vec{z}+\vec{\fc}/2)}{D(\vec{z}+\vec{\fc}/2;-\vec{z}+\vec{\fc}/2)} \prod_{i=1}^k \exp\left( \frac{1}{2} \fc_i A_N''(\fz_N) z_i^2 \right) (1 + o(1))
\end{split}
\end{align*}
where we use $\wt{F}_N$ is $1 + o(1)$ on $\gamma_{1,1} \times \cdots \gamma_{k,1}$. Due to the exponential decay coming from $e^{\frac{1}{2}\fc_i A_N''(\fz_N) z_i^2}$, we may replace $\gamma_{z_i}$ with $\sL_i$ in the integral at the cost of a relative $o(1)$ error.

\underline{$I_\Upsilon$, $\Upsilon \ne (1^k)$.} We show that
\begin{align} \label{I_Upsilon_asymp}
I_\Upsilon = I_{(1^k)} \cdot o(1), \quad \quad \Upsilon \ne (1^k).
\end{align}

We have the bound
\[ I_{(1^k)} = C e^{N^{2/3} A_N(\fz_N) \sum \fc_i} \]
for some $C > 0$. We have $\wt{F}_N$ and $N^{-k/3} \Delta(\vec{u}+\vec{\fc})\Delta(\vec{u})/D(\vec{u}+\vec{\fc};-\vec{u})$ are uniformly bounded for $\vec{u} \in \gamma_1 \times \cdots \times \gamma_k$ and in $N$; the latter follows from \Cref{cauchy_det_bd}. This implies that
\[ |I_\Upsilon| \le  C \cdot \frac{N^{k/3}}{(2\pi\bi)^k} \oint_{\gamma_{1,\Upsilon_1}} d|u_1| \cdots \oint_{\gamma_{k,\Upsilon_k}} d|u_k| \prod_{i=1}^k \left| \exp\left( N^{2/3} \fc_i A_N(u_i) + \frac{1}{2} N^{1/3} \fc_i^2 A_N'(u_i) + \frac{1}{6} \fc_i^3 A_N''(u_i)  \right) \right|. \]
For $u_i \in \gamma_{i,\Upsilon_i}$,
\begin{align*}
\left| \exp\left( N^{2/3} \fc_i A_N(u_i) + \frac{1}{2} N^{1/3} \fc_i^2 A_N'(u_i) + \frac{1}{6} \fc_i^3 A_N''(u_i)  \right) \right| \le \left\{  \begin{array}{cc}
    \displaystyle C \cdot e^{N^{2/3} A_N(\fz_N) \fc_i} & \mbox{if $u_i \in \gamma_{i,1}$} \\
    C_1 e^{N^{2/3}(A_N(\fz_N) - C_2 \delta^2)  }  & \mbox{if $u_i \in \gamma_{i,2}$},
\end{array} \right.
\end{align*}
where the bound on $\gamma_{i,2}$ comes from \eqref{gammai_bd}. By \eqref{delta_jm}, the latter bound for $u_i \in \gamma_{i,2}$ is
\[ C_1 e^{N^{2/3} A_N(\fz_N) - C_2 N^{2\e}}. \]
Since $\Upsilon_i = 2$ for some $i$, we obtain
\[ |I_\Upsilon| \le C_1 N^{k/3} e^{N^{2/3} A_N(\fz_N) \sum \fc_i} \cdot e^{-C_2 N^{2\e}} = I_{(1^k)} \cdot o(1). \]

By \eqref{expand_by_I}, we obtain
\[ \E \left[ \prod_{i=1}^k \sum_{j=1}^h e^{N^{2/3} \fc_i s_j} \right] = I_{(1^k)} \cdot (1 + o(1)). \]
From \eqref{I_leading}, we obtain \eqref{Airy_limit}, thus proving the theorem for multivariate Bessel generating functions.

For the case of Schur generating functions, we have by \eqref{eq:contour_sgf} and \Cref{thm:contour_sgf}
\begin{align*}
& \E \left[ \prod_{i=1}^k \sum_{j=1}^N e^{N^{2/3} \fc_i \ell_j} \right] \\
& \quad \quad = \frac{1}{(2\pi\bi)^k} \oint \cdots \oint \frac{\Delta(e^{\vec{u} + N^{-1/3} \vec{\fc}}) \Delta(-e^{\vec{u}})}{D(e^{\vec{u} + N^{-1/3} \vec{\fc}}; - e^{\vec{u}})} \wt{S}_N(\vec{u} + N^{-1/3} \vec{\fc},0^N/ \vec{u}) \prod_{i=1}^k \left( \frac{e^{u_i + N^{-1/3} \fc_i} - 1 }{e^{u_i} - 1} \right)^N du_i.
\end{align*}
Notice that choosing the contours as above, we can apply Airy appropriateness of the Schur generating functions to obtain \eqref{moment_proof} by replacing \[ \frac{\Delta(\vec{u} + N^{-1/3} \vec{\fc}) \Delta(-\vec{u})}{D(\vec{u} + N^{-1/3} \vec{\fc}; - \vec{u})} \quad \quad \mbox{with} \quad \quad \frac{\Delta(e^{\vec{u} + N^{-1/3} \vec{\fc}}) \Delta(-e^{\vec{u}})}{D(e^{\vec{u} + N^{-1/3} \vec{\fc}}; - e^{\vec{u}})}. \]
By an analogue of \Cref{cauchy_det_bd} for the latter term, the remainder of the proof is identical to the multivariate Bessel case.

\section{Supersymmetric Lifts} \label{sec:ssym_lifts}

The central objects in this section are a family of supersymmetric lifts for the Schur and multivariate Bessel functions. The main results of this section are contour integral formulas (\Cref{thm:ssym_bessel} and \eqref{eq:normalized_bessel_schur}) for these lifts, and asymptotics of these lifts (\Cref{bessel_asymp}). In particular, we show that \Cref{thm:ssym_bessel} implies \Cref{cor:ssym_bessel}.

\subsection{A Determinantal Family of Lifts} \label{ssec:det_lifts}

Let $\vec{x} = (x_1,\ldots,x_m)$, $\vec{y} = (y_1,\ldots,y_n)$, $p \in \C$,
\[ D(\vec{x};\vec{y}) := \prod_{i=1}^m \prod_{j=1}^n (x_i + y_j), \quad E_p(\vec{x};\vec{y}) := \begin{pmatrix} \displaystyle \frac{e^{p(x_i-y_j)}}{x_i - y_j} \end{pmatrix}_{\substack{1 \le i \le m \\ 1 \le j \le n}}. \]
Let $\delta_m = (m-1,m-2,\ldots,0)$, and for any $\vec{\ell} = (\ell_1,\ldots,\ell_m) \in \C^m$, let
\[ A_{\vec{\ell}}(\vec{x}) := \begin{pmatrix} e^{x_i \ell_j} \end{pmatrix}_{1 \le i,j \le m}. \]

Given integers $N,k \ge 0$, $p \in \C$, set $\vec{u} = (u_1,\ldots,u_{N+k})$, $\vec{v} = (v_1,\ldots,v_k)$, and define
\begin{align} \label{def:ssym_bessel}
\cB_{\vec{\ell},p}(\vec{u}/\vec{v}) := (-1)^{Nk} \frac{D(\vec{u};-\vec{v})}{\Delta(\vec{u}) \Delta(-\vec{v})} \det \begin{pmatrix} E_p(\vec{u};\vec{v}) & A_{\vec{\ell}}(\vec{u}) \end{pmatrix}
\end{align}
which is valid by extension as an analytic function in $\ell_1,\ldots,\ell_N$, $u_1,\ldots,u_{N+k}$, $v_1,\ldots,v_k \in \C$. Note that this is a doubly symmetric function in $\vec{u} \in \C^{N+k}$ and $\vec{v} \in \C^k$. Given $\lambda \in \cW_{\Z}^N$, define
\begin{align} \label{def:ssym_schur}
s_{\lambda,p}(e^{\vec{u}}/e^{\vec{v}}) := (-1)^{Nk} \prod_{i=1}^k e^{-v_i} \cdot \frac{D(e^{\vec{u}};-e^{\vec{v}})}{\Delta(e^{\vec{u}}) \Delta(-e^{\vec{v}})} \det \begin{pmatrix} E_p(\vec{u};\vec{v}) & A_{\lambda + \delta_N}(\vec{u}) \end{pmatrix}
\end{align}
which is a doubly symmetric function in $\vec{u} \in \C^{N+k}$ and $\vec{v} \in \C^k$.

\begin{remark}
Note that $s_{\lambda,p}(\vec{x}/\vec{y})$ is ill-defined for $\vec{x} = (x_1,\ldots,x_{N+k})$ and $\vec{y} = (y_1,\ldots,y_k)$ without specifying branches of $\log x_i$, $\log y_j$. In this paper, we always write $s_{\lambda,p}(e^{\vec{u}}/e^{\vec{v}})$ or otherwise specify the branch.
\end{remark}

\begin{theorem} \label{thm:ssym_lift}
For fixed $p \in \C$, $\cB_{\vec{\ell},p}(u_1,\ldots,u_{N+k}/v_1,\ldots,v_k)$ and $s_{\lambda,p}(e^{u_1},\ldots,e^{u_{N+k}}/e^{v_1},\ldots,e^{v_k})$ are respective supersymmetric lifts of $\cB_{\vec{\ell}}(u_1,\ldots,u_N)$ and $s_\lambda(e^{u_1},\ldots,e^{u_N})$ on $\C$ for $\vec{\ell} \in \cW_{\R}^N$ and $\lambda \in \cW_{\Z}^N$.
\end{theorem}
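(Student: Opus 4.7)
The plan is to verify the three defining properties of a supersymmetric lift from \Cref{def:supersymmetric}: joint analyticity and symmetry in $\vec{u}$ and $\vec{v}$, the cancellation identity under $u_{N+k} = v_k$ (respectively $e^{u_{N+k}} = e^{v_k}$ in the Schur case), and the base case $k = 0$. The base case is immediate --- at $k = 0$ the $E_p$ block is absent, while $D$ and $\Delta(-\vec{v})$ and $\prod e^{-v_i}$ are empty products, so the formulas reduce to the alternant expressions \eqref{eq:alternant} for $\cB_{\vec{\ell}}$ and $s_\lambda$. Symmetry in $\vec{u}$ holds because permuting rows of the determinant and factors of $\Delta(\vec{u})$ produces matching signs; symmetry in $\vec{v}$ is analogous via the first $k$ columns and $\Delta(-\vec{v})$.

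For global analyticity, I will check that each apparent singularity is removable. At $u_i = u_j$, two rows of the determinant coincide and this simple zero cancels the simple pole of $\Delta(\vec{u})^{-1}$. At $v_i = v_j$, two of the first $k$ columns coincide (since the $E_p$ entries of column $i$ and column $j$ become equal), cancelling the simple pole of $\Delta(-\vec{v})^{-1}$. At $u_i = v_j$, the pole of the $(i,j)$ entry $e^{p(u_i - v_j)}/(u_i - v_j)$ is cancelled by the explicit factor $(u_i - v_j)$ in $D(\vec{u};-\vec{v})$. Thus $\cB_{\vec{\ell}, p}$ extends to an entire function and $s_{\lambda,p}$ to an analytic function in the exponentials.

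The main step is the cancellation identity, which I will establish by Laurent expansion in $\epsilon := u_{N+k} - v_k$ about $\epsilon = 0$. The only singular entry of the matrix in \eqref{def:ssym_bessel} is $E_p(u_{N+k}, v_k) = e^{p\epsilon}/\epsilon$; expanding the determinant along row $N+k$ yields leading term $\epsilon^{-1}(-1)^{(N+k)+k} M + O(1)$, where $M$ specializes at $u_{N+k} = v_k$ to the determinant in \eqref{def:ssym_bessel} with $N+k$ replaced by $N+k-1$ and $k$ replaced by $k-1$. The prefactor contributes the single zero $(u_{N+k} - v_k) = \epsilon$ inside $D$, and the remaining $v_k$- and $u_{N+k}$-dependent factors $\prod_{i<N+k}(u_i - v_k)$ and $\prod_{j<k}(v_k - v_j)$ in $D$ cancel precisely against $\prod_{i<N+k}(u_i - u_{N+k})|_{u_{N+k}=v_k}$ peeled off $\Delta(\vec{u})$ and $\prod_{i<k}(v_k - v_i)$ peeled off $\Delta(-\vec{v})$. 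The $\epsilon$'s cancel, and the signs combine via $(-1)^{Nk}(-1)^{(N+k)+k} = (-1)^{N(k+1)} = (-1)^{N(k-1)}$ to reproduce exactly $\cB_{\vec{\ell}, p}$ at level $k-1$.

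For $s_{\lambda, p}$, the same argument applies with one modification: the factor $(e^{u_{N+k}} - e^{v_k}) \approx e^{v_k}\epsilon$ in $D(e^{\vec{u}}; -e^{\vec{v}})$ produces an extra $e^{v_k}$ compared to the multivariate Bessel case. This is precisely cancelled by the normalization $\prod_{i=1}^k e^{-v_i}$ in \eqref{def:ssym_schur}, which drops to $\prod_{i=1}^{k-1} e^{-v_i}$ under the specialization, supplying the balancing $e^{-v_k}$. I expect the main obstacle to be nothing more than careful bookkeeping of signs and exponential prefactors; conceptually both cases reduce to a residue identity from the pole in $u_{N+k} - v_k$ along the last row of the determinant, balanced by the explicit zero of $D$.
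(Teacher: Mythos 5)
Your proposal is correct and takes essentially the same approach as the paper: the cancellation identity is obtained by the residue computation at the coinciding pair (the paper sets $u_1 = v_1$ and pulls the factor $u_1 - v_1$ into the first column of the determinant, while you set $u_{N+k} = v_k$ and expand along the last row, but these are the same argument up to the symmetry of the construction). Your additional checks of global analyticity and of the $e^{-v_k}$ bookkeeping in the Schur case are fine; the paper simply leaves the first to the remark that \eqref{def:ssym_bessel} is "valid by extension" and the second to "the proof for $s_\lambda$ is similar."
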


\begin{proof}[Proof of \Cref{thm:ssym_lift}]
We prove that $\cB_{\vec{\ell},p}$ is a supersymmetric lift of $\cB_{\vec{\ell}}$ for $\ell_1,\ldots,\ell_N$ distinct. Continuity in $\vec{\ell}$ implies the general statement. The proof for $s_\lambda$ is similar. For $k = 0$,
\[ \cB_{\vec{\ell},p}(u_1,\ldots,u_N) = \cB_{\vec{\ell}}(u_1,\ldots,u_N). \]
It remains to check the cancellation property
\[ \cB_{\vec{\ell},p}(u_1,\ldots,u_{N+k}/v_1,\ldots,v_k) \Big|_{u_1 = v_1} = \cB_{\vec{\ell},p}(u_2,\ldots,u_{N+k}/v_2,\ldots,v_k). \]
In \eqref{def:ssym_bessel}, bring the term $u_1 - v_1$ from $D(\vec{u};-\vec{v})$ into the first column of the matrix in the determinant. Sending $u_1 \to v_1$ and observing
\[ \lim_{u_1 \to v_1} (u_1 - v_1) \frac{e^{p(u_1 - v_1)}}{u_1 - v_1} = 1 \]
proves the cancellation property.
\end{proof}

\begin{theorem} \label{thm:ssym_bessel}
Let $p,\alpha \in \C$, $\vec{\ell} \in \cW_{\R}^N$, $\vec{u} = (u_1,\ldots,u_k)$, $\vec{v} = (v_1,\ldots,v_k) \in \C^k$ where $u_1,\ldots,u_k,v_1,\ldots,v_k$ are distinct. Then
\begin{align} \label{eq:ssym_bessel}
\frac{\cB_{\vec{\ell},p}(\vec{u},0,\alpha,\ldots,(N-1)\alpha/\vec{v})}{\cB_{\vec{\ell}}(0,\alpha,\ldots,(N-1)\alpha)} = \frac{D(\vec{u};-\vec{v})}{\Delta(\vec{u})\Delta(-\vec{v})} \det \left( \frac{1}{u_i - v_j} \frac{\cB_{\vec{\ell},p}(u_i,0,\alpha,\ldots,(N-1)\alpha
/v_j)}{\cB_{\vec{\ell}}(0,\alpha,\ldots,(N-1)\alpha
)} \right)_{i,j=1}^k.
\end{align}
If $\alpha \ne 0$, $p \notin \{\ell_1,\ldots,\ell_N\}$ and $u,v \in \C \setminus\{0,\alpha,\ldots,(N-1)\alpha\}$ such that $u \ne v$, then
\begin{align} \label{eq:ssym_bessel_k=1}
\begin{multlined}
\frac{\cB_{\vec{\ell},p}(u,0,\alpha,\ldots,(N-1)\alpha
/v)}{\cB_{\vec{\ell}}(0,\alpha,\ldots,(N-1)\alpha
)} \\
= (u-v) \prod_{i=1}^N \frac{v - (i-1)\alpha}{u - (i-1)\alpha} \left( \frac{e^{p(u - v)}}{u - v} +  \int_p^\infty \! dw \oint \frac{dz}{2\pi\bi} \cdot \frac{\alpha}{e^{(w-z)\alpha} - 1} \frac{e^{zu}}{e^{wv}} \prod_{i=1}^N \frac{e^{w\alpha} - e^{\ell_i\alpha}}{e^{z\alpha} - e^{\ell_i\alpha}} \right)
\end{multlined}
\end{align}
where the $z$-contour is positively oriented around $\ell_1,\ldots,\ell_N$, and the $w$-contour is a ray from $p$ to $\infty$ which is disjoint from the $z$-contour such that
\[ \min_{0 \le i < N} \Re \Big( w \big( v - (N-i)\alpha \big) \Big) > 0 \]
for $|w|$ large.
\end{theorem}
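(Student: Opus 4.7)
The plan has two parts matching the two displayed identities. For \eqref{eq:ssym_bessel}, I would carry out a block Schur-complement computation; for \eqref{eq:ssym_bessel_k=1}, the proof is a cofactor expansion of the determinant followed by residue and Lagrange-interpolation recognition.

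For \eqref{eq:ssym_bessel}, view the $(N+k) \times (N+k)$ matrix appearing in the defining formula \eqref{def:ssym_bessel} for $\cB_{\vec{\ell},p}(u_1,\ldots,u_k, 0, \alpha, \ldots, (N-1)\alpha / \vec{v})$ as a $2 \times 2$ block matrix
\[ M = \begin{pmatrix} E_p((u_1,\ldots,u_k); \vec{v}) & A_{\vec{\ell}}((u_1,\ldots,u_k)) \\ E_p((0, \alpha, \ldots, (N-1)\alpha); \vec{v}) & A_{\vec{\ell}}((0, \alpha, \ldots, (N-1)\alpha)) \end{pmatrix}, \]
with bottom-right block $C := A_{\vec{\ell}}((0,\alpha,\ldots,(N-1)\alpha))$ a Vandermonde in the variables $e^{\alpha\ell_1},\ldots,e^{\alpha\ell_N}$; this is invertible provided the $\ell_j$ are distinct, which I may assume by continuity and then extend. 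The Schur-complement identity yields $\det M = \det C \cdot \det(h(u_i,v_j))_{i,j=1}^k$, where the crucial observation is that the $(i,j)$ entry of the Schur complement $B - AC^{-1}D$ depends on the index pair $(i,j)$ only through $(u_i, v_j)$, and therefore defines a common function $h(u,v)$. Specializing $k = 1$ identifies $h(u,v) = \det M_{k=1}/\det C$. Substituting back, using the Vandermonde splitting $\Delta(\vec{u}, 0, \alpha, \ldots, (N-1)\alpha) = \Delta(\vec{u})\Delta((0, \alpha, \ldots)) \prod_{i,m}(u_i - m\alpha)$, and pulling $\prod_m(u_i - m\alpha)$ and $\prod_m(v_j - m\alpha)^{-1}$ out of $\det(h(u_i, v_j))$ row- and column-wise, all $\alpha$-dependent products cancel against the factor $(-1)^{Nk} D((0,\alpha,\ldots); -\vec{v}) = \prod_{j,m}(v_j - m\alpha)$ coming from \eqref{def:ssym_bessel}, giving \eqref{eq:ssym_bessel}.

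For \eqref{eq:ssym_bessel_k=1}, I would expand $\det M_{k=1}$ along the row indexed by $u$. The $(0,0)$-cofactor is $\det C$, and after the global normalization it contributes the $\frac{e^{p(u-v)}}{u-v}$ term on the right-hand side. For each remaining cofactor $(-1)^j e^{u\ell_j} C_{0,j}$ with $j \ge 1$, substitute the ray-integral representation $\frac{e^{p(m\alpha - v)}}{m\alpha - v} = -\int_p^\infty e^{w(m\alpha - v)}\,dw$ entry-by-entry into the $E_p$-column of $C_{0,j}$; the convergence hypothesis $\min_{0 \le i < N}\Re(w(v - (N-i)\alpha)) > 0$ is exactly what makes every such exponential absolutely integrable along the chosen $w$-ray, and it also underwrites the Fubini interchange with the finite sum over $j$ and the inner contour. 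Pulling the $w$-integral outside the determinant, each minor becomes a genuine Vandermonde determinant in $e^{\alpha w}, e^{\alpha \ell_1}, \ldots, \widehat{e^{\alpha \ell_j}}, \ldots, e^{\alpha \ell_N}$, whose ratio with $\det C$ collapses by Lagrange interpolation (equivalently, Cramer's rule for the Vandermonde inverse $C^{-1}$) to $\prod_{j' \ne j}(e^{\alpha w} - e^{\alpha \ell_{j'}})/\prod_{j' \ne j}(e^{\alpha \ell_j} - e^{\alpha \ell_{j'}})$, so summing over $j$ produces
\[ -\int_p^\infty \frac{dw}{e^{wv}} \sum_{j=1}^N e^{u \ell_j} \prod_{j' \ne j} \frac{e^{\alpha w} - e^{\alpha \ell_{j'}}}{e^{\alpha \ell_j} - e^{\alpha \ell_{j'}}}. \]

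To finish, recognize this Lagrange sum as the inner $z$-contour integral in \eqref{eq:ssym_bessel_k=1} evaluated by residues at $z = \ell_i$: the identity $e^{\alpha w} - e^{\alpha \ell_i} = e^{\alpha \ell_i}(e^{\alpha(w - \ell_i)} - 1)$ cancels the factor $\alpha/(e^{\alpha(w-z)} - 1)$ precisely at $z = \ell_i$, and the residue of $1/(e^{\alpha z} - e^{\alpha \ell_i})$ at $z = \ell_i$ is $1/(\alpha e^{\alpha \ell_i})$, so the $z$-residue sum reproduces exactly the Lagrange summand while the sign is absorbed by the orientation of the $w$-ray. The main obstacle I anticipate is the careful bookkeeping of Vandermonde prefactors and signs when translating between $\det M$, $\det C$, and the normalizing quotient $\cB_{\vec{\ell}}(0, \alpha, \ldots, (N-1)\alpha)$, rather than anything substantively analytic; once the algebra is aligned, the Schur-complement identity and the Lagrange/residue dictionary do the real work.
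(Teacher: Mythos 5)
Your proposal is correct and follows essentially the same route as the paper: the paper proves \eqref{eq:ssym_bessel} by the block Schur-complement identity applied to the defining determinant (this is its Proposition~\ref{thm:general_ssym_bessel}, stated for general $\vec{\xi}$ and then specialized to $\vec{\xi}=(0,\alpha,\dots,(N-1)\alpha)$), and proves \eqref{eq:ssym_bessel_k=1} by writing $\mathrm{a}(u)' A_{\vec\ell}(\vec\xi)^{-1}\mathrm{b}(v)$ via the explicit Vandermonde inverse, substituting the ray integral $\frac{e^{p(m\alpha - v)}}{m\alpha-v} = -\int_p^\infty e^{w(m\alpha - v)}\,dw$, resumming with the generating series of elementary symmetric polynomials to obtain the Lagrange products, and finally contracting the sum over $a$ into the $z$-residue contour. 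Your cofactor-expansion phrasing is simply Cramer's rule for the same $A_{\vec\ell}(\vec\xi)^{-1}$, and your residue identification $e^{\alpha w}-e^{\alpha\ell_i}=e^{\alpha\ell_i}(e^{\alpha(w-\ell_i)}-1)$ is the reverse direction of the paper's contraction step; the only organizational difference is that the paper routes through the general-$\vec{\xi}$ proposition first, which slightly cleans up the $D(\vec{v};-\vec{\xi})/D(\vec{u};-\vec{\xi})$ bookkeeping that you handle by row/column scaling of the $k\times k$ determinant.
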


The proof of \Cref{thm:ssym_bessel} is contained in \Cref{ssec:ssym_contour}. The proof of \Cref{cor:ssym_bessel} is now trivial:

\begin{proof}[Proof of \Cref{cor:ssym_bessel}]
By taking $\alpha \to 0$ in \Cref{thm:ssym_bessel}, we obtain \Cref{cor:ssym_bessel}.
\end{proof}

It turns out that understanding these lifts of multivariate Bessel functions is sufficient in understanding the corresponding lifts of Schur functions. To see why this is so, observe that
\[ s_\lambda(e^{u_1},\ldots,e^{u_N}) = \frac{\Delta(u_1,\ldots,u_N)}{\Delta(e^{u_1},\ldots,e^{u_N})} \cB_{\lambda + \delta_N} (u_1,\ldots,u_N) \]
and
\[ s_{\lambda,p}(e^{\vec{u}}/e^{\vec{v}}) = \prod_{i=1}^k e^{-v_i} \cdot \frac{D(e^{\vec{u}};-e^{\vec{v}})}{D(\vec{u};-\vec{v})} \frac{\Delta(\vec{u}) \Delta(\vec{v})}{\Delta(e^{\vec{u}}) \Delta(e^{\vec{v}})} \cB_{\lambda + \delta_N,p}(\vec{u}/\vec{v};N,\alpha). \]
for $\vec{u} = (u_1,\ldots,u_{N+k})$ and $\vec{v} = (v_1,\ldots,v_k)$. For our particular application, the above implies
\begin{align} \label{eq:normalized_bessel_schur}
\begin{split}
& \frac{s_{\lambda,p}(e^{u_1},\ldots,e^{u_k},1^N/e^{v_1},\ldots,e^{v_k})}{s_\lambda(1^N)} = \prod_{i=1}^k \frac{1}{e^{v_i}} \left( \frac{u_i}{e^{u_i} - 1} \cdot \frac{e^{v_i} - 1}{v_i} \right)^N \\
& \quad \times \frac{D(e^{u_1},\ldots,e^{u_k};-e^{v_1},\ldots,-e^{v_k})}{D(u_1,\ldots,u_k;-v_1,\ldots,-v_k)} \frac{\Delta(u_1,\ldots,u_k) \Delta(v_1,\ldots,v_k)}{\Delta(e^{u_1},\ldots,e^{u_k}) \Delta(e^{v_1},\ldots,e^{v_k})} \frac{\cB_{\lambda+\delta_N,p}(u_1,\ldots,u_k,0^N/v_1,\ldots,v_k)}{\cB_{\lambda+\delta_N,p}(0^N)}
\end{split}
\end{align}
where the branches of $1$ in $1^N$ are chosen to have argument $0$.

\subsection{Asymptotics of Lifts}

The goal of this subsection is to obtain asymptotics for the supersymmetric lifts of multivariate Bessel functions introduced earlier. These asymptotics rely on steepest descent analysis. We begin with several definitions and preliminary propositions. For the most part, these definitions are conditions for which the steepest descent analysis is made possible.

The \emph{Cauchy transform} of a finite Borel measure $\bm$ is the map $G_\bm: \C \setminus \supp \bm \to \C$ defined by
\[ G_\bm(z) = \int_{\R} \! \frac{d\bm(x)}{z - x}. \]
Fixing $\bm \in \cM$, define
\[ \cS_u(z) := zu - \int \! \log(z - x) d\bm(x) - \log u. \]
We have the relation
\[ \frac{\partial}{\partial z} \cS_u(z) = u - G_\bm(z), \quad \quad z \in \C \setminus \supp \bm. \]
For $z_0 \in \C \setminus \supp \bm$, let
\begin{align*}
\cD_{u,z_0}^- & := \{ z \in \C \setminus \supp \bm: \Re \cS_u(z) < \Re \cS_u(z_0)\} \\
\cD_{u,z_0}^+ & := \{ z \in \C \setminus \supp \bm: \Re \cS_u(z) > \Re \cS_u(z_0)\}
\end{align*}
The regions above are where our steepest descent contours will lie, where $z_0$ will be given by a suitable saddle point. The next definition provides the conditions we would like for this saddle point (which we will denote by $z_u$) to satisfy.

\begin{definition} \label{def:suitable}
Let $\fO$ be the set of $u \in \C$ such that there exists $z_u \in \C \setminus \supp \bm$ satisfying
\begin{itemize}
    \item[(i)] $u = G_\bm(z_u)$ and $G_\bm'(z_u) \ne 0$;
    \item[(ii)] there exists a simple closed curve $\gamma$ which is positively oriented around $\supp \bm$ and contains $z_u$ such that
    \[ \gamma \setminus \{z_u\} \subset \cD_{u,z_u}^-. \]
\end{itemize}
Given $\fp \in \C \setminus \supp \bm$, let $\fO_\fp$ be the set of $u \in \fO$ such that
\begin{itemize}
    \item[(ii')] there exists a simple closed curve $\gamma$ which is positive oriented around $\supp \bm \cup \{\fp\}$ and contains $z_u$ such that
    \[ \gamma \setminus \{z_u\} \subset \cD_{u,z_u}^-. \]
    \item[(iii)] $\fp$ is in a component of $\cD_{u,z_u}^+$ whose boundary contains $z_u$.
\end{itemize}
Clearly, $\fO := \bigcup_{\fp \in \C \setminus \supp \bm} \fO_\fp$ and both $\fO_\fp$ and $\fO$ are open subsets of $\C$.
\end{definition}

The distinguished point $\fp$ in the definition of $\fO_{\fp}$ arises in the steepest descent analysis as a base point of a contour which is not closed. Indeed, we recall that the $w$-contour from \Cref{cor:ssym_bessel} is not a closed contour, and control of this base point is the motivation for the definition of $\fO_{\fp}$.

\begin{figure}[ht]
    \centering
    \includegraphics[width=0.5\linewidth]{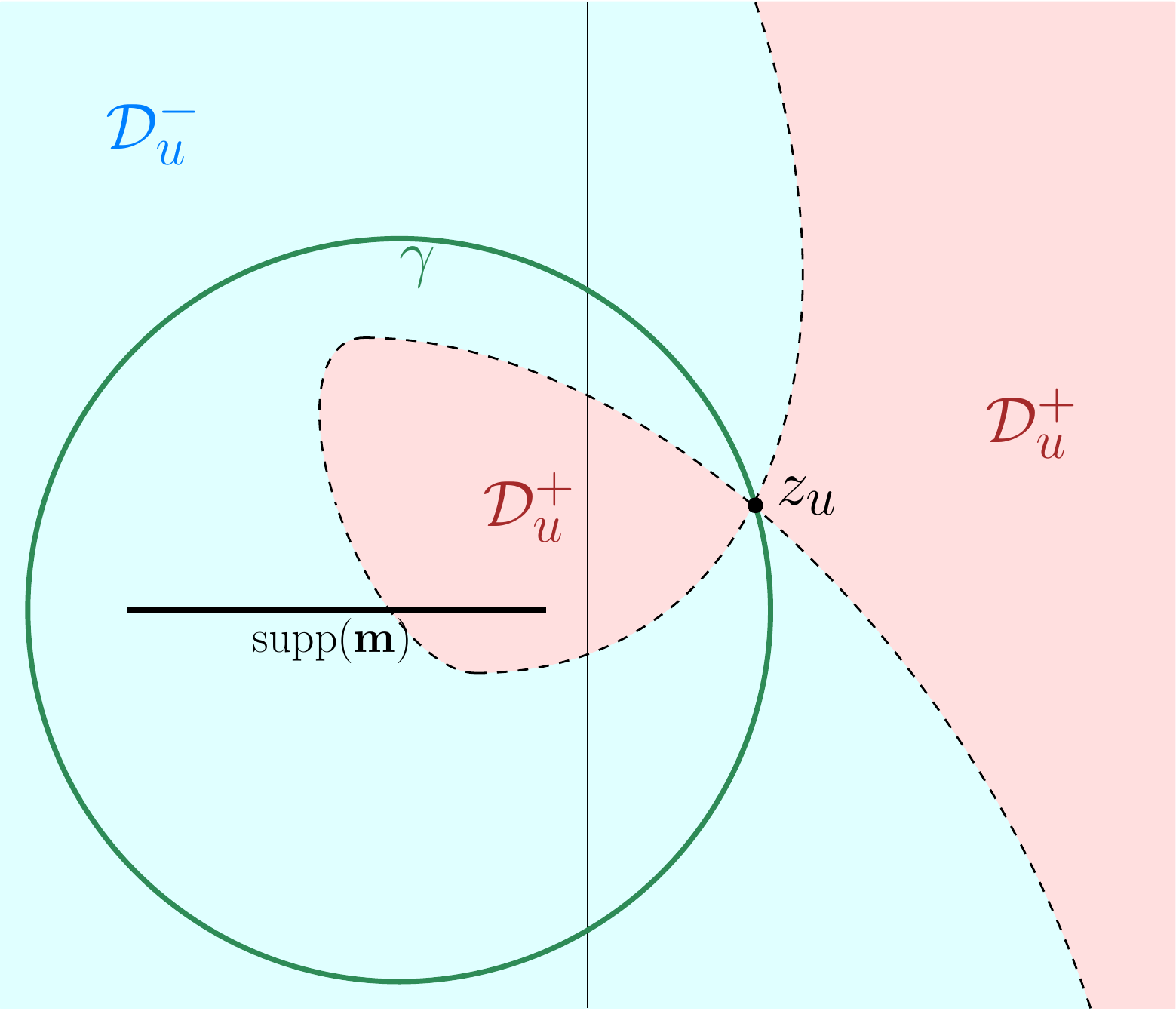}
    \caption{Level lines (dashed) of $\cS_u$ for $u \in \fO$}
    \label{fig:level_lines}
\end{figure}

\begin{proposition} \label{thm:component}
Let $z_0 \in \C \setminus \supp \bm$, $u:= G(z_0)$, and $c \in \R$. Then there exists a unique unbounded connected component of $\{z \in \C \setminus \supp \bm: \Re \cS_u(z) > c\}$ and likewise for $\{z \in \C \setminus \supp \bm: \Re \cS_u(z) < c\}$.
\end{proposition}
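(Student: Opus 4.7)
My approach will be asymptotic analysis of $\cS_u$ near infinity. Note first that although a single-valued branch of $\cS_u(z)$ requires choosing cuts, its real part
\[ h(z) := \Re(zu) - \int \log|z-x| \, d\bm(x) - \log|u| \]
is an honest harmonic function on $\C \setminus \supp \bm$ (we must have $u = G(z_0) \ne 0$ for $\cS_u$ to be defined, so $\log|u|$ makes sense). Since $\bm$ is a compactly supported probability measure, the standard expansion
$\log|z-x| = \log|z| - \Re(x/z) + O(|x|^2/|z|^2)$, valid for $|z| > \diam(\supp \bm)$, yields
\[ h(z) = \Re(zu) - \log|z| - \log|u| + O(1/|z|), \qquad |z| \to \infty, \]
uniformly in the argument of $z$, and a parallel uniform expansion holds for the angular derivative $\partial_\phi h(r e^{i\phi})$.

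Writing $u = |u|e^{i\theta}$ and $z = r e^{i\phi}$, the leading behavior on the circle $|z|=r$ is
\[ h(r e^{i\phi}) = r|u|\cos(\theta + \phi) - \log r - \log|u| + O(1/r). \]
For $r$ sufficiently large (depending on $c$ and $\bm$), this is a small perturbation of the pure-cosine profile; the unperturbed derivative $-r|u|\sin(\theta+\phi)$ is bounded away from zero outside arbitrarily small neighborhoods of $\phi = -\theta$ and $\phi = -\theta+\pi$, so the $O(1/r)$ derivative perturbation cannot create additional critical points. Consequently $\phi \mapsto h(r e^{i\phi})$ has exactly one maximum (exceeding $c$) and one minimum (below $c$) on $[0,2\pi)$, and the superlevel set $\{\phi : h(r e^{i\phi}) > c\}$ is a single open arc $I_+(r)$ converging to the open semicircle $(-\theta-\pi/2, -\theta+\pi/2)$ as $r \to \infty$. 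The sublevel set $\{\phi : h(r e^{i\phi}) < c\}$ is a single arc $I_-(r)$ converging to the complementary semicircle.

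Fix such an $R_0$, enlarged if necessary so that the fixed direction $\phi^* = -\theta$ lies in $I_+(r)$ and the direction $\phi^* + \pi$ lies in $I_-(r)$ for every $r > R_0$. Then the set $\cU_+ := \{h > c\} \cap \{|z| > R_0\} = \{r e^{i\phi} : r > R_0,\ \phi \in I_+(r)\}$ is path-connected: the ray $\{(r, \phi^*) : r > R_0\}$ is contained in $\cU_+$, and any point $(r_0, \phi_0) \in \cU_+$ joins this ray by sliding $\phi$ along the arc $I_+(r_0)$ (which contains $\phi_0$ by definition and also $\phi^*$ by choice of $R_0$). The identical argument shows $\cU_- := \{h < c\} \cap \{|z| > R_0\}$ is path-connected.

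Finally, any unbounded connected component of $\{h > c\}$ must meet every annulus $\{|z| > R\}$ and hence the connected subset $\cU_+ \subset \{h > c\}$; all such components must therefore coincide, giving uniqueness. The argument for $\{h < c\}$ using $\cU_-$ is identical. The only delicate step is uniform control of the angular derivative of the perturbation on large circles to guarantee a single sign change of $h'$; this follows at once from differentiating the $1/r$-expansion of the logarithmic potential, whose coefficients depend only on the moments of $\bm$ and are therefore uniform in $\phi$.
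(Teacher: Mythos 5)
Your argument is correct and follows essentially the same route as the paper's: expand $\Re\cS_u$ at infinity, observe that the linear term dominates, and use an angular-derivative estimate on large circles to conclude that the superlevel and sublevel sets restricted to each such circle are single arcs, from which uniqueness of the unbounded components follows. One small caveat: your assertion that $h$ has \emph{exactly} one maximum and one minimum on each large circle is not fully forced by the derivative bound alone (the $O(1/r)$ perturbation could in principle create spurious critical points inside the tiny arcs near $\phi^*$ and $\phi^*+\pi$ where $\sin(\theta+\phi)$ vanishes), but this is harmless --- what your argument does correctly establish, and what is actually needed, is that the superlevel set on each large circle is a single arc, since $h$ is far above (resp.\ below) $c$ near the cosine extrema and strictly monotone in the transition zones in between.
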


\begin{proposition} \label{thm:level_lines}
If $u \in \C$ such that there exists $z_u \in \C \setminus \supp \bm$ satisfying
\begin{itemize}
    \item $u = G(z_u)$ and $G'(z_u) \ne 0$;
    \item there exists a simple closed curve $\gamma$ positively oriented around $\supp \bm$ and through $z_u$ such that
    \[ \gamma \setminus \{z_u\} \subset \cD_{u,z_u}^- \]
\end{itemize}
then $z_u$ is the unique such point and $u \in \fO$. Furthermore, $z_u$ is contained in the boundaries of exactly three connected components of $\cD_{u,z_u}^- \cup \cD_{u,z_u}^+$. These components are the unbounded components of $\cD_{u,z_u}^-$, $\cD_{u,z_u}^+$, and a bounded component of $\cD_{u,z_u}^+$ (see \Cref{fig:level_lines}).
\end{proposition}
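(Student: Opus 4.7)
My plan is to combine a local Morse analysis at the nondegenerate critical point $z_u$ with the global topology imposed by the loop $\gamma$ encircling $\supp \bm$ and the far-field behavior of $\Re \cS_u$.

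Since $\cS_u'(z_u) = u - G_\bm(z_u) = 0$ and $\cS_u''(z_u) = -G_\bm'(z_u) \neq 0$, the Taylor expansion
\[
\cS_u(z) - \cS_u(z_u) = \tfrac{1}{2}\cS_u''(z_u)(z-z_u)^2 + O((z-z_u)^3)
\]
shows that $z_u$ is a nondegenerate saddle of the harmonic function $\Re \cS_u$. In a sufficiently small disk around $z_u$, the level set $\{\Re \cS_u = \Re \cS_u(z_u)\}$ is the union of two smooth arcs crossing transversally at $z_u$, partitioning the disk into four open sectors alternately contained in $\cD^-_{u,z_u}$ and $\cD^+_{u,z_u}$; label them $A_1^-, A_1^+, A_2^-, A_2^+$ in cyclic order. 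Since $\gamma \setminus \{z_u\} \subset \cD^-_{u,z_u}$, the curve $\gamma$ must enter $z_u$ from one minus sector and exit through the other, so $\gamma \setminus \{z_u\}$ is a single arc in $\cD^-_{u,z_u}$ joining $A_1^-$ and $A_2^-$, placing them in a common component $E^-$ of $\cD^-_{u,z_u}$.

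Because $\gamma$ is a simple closed curve whose bounded interior contains $\supp \bm$, the plus sectors $A_1^+$ and $A_2^+$ lie on opposite sides of $\gamma$ near $z_u$; any path in $\cD^+_{u,z_u}$ between them would have to cross $\gamma \subset \cD^-_{u,z_u} \cup \{z_u\}$, which is disjoint from $\cD^+_{u,z_u}$. Hence $A_1^+$ and $A_2^+$ lie in distinct components $E^+_{\mathrm{in}}, E^+_{\mathrm{out}}$ of $\cD^+_{u,z_u}$ (one inside, one outside $\gamma$), and exactly the three components $E^-, E^+_{\mathrm{in}}, E^+_{\mathrm{out}}$ have $z_u$ on their boundary. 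The component $E^+_{\mathrm{in}}$ is trapped inside $\gamma$ by the same crossing obstruction and hence bounded. From the representation $\Re \cS_u(z) = \Re(uz) - \int \log|z-x|\, d\bm(x) - \log|u|$, one sees $\Re \cS_u(z) \to \pm\infty$ along rays $|z|\to\infty$ with $\pm \Re(e^{i\arg z} u) > 0$, so by Proposition \ref{thm:component} each of $\cD^\pm_{u,z_u}$ has a unique unbounded component. To identify $E^-$ with this unbounded $\cD^-$-component I argue by contradiction: if $E^-$ were bounded, its compact closure would confine the level arcs emerging from $z_u$ to a bounded portion of the plane, while the far $-u/|u|$ direction would sit in a separate unbounded component of $\cD^-_{u,z_u}$, forcing the outer boundary arcs from $z_u$ to close up and produce a second critical level singularity on $\partial E^-$, inconsistent with the simple saddle structure at $z_u$. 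A symmetric argument identifies $E^+_{\mathrm{out}}$ as the unique unbounded $\cD^+_{u,z_u}$ component.

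For uniqueness, suppose $z_u' \neq z_u$ with an associated curve $\gamma'$ also satisfies the hypotheses; WLOG assume $\Re \cS_u(z_u) \le \Re \cS_u(z_u')$, treating the strict case first. Then $\gamma \subset \{\Re \cS_u \le \Re \cS_u(z_u)\} \subset \cD^-_{u,z_u'}$ is a closed loop in the strict sublevel set at level $\Re \cS_u(z_u')$ already winding once around $\supp \bm$. Applying the local sector structure at $z_u'$, the only way for $\gamma' \setminus \{z_u'\}$ to join the two minus sectors of $z_u'$ through $\cD^-_{u,z_u'}$ while also representing the single generator of $\pi_1(\C \setminus \supp \bm) = \Z$ is if the saddle $z_u'$ itself is the opening-up event creating an encircling loop, but the sublevel set at a lower level already contains such a loop $\gamma$; comparing the two against the homological count forced by the unique unbounded components gives the contradiction. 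The equality case is handled by perturbing $u$ slightly inside the open set $\fO$ and invoking continuity. The main obstacle is precisely this uniqueness step: rigorously articulating why the first critical value at which a loop around $\supp \bm$ appears in the sublevel set forces a uniquely determined saddle requires careful bookkeeping of how the topology of sublevel sets of $\Re \cS_u$ changes across critical values, and this is where the bulk of the technical work lies; the rest of the proposition is a straightforward consequence of the Morse-type picture at $z_u$ together with the far-field asymptotics.
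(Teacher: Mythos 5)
The local Morse analysis at the nondegenerate saddle $z_u$ and the identification of exactly three components abutting $z_u$ (one from $\cD^-_{u,z_u}$ via the loop $\gamma$ joining the two negative sectors, and two from $\cD^+_{u,z_u}$ separated by $\gamma$ into inside/outside) matches the paper's argument, and the application of \Cref{thm:component} for uniqueness of unbounded components is correct. However, from there the proposal has two genuine gaps.

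First, your identification of $E^-$ and $E^+_{\mathrm{out}}$ with the \emph{unbounded} components is not actually an argument. The claim that boundedness of $E^-$ ``would force the outer boundary arcs from $z_u$ to close up and produce a second critical level singularity'' is not a proof, and the ``symmetric argument'' for $E^+_{\mathrm{out}}$ is not stated. The paper closes both of these with short concrete arguments: for $E^+_{\mathrm{out}}$, since it lies outside $\gamma$ it misses $\supp\bm$, so if bounded, harmonicity of $\Re\cS_u$ on its closure plus the maximum principle would force $\Re\cS_u\le\Re\cS_u(z_u)$ on it, a contradiction; for $E^-$, one observes that $\Re\cS_u(t)$ is monotone on $(-\infty,E_-)$ (or $(E_+,\infty)$ according to $\sgn\Re u$), so the point where $\gamma$ crosses the real axis outside $\supp\bm$ is connected inside $\cD^-_{u,z_u}$ to a semi-infinite real ray. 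You should supply arguments at that level of concreteness rather than gesturing.

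Second, and more seriously, the uniqueness of $z_u$ is not established; you explicitly concede that ``this is where the bulk of the technical work lies.'' Your sketch appeals to a ``homological count'' and a notion of the ``first critical value at which a loop around $\supp\bm$ appears,'' neither of which is made precise, and the perturbation trick proposed for the equality case does not obviously apply since two distinct admissible points could survive small perturbations. In addition, $\pi_1(\C\setminus\supp\bm)$ need not be $\Z$, since $\supp\bm$ can be disconnected, so that identification is not available in general. By contrast, the paper's uniqueness argument is short and usable: if $z_1\ne z_2$ both work with $\Re\cS_u(z_1)\ge\Re\cS_u(z_2)$, then the bounded $\cD^+_{u,z_2}$-component abutting $z_2$ must touch $\supp\bm$ (harmonicity plus maximum principle), and its closure together with that of the unbounded $\cD^+_{u,z_2}$-component gives a connected unbounded set joining $\supp\bm$ to $\infty$ lying, aside from level-line boundary, inside $\cD^+_{u,z_2}$; any simple closed curve encircling $\supp\bm$ must meet this barrier, which is incompatible with the existence of an admissible $\gamma$ for $z_2$ once one uses $z_1\ne z_2$ with $\Re\cS_u(z_1)\ge\Re\cS_u(z_2)$. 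You should replace the sketch with an argument of this type rather than leaving it as a declared obstacle.
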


\Cref{thm:component,thm:level_lines} are proved in \Cref{ssec:component}. We note that in particular, \Cref{thm:component} implies the existence of a unique unbounded component of $\cD_u^\pm$.

Since the existence of $z_u$ implies its uniqueness, we may write $G_\bm^{[-1]}(u) := z_u$ and $\cD_u^\pm := \cD_{u,z_u}^\pm$ for $u \in \fO$. Then $G_\bm^{[-1]}$ is a right inverse of $G_\bm$, but not necessarily a left inverse. We emphasize that we use the boxed superscript $[-1]$ to indicate that this inverse is defined only on $\fO$. However, we will be able to work strictly with this choice of inverse for the remainder of the paper. If there is ambiguity in the measure $\bm$, we include an additional $\bm$ in the subscript: $\cS_{\bm,u}$, $\cD_{\bm,u,z_0}^\pm$, $\cD_{\bm,u}^\pm$, $\fO_{\bm,\fp}$, $\fO_\bm$.

\begin{assumption} \label{assum:m_conv}
Suppose $\vec{\ell} \in\cW_{\R}^N$ and $\bm \in \cM$ such that
\[ \lim_{N\to\infty} \frac{1}{N} \sum_{j=1}^N \delta_{\ell_j} = \bm \quad \mbox{weakly}, \quad \quad \lim_{N\to\infty} \sup_{1 \le i \le N} \dist(\ell_i, \supp \bm) = 0. \]
\end{assumption}

Under \Cref{assum:m_conv} with $\mathrm{m}_N := \tfrac{1}{N} \sum_{j=1}^N \delta_{\ell_j}$, we have $G_{\mathrm{m}_N}, G_{\mathrm{m}_N}'$ converge to $G_\bm, G_\bm'$ uniformly on compact subsets of $\fO_\bm$. Thus
\[ \lim_{N\to\infty} G_{\mathrm{m}_N}^{[-1]}(u) = G_\bm^{[-1]}(u) \]
uniformly over compact subsets of $\fO_\bm$.

\begin{theorem} \label{bessel_asymp}
Under \Cref{assum:m_conv} with $\mathrm{m}_N := \tfrac{1}{N} \sum_{j=1}^N \delta_{\ell_j}$, if $\fp \in \C \setminus \supp \bm$ and $\cK \subset \fO_{\bm,\fp}$ compact, then
\begin{align} \label{eq:bessel_asymp}
\begin{split}
& \frac{\cB_{\vec{\ell},\fp}(N\vec{u},0^N/N\vec{v})}{\cB_{\vec{\ell}}(0^N)} = \frac{D(\vec{u};-\vec{v})}{\Delta(\vec{u}) \Delta(-\vec{v})} \frac{\Delta(G_{\mathrm{m}_N}^{[-1]}(\vec{u})) \Delta(-G_{\mathrm{m}_N}^{[-1]}(\vec{v}))}{D(G_{\mathrm{m}_N}^{[-1]}(\vec{u});-G_{\mathrm{m}_N}^{[-1]}(\vec{v}))} \\
& \quad \quad \times \prod_{i=1}^k \frac{1}{\sqrt{G_{\mathrm{m}_N}'(G_{\mathrm{m}_N}^{[-1]}(u_i)) G_{\mathrm{m}_N}'(G_{\mathrm{m}_N}^{[-1]}(v_i))}} \exp\Bigg( N\Big(\cS_{\mathrm{m}_N,u_i}(G_{\mathrm{m}_N}^{[-1]}(u_i)) - \cS_{\mathrm{m}_N,v_i}(G_{\mathrm{m}_N}^{[-1]}(v_i))\Big) \Bigg)(1 + o(1))
\end{split}
\end{align}
uniformly over $(\vec{u},\vec{v}) = (u_1,\ldots,u_k,v_1,\ldots,v_k) \in \cK^{2k}$. The branches are chosen so that $G_{\mathrm{m}_N}'\left(G_{\mathrm{m}_N}^{[-1]}(u)\right)^{-1/2}$ points locally from $G_{\mathrm{m}_N}^{[-1]}(u)$ in the direction of the unbounded component of $\cD_{\mathrm{m}_N,u}^+$.
\end{theorem}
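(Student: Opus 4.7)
The plan is to perform a steepest descent analysis on the contour integral formula from \Cref{cor:ssym_bessel}, first reducing to the scalar case $k=1$. By the determinantal identity \eqref{eq:ssym_bessel} with $\alpha=0$,
\[
\frac{\cB_{\vec{\ell},\fp}(N\vec{u},0^N/N\vec{v})}{\cB_{\vec{\ell}}(0^N)} = \frac{D(\vec{u};-\vec{v})}{\Delta(\vec{u})\Delta(-\vec{v})} \det\!\left(\frac{1}{u_i-v_j} \frac{\cB_{\vec{\ell},\fp}(Nu_i,0^N/Nv_j)}{\cB_{\vec{\ell}}(0^N)}\right)_{i,j=1}^k,
\]
so it suffices to obtain the $k=1$ asymptotics. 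Once the $k=1$ ratio is shown to behave like $\frac{u-v}{z_u^N-z_v^N}\cdot (G'_{\mathrm{m}_N}(z_u^N) G'_{\mathrm{m}_N}(z_v^N))^{-1/2} e^{N[\cS_{\mathrm{m}_N,u}(z_u^N)-\cS_{\mathrm{m}_N,v}(z_v^N)]}$ with $z_u^N := G_{\mathrm{m}_N}^{[-1]}(u)$, the exponentials in each matrix entry $(i,j)$ depend only on row $i$ and column $j$ separately and hence factor out of the determinant; the residual Cauchy determinant $\det(1/(z_{u_i}^N - z_{v_j}^N))$ evaluates to $\Delta(G_{\mathrm{m}_N}^{[-1]}(\vec u))\Delta(-G_{\mathrm{m}_N}^{[-1]}(\vec v))/D(G_{\mathrm{m}_N}^{[-1]}(\vec u);-G_{\mathrm{m}_N}^{[-1]}(\vec v))$, producing exactly \eqref{eq:bessel_asymp}.

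For $k=1$, I apply \eqref{eq:ssym_bessel_k=1} after rescaling $u,v \mapsto Nu, Nv$ and use the algebraic identity $(v/u)^N e^{Nzu - Nwv}\prod_{i=1}^N \frac{w-\ell_i}{z-\ell_i} = \exp\bigl(N[\cS_{\mathrm{m}_N,u}(z) - \cS_{\mathrm{m}_N,v}(w)]\bigr)$ to rewrite the ratio as
\[
e^{N[\cS_{\mathrm{m}_N,u}(\fp)-\cS_{\mathrm{m}_N,v}(\fp)]} + N(u-v)\!\int_\fp^\infty\!\! dw \oint \frac{dz}{2\pi\bi(w-z)} e^{N[\cS_{\mathrm{m}_N,u}(z)-\cS_{\mathrm{m}_N,v}(w)]}.
\]
Uniform convergence of $G_{\mathrm{m}_N}$ and $G'_{\mathrm{m}_N}$ on compact subsets of $\C \setminus \supp \bm$, combined with openness of the conditions in \Cref{def:suitable}, promotes properties (i), (ii'), (iii) from $\bm$ to $\mathrm{m}_N$ for large $N$, with $z_u^N, z_v^N$ converging to $G_\bm^{[-1]}(u), G_\bm^{[-1]}(v)$. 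I deform the $z$-contour to a simple closed curve around $\supp \mathrm{m}_N \cup \{\fp\}$ passing through $z_u^N$ and lying in $\cD_{\mathrm{m}_N,u}^- \cup \{z_u^N\}$ (via (ii')); I deform the $w$-contour to a path from $\fp$ through $z_v^N$ to $\infty$ lying in $\cD_{\mathrm{m}_N,v}^+ \cup \{z_v^N\}$, where the segment $[\fp, z_v^N]$ sits in the component of $\cD_{\mathrm{m}_N,v}^+$ containing $\fp$ (via (iii)) and the tail to infinity lies in the unbounded component of $\cD_{\mathrm{m}_N,v}^+$, which has $z_v^N$ on its boundary by \Cref{thm:component,thm:level_lines}. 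Because the $z$-contour encloses $\fp$ while the $w$-contour starts at $\fp$ and immediately heads into a $\cD^+$ region, small perturbations make the two contours disjoint.

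The classical Laplace method on short arcs of length $N^{-1/3+\e}$ around each saddle yields the claimed Gaussian prefactor: the Taylor expansion $\cS_{\mathrm{m}_N,u}(z) = \cS_{\mathrm{m}_N,u}(z_u^N) - \tfrac12 G'_{\mathrm{m}_N}(z_u^N)(z - z_u^N)^2 + O((z-z_u^N)^3)$, together with its analogue in $w$, produces the factor $(G'_{\mathrm{m}_N}(z_u^N) G'_{\mathrm{m}_N}(z_v^N))^{-1/2}$ with branches fixed by the direction of each contour at its saddle in accordance with the stated convention; off the short arcs, $\Re[\cS_u(z) - \cS_v(w)]$ drops strictly below its saddle value by an $N$-independent amount, giving exponentially smaller contributions. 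For the source term, the key identity $\cS_{\mathrm{m}_N,u}(\fp) - \cS_{\mathrm{m}_N,v}(\fp) = \fp(u-v) + \log(v/u)$ (the $\int \log$ pieces cancel, leaving an expression independent of $\mathrm{m}_N$) combined with the topological constraint (ii') (which forces $\fp$ into the bounded component of $\cD_{u,z_u}^+$ relative to the $z$-contour) yields $\Re[\cS_u(\fp) - \cS_v(\fp)] < \Re[\cS_u(z_u^N) - \cS_v(z_v^N)]$, making the source exponentially subdominant. Uniformity in $(\vec u, \vec v) \in \cK^{2k}$ follows from compactness of $\cK$ and continuous dependence of the saddles and contours on $(u,v)$. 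The trickiest steps are (a) the simultaneous choice of non-intersecting contours each passing through its respective saddle along the steepest descent/ascent direction, (b) the branch-tracking of the square roots through contour orientations to match the theorem's convention, and (c) the source-term subdominance, where the topological input (ii') is used essentially.
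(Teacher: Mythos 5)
There is a genuine gap in the contour construction that undermines the $k=1$ steepest descent. You claim that after deforming the $z$-contour to a closed curve around $\supp\mathrm{m}_N \cup \{\fp\}$ and the $w$-contour to a path from $\fp$ through $z_v^N$ to $\infty$, ``small perturbations make the two contours disjoint.'' This cannot hold: the $w$-contour starts at $\fp$, which you have just placed inside the closed $z$-contour, and ends at $\infty$, which is outside, so by the Jordan curve theorem the two contours must intersect. Deforming the $z$-contour from its original position (around $\{\ell_i\}$ only) to one that also encircles $\fp$ therefore forces it to cross the $w$-contour, picking up residues from the pole of $1/(w-z)$. The paper's proof (\Cref{single_bessel}) handles exactly this: the term $L(u,v)$ is defined as an integral of $e^{N(S_u(w)-S_v(w))}$ over the portion $\gamma_{z,w}$ of $\gamma_w$ enclosed by $\gamma_z$, plus the boundary source; after integrating, the $\fp$-endpoint of the residue curve cancels the boundary source exactly, and the remaining endpoints lie at intersection points $\gamma_z \cap \gamma_w$ which are far from both saddles, so both $\Re[S_u(y_u)-S_u(\cdot)]$ and $\Re[S_v(\cdot)-S_v(y_v)]$ give exponential decay. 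Your alternative argument --- that $\Re[\cS_u(\fp)-\cS_v(\fp)] < \Re[\cS_u(z_u)-\cS_v(z_v)]$ should follow from $\fp$ sitting in a bounded component of $\cD^+$ --- does not follow: you know $\Re\cS_u(\fp)>\Re\cS_u(z_u)$ and $\Re\cS_v(\fp)>\Re\cS_v(z_v)$, but these go in the same direction and say nothing about the difference; indeed the inequality is false as $u\to v$.

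A second, milder gap is the promotion from $k=1$ to general $k$. The paper first proves the $k=1$ asymptotic only under a separation hypothesis $|u-v|\ge\xi$ (Proposition~3.14): this is essential because the steepest descent contours $\gamma_z$ (through $y_u$) and $\gamma_w$ (through $y_v$) must satisfy $\dist(\gamma_z,\gamma_w) \ge C > 0$, and the error bound near $I_4$ degenerates as $y_u\to y_v$. The theorem's claim of uniformity over all of $\cK^{2k}$, including configurations where some $u_i=v_j$, then requires a separate step: the paper places the $2k$ integration variables on $2k$ nested contours $\partial\cK_{\delta_1},\dots,\partial\cK_{\delta_{2k}}$ (automatically pairwise separated), applies the $k=1$ result on this grid, and pulls the asymptotic back to arbitrary $(\vec u,\vec v)\in\cK^{2k}$ by Cauchy's integral formula. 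Your one-line appeal to ``compactness and continuous dependence of the saddles'' does not address this, since the $(1+o(1))$ factors in each matrix entry are not obviously uniform near the diagonal, and plugging entry-wise asymptotics directly into a Cauchy determinant whose value may be small requires exactly this kind of analyticity argument.
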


By \eqref{eq:normalized_bessel_schur}, \Cref{bessel_asymp} implies asymptotics for $s_{\lambda,p}$.

We divide the remainder of this section into three parts. In \Cref{ssec:ssym_contour}, we derive a contour integral formula for the normalized lift $\cB_{\vec{\ell},p}$. In \Cref{ssec:component}, we provide the proofs of the results \Cref{thm:component,thm:level_lines} about $\cS_u$. In \Cref{ssec:bessel_asymp}, we use these formulas to prove \Cref{single_bessel}.

\subsection{Contour Integral Formula} \label{ssec:ssym_contour}

We prove \Cref{thm:ssym_bessel} through a formula for more general normalizations of $\cB_{\vec{\ell},p}$.

\begin{proposition} \label{thm:general_ssym_bessel}
Let $\vec{\ell} \in \cW_{\R}^N$, $\vec{\xi} := (\xi_1,\ldots,\xi_N) \in \C^N$, $\vec{u} := (u_1,\ldots,u_k)$, $\vec{v} := (v_1,\ldots,v_k) \in (\C \setminus \{\xi_1,\ldots,\xi_N\})^k$ such that $u_1,\ldots,u_k,$ $v_1,\ldots,v_k$ are distinct. Then
\begin{align} \label{eq:general_formula}
\frac{\cB_{\vec{\ell},p}(\vec{u},\vec{\xi}/\vec{v})}{\cB_{\vec{\ell}}(\vec{\xi})} = \frac{D(\vec{u};-\vec{v})}{\Delta(\vec{u}) \Delta(-\vec{v})} \det \left( \frac{1}{u_i - v_j} \frac{\cB_{\vec{\ell},p}(u_i,\vec{\xi}/v_j)}{\cB_{\vec{\ell}}(\vec{\xi})} \right)_{i,j=1}^k.
\end{align}
Define the vectors
\[ \mathrm{a}(u) = (e^{\ell_1 u},\ldots,e^{\ell_N u}), \quad \mathrm{b}(v) = \left( \frac{e^{p(\xi_1 - v)}}{v - \xi_1}, \frac{e^{p(\xi_2 - v)}}{v - \xi_2},\ldots, \frac{e^{p(\xi_N - v)}}{v - \xi_N} \right) \]
which we view as column vectors, and denote the transpose of a vector $\mathrm{u}$ by $\mathrm{u}'$. If $\xi_1,\ldots,\xi_N$ are distinct and $u,v \in \C \setminus \{\xi_1,\ldots,\xi_N\}$ such that $u \ne v$, then
\begin{align} \label{eq:general_formula_k=1}
\frac{\cB_{\vec{\ell},p}(u,\vec{\xi}/v)}{\cB_{\vec{\ell}}(\vec{\xi})} = (u - v) \prod_{i=1}^N \frac{v - \xi_i}{u - \xi_i} \left( \frac{e^{p(u - v)}}{u - v} + \mathrm{a}(u)' A_{\vec{\ell}}(\vec{\xi})^{-1} \mathrm{b}(v) \right).
\end{align}
\end{proposition}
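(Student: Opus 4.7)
The plan is to reduce both formulas to a single algebraic identity obtained from a Schur complement expansion of the $(N+k)\times(N+k)$ determinant that defines $\cB_{\vec{\ell},p}(\vec{u},\vec{\xi}/\vec{v})$. Write the defining matrix as the block matrix
\[ M = \begin{pmatrix} E_p(\vec{u};\vec{v}) & A_{\vec{\ell}}(\vec{u}) \\ E_p(\vec{\xi};\vec{v}) & A_{\vec{\ell}}(\vec{\xi}) \end{pmatrix}, \]
with blocks of sizes $k\times k$, $k\times N$, $N\times k$, $N\times N$. Assuming $\xi_1,\ldots,\xi_N$ are distinct (so that $A_{\vec{\ell}}(\vec{\xi})$ is invertible, being the Vandermonde-like matrix of the distinct values $e^{\xi_i}$), the Schur complement identity gives
\[ \det M = \det A_{\vec{\ell}}(\vec{\xi})\cdot\det\!\Bigl( E_p(\vec{u};\vec{v}) - A_{\vec{\ell}}(\vec{u})\,A_{\vec{\ell}}(\vec{\xi})^{-1}\,E_p(\vec{\xi};\vec{v})\Bigr). \]
Unwinding: row $i$ of $A_{\vec{\ell}}(\vec{u})$ is $\mathrm{a}(u_i)'$, column $j$ of $E_p(\vec{\xi};\vec{v})$ is $-\mathrm{b}(v_j)$ (the sign comes from $\xi_l - v_j = -(v_j - \xi_l)$), so the $(i,j)$-entry of the Schur complement equals
\[ \frac{e^{p(u_i-v_j)}}{u_i-v_j} + \mathrm{a}(u_i)'\,A_{\vec{\ell}}(\vec{\xi})^{-1}\,\mathrm{b}(v_j). \]

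\textbf{The $k=1$ case.} Specializing the above to $k=1$ and combining with the defining prefactor
\[ \cB_{\vec{\ell},p}(u,\vec{\xi}/v) = (-1)^N\,\frac{(u-v)\prod_l(\xi_l-v)}{\prod_l(u-\xi_l)\,\Delta(\vec{\xi})}\,\det M, \]
together with $\cB_{\vec{\ell}}(\vec{\xi}) = \det A_{\vec{\ell}}(\vec{\xi})/\Delta(\vec{\xi})$ and $(-1)^N\prod_l(\xi_l-v)=\prod_l(v-\xi_l)$, yields \eqref{eq:general_formula_k=1} immediately.

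\textbf{The general $k$ case.} For general $k$, factor the products $D(\vec{u}\cup\vec{\xi};-\vec{v}) = D(\vec{u};-\vec{v})\prod_{l,j}(\xi_l - v_j)$ and $\Delta(\vec{u}\cup\vec{\xi}) = \Delta(\vec{u})\Delta(\vec{\xi})\prod_{i,l}(u_i-\xi_l)$, and again insert $\cB_{\vec{\ell}}(\vec{\xi}) = \det A_{\vec{\ell}}(\vec{\xi})/\Delta(\vec{\xi})$. This gives
\[ \frac{\cB_{\vec{\ell},p}(\vec{u},\vec{\xi}/\vec{v})}{\cB_{\vec{\ell}}(\vec{\xi})} = (-1)^{Nk}\,\frac{D(\vec{u};-\vec{v})\,\prod_{l,j}(\xi_l - v_j)}{\Delta(\vec{u})\,\Delta(-\vec{v})\,\prod_{i,l}(u_i-\xi_l)}\,\det(m_{ij}), \]
where $m_{ij}$ is the Schur-complement entry above. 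Now observe from \eqref{eq:general_formula_k=1} (just proved) that
\[ m_{ij} = \frac{1}{u_i - v_j}\prod_l\frac{u_i - \xi_l}{v_j-\xi_l}\cdot\frac{\cB_{\vec{\ell},p}(u_i,\vec{\xi}/v_j)}{\cB_{\vec{\ell}}(\vec{\xi})}. \]
The factor $\prod_l(u_i-\xi_l)/\prod_l(v_j-\xi_l)$ is a product of a function of $i$ and a function of $j$, hence it pulls out of the determinant; combined with a final $(-1)^{Nk}$ from rewriting $\prod_{l,j}(\xi_l-v_j) = (-1)^{Nk}\prod_{l,j}(v_j-\xi_l)$, the extraneous factors cancel exactly, leaving \eqref{eq:general_formula}.

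\textbf{Extension and remarks.} The derivation used that $\xi_1,\ldots,\xi_N$ are distinct (so $A_{\vec{\ell}}(\vec{\xi})$ is invertible), but both sides of \eqref{eq:general_formula} are analytic in $\vec{\xi}$ wherever $\cB_{\vec{\ell}}(\vec{\xi}) \ne 0$, so the general case follows by analytic continuation. The only nontrivial steps are the Schur complement expansion and the bookkeeping that folds together the various Vandermonde, $D$, and sign factors; the main obstacle is purely notational, ensuring all the $(-1)^{Nk}$ factors and the $\prod(u_i-\xi_l)$, $\prod(v_j-\xi_l)$ products cancel correctly when the $k=1$ identity is reinserted into the $k\times k$ determinant.
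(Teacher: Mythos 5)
Your proof is correct and follows essentially the same route as the paper's: both expand the block determinant defining $\cB_{\vec{\ell},p}(\vec{u},\vec{\xi}/\vec{v})$ via the Schur complement with respect to $A_{\vec{\ell}}(\vec{\xi})$, identify the $(i,j)$-entry of the resulting $k\times k$ determinant as a rescaled copy of the $k=1$ expression, and cancel the Vandermonde, $D$, and sign prefactors. The only cosmetic difference is that the paper also notes one may assume $\ell_1,\ldots,\ell_N$ distinct by a continuity argument (so that $A_{\vec{\ell}}(\vec{\xi})$ is genuinely invertible), whereas you implicitly guarantee this through the condition $\cB_{\vec{\ell}}(\vec{\xi})\ne 0$; this does not affect the validity of the argument.
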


\begin{proof}
We may assume $\ell_1,\ldots,\ell_N$ are distinct by continuity. Suppose $u_1,\ldots,u_k$, $v_1,\ldots,v_k$, $\xi_1,\ldots,\xi_N$ are distinct. By \eqref{def:ssym_bessel}, we have
\[ \cB_{\vec{\ell},p}(\vec{u},\vec{\xi}/\vec{v}) = \frac{D(\vec{v};-\vec{\xi})}{D(\vec{u};-\vec{\xi})} \frac{D(\vec{u};-\vec{v})}{\Delta(\vec{u}) \Delta(-\vec{v})} \frac{1}{\Delta(\vec{\xi})} \det \begin{pmatrix} E_p(\vec{u};\vec{v}) & A_{\vec{\ell}}(\vec{u}) \\ E_p(\vec{\xi};\vec{v}) & A_{\vec{\ell}}(\vec{\xi}) \end{pmatrix}. \]
Using the block matrix determinant formula
\[ \det \begin{pmatrix} M_{11} & M_{12} \\ M_{21} & M_{22} \end{pmatrix} = \det\begin{pmatrix} M_{22} \end{pmatrix} \det \begin{pmatrix} M_{11} - M_{12}M_{22}^{-1}M_{21} \end{pmatrix} \]
where $M_{11},M_{22}$ are square matrices and $M_{22}$ is invertible, we obtain
\[ \cB_{\vec{\ell},p}(\vec{u},\vec{\xi}/\vec{v}) = \frac{D(\vec{v};-\vec{\xi})}{D(\vec{u};-\vec{\xi})} \frac{D(\vec{u};-\vec{v})}{\Delta(\vec{u}) \Delta(-\vec{v})} \cB_{\vec{\ell}}(\vec{\xi}) \det \begin{pmatrix} E_p(\vec{u};\vec{v}) - A_{\vec{\ell}}(\vec{u}) A_{\vec{\ell}}(\vec{\xi})^{-1} E_p(\vec{\xi};\vec{v}) \end{pmatrix}  \]
where we used the alternant formula for multivariate Bessel functions \eqref{eq:alternant}. Then
\begin{align} \label{eq:ssym_schur_proof}
\frac{\cB_{\vec{\ell},p}(\vec{u},\vec{\xi}/\vec{v})}{\cB_{\vec{\ell}}(\vec{\xi})} = \frac{D(\vec{u};-\vec{v})}{\Delta(\vec{u}) \Delta(-\vec{v})} \det \left( \prod_{l=1}^N \frac{v_j - \xi_l}{u_i - \xi_l} \left( \frac{e^{p(u_i - v_j)}}{u_i - v_j} + \mathrm{a}(u_i)' A_{\vec{\ell}}(\vec{\xi})^{-1} \mathrm{b}(v_j) \right) \right)_{i,j=1}^k.
\end{align}
When $k = 1$, this gives us \eqref{eq:general_formula_k=1}. For general $k$, observe that the $(i,j)$ entry of the matrix in \eqref{eq:ssym_schur_proof} is
\[ \frac{1}{u_i - v_j} \frac{\cB_{\vec{\ell},p}(u_i,\vec{\xi}/v_j)}{\cB_{\vec{\ell}}(\vec{\xi})}. \]
This proves \eqref{eq:general_formula}, where the general case for $\xi$ with possibly indistinct components follows from continuity. 
\end{proof}

\begin{proof}[Proof of \Cref{thm:ssym_bessel}]
We have \eqref{eq:ssym_bessel} as a direct consequence of \eqref{eq:general_formula} with $\Xi = ((N-1)\alpha,(N-2)\alpha,\ldots,0)$. We now prove \eqref{eq:ssym_bessel_k=1}. Using \eqref{eq:general_formula_k=1}, we have
\begin{align} \label{eq:geossym_schur_proof}
\begin{split}
&\frac{\cB_{\vec{\ell},p}(u,0,\alpha,\ldots,(N-1)\alpha
/v)}{\cB_{\vec{\ell}}(0,\alpha,\ldots,(N-1)\alpha
)} \\
& \quad = (u - v) \prod_{i=1}^N \frac{v - (i-1)\alpha}{u - (i-1)\alpha} \left( \frac{e^{p(u - v)}}{u - v} + \sum_{a,b=1}^N e^{\ell_a u} \left[A_{\vec{\ell}}((N-1)\alpha,\ldots,0)^{-1}\right]_{a,b} \frac{e^{p(N-b)\alpha - pv}}{v - (N-b)\alpha} \right)
\end{split}
\end{align}
and note that
\[ A_{\vec{\ell}}((N-1)\alpha,(N-2)\alpha,\ldots,0) = A_{\delta_N}(\ell_1 \alpha,\ell_2 \alpha,\ldots,\ell_N \alpha)'. \]
This is a Vandermonde matrix in $e^{\ell_1 \alpha},\ldots,e^{\ell_N \alpha}$ whose inverse is given by
\[ \left[A_{\vec{\ell}}((N-1)\alpha,\ldots,0)^{-1}\right]_{a,b} = (-1)^{b-1} e_{b-1}(e^{\ell_1 \alpha},\ldots,\widehat{e^{\ell_a \alpha}},\ldots,e^{\ell_N \alpha}) \prod_{i \ne a} \frac{1}{e^{\ell_a \alpha} - e^{\ell_i \alpha}} \]
where the $e_b$'s denote the elementary symmetric polynomials. The summation over $a,b$ in \eqref{eq:geossym_schur_proof} becomes
\[ \sum_{a=1}^N e^{\ell_a u} \prod_{i\ne a} \frac{1}{e^{\ell_a \alpha} - e^{\ell_i \alpha}} \sum_{b=1}^N (-1)^{b-1} e_{b-1}(e^{\ell_1 \alpha},\ldots,\widehat{e^{\ell_a \alpha}},\ldots,e^{\ell_N \alpha}) \int_p^\infty \frac{e^{w(N-b)\alpha}}{e^{wv}} \, dw \]
where the hat indicates omission of the indicated term and where the line integral is a ray from $p$ to $\infty$ such that
\[ \Re\Big(w\big((N-b)\alpha - v \big) \Big) < 0 \]
for every $b = 1,\ldots,N$ and $|w|$ large. By the generating series for the elementary symmetric polynomials, this becomes
\[ \sum_{a=1}^N \int_p^\infty \! \frac{e^{\ell_a u}}{e^{w v}} \prod_{i \ne a} \frac{e^{w \alpha} - e^{\ell_i \alpha}}{e^{\ell_a \alpha} - e^{\ell_i \alpha}} \, dw = \int_p^\infty \! dw \oint \frac{dz}{2\pi\bi} \cdot \frac{\alpha}{e^{(w-z)\alpha} - 1} \frac{e^{zu}}{e^{wv}} \prod_{i=1}^N \frac{e^{w\alpha} - e^{\ell_i\alpha}}{e^{z\alpha} - e^{\ell_i\alpha}} \]
where the $z$-contour is positively oriented around $\ell_1,\ldots,\ell_N$ and does not intersect the $w$-contour. Inputting the double contour integral formula into \eqref{eq:geossym_schur_proof} yields \eqref{eq:ssym_bessel_k=1}.
\end{proof}

\subsection{Components and Level Lines} \label{ssec:component}

In this section we prove \Cref{thm:component,thm:level_lines}.

\begin{proof}[Proof of \Cref{thm:component}]
Observe that for any $\e > 0$, there exists $r_0 > 0$, which can be chosen to depend continuously in $c$, $\inf \supp \bm$, $\sup \supp \bm$, large enough so that
\begin{gather} \label{eq:arg_contain}
\begin{split}
\{ z \in \C \setminus \supp \bm: |z| > r_0, \Re(zu) < -\e|z| \} \subset \{z: \Re \cS_u(z) < c \}, \\
\{ z \in \C \setminus \supp \bm: |z| > r_0, \Re(zu) > \e|z| \} \subset \{z: \Re \cS_u(z) > c\}.
\end{split}
\end{gather}
which proves the existence of unbounded connected components of $\{\Re \cS_u(z) > c\}$ and $\{\Re \cS_u(z) < c\}$. Indeed, \eqref{eq:arg_contain} is implied by the domination of the linear term in
\begin{align*}
\Re \cS_u(z) & = \Re(zu) - \int \log \left| z - x \right| d\bm(x) - \Re \log u
\end{align*}
as $|z| \to \infty$. 

To prove the uniqueness of unbounded components, we show that, $\Re \cS_u(z) = \Re \cS_u(|z|e^{\bi \theta})$ as a function of $\theta$ with $|z| = r \ge r_0$ fixed is monotone over each of the two arcs in $\{|\Re(zu)| \le \varepsilon r: |z| = r\}$ for $r$ sufficiently large. Indeed, this monotonicity implies that the boundary between $\{\Re \cS_u(z) > c\}$ and $\{\Re \cS_u(z) < c\}$ is crossed exactly once in each of these arcs for all $r \ge r_0$, which can only be the case if the unbounded components are unique. If $z = x + \bi y$ and $f$ is analytic in $z$, then
\[ f'(z) = (\Re f)_x(z) - \bi (\Re f)_y(z) \]
by Cauchy-Riemann. This implies
\[ \Re\left[ \frac{d}{dt} f(\gamma(t)) \right] = \Re[ f'(\gamma(t)) \gamma'(t)] = (\Re f)_x(\gamma(t)) \Re \gamma'(t) + (\Re f)_y(\gamma(t)) \Im \gamma'(t) = \frac{d}{dt} \Re f(\gamma(t)) \]
where in the latter we view $\gamma(t)$ as a curve in $\R^2$. Using this fact and setting $z = |z| e^{\bi \theta}$, we have
\[ \frac{d}{d\theta} \Re \cS_u(z) = \Re\left[ \frac{d}{d\theta} \cS_u(z) \right] = \Re\left(\bi z u - \int \frac{\bi z \, d\bm(x)}{z - x} \right) = \Re(\bi z u) + O(1) \]
as $|z| \to \infty$, where the $O(1)$ term can be bounded by a constant order term depending continuously in $c, \inf \supp \bm, \sup \supp \bm$. If $r$ is large and $\e$ is small, $\Re(\bi z u)/|z|$ is bounded away from zero on each of the arcs of $\{|\Re(zu)| \le \varepsilon r: |z| = r\}$. Thus we obtain the desired monotonicity statement.
\end{proof}

\begin{proof}[Proof of \Cref{thm:level_lines}]
We show that $z_u$ is contained in the boundaries of exactly three components of $\cD_{u,z_u}^- \cup \cD_{u,z_u}^+$: the unbounded components of $\cD_{u,z_u}^-$ and $\cD_{u,z_u}^+$ and a bounded component of $\cD_{u,z_u}^+$. We prove uniqueness afterwards.

Since $z_u$ is a critical point of $\cS_u$ such that $\cS_u''(z_u) = G'(z_u) \ne 0$, we have $\cS_u$ is locally quadratic at $z_u$. Thus, for $\e$ small, the set $B_\e(z_0) \setminus \{\Re \cS_u(z) = \Re \cS_u(z_u)\}$ consists of four disjoint sets, two belonging to $\cD_{u,z_u}^+$ and the other two to $\cD_{u,z_u}^-$. Each piece belongs to a component of $\cD_{u,z_u}^+$ or $\cD_{u,z_u}^-$. We denote these three components by $\mathrm{D}_1^+,\mathrm{D}_2^+ \subset \cD_{u,z_u}^+$ and $\mathrm{D}_1^-,\mathrm{D}_2^- \subset \cD_{u,z_u}^-$.

By \Cref{def:suitable}, there exists a curve $\gamma$ positively oriented around $\supp \bm$ and containing $G^{[-1]}(u)$ such that $\gamma \setminus \{G^{[-1]}(u)\} \subset \cD_{u,z_u}^-$. This implies that $\mathrm{D}^- := \mathrm{D}_1^- = \mathrm{D}_2^-$, one of $\mathrm{D}_1^+,\mathrm{D}_2^+$ (say $\mathrm{D}_1^+$) is a bounded set, and $\mathrm{D}_2^+$ does not intersect $\supp \bm$. By harmonicity of $\Re \cS_u(z)$ and the maximum principle, we must have that $\mathrm{D}_2^+$ is unbounded.

To see that $\mathrm{D}^-$ is unbounded, first suppose $\Re u \ge 0$. Then $\Re \cS_u(t)$ increases on $\{t < \inf \supp \bm\}$. Since $\gamma \subset \mathrm{D}^-$ encircles $\supp \bm$, there exists $a \in \mathrm{D}^-$ such that $a < \inf \supp \bm$. Then $(-\infty,a] \subset \mathrm{D}^-$ by the noted monotonicity. Thus $\mathrm{D}^-$ is the unbounded component of $\cD_u^-$. The argument for $\Re u \le 0$ is similar.

It remains to show that $z_u$ is the unique point in $\C \setminus \supp \bm$ satisfying (i) and (ii) in \Cref{def:suitable}. Suppose for contradiction that we have two such points $z_1,z_2$. Assume that $\Re \cS_u(z_1) \ge \Re \cS_u(z_2)$. Then $z_2$ is contained in the boundary of a bounded component of $\cD_{u,z_u}^+$ and the unbounded component of $\cD_{u,z_2}^+$ denoted $\mathrm{D}_1,\mathrm{D}_2$ as above. By harmonicity and the maximum principle, $\mathrm{cl}(\mathrm{D}_1)$ intersects $\supp \bm$. Then $\mathrm{cl}(\mathrm{D}_1 \cup \mathrm{D}_2)$ is a connected, unbounded subset of $\C$ intersecting $\supp \bm$. Thus any closed curve around $\supp \bm$ must intersect $\mathrm{cl}(\mathrm{D}_1 \cup \mathrm{D}_2) \subset \cD_{u,z_2}^+$. However, this is incompatible with the existence of closed curve $\gamma$ around $\supp \bm$ satisfying $\gamma \setminus \{z_2\} \subset \cD_{u,z_2}^-$ by the fact that $\Re \cS_u(z_1) \ge \Re \cS_u(z_2)$ and $z_1 \ne z_2$ --- a contradiction.
\end{proof}

\subsection{Proof of Theorem \ref{bessel_asymp}} \label{ssec:bessel_asymp}

In \cite{GS}, Gorin-Sun study similar asymptotics arising from normalized multivariate Bessel functions. Due to common features in the asymptotics, we adapt the organization in \cite[\S 3.4]{GS} to our setting.

We first prove a special case of \Cref{bessel_asymp} where we take $k = 1$ and impose separation between our variables.

\begin{proposition} \label{single_bessel}
Fix $\xi > 0$ arbitrarily small. Under \Cref{assum:m_conv} with $\mathrm{m}_N := \tfrac{1}{N} \sum_{j=1}^N \delta_{\ell_j}$, if $\fp \in \C \setminus \supp \bm$ and $\cK \subset \fO_{\bm,\fp}$ is compact, then
\begin{align}
\begin{split}
& \frac{\cB_{\vec{\ell},\fp}(Nu,0^N/Nv)}{\cB_{\vec{\ell}}(0^N)} = \frac{u - v}{G_{\mathrm{m}_N}^{[-1]}(u) - G_{\mathrm{m}_N}^{[-1]}(v)} \frac{1}{\sqrt{G_{\mathrm{m}_N}'(G_{\mathrm{m}_N}^{[-1]}(u)) G_{\mathrm{m}_N}'(G_{\mathrm{m}_N}^{[-1]}(v))}} \\
& \quad \quad \times \exp\Bigg( N\Big(\cS_{\mathrm{m}_N,u}(G_{\mathrm{m}_N}^{[-1]}(u)) - \cS_{\mathrm{m}_N,v}(G_{\mathrm{m}_N}^{[-1]}(v))\Big) \Bigg)(1 + o(1))
\end{split}
\end{align}
uniformly over $(\cK \times \cK) \cap \{|u - v| \ge \xi \}$. The branch is chosen so that $G_{\mathrm{m}_N}'(G_{\mathrm{m}_N}^{[-1]}(u))^{-1/2}$ points locally from $G_{\mathrm{m}_N}^{[-1]}(u)$ in the direction of the unbounded component of $\cD_{\mathrm{m}_N,u}^+$.
\end{proposition}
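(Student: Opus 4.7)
The strategy is steepest descent on the double contour integral from Corollary \ref{cor:ssym_bessel}. After substituting $u\mapsto Nu$, $v\mapsto Nv$ and rewriting the exponential factor in the integrand using the definition of $\cS_{\mathrm{m}_N,\cdot}$ (which absorbs the $(u/v)^N$ factor inside the integrand against one of the $(v/u)^N$ factors outside), we obtain
\[
\frac{\cB_{\vec{\ell},\fp}(Nu,0^N/Nv)}{\cB_{\vec{\ell}}(0^N)} = (v/u)^N e^{N\fp(u-v)} + N(u-v)\!\int_{\fp}^\infty\! dw \oint\! \frac{dz}{2\pi\bi}\, \frac{e^{N[\cS_{\mathrm{m}_N,u}(z) - \cS_{\mathrm{m}_N,v}(w)]}}{w-z}.
\]
Using the residue at $z=w$ (which equals $-e^{Nw(u-v)}(v/u)^N$) together with the identity $\int_\fp^\infty e^{Nw(u-v)}\, dw = -e^{N\fp(u-v)}/[N(u-v)]$, the boundary-like term $(v/u)^N e^{N\fp(u-v)}$ may be absorbed into the double integral by enlarging the $z$-contour so that it additionally encircles the relevant portion of the $w$-path.

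Once reduced to a single double contour integral, the hypothesis $u,v\in\fO_{\bm,\fp}$ supplies the saddle points $z_{u,N} := G_{\mathrm{m}_N}^{[-1]}(u)$ and $z_{v,N} := G_{\mathrm{m}_N}^{[-1]}(v)$ with $z_{u,N}\to G_\bm^{[-1]}(u)$, $z_{v,N}\to G_\bm^{[-1]}(v)$ uniformly on $\cK$, by uniform convergence of $G_{\mathrm{m}_N}$ and $G'_{\mathrm{m}_N}$ on compacts of $\fO_{\bm,\fp}$. Using condition (ii') of Definition \ref{def:suitable} together with the level-set topology from Proposition \ref{thm:level_lines}, I would deform the closed $z$-contour into a curve $\gamma_z$ through $z_{u,N}$ encircling $\{\ell_i\}$ and lying in $\cD_{\mathrm{m}_N,u}^-\cup\{z_{u,N}\}$. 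Using condition (iii), I would deform the $w$-ray into a path $\gamma_w$ that traverses the bounded component of $\cD_{\mathrm{m}_N,v}^+$ containing $\fp$, passes through $z_{v,N}$ on its boundary, and continues into the unbounded component of $\cD_{\mathrm{m}_N,v}^+$ toward infinity.

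Classical saddle-point asymptotics then yield the leading-order contribution. Taylor-expanding $\cS_{\mathrm{m}_N,u}(z) = \cS_{\mathrm{m}_N,u}(z_{u,N}) - \tfrac{1}{2}G'_{\mathrm{m}_N}(z_{u,N})(z - z_{u,N})^2 + O((z-z_{u,N})^3)$ (using $\cS_u'' = -G'$) and parametrizing $\gamma_z$ along its descent direction at $z_{u,N}$ produces a Gaussian integral contributing $\sqrt{2\pi/N}/\sqrt{G'_{\mathrm{m}_N}(z_{u,N})}$, with the branch determined by the tangent direction; an analogous analysis along $\gamma_w$ at $z_{v,N}$ in its ascent direction (pointing into the unbounded $\cD_{\mathrm{m}_N,v}^+$ component, consistent with the proposition's branch convention) gives $\sqrt{2\pi/N}/\sqrt{G'_{\mathrm{m}_N}(z_{v,N})}$. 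Evaluating $1/(w-z)$ at $(z_{u,N}, z_{v,N})$ and combining with the $N(u-v)/(2\pi\bi)$ prefactor, the $\sqrt{2\pi}$'s, the $\bi$'s, and the $N$'s cancel to produce the stated leading expression, while contributions away from the saddles are exponentially smaller by construction of $\gamma_z$ and $\gamma_w$ within $\cD_{\mathrm{m}_N,u}^-$ and $\cD_{\mathrm{m}_N,v}^+$ respectively.

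The main obstacles are the simultaneous construction of admissible, disjoint steepest descent contours and the absorption of the residue-like term $(v/u)^N e^{N\fp(u-v)}$, both of which rely on the topological description of level sets of $\cS_{\bm,u}, \cS_{\bm,v}$ from Proposition \ref{thm:level_lines} together with conditions (ii'), (iii). Uniformity on $\cK$ follows from uniform convergence of $G_{\mathrm{m}_N}$ and continuous dependence of the saddles, descent/ascent directions, and steepest descent error bounds on $(u,v)$; the separation condition $|u-v|\ge \xi$ ensures $|z_{u,N} - z_{v,N}|\gtrsim \xi$ uniformly on $\cK$ via continuity of $G_\bm^{[-1]}$, which in turn controls the simple pole factor $1/(w-z)$ evaluated at the saddles.
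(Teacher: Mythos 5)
Your skeleton is right — start from Corollary~\ref{cor:ssym_bessel}, rewrite the integrand in terms of $\cS_{\mathrm{m}_N,\cdot}$, deform to steepest descent contours through $y_u := G_{\mathrm{m}_N}^{[-1]}(u)$ and $y_v := G_{\mathrm{m}_N}^{[-1]}(v)$, and read off the Gaussian saddle contribution, with $1/(w-z)$ evaluated at $(y_u,y_v)$ producing the prefactor. Your local Gaussian computation, branch convention, and uniform-convergence/$|u-v|\ge\xi$ observations match the paper's Step~2 (the decomposition into $I_1,\ldots,I_4$) and Step~1 respectively.

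The genuine gap is your treatment of the boundary and residue terms — the paper's $L(u,v)$. Two problems. First, disjoint steepest descent contours are geometrically impossible here: condition~(ii$'$) of Definition~\ref{def:suitable} forces $\gamma_z$ to encircle $\supp\bm\cup\{\fp\}$ (you write only $\{\ell_i\}$, which is not enough), while $\gamma_w$ starts at $\fp$ inside $\gamma_z$ and escapes to $\infty$ outside, so the two must cross. Every crossing produces a residue contribution from the pole of $1/(w-z)$. Second, "enlarging the $z$-contour to absorb the boundary term" does not remove it: since $\gamma_z$ is compact and cannot enclose the entire semi-infinite $\gamma_w$, the enlargement merely transports the term from $\fp$ to the crossing points $\gamma_z\cap\gamma_w$. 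What the paper actually does is exploit that $S_u(w)-S_v(w)=(u-v)w-\log(u/v)$ has an elementary primitive, so the residue integrals over the enclosed arcs $\gamma_{z,w}^i$ collapse to endpoint evaluations; the $\fp$-endpoint exactly cancels $(v/u)^N e^{N\fp(u-v)}/[N(u-v)]$, and the remaining endpoint terms sit at points of $\gamma_z^2\cap\gamma_w^2$, which are at distance $\gtrsim\delta\ge N^{-1/2+\e}$ from both saddles and hence contribute an $O(e^{-cN^{2\e}})$ relative error by~\eqref{eq:gammaz2_bd} and~\eqref{eq:gammaw2_bd}. Without this step your argument only controls $I(u,v)$ and silently drops $L(u,v)$.
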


\begin{proof}
We prove this proposition by steepest descent.

Let $K := K_N := (\cK \times \cK) \cap \{|u - v| \ge \xi\}$. Throughout this proof, we use $C$ and $C'$ to denote positive constants which are independent of $N$ and $(u,v) \in K$; it may depend on $\xi$ and vary from line to line. Let $S_u := \cS_{\mathrm{m}_N,u}$, $\cS_u := \cS_{\bm,u}$, and $\cD_u := \cD_{\bm,u}$.

By \Cref{cor:ssym_bessel},
\begin{align*}
\frac{\cB_{\vec{\ell},\fp}(Nu,0^N/Nv)}{\cB_{\vec{\ell}}(0^N)} &= N(u - v) \left( \frac{e^{N(u - v)\fp}}{N(u - v)} \left( \frac{v}{u} \right)^N +  \int_\fp^\infty \! dw \oint \! \frac{dz}{2\pi\bi} \cdot \frac{1}{w - z} \exp\Bigl( N (S_u(z) - S_v(w)) \Bigr) \right)
\end{align*}
where the $z$-contour contains $\ell_1,\ldots,\ell_N$ and the $w$-contour is an infinite ray from $\fp$ to $\infty$ which is disjoint from the $z$-contour and such that
\[ \Re(w \cdot v) > 0 \]
for $|w|$ large.

We divide the proof of \Cref{single_bessel} into three steps. The first step is to deform the $z$- and $w$- contours to steepest descent contours whereas the second and third steps carry out the steepest descent analysis. More specifically, after deforming the contours in the first step, we split $\cB_{\vec{\ell},\fp}(Nu,0^N/Nv)/\cB_{\vec{\ell}}(0^N)$ into two main parts. We demonstrate that one of these parts gives the leading order term in the second step and demonstrate that the other vanishes relative to the leading order in the third step.

\begin{figure}[ht]
    \centering
\begin{subfigure}{.5\textwidth}
  \centering
  \includegraphics[width=0.9\linewidth]{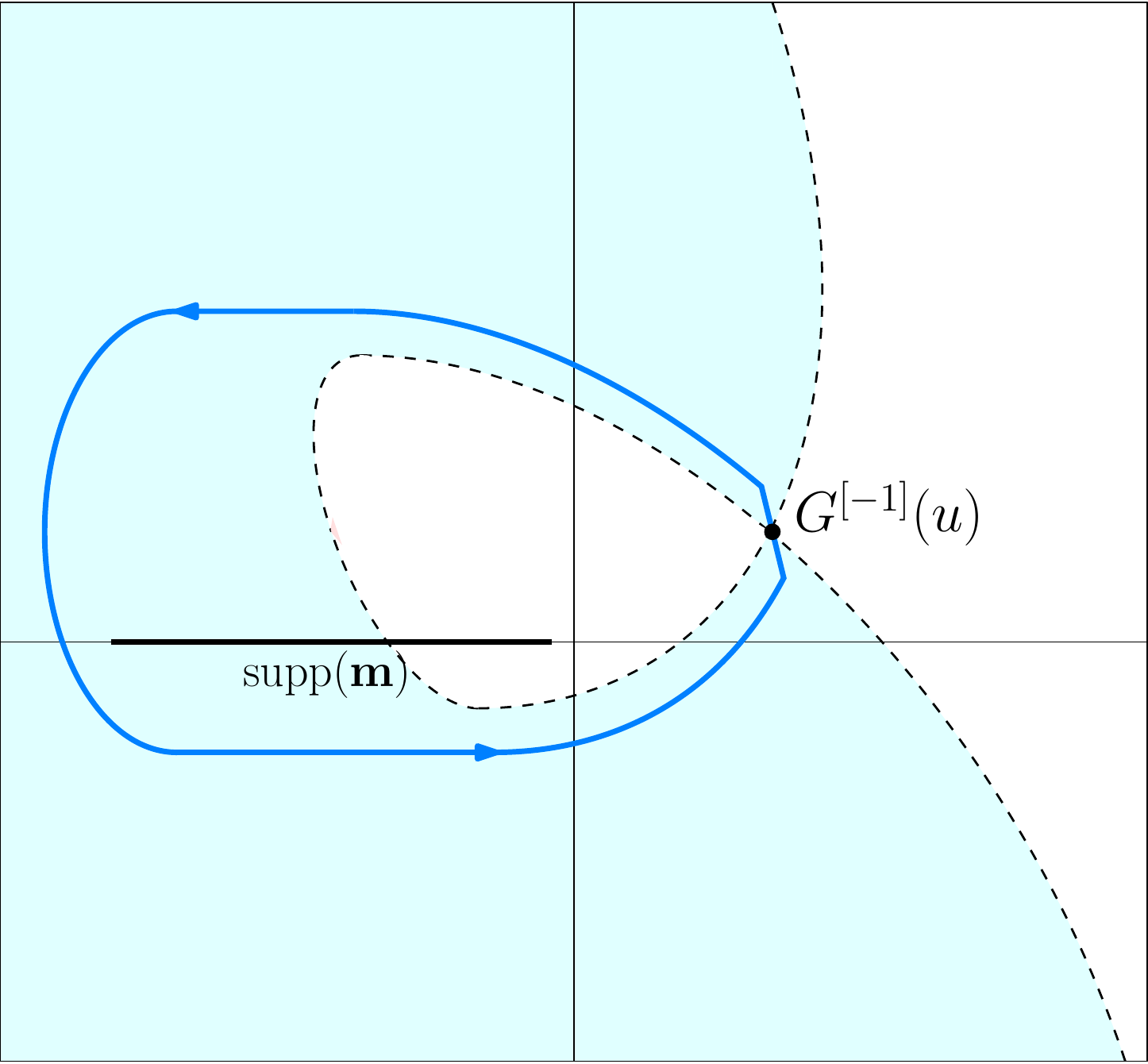}
\end{subfigure}%
\begin{subfigure}{.5\textwidth}
  \centering
  \includegraphics[width=0.9\linewidth]{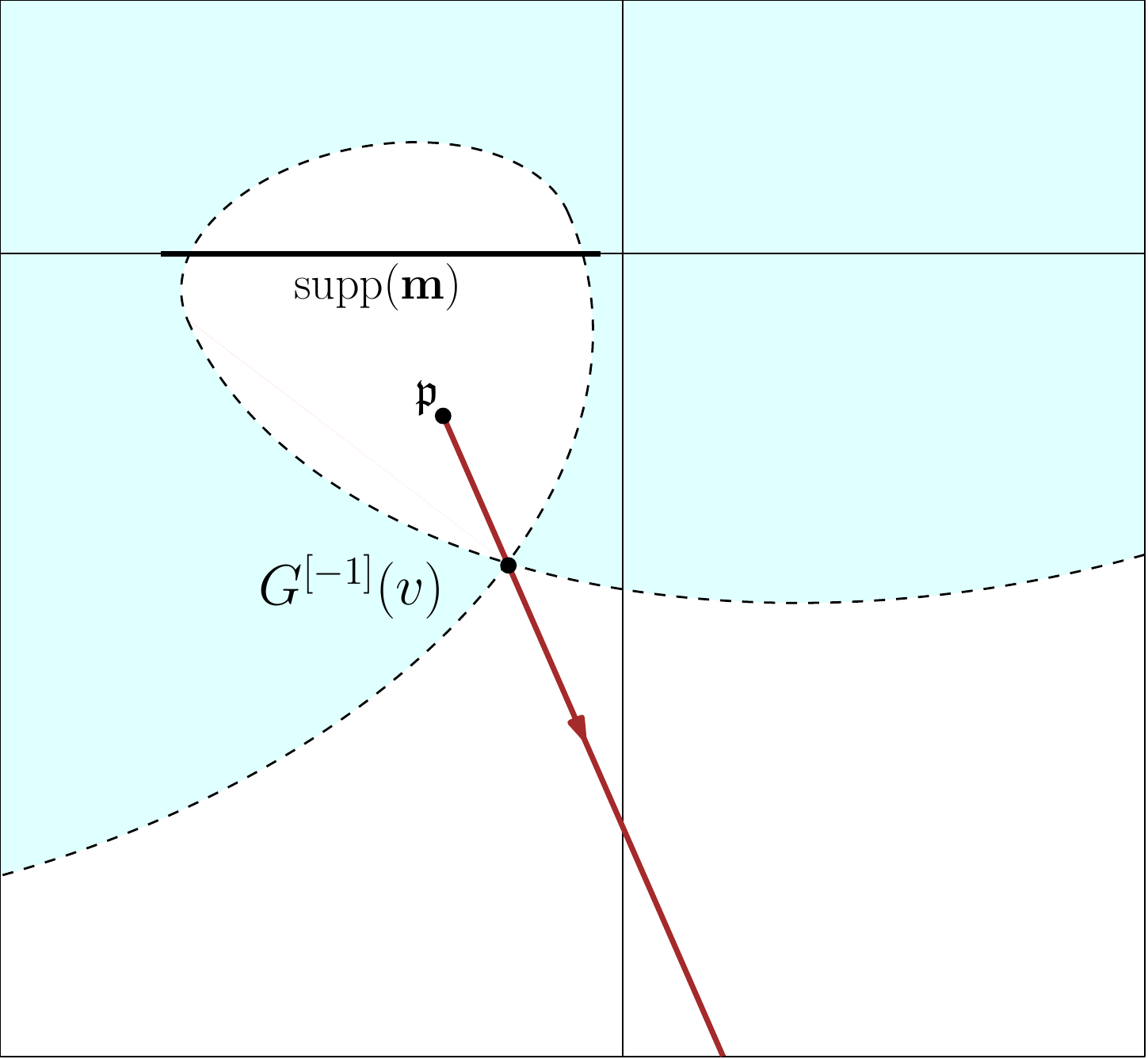}
\end{subfigure}
\caption{Steepest descent contours may be viewed as perturbations of the schematically represented contours above. Shaded regions correspond to $\cD_u^-$ and $\cD_v^-$.}
\label{fig:contour}
\end{figure}

\textbf{Step 1.} By the weak convergence $\mathrm{m}_N \to \bm$, we have that $G_{\mathrm{m}_N}(z) \to G_\bm(z)$, $S_u(z) \to \cS_u(z)$ uniformly over $u \in \cK$ and $z$ in compact subsets of $\C \setminus \supp \bm$. Then
\[ y_u := G_{\mathrm{m}_N}^{[-1]}(u) \quad \mbox{converges to} \quad G_\bm^{[-1]}(u) \]
uniformly for $u \in \cK$. Furthermore, since $\cK$ is compact, $\cK \subset \fO_{\mathrm{m}_N,\fp}$ for $N$ sufficiently large. By \Cref{def:suitable} and \Cref{thm:level_lines}, we may choose our steepest descent contours $\gamma_z := \gamma_{z,N}$ and $\gamma_w := \gamma_{w,N}$ such that
\begin{enumerate}
    \item $\gamma_z$ passes through $y_u$;
    \item in a constant order neighborhood of $y_u$, $\gamma_z$ is a line segment in the direction $[-S_u''(y_u)]^{-1/2}$;
    \item $\gamma_z$ is positively oriented around $\{\ell_i\}_{i=1}^N \cup \{\fp\}$;
\end{enumerate}
where the branch of $[-S_u''(y_u)]^{-1/2}$ is taken to be consistent with the orientation of $\gamma_z$, and
\begin{enumerate}
    \item $\gamma_w$ starts from $\fp$ and passes through $y_v$;
    \item in a constant order neighborhood of $y_v$, $\gamma_w$ is a line segment in the direction $[S_v''(y_v)]^{-1/2}$;
    \item there exists some large $r > 0$, uniform over $v \in \cK$ and $N$ sufficiently large, such that $\gamma_w \setminus B_r(0)$ is a ray in the direction $w_0$ given by
    \begin{align} \label{eq:gammaw_ray}
    \Re(w_0 \cdot v) > c
    \end{align}
    for some $c > 0$;
\end{enumerate}
where the branch of $[S_u''(y_v)]^{-1/2}$ is taken to point from $y_v$ locally in the direction of the unbounded connected component of $\{\Re S_v(w) > \Re S_v(y_v)\}$. Furthermore,
\begin{enumerate}
    \item the contours $\gamma_z,\gamma_w$ can be chosen to vary continuously in $(u,v) \in K$;
    \item since $|u - v| \ge \xi$, $\gamma_z,\gamma_w$ may be chosen so that
    \begin{align} \label{dist_int}
    \min\big(\dist(\gamma_w, y_u), \dist(\gamma_z, y_v ) \big) \ge C > 0
    \end{align}
    for $(u,v) \in K$;
    \item the curve $\gamma_{z,w}$, defined as the part of $\gamma_w$ enclosed by $\gamma_z$, is a union of a uniformly bounded (for $(u,v) \in K$ and $N$ sufficiently large) number of connected, bounded curves.
\end{enumerate}

These contours may be viewed as perturbations of corresponding contours for $\bm$, see \Cref{fig:contour}.

By deforming the $z$- and $w$-contours to $\gamma_z$,$\gamma_w$ respectively, we obtain
\begin{align} \label{B=I+R}
\frac{\cB_{\vec{\ell},\fp}(Nu,0^N/Nv)}{\cB_{\vec{\ell}}(0^N)} = N(u - v) \Bigl( I(u,v) + L(u,v) \Bigr)
\end{align}
where
\begin{align*}
I(u,v) &:= \int_{\gamma_w} \! dw \oint_{\gamma_z} \! \frac{dz}{2\pi\bi} \cdot \frac{1}{w - z} \exp\Bigl( N (S_u(z) - S_v(w)) \Bigr), \\
L(u,v) &:= \int_{\gamma_{z,w}} \exp\Bigl( N (S_u(w) - S_v(w)) \Bigr) \, dw + \left( \frac{v}{u} \right)^N \frac{e^{N(u - v)\fp}}{N(u - v)}.
\end{align*}
The term $L(u,v)$ is continuous in $u,v$, and $\gamma_{z,w}$ is nonempty because $u,v \in \fO_{\bm,\fp}$.

Choose $\delta := \delta(N) > 0$ small so that
\begin{align} \label{eq:delta}
N^{-1/2 + \e} \le \delta \le N^{-1/3 - \e}
\end{align}
for some small $\e > 0$. We decompose the contours $\gamma_z,\gamma_w$ as
\[ \gamma_z = \gamma_z^1 \cup \gamma_z^2, \quad \gamma_w = \gamma_w^1 \cup \gamma_w^2 \]
where
\begin{gather*}
\gamma_z^1 = B_\delta(y_u) \cap \gamma_z, \quad \gamma_z^2 = \gamma_z \setminus \gamma_z^1 \\
\gamma_w^1 = B_\delta(y_v) \cap \gamma_w, \quad \gamma_w^2 = \gamma_w \setminus \gamma_w^1.
\end{gather*}
By the choice of contours, for sufficiently large $N$
\begin{align} \label{eq:gammaz_max}
\Re S_u(z) < \Re S_u(y_u) & \quad \quad \mbox{for $z \in \gamma_z \setminus \{y_u\}$}, \\ \label{eq:gammaw_min}
\Re S_v(w) > \Re S_v(y_v) & \quad \quad \mbox{for $w \in \gamma_w \setminus \{y_v\}$}
\end{align}
and
\begin{align} \label{eq:gammaz2_bd}
\Re[ S_u(y_u) - S_u(z) ] > \frac{1}{4} |S_u''(y_u)| \delta^2 & \quad \mbox{for $z \in \gamma_z^2$}, \\ \label{eq:gammaw2_bd}
\Re[ S_v(w) - S_v(y_v) ] > \frac{1}{4} |S_v''(y_v)| \delta^2 & \quad \mbox{for $w \in \gamma_w^2$}.
\end{align}

\textbf{Step 2.} We establish that
\begin{align} \label{I_asymp}
I(u,v) = \frac{1}{y_v - y_u} \frac{-\bi}{N \sqrt{-S_u''(y_u)}\sqrt{S_v''(y_v)}} e^{N(S_u(y_u)-S_v(y_v))} (1 + o(1)).
\end{align}
We have
\[ I(u,v) = I_1(u,v) + I_2(u,v) + I_3(u,v) + I_4(u,v) \]
where
\begin{align*}
I_1(u,v) &:= \frac{1}{y_v - y_u} \left[ \int_{\gamma_z^1} \exp( N S_u(z) ) \, \frac{dz}{2\pi\bi} \right] \left[ \int_{\gamma_w^1} \exp( -N S_v(w)) \, dw \right], \\
I_2(u,v) &:= \int_{\gamma_z^2} \frac{dz}{2\pi\bi} \int_{\gamma_w} dw \cdot \frac{1}{w - z} \exp\Bigl( N (S_u(z) - S_v(w)) \Bigr), \\
I_3(u,v) &:= \int_{\gamma_z^1} \frac{dz}{2\pi\bi} \int_{\gamma_w^2} dw \cdot \frac{1}{w - z} \exp\Bigl( N (S_u(z) - S_v(w)) \Bigr), \\
I_4(u,v) &:= \int_{\gamma_z^1} \frac{dz}{2\pi\bi} \int_{\gamma_w^1} dw \left[ \frac{1}{w - z} - \frac{1}{y_v - y_u} \right] \exp\Bigl( N (S_u(z) - S_v(w)) \Bigr).
\end{align*}
Note that $y_u \ne y_v$ for $(u,v) \in K$, since 
\[ G_{\mathrm{m}_N}(y_u) = u \ne v = G_{\mathrm{m}_N}(y_v). \]

\underline{$I_1(u,v)$.} We show that
\begin{align} \label{I1_asymp}
I_1(u,v) = \frac{1}{y_v - y_u} \frac{-\bi}{N \sqrt{-S_u''(y_u)}\sqrt{S_v''(y_v)}} e^{N(S_u(y_u)-S_v(y_v))} (1 + o(1)).
\end{align}
For $|z - y_u| < \delta$, we have
\begin{align} \label{S_taylor}
S_u(z) = S_u(y_u) + \frac{1}{2} S_u''(y_u) (z - y_u)^2 + \cE_u(z)
\end{align}
where
\begin{align} \label{S_error}
|\cE_u(z)| < C|z - y_u|^3.
\end{align}

Therefore,
\[ \int_{\gamma_z^1} \exp( N S_u(z) ) \, \frac{dz}{2\pi\bi} = e^{N S_u(y_u)} \int_{\sL_u} \exp\Bigl( \frac{N}{2} S_u''(y_u) (z - y_u)^2 \Bigr) \frac{dz}{2\pi\bi} (1 + o(1)) \]
where $\cL_u$ is the line passing through $y_u$ in the direction $[-S_u''(y_u)]^{-1/2}$. The latter can be evaluated to obtain
\[ \int_{\gamma_z^1} \exp( N S_u(z) ) \, \frac{dz}{2\pi\bi} = e^{N S_u(y_u)} [-S_u''(y_u)]^{-1/2} \int_{-\infty}^\infty e^{-\frac{N}{2}t^2} \frac{dt}{2\pi\bi} (1 + o(1)) = \frac{-\bi}{\sqrt{-2\pi N S_u''(y_u)}} e^{N S_u(y_u)} (1 + o(1)) \]
Likewise,
\[ \int_{\gamma_w^1} \exp( -N S_v(w)) \, dw = e^{-NS_v(y_v)} \int_{\sL_v} \exp\Bigl( -\frac{N}{2} S_v''(y_v) (w - y_v)^2 \Bigr) \, dw (1 + o(1))\]
where $\sL_v$ is the line passing through $y_v$ in the direction $[S_v''(y_v)]^{-1/2}$. Evaluating the latter, we obtain
\[ \int_{\gamma_w^1} \exp( -N S_v(w)) \, dw = e^{-N S_v(y_v)} [S_v''(y_v)]
^{-1/2} \int_{-\infty}^\infty \! e^{-\frac{N}{2} t^2} \, dt (1 + o(1)) = \frac{\sqrt{2\pi}}{\sqrt{N S_v''(y_v)}} e^{-NS_v(y_v)} (1 + o(1)). \]
Combining these asymptotics, \eqref{I1_asymp} follows.

\underline{$I_2(u,v)$ and $I_3(u,v)$.} By compactness of $K$, we have that $|S_u''(y_u)|$ and $|S_v''(y_v)|$ are bounded away from $0$ for $(u,v) \in K$. By \eqref{eq:delta} and \eqref{eq:gammaz2_bd}, for $z\in \gamma_z^2$ we have
\begin{align} \label{I23_gammaz2}
\left| \exp\Bigl( N ( S_u(z) - S_u(y_u) ) \Bigr) \right| < \exp\left( - \frac{1}{4} S_u''(y_u) \delta^2 N \right) = O(e^{-N^\e})
\end{align}
Similarly, by \eqref{eq:delta} and \eqref{eq:gammaw2_bd}, for $w \in \gamma_w^2$ we have
\begin{align} \label{I23_gammaw2}
\left| \exp\Bigl( N ( S_v(y_v) - S_v(w) ) \Bigr) \right| < \exp\left( - \frac{1}{4} S_v''(y_v) \delta^2 N \right) = O(e^{-N^\e}).
\end{align}
By \eqref{eq:gammaw_ray}, for $w \in \gamma_w \setminus B_r(0)$, we have
\begin{align*}
\begin{split}
\left| \exp\Bigl( - N S_v(w) \Bigr) \right| &= \left| \exp\Bigl( -N(wv - \int \log(w - x) d\mathrm{m}_N(x) - \log u) \Bigr) \right|\\
&< \exp\Bigl( -N c'|w| \Bigr)
\end{split}
\end{align*}
for some $c' > 0$ and $r$ large enough, independent of $N, (u,v) \in K$. This implies that
\begin{align}  \label{I23_gammaw_unbdd}
\int_{\gamma_w \setminus B_r(0)} \left| \exp\Bigl( N(S_v(y_v) -  S_v(w)) \Bigr) \right| \, d|w| \le O(e^{-CN})
\end{align}

By \eqref{eq:gammaw_min} and \eqref{I23_gammaz2}, we have
\[ \left| \int_{\gamma_z^2} \frac{dz}{2\pi\bi} \int_{\gamma_w \cap B_r(0)} dw \cdot \frac{1}{w - z} \exp\Bigl( N ( S_u(z) - S_v(w)) \Bigr) \right| \le O(e^{-N^\e}) \exp\Bigl( N( S_u(y_u) - S_v(y_v)) \Bigr). \]
We note that the singularity $1/(w - z)$ at any intersection point between $\gamma_z^2$ and $\gamma_w \cap B_r(0)$ is logarithmic and therefore integrable. By \eqref{I23_gammaz2}, \eqref{I23_gammaw_unbdd}, and the bounded arc length of $\gamma_z$, we have
\[ \left| \int_{\gamma_z^2} \frac{dz}{2\pi\bi} \int_{\gamma_w \setminus B_r(0)} dw \cdot \frac{1}{w - z} \exp\Bigl( N ( S_u(z) - S_v(w)) \Bigr) \right| \le O(e^{-CN}) \exp\Bigl( N(S_u(y_u) - S_v(y_v)) \Bigr). \]
These two bounds imply that
\[ |I_2(u,v)| \le O(e^{-N^\e}) \exp\Bigl( N(S_u(y_u) - S_v(y_v)) \Bigr) = |I_1(u,v)| \cdot o(1). \]

Similarly, to bound $I_3(u,v)$, we use
\begin{align*}
\left| \int_{\gamma_z^1} \frac{dz}{2\pi\bi} \int_{\gamma_w^2 \cap B_r(0)} dw \cdot \frac{1}{w - z} \exp\Bigl( N ( S_u(z) - S_v(w)) \Bigr) \right| &\le O(e^{-N^\e}) \exp\Bigl( N( S_u(y_u) - S_v(y_v)) \Bigr) \\
\left| \int_{\gamma_z^1} \frac{dz}{2\pi\bi} \int_{\gamma_w^2 \setminus B_r(0)} dw \cdot \frac{1}{w - z} \exp\Bigl( N ( S_u(z) - S_v(w)) \Bigr) \right| &\le O(e^{-CN}) \exp\Bigl( N(S_u(y_u) - S_v(y_v)) \Bigr).
\end{align*}
The first inequality follows from \eqref{eq:gammaz_max} and \eqref{I23_gammaw2}. The second inequality follows from \eqref{eq:gammaz_max} and \eqref{I23_gammaw_unbdd}. Thus
\[ |I_3(u,v)| \le O(e^{-N^\e}) \exp\Bigl( N (S_u(y_u) - S_v(y_v)) \Bigr) = |I_1(u,v)| \cdot o(1). \]

\underline{$I_4(u,v)$.} By \eqref{S_taylor} and
\[ \frac{1}{w - z} - \frac{1}{y_v - y_u} = \frac{1}{y_v - y_u} \left( \frac{(z - y_u) - (w - y_v)}{w - z} \right), \]
we have
\begin{align*}
I_4(u,v) &= \frac{e^{N(S_u(y_u) - S_v(y_v))}}{y_v - y_u} \int_{\gamma_z^1} \frac{dz}{2\pi\bi} \int_{\gamma_w^1} dw \\
& \quad \times \frac{(z - y_u) - (w - y_v)}{w - z} \exp\left(\frac{N}{2}\Big(S_u''(y_u)(z - y_u)^2 - S_v''(y_v)(w - y_v)^2 + \cE_u(z) - \cE_v(w)\Big) \right).
\end{align*}
By compactness of $K$, $|S_u''(y_u)|$ and $|S_v''(y_v)|$ are bounded away from $0$ for $(u,v) \in K$. By \eqref{dist_int} and \eqref{eq:delta}, we have
\[ |w - z| \ge C, \quad z \in \gamma_z^1, \quad w \in \gamma_w^1. \]
Furthermore, we have
\[ |z - y_u| \le \delta, \quad |w - y_v| \le \delta, \quad z \in \gamma_z^1, \quad w \in \gamma_w^1. \]
These considerations imply
\begin{align*}
|I_4(u,v)| & \le C\delta \left| \frac{e^{N(S_u(y_u) - S_v(y_v))}}{y_v - y_u} \right| \int_{-\delta}^\delta ds \int_{-\delta}^\delta dt \, e^{-NC'(s^2 + t^2)} \\
& \le C \delta \left| \frac{e^{N(S_u(y_u) - S_v(y_v))}}{N(y_v - y_u)} \right| = |I_1(u,v)|\cdot o(1)
\end{align*}
where we use \eqref{S_error} in the first inequality and the equality uses \eqref{eq:delta}.

We conclude \eqref{I_asymp} from the analyses for $I_1(u,v)$, $I_2(u,v)$, $I_3(u,v)$, $I_4(u,v)$.

\textbf{Step 3.} We now analyze the term $L(u,v)$. Recall that the curve $\gamma_{z,w}$ is a nonempty union of connected curves $\gamma_{z,w}^0, \gamma_{z,w}^1,\ldots,\gamma_{z,w}^n$ where $\gamma_{z,w}^0$ is a curve from $a_0 := \fp$ to $b_0 \in \gamma_w \cap \gamma_z$ and $\gamma_{z,w}^i$ has endpoints $a_i,b_i \in \gamma_w \cap \gamma_z$ for $1 \le i \le n$, with $n > 0$. Moreover, $a_i,b_i$ vary continuously in $u,v$ for $0 \le i \le n$. Since
\[ S_u(z) - S_v(z) = (u - v)z - \log(u/v), \]
we have
\begin{align*}
L(u,v) &= \left( \frac{v}{u} \right)^N \frac{e^{N(u - v)\fp}}{N(u - v)} + \sum_{i=0}^n \int_{\gamma_{z,w}^i} \exp\Bigl( N(S_u(w) - S_v(w)) \Bigr) \, dw \\
&= \frac{e^{N(S_u(\fp) - S_v(\fp))}}{N(u - v)} + \sum_{i=0}^n \frac{1}{N(u - v)} \left( e^{N(S_u(b_i) - S_v(b_i))} - e^{N(S_u(a_i) - S_v(a_i))} \right) \\
&= \sum_{i=0}^n \frac{1}{N(u - v)} e^{N(S_u(b_i) - S_v(b_i))} - \sum_{i=1}^n \frac{1}{N(u - v)} e^{N(S_u(a_i) - S_v(a_i))} 
\end{align*}
where the last equality uses $a_0 = \fp$. In particular, we end up with a finite sum of terms of the form
\[ \pm\frac{1}{N(u - v)} e^{N(S_u(a) - S_v(a))}, \]
where $a \in \gamma_w \cap \gamma_z$.

By \eqref{dist_int} and \eqref{eq:delta}, we have $a \in \gamma_z^2 \cap \gamma_w^2$. Then \eqref{eq:gammaz2_bd} and \eqref{eq:gammaw2_bd} imply
\[ \Re[ S_u(a) - S_v(a)] < \Re[ S_u(y_u) - S_v(y_v)] - \frac{1}{4} |S_u''(y_u)| \delta^2. \]
It follows that
\[ \left| \frac{1}{N(u - v)} e^{N(S_u(a) - S_v(a))} \right| < C e^{-\frac{1}{4} |S_u''(y_u)| \delta^2 N} \frac{1}{N(u - v)} |e^{N(S_u(y_u) - S_v(y_v))}| < |I(u,v)| \cdot o(1) \]
where we use the fact that $|S_u''(y_u)|$ is bounded away from $0$ for all $u$ such that $(u,v) \in K$, and the fact that
\[ C^{-1} |v - u| < |y_v - y_u| < C|v - u|, \quad \quad (u,v) \in K. \]
The latter inequality follows from observing that
\[ y_v - y_u = G_{\mathrm{m}_N}^{[-1]}(v) - G_{\mathrm{m}_N}^{[-1]}(u) = \frac{1}{G_{\mathrm{m}_N}'(y_u)}(v - u) + O(|v-u|^2)= -\frac{1}{S_u''(y_u)} (v - u) + O(|v-u|^2) \]
for $|v - u|$ small. Thus
\[ |L(u,v)| = |I(u,v)| \cdot o(1). \]
By \eqref{B=I+R}, \eqref{I_asymp} and our bound for $|L(u,v)|$, we have
\[ \frac{\cB_{\vec{\ell},\fp}(Nu,0^N/Nv)}{\cB_{\vec{\ell}}(0^N)} = \frac{u - v}{y_u - y_v} \frac{\bi}{\sqrt{-S_u''(y_u)}\sqrt{S_v''(y_v)}} e^{N(S_u(y_u)-S_v(y_v))} (1 + o(1)). \]
To complete the proof of the theorem, we show that
\[ S_u(y_u) = H_{\mathrm{m}_N}(u), \]
which is defined on $\fO$, and
\[ \frac{\bi}{\sqrt{-S_u''(y_u)}} = \frac{1}{\sqrt{S_u''(y_u)}} \]
where the latter branch is chosen so that $[S_u''(y_u)]^{-1/2}$ is in the direction of the unbounded component of $\{ \Re S_u(z) > \Re S_u(y_u) \}$. The first equality is obvious by definition of $H_{\mathrm{m}_N}$. For the second equality, recall that $\sqrt{-S_u''(y_u)}$ is tangent to $\gamma_z$ at $y_u$ and $\gamma_z$ is positively oriented around $\supp \mathrm{m}_N$. Thus clockwise rotation by $\pi/2$ points in the direction of the unbounded component of $\{ \Re S_u(z) > \Re S_u(y_u) \}$, which is exactly multiplication by $-\bi$. This completes the proof of \Cref{single_bessel}.
\end{proof}

We now prove \Cref{bessel_asymp}.

\begin{proof}[Proof of Theorem \ref{bessel_asymp}]
Since $\fO_{\bm,\fp}$ is open, we can choose $\e > 0$ so that $\cK_\e := \{z \in \C: \mathrm{dist}(z,\cK) \le \e \} \subset \fO_{\bm,\fp}$. We may assume that the boundaries $\cK_\delta \subset \cK_\e$ for $0 < \delta < \e$ may be parametrized by finitely many smooth curves of bounded arc length, for example we may replace $\cK$ with a cover formed by a union of finitely many balls in $\fO_{\bm,\fp}$. Let
\[ 0 < \delta_1 < \cdots < \delta_{2k} \le \e \]
and suppose $\gamma_1,\ldots,\gamma_{2k}$ are simple contours with images $\partial \cK_{\delta_1},\ldots,\partial \cK_{\delta_{2k}}$ and are positively oriented around $\cK_{\delta_1},\ldots,\cK_{\delta_{2k}}$ respectively. Then $\gamma_1,\ldots,\gamma_{2k}$ are disjoint and contained in $\fO_{\bm,\fp}$. We have
\begin{align*}
& \frac{\cB_{\vec{\ell},\fp}(N\vec{u},0^N/N\vec{v})}{\cB_{\vec{\ell}}(0^N)} = \frac{D(\vec{u};-\vec{v})}{\Delta(\vec{u}) \Delta(-\vec{v})} \det \left( \frac{1}{u_i - v_j} \frac{\cB_{\vec{\ell},\fp}(Nu_i/Nv_j)}{\cB_{\vec{\ell}}(0^N)} \right)_{i,j=1}^k \\
& \quad = \frac{D(\vec{u};-\vec{v})}{\Delta(\vec{u}) \Delta(-\vec{v})} \det \left( \frac{1 + o(1)}{G_{\mathrm{m}_N}^{[-1]}(u_i) - G_{\mathrm{m}_N}^{[-1]}(v_j)} \right) \prod_{i=1}^k \frac{\exp\Bigg( N\Big(\cS_{\mathrm{m}_N,u}(G_{\mathrm{m}_N}^{[-1]}(u_i)) - \cS_{\mathrm{m}_N,v}(G_{\mathrm{m}_N}^{[-1]}(v_i))\Big) \Bigg)}{\sqrt{G_{\mathrm{m}_N}'(G_{\mathrm{m}_N}^{[-1]}(u_i)) G_{\mathrm{m}_N}'(G_{\mathrm{m}_N}^{[-1]}(v_i))}} \\
& \quad = \frac{D(\vec{u};-\vec{v})}{\Delta(\vec{u}) \Delta(-\vec{v})} \frac{\Delta(G_{\mathrm{m}_N}^{[-1]}(\vec{u})) \Delta(-G_{\mathrm{m}_N}^{[-1]}(\vec{v}))}{D(G_{\mathrm{m}_N}^{[-1]}(\vec{u}); -G_{\mathrm{m}_N}^{[-1]}(\vec{v}))} \prod_{i=1}^k \frac{\exp\Bigg( N\Big(\cS_{\mathrm{m}_N,u}(G_{\mathrm{m}_N}^{[-1]}(u_i)) - \cS_{\mathrm{m}_N,v}(G_{\mathrm{m}_N}^{[-1]}(v_i))\Big) \Bigg)}{\sqrt{G_{\mathrm{m}_N}'(G_{\mathrm{m}_N}^{[-1]}(u_i)) G_{\mathrm{m}_N}'(G_{\mathrm{m}_N}^{[-1]}(v_i))}} (1 + o(1))
\end{align*}
uniformly over $(\vec{u},\vec{v}) \in \gamma_1 \times \cdots \times \gamma_k$, where the first equality is \Cref{thm:ssym_bessel}, the second follows from \Cref{single_bessel}, and the third from the Cauchy determinant formula. Setting
\[ \fB_N(\vec{u},\vec{v}) := \frac{D(\vec{u};-\vec{v})}{\Delta(\vec{u}) \Delta(-\vec{v})} \frac{\Delta(G_{\mathrm{m}_N}^{[-1]}(\vec{u})) \Delta(-G_{\mathrm{m}_N}^{[-1]}(\vec{v}))}{D(G_{\mathrm{m}_N}^{[-1]}(\vec{u}); -G_{\mathrm{m}_N}^{[-1]}(\vec{v}))} \prod_{i=1}^k \frac{\exp\Bigg( N\Big(\cS_{\mathrm{m}_N,u}(G_{\mathrm{m}_N}^{[-1]}(u_i)) - \cS_{\mathrm{m}_N,v}(G_{\mathrm{m}_N}^{[-1]}(v_i))\Big) \Bigg)}{\sqrt{G_{\mathrm{m}_N}'(G_{\mathrm{m}_N}^{[-1]}(u_i)) G_{\mathrm{m}_N}'(G_{\mathrm{m}_N}^{[-1]}(v_i))}}, \]
we have that both $\fB_N(\vec{u},\vec{v})$ and $\cB_{\vec{\ell},\fp}(N\vec{u},0^N/N\vec{v})/B_{\vec{\ell}}(0^N)$ are analytic in $\cK_\e$. Since the former function is nonzero on $\cK_\e$, so is the latter for $N$ large. Then for $(\vec{u},\vec{v}) \in \cK^{2k}$, we have
\begin{align*}
& \frac{\cB_{\vec{\ell},\fp}(N\vec{u},0^N/N\vec{v})}{\cB_{\vec{\ell}}(0^N)} \frac{1}{\fB_N(\vec{u},\vec{v})} = \frac{1}{(2\pi\bi)^{2k}} \oint_{\gamma_1} \frac{dz_1}{z_1 - u_1} \cdots \oint_{\gamma_k} \frac{dz_k}{z_k - u_k} \oint_{\gamma_{k+1}} \frac{dw_1}{w_1 - v_1} \cdots \oint_{\gamma_{2k}} \frac{dw_k}{w_k - v_k} \\
& \quad \quad \times \frac{\cB_{\vec{\ell},\fp}(Nz_1,\ldots,Nz_k,0^N/Nw_1,\ldots,Nw_k)}{\cB_{\vec{\ell}}(0^N)} \frac{1}{\fB_N(z_1,\ldots,z_k,w_1,\ldots,w_k)} = 1 + o(1)
\end{align*}
for $(\vec{u},\vec{v}) \in \cK^{2k}$ by Cauchy's integral formula. This completes our proof.
\end{proof}

\section{Simplification of Hypotheses} \label{sec:submaster}

The main results of this section (\Cref{thm:submaster} and \Cref{thm:submaster_q}) provide a set of sufficient conditions for Airy-appropriateness, at the expense of some generality. \Cref{thm:submaster} gives conditions in the random matrix setting and \Cref{thm:submaster_q} gives analogous conditions in the quantized setting. The advantage of these conditions is that checking Airy appropriateness is reduced to checking certain properties about the limiting measures. The key ingredients in the proofs are \Cref{thm:master} and analytic subordination for free additive convolutions, and its quantized analogues. Our main results (\Cref{thm:multisum,thm:two_sum,thm:multitensor}) are proved in \Cref{sec:applications,sec:applications_q} using the developments from this section.

We begin by defining the free additive convolution via analytic subordination. Assume throughout this section that $\bm,\bm^{(1)},\bm^{(2)},\ldots \in \cM$.

\begin{theorem}[{\cite[Theorem 4.1]{BB07}, \cite[Theorem 6]{Bel14}}] \label{thm:subord_add}
There exists a unique compactly supported measure $\bm^{(1)} \boxplus \bm^{(2)}$ and unique analytic functions $\omega_1, \omega_2: \C^+ \to \C^+$ such that
\begin{gather} \nonumber
\lim_{t \nearrow \infty} \frac{\omega_i(\bi t)}{\bi t} = 1 \\ \label{subord_imz}
\Im \omega_1(z), \Im \omega_2(z) \ge \Im z \\ \label{subord_cauchy}
G_{\bm^{(1)} \boxplus \bm^{(2)}}(z) = G_{\bm^{(1)}}(\omega_1(z)) = G_{\bm^{(2)}}(\omega_2(z)) \\ \label{subord_relation}
z = \omega_1(z) + \omega_2(z) - \frac{1}{G_{\bm^{(1)} \boxplus \bm^{(2)}}(z)}.
\end{gather}
for all $z \in \C^+$. Furthermore, if neither $\bm^{(1)}$ nor $\bm^{(2)}$ are a single atom, then $\omega_1,\omega_2$ extend continuously to $\C^+ \cup \R$ with values in $\C \cup \{\infty\}$ and the inequalities \eqref{subord_imz} are strict.
\end{theorem}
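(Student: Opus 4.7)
The plan is to construct $\bm^{(1)} \boxplus \bm^{(2)}$ and the subordination pair $(\omega_1,\omega_2)$ simultaneously through a fixed-point argument on the upper half-plane, following the scheme of Belinschi-Bercovici. Set $F_i(z) := 1/G_{\bm^{(i)}}(z)$; because $\bm^{(i)}$ is a probability measure, $F_i$ is an analytic self-map of $\C^+$ with $\Im F_i(z) \ge \Im z$, equivalently $H_i(z) := F_i(z) - z$ sends $\C^+$ into $\overline{\C^+}$. A direct manipulation of \eqref{subord_cauchy}--\eqref{subord_relation} shows that these relations are equivalent to setting $\omega_2(z) := H_1(\omega_1(z)) + z$ together with $\omega_1(z)$ being a fixed point of
\begin{equation*}
\Phi_z(\omega) := H_2\bigl(H_1(\omega) + z\bigr) + z.
\end{equation*}

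\textbf{Key step.} For each fixed $z \in \C^+$, $\Phi_z$ is an analytic self-map of $\C^+$ because the outer addition of $z$ returns $\overline{\C^+}$ to $\C^+$. The central step is to prove that $\Phi_z$ admits a unique fixed point in $\C^+$. Since $H_1,H_2$ are not affine, $\Phi_z$ is not a conformal automorphism of $\C^+$, so by the Denjoy-Wolff theorem its iterates converge locally uniformly to a unique point $\xi \in \overline{\C^+} \cup \{\infty\}$. The Nevanlinna representation of $F_i$ together with compact support of $\bm^{(i)}$ yields $H_i(w) \to -\int x\,d\bm^{(i)}(x)$ as $w \to \infty$ non-tangentially in $\C^+$; this forces $\Im \Phi_z(\bi t)$ to remain bounded while $\Im \bi t \to \infty$, so the Julia-Carath\'eodory angular derivative at $\infty$ is infinite and $\xi = \infty$ is ruled out. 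A parallel argument using that $H_i$ extends analytically across $\R \setminus \supp \bm^{(i)}$ rules out $\xi \in \R$. Hence $\xi =: \omega_1(z) \in \C^+$. Holomorphy of $\omega_1(z)$ in $z$ follows from the implicit function theorem applied to the fixed-point equation, using $|\Phi_z'(\omega_1)| < 1$ (Schwarz-Pick strict contraction at the attracting interior fixed point).

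\textbf{Completing the theorem.} Define $G(z) := G_{\bm^{(1)}}(\omega_1(z))$. The relations \eqref{subord_imz}--\eqref{subord_relation} hold by construction. That $G$ is the Cauchy transform of a compactly supported probability measure follows from the Nevanlinna form of $G$ (inherited from $G_{\bm^{(1)}} \circ \omega_1$) together with analytic continuation of $G$ across every point of $\R$ outside a bounded set via the subordination identities; the growth $\omega_i(\bi t)/(\bi t) \to 1$ is then read off from the fixed-point equation and the boundedness of $H_i$ at $\infty$, which in turn forces the support of $\bm^{(1)} \boxplus \bm^{(2)}$ to be compact. Uniqueness of $(\omega_1,\omega_2)$ and of $\bm^{(1)}\boxplus\bm^{(2)}$ is immediate, since any admissible $\omega_1$ solves $\omega_1 = \Phi_z(\omega_1)$. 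Under the non-atomic hypothesis each $H_i$ is a non-degenerate Pick function, so $\Phi_z$ is a strict Poincar\'e contraction; the Julia-Carath\'eodory theorem then yields continuous extension of $\omega_i$ to $\C^+ \cup \R$ with values in $\C \cup \{\infty\}$, and the strict inequalities $\Im \omega_i(z) > \Im z$. \textbf{The main obstacle} is the Denjoy-Wolff localization of the fixed point into $\C^+$ rather than onto its boundary: everything else flows fairly mechanically from the fixed-point characterization, but ruling out boundary attractors requires converting the asymptotic behavior of $H_i$ at $\infty$ and at points of $\R \setminus \supp \bm^{(i)}$ into quantitative Julia-Carath\'eodory estimates for $\Phi_z$, and this is the technical heart of the argument.
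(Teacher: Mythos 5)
This theorem is not proved in the paper at all — it is imported verbatim from \cite{BB07} and \cite{Bel14}, with the remark immediately after being the only original argument: the paper's short proof that the inequalities \eqref{subord_imz} are strict when neither measure is an atom. For that piece the paper's argument is: if $\Im\omega_1(w)=\Im w$ at one interior point, then $\omega_1(z)-z$ is a Pick function taking a real value, hence a real constant $a$; substituting into \eqref{subord_cauchy}--\eqref{subord_relation} shows $G_{\bm^{(1)}}$ must be that of a single atom. Your appeal to ``$\Phi_z$ is a strict Poincar\'e contraction'' does not directly give $\Im\omega_i(z)>\Im z$, because strict contraction of the hyperbolic metric says nothing immediate about the displacement $\omega_1(z)-z$; you would still have to run an argument like the paper's to finish. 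So on the one part of the statement the paper actually argues, your route is vaguer and essentially different.

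For the existence and uniqueness of $(\omega_1,\omega_2)$, the Denjoy--Wolff fixed-point scheme you sketch is indeed the argument of \cite{BB07}, and the reduction to $\omega_1=\Phi_z(\omega_1)$ with $\Phi_z(\omega)=H_2(H_1(\omega)+z)+z$ is correct. A small simplification you miss: since $\Phi_z(\C^+)\subset\{\Im w\ge\Im z\}$ and $\Phi_z$ is bounded (because $H_i(\cdot)\to -\!\int x\,d\bm^{(i)}$ at $\infty$), the image closure is a compact subset of $\C^+$, which at once forbids a Denjoy--Wolff point on $\R$ or at $\infty$ without invoking angular derivatives. The genuine gap is the last clause, the continuous boundary extension of $\omega_1,\omega_2$ to $\C^+\cup\R$. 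That is the content of \cite[Theorem 6]{Bel14}, and it is not a one-line consequence of Julia--Carath\'eodory: Belinschi's proof requires a substantial boundary-fixed-point analysis of the map $\Phi_z$ as $z$ tends to the real line, where $\Phi_z$ may degenerate. Presenting that result as ``then yields continuous extension'' leaves the hardest part of the theorem unproved.
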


\begin{remark}
The statement that $\Im \omega_i(z) > \Im z$ for $i = 1,2$ if $\bm^{(1)},\bm^{(2)}$ are not single atoms is not mentioned in \cite{BB07,Bel14}. It can be seen from the following argument. If $\Im \omega_1(w) = \Im w$ at some $w \in \C^+$, then $\omega_1(z) - z$ is an analytic map from $\C^+$ to $\C^+\cup \R$ achieving a real value at $w$. Then $\omega_1(z) - z \equiv a$ for some $a \in \R$. Using \eqref{subord_cauchy}, \eqref{subord_relation}, we find that the Cauchy transform of $G_{\bm^{(1)}}(z)$ is that of a single atom.
\end{remark}

The measure $\bm^{(1)} \boxplus \bm^{(2)}$ is known as the \emph{free additive convolution of $\bm^{(1)}$ and $\bm^{(2)}$}, which first appeared in the work of \cite{V86} in terms of the $R$-transform, a generating function for certain quantities known as free cumulants. The existence of subordination functions was established in \cite{V93} under some mild restrictions, see also \cite{BB07} for an elementary complex analytic proof using Denjoy-Wolff fixed points. We take the alternative approach of defining the free additive convolution in terms of this subordination. The continuity of $\omega_1,\omega_2$ on $\C^+ \cup \R$ was established by \cite{Bel14}.

It can be checked that $\boxplus$ is a commutative and associative binary operation. Although we are in the setting where our measures are compactly supported, we note that $\boxplus$ can be extended to arbitrary probability measures on $\R$, see e.g. \cite{MS17} for details.

Fix $\bm \in \cM$, and let
\[ E_- := \inf \supp \bm, \quad E_+ := \sup \supp \bm \]
We write $E_\pm(\bm)$ when there is ambiguity in the measure $\bm$. Let
\[ G(E_-) := \lim_{x \nearrow E_-} G(x), \quad G(E_+) := \lim_{x \searrow E_+} G(x), \]
the existence of these limits in $[-\infty,\infty]$ are guaranteed by monotonicity of $G$. Observe that
\begin{align} \label{eq:G_monotone}
G~\mbox{is strictly decreasing}: \quad (E_+,\infty) \overset{1:1}{\longrightarrow} (0, G(E_+) ) \quad \mbox{and} \quad (-\infty,E_-) \overset{1:1}{\longrightarrow} (G(E_-),0).
\end{align}

\begin{theorem} \label{thm:submaster}
Under \Cref{assum:rmt_conv}, let $\vec{\ell} \in \cW_{\R}^N$ denote the eigenvalues of
\[ X_N^{(1)} + \cdots + X_N^{(n)}, \]
$\mathrm{m}^{(i)} := \tfrac{1}{N} \sum_{j=1}^{N} \delta_{\ell_j^{(i)}}$, and
\[ \bm := \bm^{(1)} \boxplus \cdots \boxplus \bm^{(n)}. \]
Suppose that
\[ \cA(u) = G_{\bm^{(1)}}^{[-1]}(u) + \cdots + G_{\bm^{(n)}}^{[-1]}(u) + \frac{1 - n}{u} \]
has a minimal real positive critical point $\fz \in (0,\min_{1 \le i \le n} G_{\bm^{(i)}}(E_+(\bm^{(i)}))) \subset \bigcap_{i=1}^n \fO_{\bm^{(i)}}$ which satisfies $\cA''(\fz) > 0$.
\begin{enumerate}[(i)]
    \item Then $\fz = G_\bm(E_+(\bm))$.
    \item If there exists $\fp_i \in \C \setminus \supp \bm$ such that
    \begin{align} \label{eq:curve_exists}
    \Gamma_\cA\left(\bigcap_{i=1}^n \fO_{\bm^{(i)},\fp_i}, \fz \right) \ne \emptyset
    \end{align}
    for $1 \le i \le n$, then the multivariate Bessel generating functions of $\vec{\ell}(N)$ are Airy edge appropriate where
    \[ A_N(u) = G_{\mathrm{m}^{(1)}}^{[-1]}(u) + \cdots + G_{\mathrm{m}^{(n)}}^{[-1]}(u) + \frac{1 - n}{u}. \]
    
    \item If
    \begin{align} \label{eq:set_inclusion}
    G_{\bm^{(i)}}(\{z \in \C: \Re z \ge G_{\bm^{(i)}}^{[-1]}(\fz) \}) \subset \fO_{\bm^{(i)},\fp_i}
    \end{align}
    for some $\fp_i \in \C \setminus \supp \bm$, for each $1 \le i \le n$, then \eqref{eq:curve_exists} is satisfied.
\end{enumerate}
\end{theorem}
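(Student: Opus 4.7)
The plan is to prove the three parts in sequence, with part (i) following from the $R$-transform representation of $\cA$, part (ii) from combining \Cref{bessel_asymp} with \Cref{thm:master}, and part (iii) from an explicit construction of $\gamma$ via subordination functions. For part (i), $R$-transform additivity (equivalently, iteration of \Cref{thm:subord_add}) yields $\cA(u) = G_\bm^{[-1]}(u)$ as analytic functions on a common domain of analyticity. On the real interval $(0, G_\bm(E_+))$, $G_\bm^{[-1]}$ is a strictly decreasing bijection to $(E_+,\infty)$, so $\cA' < 0$ there and no critical points occur in this interval, forcing $\fz \ge G_\bm(E_+)$. For the reverse inequality, the hypothesis $\cA''(\fz) > 0$ forces $\fz$ to correspond to the branch point of $G_\bm$ at the edge $E_+$ (i.e., square-root edge behavior of $\bm$): if $\fz > G_\bm(E_+)$, then the analytic continuation of $\cA = G_\bm^{[-1]}$ across $G_\bm(E_+)$ would yield an analytic inverse of $G_\bm$ sending real $u$-neighborhoods of $G_\bm(E_+)$ to real $z$-neighborhoods of $E_+$ crossing into the interior of $\supp\bm$, incompatible with $G_\bm$ taking non-real boundary values there. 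Minimality of $\fz$ then gives $\fz = G_\bm(E_+)$.

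For part (ii), I verify Airy edge appropriateness and invoke \Cref{thm:master}. The multivariate Bessel generating function factors as $S_N = \prod_{j=1}^n S_N^{(j)}$, so by \Cref{ex:algebra} the product $\wt{S}_N := \prod_j \wt{S}_N^{(j)}$ with $\wt{S}_N^{(j)} := \cB_{\vec{\ell}^{(j)},\fp_j}/\cB_{\vec{\ell}^{(j)}}(0^N)$ (from \Cref{thm:ssym_lift} and \Cref{cor:ssym_bessel}) is a supersymmetric lift defined on $\C$, hence on $N\Omega$ with $\Omega := \bigcap_j \fO_{\bm^{(j)},\fp_j}$ open and containing $0$. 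Applying \Cref{bessel_asymp} to each factor, using the identity $\tfrac{d}{du}\cS_{\mathrm{m},u}(G_\mathrm{m}^{[-1]}(u)) = G_\mathrm{m}^{[-1]}(u) - 1/u$ (from differentiating $\cS$ and $G_\mathrm{m}(G_\mathrm{m}^{[-1]}(u)) = u$), and combining with the normalization $\prod_i((u_i + N^{-1/3}c_i)/u_i)^N$ in \eqref{ssym_mbgf_limit}, the resulting exponent is $N \sum_i \int_{u_i}^{u_i + N^{-1/3}c_i} A_N(z) \, dz$ with $A_N(u) = \sum_j G_{\mathrm{m}^{(j)}}^{[-1]}(u) + (1-n)/u$. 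The prefactor $F_N$ collects the Cauchy-determinant and square-root factors from \Cref{bessel_asymp}; its uniform boundedness on compacts of $\Omega^k$ and convergence to $1$ near the diagonal $\vec{u} \to (\fz,\ldots,\fz)$ follow by tracking the vanishing factors in the product. Uniform convergence $A_N \to \cA$ on compacts of $\Omega \setminus \{0\}$ follows from that of each $G_{\mathrm{m}^{(j)}}^{[-1]}$. The hypothesis $\Gamma_\cA(\Omega,\fz) \ne \emptyset$ supplies (ii) in \Cref{def:airy_edge}, while part (i), $\cA''(\fz) > 0$, and Schwarz reflection place $\cA \in \cH_\fz(\Omega)$. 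Then \Cref{thm:master} delivers the Airy limit with $\fE_N = A_N(\fz_N)$, $\fV_N = (A_N''(\fz_N)/2)^{1/3}$.

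For part (iii), I construct $\gamma \in \Gamma_\cA(\Omega,\fz)$ as $\gamma := G_\bm(\sigma)$, where $\sigma$ is a simple closed curve in $\C \setminus \supp\bm$ positively oriented around $\supp\bm$, passing through $E_+$ as its unique point of maximum real part, with $\Re z < E_+$ on $\sigma \setminus \{E_+\}$ (for instance, a circle centered at a real point far to the left of $E_-$ whose radius is chosen so that $\sigma$ meets $\supp\bm$ only at $E_+$). Then $\gamma$ is positively oriented around $0$, contains $\fz = G_\bm(E_+)$, and for $u = G_\bm(z) \in \gamma$ one has $\cA(u) = z$, so $\Re \cA(u) = \Re z < E_+ = \Re \cA(\fz)$ strictly off $\fz$. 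For inclusion $\gamma \subset \Omega$, let $\omega_j$ denote the iterated subordination function from \Cref{thm:subord_add} satisfying $G_{\bm^{(j)}}(\omega_j(z)) = G_\bm(z)$; the key claim is $\Re \omega_j(z) \ge \omega_j(E_+) = G_{\bm^{(j)}}^{[-1]}(\fz)$ for all $z \in \sigma$, which combined with \eqref{eq:set_inclusion} yields $G_\bm(z) = G_{\bm^{(j)}}(\omega_j(z)) \in \fO_{\bm^{(j)},\fp_j}$ and hence $\gamma \subset \Omega$. The main obstacle is verifying this inequality globally on $\sigma$: locally near $E_+$ the edge nondegeneracy $\cA''(\fz) > 0$ forces square-root branch behavior of $\omega_j$ and makes the inequality automatic on a neighborhood of $E_+$, but globalizing across $\sigma$ requires harmonicity of $\Re \omega_j$ on $\C \setminus \supp\bm$ together with a min-principle argument exploiting the behavior of $\omega_j$ at $\infty$ and near the left edge $E_-$; this is the technical heart of the proof.
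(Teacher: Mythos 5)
Parts (i) and (ii) follow the paper's argument closely and are correct. For (i), the paper likewise rules out $\fz < G_\bm(E_+)$ via $\cA'(\fz) = 1/G_\bm'(x_0) < 0$ and rules out $\fz > G_\bm(E_+)$ via analytic continuation of the inverse across $G_\bm(E_+)$. For (ii), the paper also factors the generating function via \Cref{bessel_gf}, uses \Cref{ex:algebra} to build the product lift, applies \Cref{bessel_asymp} to each factor, and uses $\frac{d}{du}\cS_{\mathrm{m},u}(G_\mathrm{m}^{[-1]}(u)) = G_\mathrm{m}^{[-1]}(u) - 1/u$ to recognize the exponent in \eqref{ssym_mbgf_limit}.

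Part (iii) has a genuine gap. Your ``key claim'' that $\Re\,\omega_j(z) \ge \omega_j(E_+)$ for all $z$ on a circle $\sigma$ through $E_+$ encircling $\supp\bm$ is simply false, so no minimum-principle argument can rescue it. Near the leftmost point of $\sigma$ you have $z$ real with $z < E_-(\bm)$, and by \Cref{thm:omega:nevanlinna}, $\omega_j$ is real-valued and strictly increasing on $(-\infty, E_-(\bm))$ with $\omega_j(z) \to -\infty$ as $z \to -\infty$; in particular $\omega_j(z) < \omega_j(E_-(\bm)) < E_+^{(j)} < \omega_j(E_+)$ there. The hypothesis \eqref{eq:set_inclusion} is only about the image of the right half-plane $\{\Re z \ge G_{\bm^{(j)}}^{[-1]}(\fz)\}$ under $G_{\bm^{(j)}}$, and $G_\bm(z)$ for $z$ far to the left of the support (a small negative real number) does not lie in that image, so \eqref{eq:set_inclusion} gives you nothing on the left part of $\sigma$. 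The paper avoids this by using a two-piece contour $\varphi = \varphi^1 \cup \varphi^2$: a narrow V-shaped wedge $\varphi^1$ with vertex at $E_+$ opening to the left (so its real part is maximized at $E_+$ and the wedge stays close to the vertical line $\{\Re z = E_+\}$), joined to a large arc $\varphi^2$ far from $\supp\bm$. On $\varphi^1$, the Nevanlinna representation of $\omega_j - z$ from \Cref{thm:omega:nevanlinna} gives that $\Re\,\omega_j(x+\bi t)$ is increasing in $|t|$ for $x \ge E_+$, which, combined with continuity and openness of $\fO_{\bm^{(j)},\fp_j}$, yields inclusion for the near part; on $\varphi^2$, the inclusion is handled independently by \Cref{thm:domain} (ii) since $\omega_j(z) = z + O(1)$ stays far from the support for $|z|$ large. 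Your circle conflates these two regimes and only the estimate near $E_+$ can be justified by your proposed monotonicity; you need a separate far-field argument and a contour that segregates the two cases.
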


\begin{remark}
The assumption \eqref{eq:set_inclusion} is purely technical, and we believe that the critical point and second derivative condition on $\cA$ should be sufficient for Airy appropriateness.
\end{remark}

In words, \Cref{thm:submaster} gives a set of conditions for which the hypothesis of \Cref{thm:master} is satisfied in the setting of sums of unitarily invariant random matrices with deterministic spectra. \Cref{thm:submaster} (ii) states that we must check that $\cA(u)$ has a minimal real positive critical point with $\cA''(\fz) > 0$ and that \eqref{eq:curve_exists} holds. \Cref{thm:submaster} (iii) allows us to check \eqref{eq:set_inclusion} in place of \eqref{eq:curve_exists}.

We present an analogous result for Schur generating functions in the quantized setting. Prior to stating this analogue, we state a crucial connection between the random matrix and quantized models is given by a variant of the Markov-Krein correspondence. Recall that $\cM_1$ is the subset of $\cM$ consisting of measures with density $\le 1$.

\begin{theorem}[{\cite[Theorem 1.10]{BuG15}}] \label{thm:MK_correspondence}
There exists a bijection $\cQ:\cM_1 \to \cM$ such that
\begin{align} \label{eq:MK_correspondence}
\exp\left( - G_{\bm}(z) \right) = 1 - G_{\cQ\bm}(z)
\end{align}
for $\bm \in \cM_1$.
\end{theorem}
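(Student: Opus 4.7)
The plan is to construct $\cQ$ explicitly via the formula \eqref{eq:MK_correspondence} and then produce a two-sided inverse; the argument rests on the Nevanlinna (Herglotz) characterization of Cauchy transforms. The statement is \cite[Theorem 1.10]{BuG15}, and the sketch below outlines the complex-analytic proof.

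For the forward direction, fix $\bm \in \cM_1$ and set $F(z) := 1 - e^{-G_\bm(z)}$ for $z \in \C^+$. The Stieltjes inversion formula, combined with the Poisson representation of the positive harmonic function $-\Im G_\bm$ on $\C^+$, shows that the density bound $d\bm/dx \le 1$ is equivalent to $-\Im G_\bm(z) \in (0,\pi)$ throughout $\C^+$. Since $\exp$ is biholomorphic from the strip $\{w \in \C : 0 < \Im w < \pi\}$ onto $\C^+$, this places $e^{-G_\bm(z)} \in \C^+$, hence $F(z) \in \C^-$. Expanding $G_\bm(\bi y) = (\bi y)^{-1} + O(y^{-2})$ as $y \to \infty$ and $1 - e^{-w} = w + O(w^2)$ yields $\lim_{y \to \infty} \bi y F(\bi y) = 1$, so by Nevanlinna's representation there is a unique Borel probability measure $\cQ\bm$ with $G_{\cQ\bm} = F$. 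Compact support follows because $G_\bm$ extends analytically across $\R \setminus \supp \bm$ with real values there, hence so does $F$, and Stieltjes inversion places no mass outside $\supp \bm$; in particular $\supp \cQ\bm \subset \supp \bm$.

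For the inverse, given $\nu \in \cM$ we have $G_\nu(\C^+) \subset \C^-$, so $1 - G_\nu(z)$ lies in $1 + \C^+$, which is disjoint from $(-\infty, 0]$ for $z \in \C^+$. Define $G_\bm(z) := -\log(1 - G_\nu(z))$ via the principal branch of $\log$; then $\Im G_\bm(z) \in (-\pi, 0)$ on $\C^+$, and the expansions $-\log(1-w) = w + O(w^2)$ and $G_\nu(\bi y) = (\bi y)^{-1} + O(y^{-2})$ give $\lim_{y \to \infty} \bi y G_\bm(\bi y) = 1$. Nevanlinna's theorem then produces a probability measure $\bm$ with Cauchy transform $G_\bm$; the imaginary part bound together with Stieltjes inversion gives $d\bm/dx \le 1$. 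Since $G_\nu(x) \to 0$ as $|x| \to \infty$, for $|x|$ large we have $1 - G_\nu(x) > 0$, making $G_\bm$ real-analytic there, so $\bm$ is compactly supported. Setting $\cQ^{-1}\nu := \bm$, both compositions $\cQ \circ \cQ^{-1}$ and $\cQ^{-1} \circ \cQ$ are the identity by the uniqueness clause in Nevanlinna's representation.

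The main subtlety is the careful tracking of the principal branch of $\log$ in the inverse direction and the resulting compact support claim for $\bm$; this is delicate because $\supp \bm$ may strictly contain $\supp \nu$, extending out to wherever $G_\nu$ attains the value $1$ on the real line.
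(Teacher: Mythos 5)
The paper states this result as a citation to \cite[Theorem 1.10]{BuG15} and does not supply its own proof, so there is no in-paper argument to compare against. Your proof is correct and self-contained, and it follows the standard route via the Nevanlinna/Herglotz representation: the equivalence between the density bound $d\bm/dx \le 1$ and the strip condition $-\Im G_\bm \in (0,\pi)$ on $\C^+$, the biholomorphism of $\exp$ from that strip onto $\C^+$, the leading-order asymptotics at $\bi\infty$ to pin down total mass $1$, and the analytic-continuation/Stieltjes-inversion argument for compact support. The two-sided inverse check via uniqueness of Cauchy transforms is also correct: composing the two explicit formulas recovers $G_\nu$ (resp.\ $G_\bm$), and for $\cQ^{-1}\circ\cQ$ the branch issue is benign because $-G_\bm$ already takes values in the strip $\{0<\Im w<\pi\}$, so $\log e^{-G_\bm}=-G_\bm$ for the principal branch. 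You are right to single out compact support in the inverse direction as the delicate point; one can sharpen your remark by noting that $G_\nu<0$ on $(-\infty,E_-(\nu))$, so $1-G_\nu>1$ there and the extra support can only appear to the right of $\supp\nu$, where $G_\nu$ may exceed $1$ near $E_+(\nu)$, and on that stretch the density of $\bm$ is identically $1$ (since $\Im\log(1-G_\nu(x+\bi 0))\to\pi$ there). That observation is consistent with the structure of the quantized free convolution used elsewhere in the paper.
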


The original Markov-Krein correspondence is directly related to the bijection above. It is obtained by conjugating $\cQ$ by the map on measures which reflects about the origin, see \cite[\S 1.5]{BuG15} and references therein for details.

The \emph{quantized free convolution of $\bm^{(1)},\bm^{(2)} \in \cM_1$} is defined by
\[ \bm^{(1)} \otimes \bm^{(2)} := \cQ^{-1}( \cQ \bm^{(1)} \boxplus \cQ \bm^{(2)} ). \]

\begin{theorem} \label{thm:submaster_q}
Under \Cref{assum:m_conv_q}, let $\lambda \in \cW_{\Z}^N$ be distributed as $\rho^V$ where
\[ V := V^{(1)}_N \otimes \cdots \otimes V^{(n)}_N, \]
$\mathrm{m}^{(i)} := \tfrac{1}{N} \sum_{j=1}^N \delta_{\frac{\lambda^{(i)}_j + N - j}{N}}$, and
\[ \bm := \bm^{(1)} \otimes \cdots \otimes \bm^{(n)}. \]
Suppose that
\[ \cA(u) = G_{\bm^{(1)}}^{[-1]}(u) + \cdots + G_{\bm^{(n)}}^{[-1]}(u) + \frac{1 - n}{1 - e^{-u}} \]
has a minimal real positive critical point $\fz \in (0,\min_{1 \le i \le n} G_{\bm^{(i)}}(E_+(\bm^{(i)}))) \subset \bigcap_{i=1}^n \fO_{\bm^{(i)}}$ which satisfies $\cA''(\fz) > 0$.
\begin{enumerate}[(i)]
    \item Then $\fz = G_\bm(E_+(\bm))$.
    \item If there exists $\fp_i \in \C \setminus \supp \bm$ such that
    \begin{align} \label{eq:curve_exists_q}
    \Gamma_\cA\left(\bigcap_{i=1}^n \fO_{\bm^{(i)},\fp_i}, \fz \right) \ne \emptyset
    \end{align}
    for $1 \le i \le n$, then the Schur generating functions of $\lambda(N)$ are Airy edge appropriate where
    \[ A_N(u) = G_{\mathrm{m}^{(1)}}^{[-1]}(u) + \cdots + G_{\mathrm{m}^{(n)}}^{[-1]}(u) + \frac{1 - n}{1 - e^{-u}}. \]
    
    \item If
    \begin{align} \label{eq:set_inclusion_q}
    G_{\bm^{(i)}}(\{z \in \C: \Re z \ge G_{\bm^{(i)}}^{[-1]}(\fz) \}) \subset \fO_{\bm^{(i)},\fp_i}
    \end{align}
    for some $\fp_i \in \C \setminus \supp \bm$, for each $1 \le i \le n$, then \eqref{eq:curve_exists_q} is satisfied.
\end{enumerate}
\end{theorem}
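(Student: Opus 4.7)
The plan is to reduce \Cref{thm:submaster_q} to its random-matrix counterpart \Cref{thm:submaster} via the Markov--Krein correspondence (\Cref{thm:MK_correspondence}). The first task is to establish the identification $\cA(u) = G_\bm^{[-1]}(u)$ where $\bm = \bm^{(1)} \otimes \cdots \otimes \bm^{(n)}$. The MK identity \eqref{eq:MK_correspondence} yields $G_{\bm^{(i)}}^{[-1]}(u) = G_{\cQ \bm^{(i)}}^{[-1]}(1 - e^{-u})$, and since $\cQ \bm = \cQ \bm^{(1)} \boxplus \cdots \boxplus \cQ \bm^{(n)}$ by the definition of $\otimes$, iterating the subordination relation \eqref{subord_relation} gives
\[ G_{\cQ \bm}^{[-1]}(\tilde u) = \sum_{i=1}^n G_{\cQ \bm^{(i)}}^{[-1]}(\tilde u) + \frac{1-n}{\tilde u}. \]
Substituting $\tilde u = 1 - e^{-u}$ and invoking \eqref{eq:MK_correspondence} once more gives $\cA(u) = G_\bm^{[-1]}(u)$.

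With this identification in hand, part (i) follows exactly as in the random-matrix case: since $\cA = G_\bm^{[-1]}$ on a real neighborhood of $\fz$, the relation $\cA'(\fz) = 1/G_\bm'(\cA(\fz))$ forces $G_\bm'(\cA(\fz)) = \infty$; combined with the monotonicity of $G_\bm$ on $(E_+(\bm),\infty)$ and the minimality of $\fz$ among positive real critical points, this yields $\fz = G_\bm(E_+(\bm))$. For part (ii), \Cref{schur_gf} factors the Schur generating function of $\rho^V$ as $S_N = \prod_i S_N^{(i)}$, so \Cref{ex:algebra} combined with \Cref{thm:ssym_lift} produces the supersymmetric lift $\wt S_N := \prod_i s_{\lambda^{(i)}, \fp_i}/s_{\lambda^{(i)}}(1^N)$. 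Applying \eqref{eq:normalized_bessel_schur} and the steepest-descent asymptotic \Cref{bessel_asymp} to each factor expresses $\wt S_N$ in the form \eqref{schur_version_lim} with the claimed $A_N$; the Jacobian prefactors $(u/(e^u-1))^N$ and $((e^v-1)/v)^N$ in \eqref{eq:normalized_bessel_schur} combine with the Cauchy-determinant and square-root factors from \Cref{bessel_asymp} into an $F_N$ that is uniformly bounded and satisfies \eqref{FN_approx}, the cancellation at $\fz$ being enforced by the identification of the previous paragraph. The remaining conditions in the Schur version of \Cref{def:airy_edge} then follow from $A_N \to \cA$ uniformly on compacta (via \Cref{assum:m_conv_q}) and from the hypothesis \eqref{eq:curve_exists_q}.

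Part (iii) is the main technical step; I would follow the random-matrix blueprint and build the required contour from the subordination functions $\omega_i = G_{\bm^{(i)}}^{[-1]} \circ G_\bm$, which coincide with those of the $\boxplus$-convolution of the $\cQ \bm^{(i)}$ after applying MK. Let $z_i^0 := G_{\bm^{(i)}}^{[-1]}(\fz) = \omega_i(E_+(\bm))$. The image $\gamma_0 := G_\bm(\{\Re z = E_+(\bm)\}) \cup \{0\}$ is a simple closed curve through $\fz$ positively oriented around $0$ on which $\Re \cA \equiv E_+(\bm)$; via $\gamma_0 = G_{\bm^{(i)}}(\omega_i(\{\Re z = E_+(\bm)\}))$ and the inclusion $\omega_i(\{\Re z \ge E_+(\bm)\}) \subset \{\Re z \ge z_i^0\}$ (a consequence of the Denjoy--Wolff analysis underlying \Cref{thm:subord_add}), the hypothesis \eqref{eq:set_inclusion_q} places $\gamma_0$ inside $\bigcap_i \fO_{\bm^{(i)}, \fp_i}$. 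Since $\fz$ is a non-degenerate critical point of $\cA$ and $\gamma_0$ is tangent to a level curve of $\Re \cA$ at $\fz$, a small perturbation of $\gamma_0$ into the region $\{\Re \cA < \Re \cA(\fz)\}$ near $\fz$ produces the desired element of $\Gamma_\cA(\bigcap_i \fO_{\bm^{(i)}, \fp_i}, \fz)$. The main obstacle lies in verifying the half-plane self-mapping property of $\omega_i$ precisely enough to locate $\gamma_0$ inside the domain provided by \eqref{eq:set_inclusion_q}, and in showing that the perturbation preserves this inclusion---for which compactness of $\gamma_0$ and openness of $\bigcap_i \fO_{\bm^{(i)}, \fp_i}$ should suffice.
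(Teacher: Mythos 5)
Parts (i) and (ii) of your proposal follow the paper's route: the Markov--Krein identity reduces the quantized subordination to the $\boxplus$-subordination of $\cQ\bm^{(i)}$, and the identification $\cA = G_\bm^{[-1]}$ on a punctured real neighborhood of $0$ together with \Cref{schur_gf}, \eqref{eq:normalized_bessel_schur}, and \Cref{bessel_asymp} gives Airy edge appropriateness. This is essentially what the paper does, with the one quibble that the half-plane inclusion for $\omega_i$ in the paper comes from the Nevanlinna representation of $\omega_i(z)-z$ (\Cref{thm:omega:nevanlinna}), not directly from Denjoy--Wolff.

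Part (iii), however, has a genuine gap. Your candidate curve $\gamma_0 := G_\bm(\{\Re z = E_+(\bm)\}) \cup \{0\}$ cannot lie in $\bigcap_i \fO_{\bm^{(i)},\fp_i}$: the point $0$ is explicitly excluded from $\fO$ (cf.\ \Cref{thm:domain}(i), which identifies $(G(E_-),G(E_+))\setminus\{0\}$ as the \emph{punctured} real neighborhood in $\fO$; and $\cA$, being in $\cH_\fz(\Omega)$, has its pole at $0$). So the compactness-plus-openness argument you invoke cannot locate $\gamma_0$ inside the open domain. Moreover, $\gamma_0 \setminus \{0,\fz\}$ is entirely contained in the level set $\{\Re\cA = \Re\cA(\fz)\}$, so a perturbation confined to a neighborhood of $\fz$ leaves the far portion of the curve violating the strict inequality $\Re\cA(z) < \Re\cA(\fz)$; the perturbation must be global, must push the whole curve into the strict sublevel set, and must additionally keep the image away from $0$. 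The paper avoids all of this at once by working in the $z$-plane: it builds a \emph{bounded} simple closed contour $\varphi$ whose real part is strictly less than $E_+(\bm)$ except at the single point $E_+(\bm)$, shows $G_i(\omega_i(\varphi)) \subset \fO_{\bm^{(i)},\fp_i}$ using the Nevanlinna monotonicity and the hypothesis \eqref{eq:set_inclusion_q}, and then takes $\gamma := G_\bm(\varphi)$. Since $\cA(G_\bm(z)) = z$ on $\varphi$, the strict inequality $\Re\cA(z) < \Re\cA(\fz)$ on $\gamma\setminus\{\fz\}$ is immediate, and boundedness of $\varphi$ keeps $\gamma$ away from $0$. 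If you replace your level curve $\{\Re z = E_+(\bm)\}$ with such a $\varphi$, the rest of your outline goes through.
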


\subsection{Domain of the Inverse Cauchy Transform}

In preparation for the proofs of \Cref{thm:submaster,thm:submaster_q}, we obtain conditions under which $G^{[-1]}(u) = z_u$ is a left inverse of $G$. Note that we already know it is always a right inverse.

Throughout this subsection, fix $\bm \in \cM$ and let $G := G_\bm$. By the expansion
\[ G(z) = \frac{1}{z} + O(1) \]
we know that $G$ is an invertible meromorphic map taking a neighborhood of $\infty$ to a neighborhood of $0$. We have a candidate inverse $G^{[-1]}$ defined on $\fO$. Below, we show that $\fO$ contains a punctured neighborhood of $0$ and that $G^{[-1]}$ is the inverse of $G$ on this neighborhood. Given that $\fp$ is close enough to the support, we further glean that this neighborhood is contained in $\fO_\fp$.

\begin{proposition} \label{thm:domain}
Let $\fp \in \C \setminus \supp \bm$ and for $r > 0$, set $\mathrm{D}_r := \left\{z\in \C: \mathrm{dist}(z,\supp \bm)  \ge r \right\}$.
\begin{enumerate}[(i)]
    \item We have $(G(E_-),G(E_+)) \setminus \{0\}$ is the maximal punctured real neighborhood of $0$ contained in $\fO$, and $G^{[-1]} \circ G$ is the identity on $\R \setminus [E_-,E_+]$.
    \item If $r \ge 4(E_+ - E_-)$ and $\fp \in \C \setminus \supp \bm$ such that $\sup\{ |\fp - x|:x \in \supp \bm\} \le r/4$ , then $G(\mathrm{D}_r) \subset \fO_\fp$ and $G^{[-1]} \circ G$ is the identity on $\mathrm{D}_r$.
\end{enumerate}
\end{proposition}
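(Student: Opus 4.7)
My plan handles (i) using the topological structure of the saddle of $\Re\cS_u$ at $z_u$ together with Proposition \ref{thm:component}, and handles (ii) by an explicit circular contour centered at the midpoint of $\supp \bm$ estimated via expansion in the small parameter $|x - c|/R$.

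For (i), fix $u \in (G(E_-), G(E_+)) \setminus \{0\}$ with WLOG $u > 0$. Monotonicity \eqref{eq:G_monotone} gives a unique $z_u \in (E_+, \infty)$ with $G(z_u) = u$ and $G'(z_u) = -\int (z_u - x)^{-2} d\bm(x) < 0$, verifying Definition \ref{def:suitable}(i). Since $\cS_u''(z_u) = -G'(z_u) > 0$ is real positive, the local saddle structure of $\Re \cS_u$ at $z_u$ places short vertical arcs above and below $z_u$ into $\cD^-_{u, z_u}$; also, the far-left real axis lies in $\cD^-_{u, z_u}$ because $\Re \cS_u(x) = ux - \int \log|x-t| d\bm(t) - \log u \to -\infty$ as $x \to -\infty$. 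By Proposition \ref{thm:component}, these all lie in the unique unbounded component $D$ of $\cD^-_{u, z_u}$, which is open and path-connected. I then choose a simple path in $D$ from a point just above $z_u$ through the far-left real axis to a point just below $z_u$; concatenating with the vertical arcs and $z_u$ yields a simple closed curve $\gamma$ through $z_u$ with $\gamma \setminus \{z_u\} \subset D$ and $\gamma \cap \supp \bm = \emptyset$. By the Jordan curve theorem, $\supp \bm$ (being bounded) lies in the bounded component of $\C \setminus \gamma$ since $\gamma$ passes through arbitrarily far negative real points, and $\gamma$ has winding number $+1$ around $\supp \bm$ when oriented counterclockwise. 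This verifies Definition \ref{def:suitable}(ii) and shows $u \in \fO$. That $G^{[-1]} \circ G = \mathrm{id}$ on $\R \setminus [E_-, E_+]$ then follows from the uniqueness in Proposition \ref{thm:level_lines}. Maximality: for real $u$ with $u \ge G(E_+)$ (if finite) or $u \le G(E_-)$, the real preimage branches of $G$ terminate at $E_\pm \in \supp \bm$, and any complex preimage violating Definition \ref{def:suitable} can be ruled out by analyzing the image of $G$ near the closed interval $[G(E_-), G(E_+)]$.

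For (ii), set $c := (E_+ + E_-)/2$, $R := |z - c|$, and $u := G(z)$. The hypothesis $r \ge 4(E_+ - E_-)$ gives $R \ge 7r/8$, and $\alpha := (x - c)/R$ satisfies $|\alpha| \le 1/7$ for $x \in \supp \bm$; further, $|\fp - c| \le r/4 + (E_+ - E_-)/2 \le 3r/8 < R$, so the circle $\gamma := \{w : |w - c| = R\}$ passes through $z$ and strictly encloses $\supp \bm \cup \{\fp\}$. The key identity
\[
\Re[\cS_u(w) - \cS_u(z)] = \int \bigl[ \Re \eta_x - 1 - \log|\eta_x| \bigr] d\bm(x), \qquad \eta_x := \tfrac{w - x}{z - x},
\]
follows by substituting $u = G(z) = \int (z - x)^{-1} d\bm(x)$ and using $u(w - z) = \int (\eta_x - 1)\, d\bm(x)$. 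Parameterizing $w = c + Re^{i\theta}$, $z = c + Re^{i\theta_0}$ on $\gamma$, Taylor expanding in $\alpha$ yields (after cancellation of the linear-in-$\alpha$ contributions)
\[
\Re \eta_x - 1 - \log|\eta_x| = (\cos(\theta - \theta_0) - 1)(1 + 2\alpha \cos\theta_0) + O(\alpha^2),
\]
where the $O(\alpha^2)$ remainder itself vanishes to second order as $\theta \to \theta_0$, since the full integrand and its first $\theta$-derivative both vanish at $\theta = \theta_0$. For $|\alpha| \le 1/7$ the factor $1 + 2\alpha\cos\theta_0 \ge 5/7$, so the integrand is strictly negative for $\theta \ne \theta_0$ and $\gamma \setminus \{z\} \subset \cD^-_{u, z}$. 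Nonvanishing of $G'(z) = -\int (z - x)^{-2}\,d\bm(x)$ follows because $\arg((z - x)^{-2})$ varies in an arc of width $\le 4\arcsin(1/7) < \pi$, preventing cancellation. For Definition \ref{def:suitable}(iii), $\fp \in \cD^+_{u, z}$ because the $-\int \log|\fp - x|\, d\bm(x)$ term dominates for $R$ large, and $\fp$ lies in the bounded component of $\cD^+_{u, z}$ adjacent to $z$ by connectedness to $\supp \bm$ (both strictly inside the circle).

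The main obstacle is making the $O(\alpha^2)$ estimate in part (ii) quantitative enough to preserve the sign uniformly in $\theta \ne \theta_0$. The key observation is that the $\alpha^2$ coefficient vanishes at $\theta = \theta_0$, so near the diagonal the estimate reduces to $-\tfrac{1}{2}(\theta - \theta_0)^2 (1 + O(\alpha))$, manifestly negative for $|\alpha|$ small; away from the diagonal, $|\cos(\theta - \theta_0) - 1|$ is bounded below and a routine bound on the $\alpha^2$ remainder suffices. For part (i), the subtler point is ensuring the constructed path in the unbounded component of $\cD^-_{u, z_u}$ yields a simple closed curve winding exactly once around $\supp \bm$, which follows from planar topology and the Jordan curve theorem.
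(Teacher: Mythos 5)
Your plan takes a genuinely different and more constructive route. In both parts the paper first proves a preliminary claim: if $\cD^-_{u,z_0}$ is connected and $G'(z_0)\ne0$, then $u\in\fO$ and $G^{[-1]}(u)=z_0$. It then verifies connectedness by ruling out bounded components of $\cD^-_{u,z_0}$: in (i) via the observation that for real $u$, $\Re\cS_u$ is decreasing in $|\Im z|$ at fixed $\Re z$, so every vertical ray inside $\cD^-$ escapes to $\infty$; in (ii) via the pointwise bound $\Re(\cS_u(z_1)-\cS_u(z_0))\ge -1/4-1+\log4>0$ for every $z_1$ within distance $r/4$ of $\supp\bm$, which keeps $\mathrm{cl}(\cD^-)$ away from $\supp\bm$ and hence (by the minimum principle for the harmonic $\Re\cS_u$) eliminates bounded components. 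You instead build the curve of \Cref{def:suitable} by hand --- a loop through the far-left real axis for (i), the explicit circle $\{|w-c|=R\}$ for (ii) --- and your identity for $\Re[\cS_u(w)-\cS_u(z)]$ in terms of $\eta_x=(w-x)/(z-x)$ is the same quantity the paper bounds. The paper's route buys a lot: no Taylor expansion, no remainder control, and almost no planar topology; yours produces an explicit steepest-descent contour.

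There are concrete gaps to repair. In (i), ``by \Cref{thm:component}, these all lie in the unique unbounded component'' is a non-sequitur: uniqueness of the unbounded component does not place your short vertical arcs in it. You should extend them to full vertical rays via the $|\Im z|$-monotonicity noted above so they visibly reach $\infty$. More serious is the winding step: ``$\supp\bm$ lies in the bounded component since $\gamma$ passes through arbitrarily far negative real points'' is not a valid inference --- a simple closed curve can pass through far-left points and still fail to enclose $\supp\bm$. You need to control the crossings of $\gamma$ with $\R$, e.g.\ arrange exactly two transversal crossings, at $z_u$ and at one $a<E_-$, using that $(E_+,\infty)\setminus\{z_u\}\subset\cD^+$ so the connecting path cannot cross there. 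In (ii), your Taylor formula $(\cos(\theta-\theta_0)-1)(1+2\alpha\cos\theta_0)+O(\alpha^2)$ is correct (and the $\alpha^2$ coefficient also factors through $\cos(\theta-\theta_0)-1$), but the acknowledged ``main obstacle'' --- a uniform remainder bound sharp enough for $|\alpha|\le1/7$ --- is precisely the work your contour choice creates and which the paper avoids by never choosing a contour: it needs only the one-line estimate $\Re\eta_x-1-\log|\eta_x|\ge-5/4+\log4>0$ valid when $|\eta_x|\le1/4$, applied with $z_1$ near $\supp\bm$ (not on a circle). Finally, the assertion that $\fp$ lies in the bounded $\cD^+$-component adjacent to $z$ ``by connectedness to $\supp\bm$'' should be substantiated; the paper obtains it by exhibiting the $r/4$-neighborhood of $\supp\bm$, a convex set containing $\fp$, lying inside $\cD^+$.
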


\begin{proof}
We use the following claim. For $z_0 \in \C \setminus \supp \bm$, if $u = G(z_0)$, $G'(z_0) \ne 0$, and $\cD_{u,z_0}^-$ is connected, then $G(z_0) \in \fO$ and $G^{[-1]}(G(z_0)) = z_0$. To see this fact, observe $\cS_u$ is locally quadratic at the critical point $z_0$. Then the connectedness of $\cD_{u,z_0}^-$ implies that $z_0$ belongs to the boundary of $\cD_{u,z_0}^-$ and two distinct components $\mathrm{D}_1,\mathrm{D}_2$ of $\cD_{u,z_0}^+$. At least one of $\mathrm{D}_1,\mathrm{D}_2$, say $\mathrm{D}_1$, must be bounded because the unbounded component of $\cD_{u,z_0}^+$ is unique by \Cref{thm:component}. Since $\Re \cS_u$ is harmonic, the maximum principle implies $\mathrm{cl}(\mathrm{D}_1)$ intersects $\supp \bm$. We also cannot have $\mathrm{cl}(\mathrm{D}_2)$ intersect $\supp \bm$ without violating the connectedness of $\cD_{u,z_0}^- \subset \C \setminus \supp \bm$. Thus $\mathrm{D}_2$ is unbounded. By the connectedness of $\cD_{u,z_0}^-$, we can find a simple closed curve $\gamma$ through $z_0$ such that $\gamma \setminus \{z_0\} \subset \fD_{u,z_0}$ and $\gamma$ is positively oriented around $\supp \bm$, see \Cref{fig:level_lines}.

We prove (i). If $u \in (G(E_-),G(E_+)) \setminus \{0\}$, there is a unique $z_u \in \R \setminus [E_-,E_+]$ such that $u = G(z_u)$ by \eqref{eq:G_monotone}, and we have $G'(z_u) \ne 0$. Observe that $\cD_{u,z_u}^-$ has no bounded components because
\[ \Re \cS_u(z) = u \Re z - \int \log|z - x| d\bm(x) - \log|u| \]
is decreasing to $-\infty$ as $|\Im z|$ increases to $\infty$ for fixed $\Re z$. By \Cref{thm:component}, $\cD_{u,z_u}^-$ consists of a single unbounded component and is therefore connected. By our claim, $u \in \fO$ and $G^{[-1]}(u) = z_u$. The maximality of the punctured neighborhood follows from the fact that $G^{[-1]}$ is continuous on $\fO$ so that if $G(E_+) \in \fO$ or $G(E_-) \in \fO$, then $G^{[-1]}(G(E_+)) = E_+$ or $G^{[-1]}(G(E_-)) = E_-$, both of which are contradictions since $G^{[-1]}$ maps into $\C \setminus \supp \bm$ (see \Cref{def:suitable}).

We now prove (ii). Fix $z_0 \in \mathrm{D}_r$ and set $u = G(z_0)$. For any fixed $x_0 \in \supp \bm$, we have
\begin{align*}
\Re\left( \frac{(z_0 - x_0)^2}{(z_0 - x)^2} \right) = \Re\left( 1 + 2 \cdot \frac{x - x_0}{z_0 - x} + \frac{(x - x_0)^2}{(z_0 - x)^2} \right) > 1 - 2\cdot \frac{1}{4} - \frac{1}{16} > 0
\end{align*}
for every $x\in \supp \bm$, since $|x - x_0| < E_+ - E_-$ and $r \ge 4(E_+ - E_-)$. This shows that $1/(z_0 - x)^2$ is contained in the half-plane $\{w:\Re(w(z_0 - x_0)^2) > 0\}$ for each $x \in \supp \bm$ and therefore
\[ G'(z_0) = -\int_\R \frac{d\bm(x)}{(z_0 - x)^2} \in \{w: \Re(w(z - x_0)^2) < 0\}. \]
In particular, $G'(z_0) \ne 0$. Observe that if $z_1 \in \C \setminus \supp \bm$ such that
\begin{align*}
|z_1 - x| \le r/4 \quad \quad \mbox{for $x \in \supp \bm$},
\end{align*}
then
\[ \Re \cS_u(z_1) > \Re \cS_u(z_0). \]
Indeed, we have
\[ \left| \frac{z_1 - x}{z_0 - x} \right| < \frac{1}{4} \]
for $x \in \supp \bm$, so that
\begin{align*}
\Re \Big( \cS_u(z_1) - \cS_u(z_0) \Big) &= \Re \Big( G(z_0)(z_1 - z_0) \Big) - \int_\R \! \log\left| \frac{z_1 - x}{z_0 - x} \right| d\bm(x) \\
&= \Re \left( -1 + \int_\R \frac{z_1 - x}{z_0 - x} d\bm(x) \right) - \int_\R \log \left| \frac{z_1 - x}{z_0 - x} \right| d\bm(x) > \log 4 - \frac{5}{4} > 0.
\end{align*}
This means that $\mathrm{cl}(\cD_{u,z_0}^-)$ does not intersect $\supp \bm$. Thus, $\cD_{u,z_0}^-$ does not contain any bounded components, as such a component must intersect $\supp \bm$ by harmonicity of $\Re \cS_u$ and the maximum principle. By \Cref{thm:component}, $\cD_{u,z_0}^-$ consists of a single unbounded component and is therefore connected, see \Cref{fig:domain}. By our claim, $u \in \fO$ and $G^{[-1]}(u) = z_0$. Furthermore, we see that there is only one bounded component of $\cD_{u,z_u}^+$ and that this component contains $\{\fp \in \C \setminus \supp \bm: |\fp - x| \le E_+ - E_-\}$.
\end{proof}

\begin{figure}[ht]
    \centering
    \includegraphics[width=0.5\linewidth]{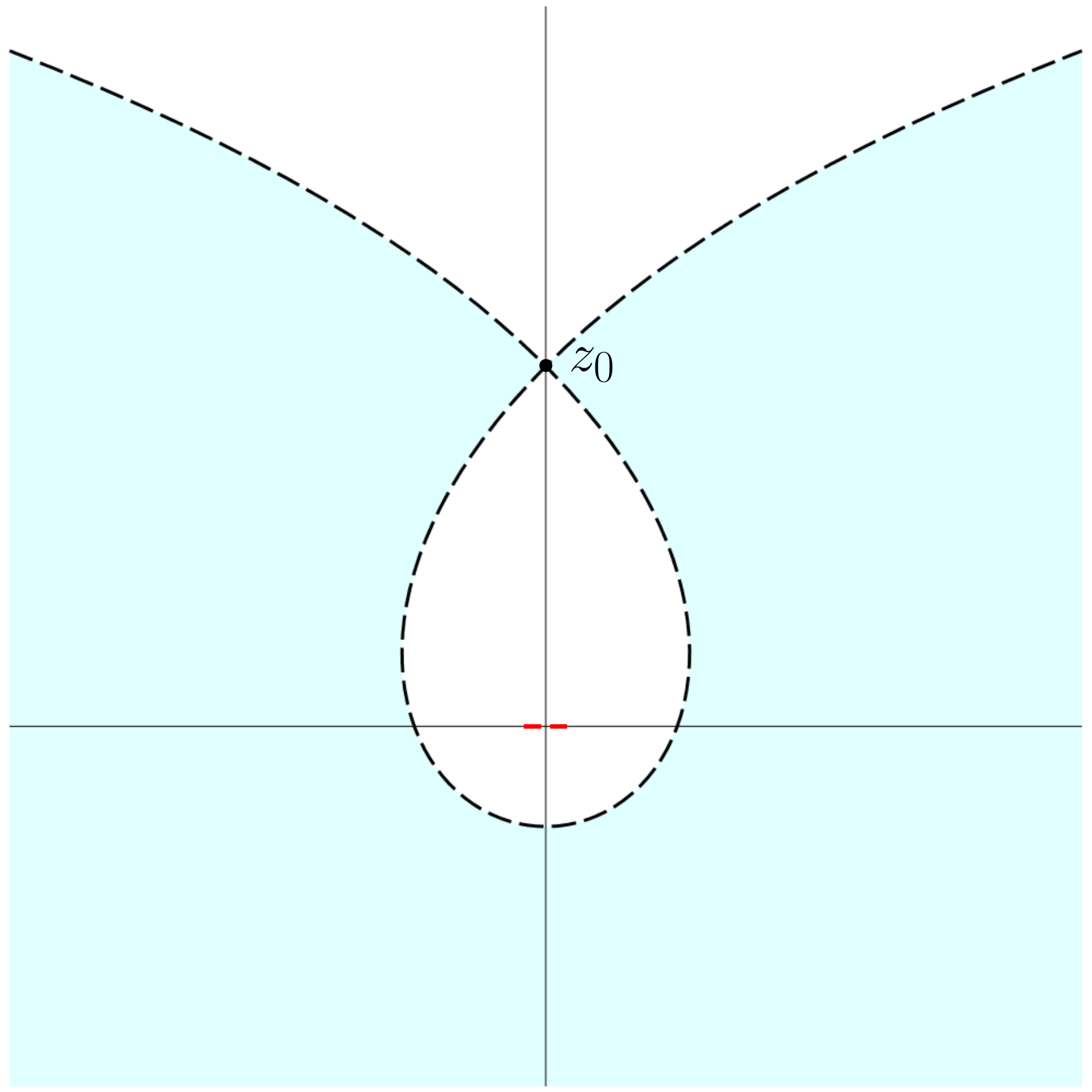}
    \caption{A representative diagram illustrating the level curve $\{\Re \cS_u(z) = \Re \cS_u(z_0)\}$ for $z_0$ far from the support. The shaded region is contained in $\cD_{u,z_0}^-$ and the red segments indicate the support.}
    \label{fig:domain}
\end{figure}

\subsection{Free Additive Convolution}

For this subsection, fix $\bm^{(1)},\ldots,\bm^{(n)} \in \cM$, let $\bm := \bm^{(1)} \boxplus \cdots \boxplus \bm^{(n)}$, and denote by $G$ the Cauchy transform of $\bm$.

By iterating \Cref{thm:subord_add}, we obtain

\begin{proposition} \label{thm:subord}
There exist analytic functions $\omega_i: \C^+ \to \C^+$ for $1 \le i \le n$ such that
\begin{gather} \label{omega:infty}
\lim_{t \nearrow \infty} \frac{\omega_i(\bi t)}{\bi t} = 1; \\ \label{omega:im_bd}
\Im \omega_i(z) \ge \Im z \\ \label{omega:subord}
G(z) = G_{\bm^{(i)}}(\omega_i(z)) \\ \label{omega:relation}
z = \omega_1(z) + \cdots + \omega_n(z) + \frac{1 - n}{G(z)}
\end{gather}
for all $z \in \C^+$, where the inequality \eqref{omega:im_bd} is strict if none of the measures $\bm^{(1)},\ldots,\bm^{(n)}$ are single atoms. Furthermore, if $\bm^{(i)}$ is not a single atom for each $1 \le i \le n$, then $\omega_1,\omega_2$ extend continuously to $\C^+ \cup \R$ with values in $\C \cup \{\infty\}$.
\end{proposition}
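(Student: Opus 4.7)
My plan is to induct on $n \ge 2$, taking the case $n=2$ as the base case, which is precisely \Cref{thm:subord_add}. For the inductive step, set $\nu := \bm^{(2)} \boxplus \cdots \boxplus \bm^{(n)}$ and apply \Cref{thm:subord_add} to the pair $(\bm^{(1)}, \nu)$ to obtain analytic $\omega_1, \tilde{\omega}:\C^+ \to \C^+$ with $G(z) = G_{\bm^{(1)}}(\omega_1(z)) = G_\nu(\tilde{\omega}(z))$ and $z = \omega_1(z) + \tilde\omega(z) - 1/G(z)$. By the inductive hypothesis applied to $\nu$ (viewed as a convolution of $n-1$ measures) I obtain analytic $\eta_2,\ldots,\eta_n:\C^+\to\C^+$ with $G_\nu(w) = G_{\bm^{(i)}}(\eta_i(w))$ for $i \ge 2$ and $w = \eta_2(w) + \cdots + \eta_n(w) + (2-n)/G_\nu(w)$. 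The candidates for the remaining subordination functions are then the compositions $\omega_i(z) := \eta_i(\tilde\omega(z))$ for $2 \le i \le n$, which are well-defined analytic maps $\C^+ \to \C^+$ since $\tilde\omega(\C^+) \subset \C^+$.

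Verifying the four required properties is mostly mechanical. The identity \eqref{omega:subord} for $i \ge 2$ is immediate by chaining two subordination relations: $G_{\bm^{(i)}}(\omega_i(z)) = G_{\bm^{(i)}}(\eta_i(\tilde\omega(z))) = G_\nu(\tilde\omega(z)) = G(z)$. For the sum identity \eqref{omega:relation}, I substitute $w = \tilde\omega(z)$ into the inductive relation to get $\tilde\omega(z) = \sum_{i=2}^n \omega_i(z) + (2-n)/G(z)$, then combine with $z = \omega_1(z) + \tilde\omega(z) - 1/G(z)$ to produce $z = \sum_{i=1}^n \omega_i(z) + (1-n)/G(z)$. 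The imaginary-part inequality \eqref{omega:im_bd} for $i \ge 2$ follows by chaining $\Im\omega_i(z) = \Im \eta_i(\tilde\omega(z)) \ge \Im \tilde\omega(z) \ge \Im z$.

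The asymptotic normalization \eqref{omega:infty} for $i \ge 2$ comes from the factorization
\[ \frac{\omega_i(\bi t)}{\bi t} \;=\; \frac{\eta_i(\tilde\omega(\bi t))}{\tilde\omega(\bi t)} \cdot \frac{\tilde\omega(\bi t)}{\bi t}. \]
The second factor tends to $1$ by the base case, and this forces $\tilde\omega(\bi t)$ to escape to infinity along a sequence with imaginary part tending to $+\infty$; applying the inductive normalization for $\eta_i$ along this sequence gives that the first factor also tends to $1$. For the strict inequality and continuous-extension statements, I chain the corresponding properties for $\tilde\omega$ and $\eta_i$ via composition, provided the strong form of \Cref{thm:subord_add} is actually available at the outer step, i.e.\ provided that $\nu$ is not a single atom.

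The one genuine subtlety is propagating the non-atomic hypothesis. I would verify separately the elementary fact that $\bm^{(1)} \boxplus \bm^{(2)}$ is a single atom if and only if both summands are atoms: if $\bm^{(1)} = \delta_a$, then the defining relations in \Cref{thm:subord_add} force $\omega_2(z) = z - a$ and $G_{\bm^{(1)}\boxplus\bm^{(2)}}(z) = G_{\bm^{(2)}}(z-a)$, so the convolution is a translate of $\bm^{(2)}$. Iterating, $\nu$ is an atom iff every $\bm^{(i)}$ with $i \ge 2$ is, so under the hypothesis that none of $\bm^{(1)},\ldots,\bm^{(n)}$ are atoms the strong form of \Cref{thm:subord_add} applies to $(\bm^{(1)},\nu)$ and the induction carries both strict inequality and continuous extension through composition. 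Beyond this bookkeeping, the argument is purely a mechanical iteration.
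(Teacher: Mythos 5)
Your proof takes essentially the same route as the paper: both arguments iterate the two-variable subordination result (\Cref{thm:subord_add}) and build the $n$-variable subordination functions by composition. The paper accumulates from the left, processing $\bm^{(1)}\boxplus\cdots\boxplus\bm^{(j)}$ for $j$ increasing and composing with the ``left'' subordination function at each step, while you split off $\bm^{(1)}$ and recurse on $\nu = \bm^{(2)}\boxplus\cdots\boxplus\bm^{(n)}$; by associativity these are the same iteration in a different bracketing, and the chaining arguments for \eqref{omega:subord}, \eqref{omega:relation}, and \eqref{omega:im_bd} proceed identically.

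The one place where your argument is not quite airtight is the non-atomic propagation. Your translation observation (if $\bm^{(1)} = \delta_a$ then $\bm^{(1)}\boxplus\bm^{(2)}$ is a shift of $\bm^{(2)}$) shows that if \emph{both} summands are atoms the convolution is an atom, and that an atom plus a non-atom is a non-atom --- but what you actually need is the converse direction: if \emph{neither} summand is an atom, the convolution is not an atom, so that $\nu$ is genuinely non-atomic and the strong form of \Cref{thm:subord_add} is available for the pair $(\bm^{(1)},\nu)$. That implication does not follow from the translation argument alone. The cleanest supplement is additivity of free cumulants under $\boxplus$: the variance of $\mu\boxplus\nu$ is the sum of the variances, and a compactly supported probability measure is a single atom exactly when its variance vanishes, so one non-atomic factor already forces the convolution to be non-atomic. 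A similar remark applies to \eqref{omega:infty}: the inductive normalization for $\eta_i$ is stated radially, but $\tilde\omega(\bi t)$ leaves the imaginary axis; one needs the standard strengthening that $\eta_i(z)/z \to 1$ nontangentially (which follows from the Nevanlinna representation of $\eta_i(z) - z$). The paper's own proof is equally terse on both points, so these are notes to tighten the write-up rather than flaws in the strategy.
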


\begin{proof}
Let $G_j$ denote the Cauchy transform of $\bm^{(1)} \boxplus \cdots \boxplus \bm^{(j)}$. By \Cref{thm:subord_add}, for each $j = 1,\ldots,n-1$, we have analytic $\wt{\omega}_1^{(j)},\wt{\omega}_2^{(j)}: \C^+ \to \C^+$ such that
\begin{gather} \nonumber
G_{j+1}(z) = G_j(\wt{\omega}_1^{(j)}(z)) = G_{\bm^{(j+1)}}(\wt{\omega}_2^{(j)}(z)) \\ \label{eq:subord_j}
z = \wt{\omega}_1^{(j)}(z) + \wt{\omega}_2^{(j)}(z) - \frac{1}{G_{j+1}(z)}
\end{gather}
Set $\omega_1^{(1)} = \wt{\omega}_1^{(1)}$ and define iteratively in increasing $j = 1,\ldots,n-1$
\[ \omega_{j+1}^{(j+1)} = \wt{\omega}_2^{(j)}, \quad \quad \omega_i^{(j+1)} = \omega_i^{(j)} \circ \wt{\omega}_1^{(j)} \quad \quad (1 \le i \le j).  \]
Inducting in $j$, we have
\begin{gather*}
G_{j+1}(z) = G_{\bm^{(i)}}(\omega_i^{(j+1)}), \quad \quad 1 \le i \le j+1 \\
z = \omega_1^{(j+1)}(z) + \cdots + \omega_{j+1}^{(j+1)}(z) - \frac{j}{G_{j+1}(z)}.
\end{gather*}
The base case $j = 1$ is given by \eqref{eq:subord_j} and for $j > 1$ we use the previous induction step (replacing $j$ with $j - 1$ above), replace $z$ with $\wt{\omega}_1^{(j)}(z)$, and combine it with \eqref{eq:subord_j}. In particular, for $j = n-1$, this is exactly \eqref{omega:subord} and \eqref{omega:relation} where $\omega_i = \omega_i^{(n)}$ for $1 \le i \le n$.

Properties \eqref{omega:infty}, \eqref{omega:im_bd}, uniqueness, and continuity on $\C^+ \cup \R$ are inherited from $\wt{\omega}_1^{(j)},\wt{\omega}_2^{(j)}$.

If none of $\bm^{(1)},\ldots,\bm^{(n)}$ are single atoms, then \Cref{thm:subord_add} implies $\Im \wt{\omega}_i^j(z) \ge \Im z$ for $z \in \C^+$, $i = 1,2$, and $j = 1,\ldots,n$. Thus $\Im \omega_i(z) > \Im z$ for $z \in \C^+$.
\end{proof}

Let $\omega_1,\ldots,\omega_n$ be as in \Cref{thm:subord}. Then \eqref{omega:subord} and \Cref{thm:domain} imply that
\begin{align} \label{omega:G_rep}
\omega_i(z) = G_{\bm^{(i)}}^{[-1]} (G(z))
\end{align}
for $z$ in a neighborhood of $\infty$.

Our next task is to obtain a useful representation for the subordination functions in terms of a Cauchy transform of a measure. For this, we need the following representation theorem which is a converse of the fact that the Cauchy transform of a finite positive Borel measure on $\R$ is a map from $\C^+ \to \C^-$ and satisfies $1/z + o(1/|z|)$ as $z \to \infty$.

\begin{proposition} \label{thm:nevanlinna}
Suppose $\cG:\C^+ \to \C^-$ is analytic and $\limsup_{y \to \infty} y |\cG(\bi y)| = c < \infty$. Then there is a unique positive Borel measure $\nu$ on $\R$ such that
\[ \cG(z) = \int_\R \frac{1}{z - t} d\nu(t) \quad \quad \mbox{and} \quad \quad \nu(\R) = c. \]
\end{proposition}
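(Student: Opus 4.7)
The plan is to deduce this from the classical Herglotz--Nevanlinna representation for analytic self-maps of the upper half-plane. Set $F(z) := -\cG(z)$, so that $F:\C^+ \to \overline{\C^+}$ is analytic. By Herglotz, there exist $a\in \R$, $b\ge 0$, and a positive Borel measure $\mu$ on $\R$ with $\int_\R (1+t^2)^{-1}\,d\mu(t) < \infty$ such that
\[
F(z) = a + bz + \int_\R \left(\frac{1}{t-z} - \frac{t}{1+t^2}\right)d\mu(t), \quad z\in \C^+.
\]
I would take this as the starting point (citing a standard reference); the remainder of the argument pins down the parameters using the growth hypothesis.

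Next, I would extract the imaginary part at $z = iy$, $y>0$:
\[
\Im F(iy) = by + \int_\R \frac{y}{t^2+y^2}\,d\mu(t),
\]
and therefore
\[
y\, \Im F(iy) = by^2 + \int_\R \frac{y^2}{t^2+y^2}\,d\mu(t).
\]
Since $y|F(iy)| \ge y\,\Im F(iy)$ and $\limsup_{y\to \infty} y|F(iy)| = c < \infty$, the first term forces $b=0$, while monotone convergence applied to the second gives $\mu(\R) \le c$, hence $\mu$ is finite. With $b=0$ and $\mu$ finite, the representation simplifies to
\[
F(z) = \tilde a + \int_\R \frac{d\mu(t)}{t-z}, \qquad \tilde a := a - \int_\R \frac{t}{1+t^2}\,d\mu(t)\in\R.
\]
Letting $y\to\infty$ and using dominated convergence on the integral shows $F(iy)\to \tilde a$; combined with $y|F(iy)| = O(1)$ this forces $\tilde a = 0$. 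Thus $\cG(z) = -F(z) = \int_\R \frac{d\mu(t)}{z-t}$, taking $\nu := \mu$.

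To identify $\nu(\R) = c$, I would apply dominated convergence again:
\[
iy\,\cG(iy) = \int_\R \frac{iy}{iy-t}\,d\nu(t) \xrightarrow[y\to\infty]{} \nu(\R),
\]
so the full limit $\lim_{y\to\infty} y|\cG(iy)|$ exists and equals $\nu(\R)$, which must coincide with $c$. Uniqueness follows from the Stieltjes inversion formula: for any $a<b$ which are continuity points of $\nu$,
\[
\nu((a,b)) = -\frac{1}{\pi}\lim_{\varepsilon \downarrow 0}\int_a^b \Im \cG(x+i\varepsilon)\,dx,
\]
so $\cG$ determines $\nu$ on a generating $\pi$-system, hence everywhere. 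The only real content is the Herglotz step; once that is in hand, the rest is elementary bookkeeping with the growth hypothesis, which is the reason I would most naturally just quote Herglotz rather than rederive it.
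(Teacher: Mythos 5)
Your proof is correct and takes essentially the same route the paper intends: the paper omits the proof, citing the Nevanlinna/Herglotz representation theorem, and your argument is precisely the standard bookkeeping that converts that representation plus the growth condition $\limsup_{y\to\infty} y|\cG(\bi y)|<\infty$ into the stated conclusion. One optional refinement: you could note explicitly at the outset that $F=-\cG$ maps $\C^+$ strictly into $\C^+$ (so $F$ is a genuine Pick function and is non-constant, since a constant with positive imaginary part would violate the growth hypothesis), but this is cosmetic and the rest of the argument already handles all cases.
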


This is a consequence of the Nevanlinna representation theorem for self-maps of the upper half plane (see e.g. \cite[Chapter III]{AKh65} or \cite[Lemma 3.1]{BES18}). We omit the proof, though the details can be found in \cite[Theorem 10]{MS17}.

For $\nu$ a finite positive Borel measure, we recall that the Cauchy transform admits the inversion formula
\begin{align} \label{eq:cauchy_inversion}
\nu(I) = \lim_{\e \to 0} -\frac{1}{\pi} \int_I \! \Im G_\nu(x+\bi \e) \, dx
\end{align}
for any interval $I \subset \R$ with nonempty interior such that the endpoints of $I$ are not atoms of $\nu$ (see e.g. \cite[Theorem 2.4.3]{AGZ10}).

\begin{proposition} \label{thm:omega:nevanlinna}
Let $\omega_1,\ldots,\omega_n$ be as in \Cref{thm:subord} and suppose none of the measures $\bm^{(1)},\ldots,\bm^{(n)}$ are single atoms. Then there exist finite positive Borel measures $\nu_1,\ldots,\nu_n$ such that
\begin{align} \label{subord_nevanlinna}
\omega_i(z) - z = \kappa_i + \int_\R \frac{d\nu_i(z)}{x - z}, \quad \quad z \in \C \setminus \supp \nu_i,
\end{align}
for some $\kappa_i \in \R$ and $\supp \nu_i \subset [E_-(\bm),E_+(\bm)]$ for $i = 1,\ldots,n$. In particular $\omega_i$ is real-valued and increasing on $(E_+(\bm),\infty)$ and $(-\infty,E_-(\bm))$.
\end{proposition}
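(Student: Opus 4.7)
The plan is to apply the Nevanlinna representation theorem to $h_i(z) := \omega_i(z) - z$ and then localize the support of the resulting measure to $[E_-(\bm), E_+(\bm)]$. By \Cref{thm:subord}, the assumption that none of the $\bm^{(j)}$ is a single atom yields $\Im \omega_i(z) > \Im z$ strictly on $\C^+$, so $h_i$ is an analytic self-map of $\C^+$. The Nevanlinna representation for such maps gives
\[ h_i(z) = \alpha_i + \beta_i z + \int_\R \frac{1+xz}{x-z}\, d\sigma_i(x), \quad z \in \C^+, \]
for some $\alpha_i \in \R$, $\beta_i \ge 0$, and a finite positive Borel measure $\sigma_i$. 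The asymptotic $h_i(\bi t)/(\bi t) \to 0$ from \eqref{omega:infty}, together with the standard computation $\Im h_i(\bi t)/t = \beta_i + \int (1+x^2)/(x^2+t^2)\, d\sigma_i(x)$ (whose integral vanishes as $t \to \infty$ by dominated convergence), forces $\beta_i = 0$.

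The main step is showing $\supp \sigma_i \subset [E_-(\bm), E_+(\bm)]$. By \Cref{thm:subord}, $\omega_i$ extends continuously from $\C^+$ to $\R$ with values in $\C \cup \{\infty\}$, and the subordination identity $G(z) = G_{\bm^{(i)}}(\omega_i(z))$ persists by continuity. For $x \in (E_+(\bm), \infty)$, $G(x)$ is a nonzero positive real number; since $\bm^{(i)}$ is not a single atom, $G_{\bm^{(i)}}$ maps $\C^+$ strictly into $\C^-$, its continuous boundary values are real only on $\R \setminus \supp \bm^{(i)}$, and $G_{\bm^{(i)}}(\infty) = 0 \neq G(x)$. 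This forces $\omega_i(x) \in \R$; a symmetric argument using $G(x) < 0$ handles $x \in (-\infty, E_-(\bm))$, so $\Im h_i$ extends continuously to zero on $\R \setminus [E_-(\bm), E_+(\bm)]$. Setting $d\mu_i := (1+x^2)\, d\sigma_i$, the identity $\Im h_i(x + \bi\e) = \int \e\, d\mu_i(x') / ((x'-x)^2 + \e^2)$ combined with Stieltjes inversion then yields $\supp \mu_i \subset [E_-(\bm), E_+(\bm)]$, hence the same for $\sigma_i$.

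The conclusion is routine. Using the algebraic identity $\frac{1+xz}{x-z} = \frac{1+x^2}{x-z} - x$ together with the finiteness of $\int x\, d\sigma_i(x)$ (ensured by compact support), the Nevanlinna representation rewrites as
\[ h_i(z) = \kappa_i + \int_\R \frac{d\nu_i(x)}{x-z}, \quad \kappa_i := \alpha_i - \int_\R x\, d\sigma_i(x), \quad d\nu_i(x) := (1+x^2)\, d\sigma_i(x), \]
with $\nu_i$ finite and supported in $[E_-(\bm), E_+(\bm)]$. For $z \in \R \setminus [E_-(\bm), E_+(\bm)]$, the integrand is real, so $\omega_i(z) \in \R$, and $\omega_i'(z) = 1 + \int (x-z)^{-2}\, d\nu_i(x) \ge 1$ yields the strict monotonicity. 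The hard part is the support localization, which relies essentially on the non-atomicity hypothesis, both to guarantee continuous extension of $\omega_i$ to $\R$ and to ensure that the strict negativity of $\Im G_{\bm^{(i)}}$ on $\C^+$ rules out $\omega_i(x)$ lying in $\C^+$ for real $x$ outside the support of $\bm$.
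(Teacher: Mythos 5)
Your proof is correct, and it takes a mildly different route from the paper's. The paper first derives the sharper asymptotic $\omega_i(z) - z = -\kappa_i - c_i/z + O(|z|^{-2})$ near $\infty$ by composing the Laurent expansion of $G_\bm$ at $\infty$ with that of $G_{\bm^{(i)}}^{[-1]}$ at $0$, and this $O(1/z)$ decay is precisely the hypothesis needed to invoke the specialized Nevanlinna statement (\Cref{thm:nevanlinna}), which yields a finite measure at once; the support is then localized afterwards via the Stieltjes inversion formula \eqref{eq:cauchy_inversion}. You instead apply the full Herglotz--Nevanlinna representation with the kernel $\tfrac{1+xz}{x-z}$, kill the linear term $\beta_i z$ using only the crude $o(|z|)$ asymptotic from \eqref{omega:infty}, localize the support of the representing measure $\sigma_i$ \emph{first}, and only then deduce finiteness of $\nu_i = (1+x^2)\,d\sigma_i$ from compactness of its support. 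Your order of operations avoids having to establish the Laurent expansion of $\omega_i$, so it is somewhat more self-contained; the paper's route is shorter once \Cref{thm:nevanlinna} is in hand. You also flesh out a step the paper asserts in a single sentence, namely why $\omega_i(x) \in \R$ for $x \in \R \setminus [E_-(\bm),E_+(\bm)]$: ruling out $\omega_i(x) \in \C^\pm$ via strictness of $G_{\bm^{(i)}}:\C^+\to\C^-$ (and its conjugate) and ruling out $\omega_i(x)=\infty$ via $G_{\bm^{(i)}}(\infty)=0\neq G_\bm(x)$. One small nit: the parenthetical claim that the boundary values of $G_{\bm^{(i)}}$ are real \emph{only} on $\R\setminus\supp\bm^{(i)}$ is neither needed for your argument nor true in general (the boundary density of $\bm^{(i)}$ can vanish at interior points of the support); what you actually use, and what suffices, is that $\omega_i(x)\notin\C^\pm\cup\{\infty\}$.
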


\begin{proof}
Fix $i \in \{1,\ldots,n\}$ and let $G_i := G_{\bm^{(i)}}$. From the asymptotic expansions of $G(z)$ and $G_i^{[-1]}(u)$ near $z = \infty$ and $u = 0$ respectively, we have
\begin{align} \label{subord_expansion}
\omega_i(z) = G_i^{[-1]}(G(z)) = z - \kappa_i - c_i/z + O(1/|z|^2)
\end{align} 
in a neighborhood of $\infty$ for some $\kappa_i,c_i \in \R$ by \eqref{omega:G_rep}. Since none of $\bm^{(1)},\ldots,\bm^{(n)}$ are single atoms, \Cref{thm:subord} implies $\Im \omega_i(z) > \Im z$ for $z \in \C^+$ so that $\omega_i(z) - z$ is a map from $\C^+$ to $\C^+$. \Cref{thm:nevanlinna} and \eqref{omega:infty} imply that there exists a finite positive Borel measure $\nu_i$ such that
\[ z - \omega_i(z) - \kappa_i = G_{\nu_i}(z) \]
and $\nu_i(\R) = c_i$. We note that $c_i$ must be positive ($\nu_i$ is nonzero), otherwise violating the strict inequality $\Im \omega_i(z) > \Im z$.

We thus arrive at the integral representation \eqref{subord_nevanlinna} which extends to $z \in \C \setminus \supp \nu_i$. From \eqref{omega:subord} we know that $\Im \omega_i(z) = 0$ for $z \in \C \setminus [E_-(\bm),E_+(\bm)]$. By \eqref{eq:cauchy_inversion}, we obtain $\supp \nu_i \subset [E_-(\bm),E_+(\bm)]$.
\end{proof}

The monotonicity of $\omega_i$ implies that the limit
\[ \omega_i(E_+(\bm)) := \lim_{t \searrow E_+(\bm)} \omega_i(t) \]
is well-defined. By \eqref{eq:cauchy_inversion}, $(E_+(\bm^{(i)}),\infty)$ is the maximal right semi-infinite interval on which $G_{\bm^{(i)}}$ is real-valued. Since $G_{\bm^{(i)}}$ is also real-valued on $(\omega_i(E_+(\bm)),\infty)$ by \eqref{omega:subord}, we must have
\begin{align}
\omega_i(E_+(\bm)) \ge E_+(\bm^{(i)})
\end{align}
Plugging in $z = G^{[-1]}(u)$ into \eqref{omega:relation} and using \eqref{omega:G_rep}, we obtain
\begin{align} \label{eq:Ginv_relation}
G^{[-1]}(u) = G_{\bm^{(1)}}^{[-1]}(u) + \cdots + G_{\bm^{(n)}}^{[-1]}(u) + \frac{1 - n}{u}
\end{align}
for $u$ in a punctured neighborhood of $0$ containing $(G(E_-(\bm)),G(E_+(\bm))) \setminus \{0\}$.

\subsection{Proof of Theorem \ref{thm:submaster}}

Let $G$ denote the Cauchy transform of $\bm$, $E_\pm := E_\pm(\bm)$, and $E_\pm^{(i)} := E_\pm(\bm^{(i)})$.

\textbf{Proof of (i).} We rule out $\fz < G(E_+)$ and $\fz > G(E_+)$. If $\fz > G(E_+)$, then by minimality of $\fz$ and monotonicity \eqref{eq:G_monotone} of $G$, we have that $\cA$ is analytic with nonzero derivative in a neighborhood of $(0,G(E_+)]$. By \eqref{eq:Ginv_relation}, we know that $\cA = G^{[-1]}$ in a neighborhood of $0$. Then $\cA$ has an inverse in a connected neighborhood $U$ of $(0,G(E_+)]$ which must be $G$. Since $\cA$ is real valued on $U \cap \R$, this implies that $G$ is real valued in a neighborhood of $E_+$ contradicting the inversion formula \eqref{eq:cauchy_inversion} and that $\sup \supp \bm = E_+$. If $\fz < G(E_+)$, then there exists $x_0 \in (E_+,\infty)$ such that $\fz = G(x_0)$ so that
\[ \cA'(\fz) = \frac{1}{G'(x_0)} < 0 \]
which contradicts $\fz$ is a critical point. Thus $\fz = G(E_+)$.

\textbf{Proof of (ii).} Let $\vec{\ell}^{(i)} := \vec{\ell}^{(i)}(N)$ for $1 \le i \le n$. By iterated applications of \Cref{bessel_gf}, the multivariate Bessel generating function for $\vec{\ell}$ is given by
\[ \prod_{i=1}^n \frac{\cB_{\vec{\ell}^{(i)}}(z_1,\ldots,z_N)}{\cB_{\vec{\ell}^{(i)}}(0^N)} \]
so a supersymmetric lift is given by
\[ \wt{S}_N(z_1,\ldots,z_{N+k}/w_1,\ldots,w_k) = \prod_{i=1}^n \frac{\cB_{\vec{\ell}^{(i)},\fp_i}(z_1,\ldots,z_{N+k}/w_1,\ldots,w_k)}{\cB_{\vec{\ell}^{(i)}}(0^N)} \]
Set $\vec{u} = (u_1,\ldots,u_k), \vec{c} = (c_1,\ldots,c_k)$, write $\cS_{i,u} = \cS_{\mathrm{m}^{(i)},u}$ and $G_i(z) = G_{\mathrm{m}^{(i)}}(z)$. By \Cref{bessel_asymp}, if $u_1,\ldots,u_k \in \fO_{\bm^{(i)},\fp_i}$, then
\begin{align*}
& \frac{\cB_{\vec{\ell}^{(i)},\fp_i}(N \vec{u} + N^{2/3} \vec{c},0^N/N\vec{u})}{\cB_{\vec{\ell}^{(i)}}(0^N)} \\
& \quad \quad = F_N^{(i)}(\vec{u}) \prod_{j=1}^k \exp\Bigg( N \Big(\cS_{i,u_j + N^{-1/3} c_j} \big(G_i^{[-1]}(u_j + N^{-1/3} c_j)\big) - \cS_{i,u_j}\big(G_i^{[-1]}(u_j)\big)\Big) \Bigg) \\
& \quad \quad  = F_N^{(i)}(\vec{u}) \prod_{j=1}^k \exp\Bigg( N \int_{u_i}^{u_i + N^{-1/3} c_i} \left( G_i^{[-1]}(z) - \frac{1}{z} \right) dz \Bigg)
\end{align*}
where
\begin{align*}
F_N^{(i)}(\vec{u}) &= \frac{D\big(\vec{u} + N^{-1/3} \vec{c};-\vec{u}\big)}{\Delta\big(\vec{u} + N^{-1/3}\vec{c} \big) \Delta\big(-\vec{u}\big)} \frac{\Delta\big(G_i^{[-1]}(\vec{u} + N^{-1/3} \vec{c})\big) \Delta\big(-G_i^{[-1]}(\vec{u})\big)}{D\big(G_i^{[-1]}(\vec{u} + N^{-1/3} \vec{c});-G_i^{[-1]}(\vec{u})\big)} \\
& \quad \quad \times \prod_{j=1}^k \frac{1}{\sqrt{G_i'(G_i^{[-1]}(u_j + N^{-1/3} c_j)) G_i'(G_i^{[-1]}(u_j))}}
\end{align*}
and we use the fact that
\[ \frac{d}{du} \cS_{i,u}(G_i^{[-1]}(u)) = G_i^{[-1]}(u) - \frac{1}{u}. \]
Suppose $u,v$ tend to some $\fu \in \fO_{\bm^{(i)},\fp_i}$ as $N\to\infty$. Then
\[ G_i^{[-1]}(u) - G_i^{[-1]}(v) = \frac{1}{G_i'(G_i^{[-1]}(\fu))}(u - v)(1 + o(1)). \]
It follows that if $u_1,\ldots,u_k \to \fu$ as $N\to\infty$, then
\[ F_N^{(i)}(\vec{u}) = 1 + o(1). \]
Observe that
\[ \frac{u + N^{-1/3} c}{u} = \exp\left( \int_u^{u + N^{-1/3} c} \frac{1}{z} \, dz \right), \]
for $c > 0$ fixed, $u \ne 0$, and where the integral is over a horizontal line segment from $u$ to $u+N^{-1/3} c$. Thus, $\wt{S}_N$ satisfies \eqref{ssym_mbgf_limit} with 
\[ A_N(z) = \sum_{i=1}^n G_i^{[-1]}(z) + \frac{1 - n}{z} \]
which clearly converges to $\cA(u)$ on compact subsets of $\Omega = \bigcap_{i=1}^n \fO_{\bm^{(i)},\fp_i}$. Using \eqref{eq:curve_exists}, we conclude that the multivariate Bessel generating functions for $\vec{\ell}$ are Airy edge appropriate.

\textbf{Proof of (iii).} Suppose \eqref{eq:set_inclusion} holds. We must show \eqref{eq:curve_exists}, i.e. construct a curve $\gamma$ in
\[ \Omega := \bigcap_{i=1}^n \fO_{\bm^{(i)},\fp_i} \]
such that
\begin{enumerate}[(a)]
    \item $\fz \in \gamma$ with $\cA'(\fz) = 0$ and $\cA''(\fz) > 0$,
    \item $\gamma$ is positively oriented around $0$, and
    \item $\Re \cA(\fz) > \Re \cA(z)$ for $z \in \gamma \setminus \{\fz\}$.
\end{enumerate}
Since we assume that $\cA'(\fz) = 0$ and $\cA''(\fz) > 0$, (a) is satisfied by any contour through $\fz$.

Using the representation from \Cref{thm:omega:nevanlinna}, we have
\begin{align} \label{omega:representation}
\omega_i(z) - z = \kappa_i + \int_\R \frac{d\nu_i(x)}{x - z}
\end{align}
where $\nu_i$ is a finite Borel measure with $\supp \nu_i \subset [E_-,E_+]$. Thus we view $\omega_i$ as a continuous function on $\C \setminus [E_-,E_+)$ where the extension to $E_+$ is by monotonicity (we can of course extend to $E_-$ as well). From the representation above, we have $\Re \omega_i(x + \bi t)$ is increasing as $|t|$ increases for $x \ge E_+$. This gives us
\begin{align} \label{realpart_bdd_below_at_E+}
\omega_i(E_+) < \Re \omega_i(E_+ + \bi t), \quad \quad t \ne 0.
\end{align}
and
\begin{align} \label{omega:line}
\begin{split}
& G(\{z \in \C: \Re z \ge E_+\}) = G_i( \omega_i(\{z \in \C: \Re z \ge E_+\})) \\
& \quad \quad \subset G_i(\{z \in \C: \Re z \ge \omega_i(E_+)\}) = G_i(\{z \in \C: \Re z \ge G_i^{[-1]}(G(E_+)) \}) \subset \fO_{\bm^{(i)},\fp_i}
\end{split}
\end{align}
where the first equality uses \eqref{omega:subord}, the second equality uses \eqref{omega:G_rep}, and the final inclusion is our assumption \eqref{eq:set_inclusion}. In particular, the monotonicity of $G$ and positivity of $\fz$ gives 
\[ (0,G(E_+)] = G([E_+,\infty)) \subset \fO_{\bm^{(i)},\fp_i}. \]
By \Cref{thm:domain} (i),
\[ (0,G(E_+)] \subset (0,G_i(E_+^{(i)})). \]
Applying $G_i^{[-1]}$, we have
\[ [G_i^{[-1]}(G(E_+)),\infty) \subset (E_+^{(i)},\infty). \]
Using \eqref{omega:G_rep} again, we can choose $\e > 0$ so that
\[ \omega_i(E_+) - \e > E_+^{(i)}, \quad \quad i = 1,\ldots,n. \]

\begin{figure}[ht]
    \centering
\begin{subfigure}{.5\textwidth}
  \centering
  \includegraphics[width=0.8\linewidth]{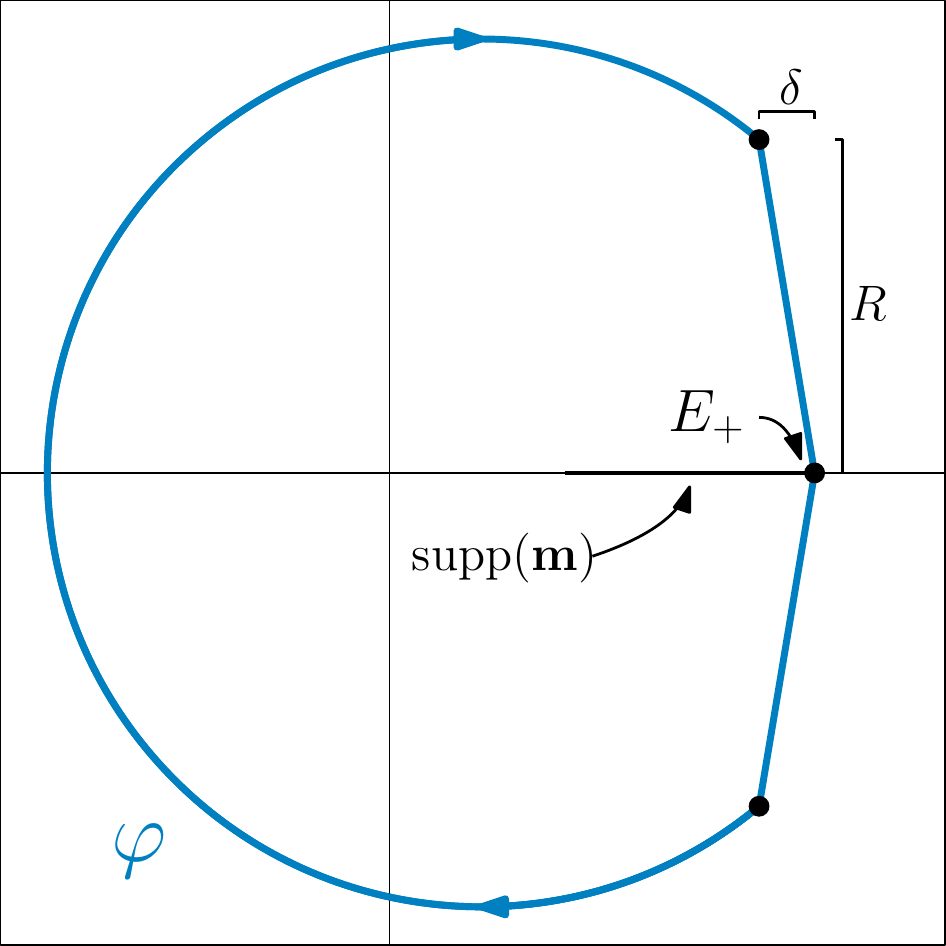}
\end{subfigure}%
\begin{subfigure}{.5\textwidth}
  \centering
  \includegraphics[width=0.8\linewidth]{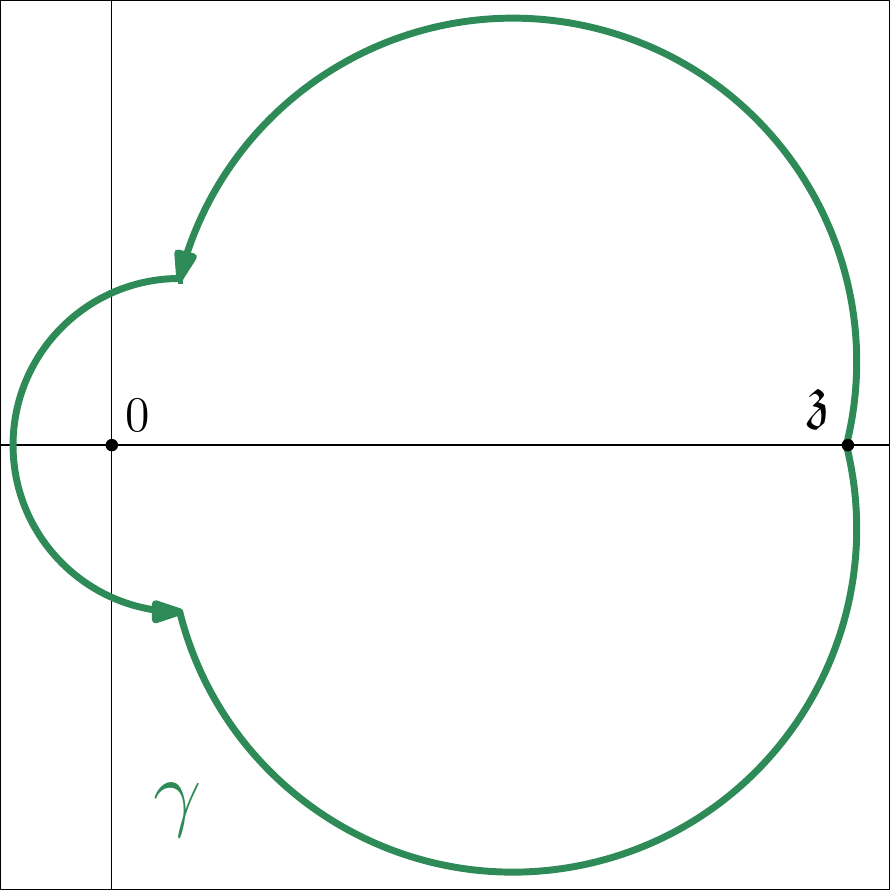}
\end{subfigure}
\caption{A depiction of the curve $\varphi$ (left) and its image $\gamma$ (right) under $G$.}
\label{fig:PhiGamma}
\end{figure}

Since $\omega_i(z) = z + O(1)$ as $z \to \infty$, we can choose $R > 0$ so that
\[ G_i(\omega_i(z)) \in \fO_{\bm^{(i)},\fp_i}, \quad \quad |z| \ge R \]
by \Cref{thm:domain} (ii). Fix a small $\delta > 0$ to be determined. Let $\varphi := \varphi^1 \cup \varphi^2$ be the curve where (i) $\varphi^1$ is the piecewise linear curve which is a line segment from $E_+ - \delta + \bi R$ to $E_+$ and from $E_+$ to $E_+ - \delta - \bi R$, and (ii) $\varphi^2$ is a clockwise arc from $E_+ - \delta - \bi R$ to $E_+ - \delta + \bi R$ with large enough radius so that $\varphi^2 \subset \{|z| \ge R\}$. Clearly, the real part of $\varphi$ is uniquely maximized at $E_+$. We have
\[ G(\varphi^j) = G_i(\omega_i(\varphi^j)) \subset \fO_{\bm^{(i)},\fp_i}, \quad \quad i = 1,\ldots,n \]
for $j = 2$ by our choice of $R$. By \eqref{omega:line}, the openness of $\fO_{\bm^{(i)},\fp_i}$, and the continuity of $G_i\circ \omega_i$ at $E_+$, we can choose $\delta > 0$ sufficiently small so that the above holds for $j = 1$ as well. Thus $\gamma := G(\varphi) \subset \Omega$, see \Cref{fig:PhiGamma} for an illustration of $\varphi$ and $\gamma$. Clearly $G(E_+) = \fz \in \gamma$, thus $\gamma$ satisfies (a). Since $\varphi$ is positively oriented around $\infty$, $\gamma$ satisfies (b). Finally, we have
\begin{align*}
\cA(G(z)) &= G^{[-1]}(G(z)) = z, \quad \quad z \in \varphi.
\end{align*}
Since the real part of $\varphi$ is maximized at $E_+$, we have
\[ \cA(G(E_+)) > \cA(u), \quad \quad u \in \gamma \setminus \{G(E_+)\}. \]
This proves $\gamma$ satisfies (c), completing the proof.

\qed

\subsection{Subordination for Quantized Operations and Proof of Theorem \ref{thm:submaster_q}}
Due to the parallels in the proofs, we point out only the modifications needed to adapt the proof of \Cref{thm:submaster} to that of \Cref{thm:submaster_q}. We emphasize that in the quantized setting, we have
\[ \bm = \bm^{(1)} \otimes \cdots \otimes \bm^{(n)} \]
and
\[ \cA(u) = G_{\bm^{(1)}}^{[-1]}(u) + \cdots + G_{\bm^{(n)}}^{[-1]}(u) + \frac{1 - n}{1 - e^{-u}} \]
in the setting of \Cref{thm:submaster_q}. We discuss analytic subordination in the quantized setting:

By definition of quantized free convolution, we have
\[ \cQ \bm = \cQ \bm^{(1)} \boxplus \cdots \boxplus \cQ \bm^{(n)}. \]
Thus \eqref{omega:subord}, \eqref{omega:relation} give us the existence of analytic maps $\omega_1,\ldots,\omega_n \in \C^+ \to \C^+$ such that
\[ G_{\cQ \bm}(z) = G_{\cQ \bm^{(i)}}(\omega_i(z)), \quad \quad z = \omega_1(z) + \cdots + \omega_n(z) + \frac{1 - n}{G_{\cQ \bm}(z)}. \]
Recall that $\omega_i$ extends to $\C \setminus \supp \cQ \bm$. By \eqref{eq:MK_correspondence}, we obtain
\begin{align} \label{omega:subord_q}
G_\bm(z) = G_{\bm^{(i)}}(\omega_i(z)), \quad \quad \frac{1}{G_{\cQ \bm}(z)} = \frac{1}{1 - e^{-G_\bm(z)}}
\end{align}
which imply
\[ \omega_i(z) = G_{\bm^{(i)}}^{[-1]}(G_\bm(z)), \quad \quad z = \omega_1(z) + \cdots + \omega_n(z) + \frac{1 - n}{1 - e^{-G_\bm(z)}} \]
where the former holds for $z$ in a neighborhood of $\infty$ and the latter for $z \in \C^+$. Setting $z = G_\bm^{[-1]}(u)$ for $u$ in a punctured neighborhood of $0$ in the latter, we obtain
\begin{align} \label{eq:Ginv_relation_q}
G_\bm^{[-1]}(u) = G_{\bm^{(1)}}^{[-1]}(u) + \cdots + G_{\bm^{(n)}}^{[-1]}(u) + \frac{1}{1 - e^{-u}}
\end{align}
for $u$ in some punctured neighborhood of $0$.

\textbf{Proof of (i).} The proof is the same as that of \Cref{thm:submaster} (i), where we use $\cA(u) = G_\bm^{[-1]}(u)$ from \eqref{eq:Ginv_relation_q}.

\textbf{Proof of (ii).}  Let $\lambda^{(i)} := \lambda^{(i)}(N)$ for $1 \le i \le n$. By iterated applications of \Cref{schur_gf}, the Schur generating function for $\lambda$ is given by
\[ \prod_{i=1}^n \frac{s_{\lambda^{(i)}}(e^{z_1},\ldots,e^{z_N})}{s_{\lambda^{(i)}}(1^N)} \]
so a supersymmetric lift is given by
\[ \wt{S}_N(z_1,\ldots,z_{N+k}/w_1,\ldots,w_k) = \prod_{i=1}^n \frac{s_{\lambda^{(i)},\fp_i}(e^{z_1},\ldots,e^{z_{N+k}}/e^{w_1},\ldots,e^{w_k})}{s_{\lambda^{(i)}}(1^N)}. \]
We can use \eqref{eq:normalized_bessel_schur} to write
\begin{align*}
& \wt{S}_N(u_1,\ldots,u_k,0^N/v_1,\ldots,v_k) = \prod_{i=1}^n \Bigg[ \prod_{j=1}^k \frac{1}{e^{v_j}} \left( \frac{u_j}{e^{u_j} - 1} \cdot \frac{e^{v_j} - 1}{v_j} \right)^N
\times \frac{D(e^{u_1},\ldots,e^{u_k};-e^{v_1},\ldots,-e^{v_k})}{D(u_1,\ldots,u_k;-v_1,\ldots,-v_k)} \\
& \quad \times \frac{\Delta(u_1,\ldots,u_k) \Delta(v_1,\ldots,v_k)}{\Delta(e^{u_1},\ldots,e^{u_k}) \Delta(e^{v_1},\ldots,e^{v_k})} \frac{\cB_{\lambda^{(i)}+\delta_N,N\fp_i}(u_1,\ldots,u_k,0^N/v_1,\ldots,v_k)}{\cB_{\lambda^{(i)}+\delta_N,N\fp_i}(0^N)} \Bigg].
\end{align*}
Set $\vec{u} = (u_1,\ldots,u_k)$, $\vec{c} = (c_1,\ldots,c_k)$, write $\cS_{i,u} = \cS_{\mathrm{m}^{(i)},u}$ and $G_i(z) = G_{\mathrm{m}^{(i)}}(z)$. We can show that
\begin{align*}
& \wt{S}_N(\vec{u} + N^{-1/3} \vec{c},0^N/\vec{u}) = F_N(\vec{u})\prod_{j=1}^k \left( \frac{u_j + N^{-1/3} c_j}{u_j} \frac{e^{u_j} - 1}{e^{u_j + N^{-1/3} c_j}} \right)^{nN} \\
& \quad \quad \times \prod_{i=1}^n \exp\Bigg( N \int_{u_i}^{u_i + N^{-1/3} c_i} \left( G_i^{[-1]}(z) - \frac{1}{z} \right) dz \Bigg)
\end{align*}
for $u_1,\ldots,u_k \in \Omega := \bigcap_{i=1}^n \fO_{\bm^{(i)},\fp_i}$,
where
\[ F_N(\vec{u}) = 1 + o(1) \]
if $u_1,\ldots,u_k \to \fu$ as $N\to\infty$ for some $\fu \in \Omega$. To see this, we use the fact that
\begin{align*}
\frac{\cB_{\lambda^{(i)}+\delta_N,N\fp_i}(\vec{u} + N^{-1/3} \vec{c},0^N/\vec{u})}{\cB_{\lambda^{(i)}+\delta_N,N\fp_i}(0^N)} = \frac{\cB_{\frac{\lambda^{(i)}+\delta_N}{N},\fp_i}(N\vec{u} + N^{2/3} \vec{c},0^N/N\vec{u})}{\cB_{\frac{\lambda^{(i)}+\delta_N}{N},\fp_i}(0^N)}
\end{align*}
where the right hand side has the same limit as in the proof of \Cref{thm:submaster} (ii) with $(\lambda^{(i)} + \delta_N)/N = \vec{\ell}^{(i)}$.

Observe that
\[ \frac{u + N^{-1/3} c}{u} = \exp\left( \int_u^{u + N^{-1/3} c} \frac{dz}{z} \right), \quad \quad 
\frac{e^{u + N^{-1/3} c} - 1}{e^{u_i} - 1}  = \exp\left( \int_u^{u+N^{-1/3}c} \frac{dz}{1 - e^{-z}} \right) \]
for $c > 0$ fixed, $u \ne 0$, and where the integrals are over a horizontal line segment from $u$ to $u+N^{-1/3} c$. Thus, $\wt{S}_N$ satisfies \eqref{schur_version_lim} with
\[ A_N(z) = \sum_{i=1}^n G_i^{[-1]}(z) + \frac{1 - n}{1 - e^{-z}} \]
which clearly converges to $\cA(u)$ on compact subsets of $\Omega$. Using \eqref{eq:curve_exists_q}, we conclude that the Schur generating functions for $\lambda$ are Airy edge appropriate.

\textbf{Proof of (iii).} The proof is verbatim identical to that of \Cref{thm:submaster} (iii), where we use the subordination functions $\omega_1,\ldots,\omega_n$ obtained in the quantized setting.

\section{Unitarily Invariant Random Matrices} \label{sec:applications}

In this section, we prove \Cref{thm:multisum,thm:two_sum}. We start by reformulating \Cref{thm:multisum} in terms of Airy edge appropriateness. Given $\mu \in \cM$, let
\begin{align} \label{eq:tau}
x(\mu) := 4(E_+(\mu) - E_-(\mu)) + E_+(\mu), \quad \quad \tau(\mu) := \left( 1 + \frac{G(x(\mu))^2}{G'(x(\mu))} \right)^{-1}.
\end{align}

\begin{theorem} \label{thm:application}
Under the hypotheses of \Cref{thm:multisum} with $\tau(\cdot)$ as above, the multivariate Bessel generating functions for $\vec{\ell}$ form an Airy appropriate sequence where
\[ A_N(u) = \sum_{i=1}^n G_{\mathrm{m}^{(i)}}^{[-1]}(u) + \frac{1 - n}{u}, \]
$\mathrm{m}^{(i)} := \tfrac{1}{N} \sum_{j=1}^N \delta_{\ell_j^{(i)}}$, and $\fz_N$ is the minimal positive critical point of $A_N$.
\end{theorem}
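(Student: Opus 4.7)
The plan is to invoke \Cref{thm:submaster}. Setting $\bm_j := \mu_j^{\boxplus n_j}$ and $\bm := \bm_1 \boxplus \cdots \boxplus \bm_k$, iterating the identity \eqref{eq:Ginv_relation} gives $\cA(u) = G_\bm^{[-1]}(u)$ as analytic functions in a punctured neighborhood of $0$. It remains to verify that $\fz := G_\bm(E_+(\bm))$ is the minimal real positive critical point of $\cA$ with $\cA''(\fz) > 0$, and that the inclusion property \eqref{eq:set_inclusion} holds; the theorem then yields Airy edge appropriateness with the indicated $A_N$.

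\textbf{Critical point and second derivative.} On $(0,\fz)$ one has $\cA = G_\bm^{[-1]}$ and $\cA'(u) = 1/G_\bm'(G_\bm^{[-1]}(u))$, which is nonvanishing since $G_\bm$ is strictly monotone on $(E_+(\bm),\infty)$, placing the minimal positive critical point at $\fz$. The condition $\cA''(\fz) > 0$ is equivalent to $\bm$ exhibiting square-root decay at $E_+(\bm)$. To establish this, I would first handle each single group $\bm_j$: parameterizing the critical equation for $\cA_j(u) := n_j G_{\mu_j}^{[-1]}(u) + (1-n_j)/u$ as $u = G_{\mu_j}(x)$ reduces it to
\begin{equation*}
h_j(x) := -\frac{G_{\mu_j}(x)^2}{G_{\mu_j}'(x)} = 1 - \frac{1}{n_j}.
\end{equation*}
Since $h_j$ is continuous on $(E_+(\mu_j),\infty)$ with $\lim_{x\to\infty} h_j(x) = 1$, and the hypothesis $n_j \geq \tau(\mu_j)$ rewrites as $h_j(x(\mu_j)) \leq 1 - 1/n_j$, an intermediate value argument produces a solution $x_j^* \in [x(\mu_j),\infty)$, yielding square-root decay of $\bm_j$ at $E_+(\bm_j)$ together with $\fz_j := G_{\bm_j}(E_+(\bm_j)) = G_{\mu_j}(x_j^*)$. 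I would then propagate the edge behavior through the remaining free convolutions using the stability of square-root edges under $\boxplus$, a consequence of the subordination representation \eqref{subord_nevanlinna}.

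\textbf{Inclusion property.} By \Cref{thm:domain}(ii) applied with $r = 4(E_+(\mu_j) - E_-(\mu_j))$ and a suitable $\fp_j$ within distance $r/4$ of $\supp\mu_j$, it suffices to show $G_{\mu_j}^{[-1]}(\fz) \geq x(\mu_j)$ for each $j$. I would write $\bm = \bm_j \boxplus \widetilde{\bm}_j$ with $\widetilde{\bm}_j$ the free convolution of the other groups, and use the subordination function $\widetilde{\omega}_j$ for $\bm \to \bm_j$ from \Cref{thm:subord_add}. Reality of $G_\bm(x) = G_{\bm_j}(\widetilde{\omega}_j(x))$ for $x > E_+(\bm)$ combined with $\widetilde{\omega}_j(x) \to \infty$ as $x \to \infty$ forces $\widetilde{\omega}_j(x) > E_+(\bm_j)$ there; taking $x \searrow E_+(\bm)$ yields $\widetilde{\omega}_j(E_+(\bm)) \geq E_+(\bm_j)$. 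Monotonicity of $G_{\bm_j}$ on $(E_+(\bm_j),\infty)$ then gives $\fz = G_{\bm_j}(\widetilde{\omega}_j(E_+(\bm))) \leq \fz_j$, and applying the decreasing map $G_{\mu_j}^{[-1]}$ delivers $G_{\mu_j}^{[-1]}(\fz) \geq G_{\mu_j}^{[-1]}(\fz_j) = x_j^* \geq x(\mu_j)$, as required.

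The hardest step should be propagating square-root edge behavior through the iterated free convolution. While essentially classical, making it precise—specifically, extending the subordination functions $\omega_1,\ldots,\omega_k$ past $E_+(\bm)$ with sufficient boundary regularity to conclude $\cA''(\fz) > 0$, and ruling out spurious interior critical points of $\cA$ inside $(0,\fz)$ that could arise from the auxiliary measures in \eqref{subord_nevanlinna}—requires careful Cauchy-transform bookkeeping at each convolution.
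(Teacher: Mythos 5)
Your proposal follows the same architecture as the paper's proof: reduce to \Cref{thm:submaster}(iii), handle a single group $\mu_j^{\boxplus n_j}$ by rewriting the critical equation of $\cA_j$ as $-G_{\mu_j}(x)^2/G_{\mu_j}'(x) = 1 - 1/n_j$ and applying the intermediate value theorem on $[x(\mu_j),\infty)$ (the paper's \Cref{thm:high_compression}), propagate through the iterated free convolution via subordination (the paper's \Cref{thm:free_convolution_cp}), and verify the inclusion \eqref{eq:set_inclusion} by combining the distance bound $G_{\mu_j}^{[-1]}(\fz) \geq x(\mu_j)$ with \Cref{thm:domain}(ii). The one substantive difference in framing is that the paper never works with square-root edge regularity as such: what is actually propagated is the condition $G'(E_+) = -\infty$ (equivalently $\lim G'/G^2 = -\infty$), obtained from the vanishing of $\cA_{j-1}'$ at its critical point, which feeds directly into the variance-saturation identity $I(E_+) = 1$ from \Cref{variance_equality}. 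This is both weaker than and more directly verifiable than square-root decay of the density, and it is precisely what makes the inductive step close without any boundary-regularity analysis of the limiting density. Your flagged concerns—extending $\omega_j$ past $E_+(\bm)$ and ruling out interior critical points—are handled respectively by the Nevanlinna representation (\Cref{thm:omega:nevanlinna}) and by the fact that $\cA = G_\bm^{[-1]}$ is strictly monotone on $(0,\fz)$ via \eqref{eq:Ginv_relation}, exactly as you anticipate.
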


\begin{remark} \label{rmk:tau_optimize}
By tracking the arguments, the conclusion of \Cref{thm:application} still holds if we replace $x(\mu)$ with $c(E_+ - E_-) + E_+$ in the definition of $\tau(\mu)$ where $c$ solves $\log c - \frac{c+1}{c} = 0$. Then $c \approx 3.59112$, reducing the value of $\tau(\mu)$.
\end{remark}

\begin{remark} \label{rmk:tau_optimal}
We believe that \Cref{thm:samelimit} (and also \Cref{thm:multisum}) should continue to hold with $\tau(\mu)$ defined as above and $x(\mu)$ replaced by $E_+(\mu) + \e$ for any $\e > 0$. We expect this to be optimal for Airy universality in the setting of \Cref{thm:samelimit}. 
\end{remark}

We state a general theorem about Airy fluctuations for two matrix summands which implies \Cref{thm:two_sum}.

\begin{definition}
Let $\fM$ be the set of $\bm \in \cM$ such that
\begin{itemize}
    \item $\bm$ has a continuous nonzero density $f$ on $(E_-,E_+)$ satisfying
    \begin{align} \label{jacobi_hypothesis}
    C^{-1} \le \frac{f(x)}{(E_+ - x)^t} \le C, \quad \quad x \in (E_+ - \e, E_+)
    \end{align}
    for some $C \ge 1$, $0 \le t < 1$, and $\e > 0$ sufficiently small, and

    \item for each $\xi > E_+$,
    \begin{align} \label{eq:set_inclusion_xi}
    G_\bm(\{ z \in \C: \Re z \ge \xi\}) \subset \fO_\fp
    \end{align}
    for some $\fp \in \C \setminus \supp \bm$.
\end{itemize}
\end{definition}

\begin{remark}
The set $\fM$ is closed under translation and dilation.
\end{remark}

\begin{theorem} \label{thm:general_twosum}
Under \Cref{assum:rmt_conv}, suppose $\bm^{(1)},\bm^{(2)} \in \fM$ and $\vec{\ell} \in \cW_{\R}^N$ are the eigenvalues of
\[ X^{(1)}_N + X^{(2)}_N. \]
Then the multivariate Bessel generating functions for $\vec{\ell}$ form an Airy appropriate sequence where
\[ A_N(u) := G_{\mathrm{m}^{(1)}}^{[-1]}(u) + G_{\mathrm{m}^{(2)}}^{[-1]}(u) - \frac{1}{u}, \quad \quad u \in \fO_{\bm^{(1)}} \cap \fO_{\bm^{(2)}}, \]
$\mathrm{m}^{(i)} := \tfrac{1}{N} \sum_{j=1}^N \delta_{\ell_j^{(i)}}$, and $\fz_N$ is the minimal critical point of $A_N$.
\end{theorem}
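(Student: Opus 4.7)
The plan is to apply Theorem \ref{thm:submaster} with $n = 2$, which reduces the proof of \Cref{thm:general_twosum} to verifying two conditions: that $\cA(u) := G_{\bm^{(1)}}^{[-1]}(u) + G_{\bm^{(2)}}^{[-1]}(u) - 1/u$ has a minimal positive real critical point $\fz$ contained in $(0, \min_i G_{\bm^{(i)}}(E_+(\bm^{(i)})))$ with $\cA''(\fz) > 0$, and that the set-inclusion hypothesis \eqref{eq:set_inclusion} holds for some choice of $\fp_1, \fp_2$.

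For the critical-point condition, I would invoke analytic subordination. Setting $\bm := \bm^{(1)} \boxplus \bm^{(2)}$ and using Theorem \ref{thm:subord}, there exist subordination functions $\omega_i$ with $G_\bm(z) = G_{\bm^{(i)}}(\omega_i(z))$ and $z = \omega_1(z) + \omega_2(z) - 1/G_\bm(z)$. Identity \eqref{eq:Ginv_relation} identifies $\cA$ with $G_\bm^{[-1]}$ in a real punctured neighborhood of $0$, so the critical-point question reduces to the edge behavior of $\bm$. The hypothesis $\bm^{(i)} \in \fM$ with power-law exponent $t_i \in [0,1)$ forces $G_{\bm^{(i)}}'(x) \to -\infty$ as $x \searrow E_+(\bm^{(i)})$, while the Nevanlinna representation from Proposition \ref{thm:omega:nevanlinna} allows $\omega_i$ to be extended monotonically to $(E_+(\bm), \infty)$ with $\omega_i(E_+(\bm)) \ge E_+(\bm^{(i)})$. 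Combining these, the relation $\omega_i'(z) = G_\bm'(z)/G_{\bm^{(i)}}'(\omega_i(z))$ and the subordination identity can be used to rule out $\omega_i(E_+(\bm)) = E_+(\bm^{(i)})$: such an equality would be incompatible with $G_{\bm^{(i)}}'(E_+(\bm^{(i)})) = -\infty$ and the first-order subordination relation at $E_+(\bm)$. Thus $\omega_i(E_+(\bm)) > E_+(\bm^{(i)})$ strictly, so $\omega_i$ is analytic at $E_+(\bm)$ with nonvanishing derivative there, and expanding the subordination equation around $z = E_+(\bm)$ solves out the square-root-type expansion $G_\bm(E_+(\bm)) - G_\bm(z) \asymp c\sqrt{z - E_+(\bm)}$ with $c > 0$. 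Setting $\fz := G_\bm(E_+(\bm))$ gives $\cA'(\fz) = 0$ and $\cA''(\fz) > 0$; minimality and the strict bound $\fz < \min_i G_{\bm^{(i)}}(E_+(\bm^{(i)}))$ follow by the monotonicity in Proposition \ref{thm:domain} from $\fz = G_{\bm^{(i)}}(\omega_i(E_+(\bm))) < G_{\bm^{(i)}}(E_+(\bm^{(i)}))$.

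For the set-inclusion condition, by Theorem \ref{thm:submaster}(iii) it is enough to check $G_{\bm^{(i)}}(\{\Re z \ge G_{\bm^{(i)}}^{[-1]}(\fz)\}) \subset \fO_{\bm^{(i)}, \fp_i}$ for some $\fp_i$. Since $G_{\bm^{(i)}}^{[-1]}(\fz) = \omega_i(E_+(\bm)) > E_+(\bm^{(i)})$ by the previous step, this is exactly an instance of the built-in property \eqref{eq:set_inclusion_xi} characterizing membership in $\fM$, applied at $\xi = \omega_i(E_+(\bm))$; a choice of $\fp_i$ is therefore supplied directly by the hypothesis $\bm^{(i)} \in \fM$.

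The main obstacle is the first step: establishing the square-root-type regularization of $\bm^{(1)} \boxplus \bm^{(2)}$ at its upper edge under the power-law hypothesis $t_i \in [0,1)$. The heart of the matter is the strict inequality $\omega_i(E_+(\bm)) > E_+(\bm^{(i)})$, whose failure would be incompatible with $G_{\bm^{(i)}}'(E_+(\bm^{(i)})) = -\infty$ and the subordination equation; once this is available, the rest of the argument is a routine expansion and the set-inclusion condition comes for free from $\fM$.
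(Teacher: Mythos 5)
Your overall strategy matches the paper's: reduce to Theorem~\ref{thm:submaster}(iii), verify that the Jacobi-type edge forces $G_{\bm^{(i)}}'$ to blow up, deduce $\omega_i(E_+(\bm)) > E_+(\bm^{(i)})$, and then read off the set-inclusion from the definition of $\fM$. The set-inclusion step is handled correctly. The issue is the middle step, which you correctly identify as the heart of the matter but do not actually close.

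The paper packages the entire critical-point analysis in a standalone result (Proposition~\ref{thm:free_convolution_cp}), stated just before this theorem, and the proof of Theorem~\ref{thm:general_twosum} consists of verifying its hypothesis $G_{\bm^{(i)}}'(\omega)/G_{\bm^{(i)}}(\omega)^2 \to -\infty$ and then quoting it. You do not invoke this proposition and instead sketch a direct argument via the relation $\omega_i'(z) = G_\bm'(z)/G_{\bm^{(i)}}'(\omega_i(z))$. That sketch has a real gap: from Proposition~\ref{thm:omega:nevanlinna}, $\omega_i'(z) = 1 + \int d\nu_i(x)/(x-z)^2 \ge 1$ on $(E_+(\bm),\infty)$, but $\omega_i'$ can itself diverge as $z \searrow E_+(\bm)$ since $\supp\nu_i$ may reach up to $E_+(\bm)$. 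So the contradiction ``$\omega_i' \ge 1$ versus $\omega_i' = G_\bm'/G_{\bm^{(i)}}'(\omega_i) \to 0$'' does not follow unless one first controls $G_\bm'(z)$ near $E_+(\bm)$, which is precisely what is at stake. The mechanism that actually works, used in Proposition~\ref{thm:free_convolution_cp}, is different in kind: it exploits the Pick-function property $\Im\omega_i(z) \ge \Im z$ to get the bound $I(z) := (1 - \Im z/\Im\omega_1(z))(1 - \Im z/\Im\omega_2(z)) \le 1$ together with the identity $I(z) = -G_{\bm^{(i)}}'(\omega_i(z))/G_\bm(z)^2 - 1$; the divergence of $G_i'/G_i^2$ (not merely $G_i' \to -\infty$) combined with the monotonicity in Lemma~\ref{variance} then forces $I$ to blow up if $\omega_i(E_+(\bm)) \le E_+(\bm^{(i)})$, a contradiction. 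Likewise, your passage ``expanding the subordination equation $\ldots$ solves out the square-root-type expansion $\ldots$ gives $\cA'(\fz)=0$ and $\cA''(\fz)>0$'' is too quick: in the paper, $\cA'(\fz)=0$ is established by showing the saturation $I(E_+(\bm)) = 1$ via a density argument (a sequence $x_n \nearrow E_+(\bm)$ with $\Im\omega_i(x_n) > 0$, ruling out an atom of $\bm$ at $E_+$), and $\cA''(\fz) > 0$ requires a separate explicit computation in terms of $F_i := 1/G_{\bm^{(i)}}$ using $\omega_i(E_+(\bm)) > E_+(\bm^{(i)})$. Without Proposition~\ref{thm:free_convolution_cp} or an equivalent argument supplying the $I(z) \le 1$ inequality, your proof does not go through.
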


The remainder of this section is devoted to proofs of \Cref{thm:application,thm:general_twosum}. The fact that \Cref{thm:general_twosum} implies \Cref{thm:two_sum} is shown in \Cref{ssec:two_sums}. The main work is in analyzing the Cauchy transform and subordination functions to show that the hypotheses of \Cref{thm:submaster} are satisfied. Throughout this section, we write
\[ G'(E_+) = \lim_{z \searrow E_+} G'(z) \]
where the right hand side limit exists in $[-\infty,\infty)$ by monotonicity.

\subsection{Many Self Convolutions, Stability Under Addition, and Proof of Theorem \ref{thm:application}}

We use two intermediate results in the proof of \Cref{thm:application} beyond \Cref{thm:submaster}. The first (\Cref{thm:high_compression}) asserts that the inverse Cauchy transform of $\mu^{\boxplus k}$ has a minimal real positive critical point with positive second derivative for $k$ suitably large. The second (\Cref{thm:free_convolution_cp}) gives conditions under which this property can be propagated by free convolutions. The latter result is used again later to prove \Cref{thm:general_twosum}.

We start with a useful lemma.

\begin{lemma} \label{variance}
Fix $\mu \in \cM$ and let $G := G_\mu$. For $z \in \R \setminus [E_-,E_+]$, we have
\begin{gather*}
- \frac{G(z)^2}{G'(z)} \le 1, \\
G(z) G''(z) - 2 G'(z)^2 \ge 0.
\end{gather*}
Furthermore, $-G(z)^2/G'(z)$ is increasing on $(E_+,\infty)$ and decreasing on $(-\infty,E_-)$ such that
\[ \lim_{z \to \pm\infty} - \frac{G(z)^2}{G'(z)} = 1. \]
The inequalities and monotonicity statements are strict if $\mu$ is not a single atom.
\end{lemma}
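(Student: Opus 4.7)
The plan is to treat each statement as a direct computation with the Cauchy transform, leveraging two elementary integral inequalities and a routine asymptotic expansion. Throughout, write $G(z) = \int (z-x)^{-1} d\mu(x)$, so that $G'(z) = -\int (z-x)^{-2} d\mu(x)$ and $G''(z) = 2\int (z-x)^{-3} d\mu(x)$, with all integrands real and sign-definite when $z \in \R \setminus [E_-,E_+]$.

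For the first inequality $-G(z)^2/G'(z) \le 1$, I would apply the Cauchy-Schwarz inequality to the pair $(z-x)^{-1}$ and $1$ in $L^2(d\mu)$, obtaining
\[
G(z)^2 = \Bigl(\int \tfrac{1}{z-x}\, d\mu(x)\Bigr)^2 \le \int \tfrac{1}{(z-x)^2}\, d\mu(x) \cdot \int d\mu(x) = -G'(z),
\]
since $\mu$ is a probability measure. Equality in Cauchy-Schwarz holds iff $(z-x)^{-1}$ is constant $\mu$-a.e., i.e.\ iff $\mu$ is a single atom, giving strictness.

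For the second inequality, the key computation is symmetrization: writing the double integral expressions for $G\, G''$ and $2(G')^2$ over $d\mu(x)\, d\mu(y)$ and combining over the common denominator $(z-x)^3(z-y)^3$, I expect to get
\[
G(z) G''(z) - 2 G'(z)^2 = \int\!\!\int \frac{(y-x)^2}{(z-x)^3 (z-y)^3}\, d\mu(x)\, d\mu(y).
\]
For $z > E_+$ (resp.\ $z < E_-$), the denominator is a product of two positive (resp.\ two negative) cubes, hence positive, and the numerator is non-negative with strict positivity on a set of positive $\mu \otimes \mu$ measure unless $\mu$ is a single atom. This yields the second inequality together with its strict version.

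Part 3 then follows from direct differentiation:
\[
\frac{d}{dz}\Bigl(-\frac{G(z)^2}{G'(z)}\Bigr) = \frac{G(z)\bigl(G(z) G''(z) - 2 G'(z)^2\bigr)}{G'(z)^2},
\]
whose sign is determined by the sign of $G(z)$, since the bracketed factor is non-negative by Part 2. On $(E_+,\infty)$ we have $G > 0$, so $-G^2/G'$ is (strictly, unless $\mu$ is an atom) increasing; on $(-\infty,E_-)$ we have $G < 0$, yielding the decrease. Finally, for Part 4, expanding $G(z) = z^{-1} + m_1 z^{-2} + m_2 z^{-3} + O(z^{-4})$ with $m_k = \int x^k d\mu$ and differentiating termwise gives $-G(z)^2/G'(z) = 1 + O(z^{-2})$ as $|z| \to \infty$, which delivers the stated limits. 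I do not anticipate any serious obstacle; the only slightly delicate step is carrying out the symmetrization in Part 2 correctly so that the perfect-square numerator appears.
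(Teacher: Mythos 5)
Your proof is correct and follows essentially the same route as the paper: Cauchy--Schwarz for $-G^2/G' \le 1$, the same quotient-rule identity for the derivative of $-G^2/G'$ with the sign read off from $G$, and the same termwise expansion at infinity. The only cosmetic difference is in the second inequality: the paper cites Cauchy--Schwarz directly in the form $G'(z)^2 \le \tfrac{1}{2} G(z) G''(z)$ (applied to $|z-x|^{-1/2}$ and $|z-x|^{-3/2}$ in $L^2(d\mu)$), while you prove the equivalent inequality by symmetrization, writing $G G'' - 2(G')^2$ as the double integral with perfect-square numerator $(y-x)^2$. These are interchangeable since symmetrization is essentially the proof of Cauchy--Schwarz; your version has the small advantage of making the strictness criterion (single atom) completely transparent.
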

\begin{proof}
By Cauchy-Schwarz, for $z \in \R \setminus [E_-,E_+]$ we have the inequalities
\[ G(z)^2 \le -G'(z), \quad \quad G'(z)^2 \le \frac{1}{2} G(z) G''(z) \]
where we use the fact that the sign of $\frac{1}{z - x}$ for $x \in \supp(\mu)$ is negative for $z < E_-$ and positive for $z > E_+$. The inequalities above imply the desired inequalities. In particular, we have
\[ - \frac{d}{dz} \frac{G(z)^2}{G'(z)} = \frac{G(z)^2 G''(z) - 2G(z) G'(z)^2}{G'(z)^2} \quad \left\{ \begin{array}{ll}
\ge 0 & \mbox{if $z > E_+$}, \\
\le 0 & \mbox{if $z < E_-$}. \end{array} \right. \]
This proves the monotonicity statement. Observe that the inequalities are equalities if and only if $\mu$ is a single atom. The asymptotics $G(z)^2/G'(z) \to 1$ as $z \to \pm \infty$ follow from $G(z) \sim -1/z^2$ and $G(z) \sim 1/z$ in leading order.
\end{proof}

\begin{proposition} \label{thm:high_compression}
Let $\mu \in \cM$, $G := G_\mu$, and $E_\pm := E_\pm(\mu)$. If $n \ge \tau(\mu)$, then
\[ \mathsf{A}(u) := nG^{[-1]}(u) + \frac{1 - n}{u} \]
has a minimal real positive critical point $\mathsf{z} \in (0,G(E_+))$ such that $\mathsf{A}''(\mathsf{z}) > 0$ and
\begin{align} \label{z_distance}
G^{[-1]}(\mathsf{z}) - E_+ \ge 4(E_+ - E_-).
\end{align}
Furthermore, $\mathsf{z} = G_{\mu^{\boxplus n}}(E_+(\mu^{\boxplus n}))$ and $G_{\mu^{\boxplus n}}'(E_+(\mu^{\boxplus n})) = -\infty$.
\end{proposition}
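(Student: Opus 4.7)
The plan is to convert the critical-point equation for $\mathsf{A}$ into a scalar equation in the variable $z = G^{[-1]}(u)$ and then apply the monotonicity statements of Lemma \ref{variance}. Note that $-G^2/G' \equiv 1$ when $\mu$ is a single Dirac mass, so $\tau(\mu)$ is finite only for nondegenerate $\mu$; the hypothesis $n \ge \tau(\mu)$ therefore forces $\mu$ to be non-atomic.

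First I would substitute $u = G(z)$ for $z \in (E_+,\infty)$ and compute
\[ \mathsf{A}(G(z)) = nz + \frac{1-n}{G(z)}, \qquad \frac{d}{dz}\mathsf{A}(G(z)) = n + (n-1)\,\frac{G'(z)}{G(z)^2}. \]
By \eqref{eq:G_monotone}, $G$ is a strictly decreasing bijection $(E_+,\infty) \to (0, G(E_+))$ with nonvanishing derivative, so critical points of $\mathsf{A}$ in $(0, G(E_+))$ correspond bijectively to solutions $z \in (E_+,\infty)$ of $h(z) := -G(z)^2/G'(z) = (n-1)/n$. Lemma \ref{variance} gives that $h$ is strictly increasing on $(E_+,\infty)$ with $h < 1$ and $h(z) \to 1$ as $z \to \infty$. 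Rewriting $\tau(\mu) = (1 - h(x(\mu)))^{-1}$, the hypothesis $n \ge \tau(\mu)$ is equivalent to $h(x(\mu)) \le (n-1)/n$; by monotonicity, the unique solution $z_\ast$ satisfies $z_\ast \ge x(\mu) = E_+ + 4(E_+ - E_-)$, so $\mathsf{z} := G(z_\ast) \in (0, G(E_+))$ is the unique critical point of $\mathsf{A}$ in $(0, G(E_+))$ and verifies \eqref{z_distance}.

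Differentiating a second time in $z$ gives
\[ G'(z)^2\,\mathsf{A}''(u) + G''(z)\,\mathsf{A}'(u) = (n-1)\cdot\frac{G(z) G''(z) - 2 G'(z)^2}{G(z)^3}. \]
At $z = z_\ast$ the $\mathsf{A}'$-term vanishes; Lemma \ref{variance} gives $GG'' - 2(G')^2 > 0$ on $(E_+,\infty)$ under the nondegeneracy of $\mu$, and $G(z_\ast), G'(z_\ast)^2 > 0$, so $\mathsf{A}''(\mathsf{z}) > 0$.

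Finally, specializing \eqref{eq:Ginv_relation} to $\bm^{(1)} = \cdots = \bm^{(n)} = \mu$ gives $G_{\mu^{\boxplus n}}^{[-1]}(u) = \mathsf{A}(u)$ on a punctured real neighborhood of $0$. Applying Theorem \ref{thm:submaster}(i) to $\mu^{\boxplus n}$ with the critical point $\mathsf{z}$ just constructed identifies $\mathsf{z} = G_{\mu^{\boxplus n}}(E_+(\mu^{\boxplus n}))$. Letting $u \nearrow \mathsf{z}$ in the inverse-function identity $1/G_{\mu^{\boxplus n}}'(G_{\mu^{\boxplus n}}^{[-1]}(u)) = \mathsf{A}'(u)$, the right-hand side tends to $0$ while $G_{\mu^{\boxplus n}}' < 0$ on $(E_+(\mu^{\boxplus n}),\infty)$, yielding $G_{\mu^{\boxplus n}}'(E_+(\mu^{\boxplus n})) = -\infty$. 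The core step is translating $n \ge \tau(\mu)$ into the level-set relation $h(x(\mu)) \le (n-1)/n$; once this is set up, the remaining work is either routine calculus or a direct invocation of the framework already established.
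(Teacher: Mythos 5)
Your proposal is correct and follows essentially the same route as the paper: substitute $u = G(z)$, use the monotonicity of $-G^2/G'$ from \Cref{variance} to locate the critical point past $x(\mu)$, differentiate again to get $\mathsf{A}''(\mathsf{z}) > 0$, and finish via \eqref{eq:Ginv_relation} and \Cref{thm:submaster}(i). Your reformulation in terms of the level set $h(z) = (n-1)/n$ is a cosmetic restatement of the paper's sign analysis of $\frac{d}{dz}\mathsf{A}(G(z))$, and your explicit remark that $n \ge \tau(\mu)$ forces $\mu$ to be non-atomic (so the strict inequalities in \Cref{variance} apply) is a clarifying detail the paper leaves implicit.
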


\begin{proof}
We first show that $\mathsf{A}$ has a minimal real positive critical point. Plug in $u = G(z)$ to obtain
\begin{align*}
\mathsf{A}(G(z)) &= n z + \frac{1 - n}{G(z)} \\
\frac{d}{dz} \mathsf{A}(G(z)) &= n + (n - 1) \frac{G'(z)}{G(z)^2}
\end{align*}
valid in a neighborhood of $(E_+,\infty)$. By \Cref{variance}, $G'/G^2$ is negative and strictly increasing on $(E_+,\infty)$ with limit $-1$ at $\infty$. The inequality $n \ge \tau(\mu)$ implies
\[ \frac{1}{1 - 1/n} \le \frac{1}{1 - 1/\tau(\mu)} = - \frac{G'(x(\mu))}{G(x(\mu))^2} \]
so that
\[ \lim_{z \nearrow \infty} \frac{d}{dz} \mathsf{A}(G(z)) = 1, \quad \quad \frac{d}{dz} \mathsf{A}(G(z)) \Big|_{z=x(\mu)} \le 0. \]
Thus, there is a unique critical point $\xi$ of $\mathsf{A}\circ G$ in $[x(\mu),\infty)$, and
\[ \frac{d^2}{dz^2} \mathsf{A}(G(z)) > 0. \]
Then $\mathsf{z} := G(\xi)$ is the minimal positive critical point of $\mathsf{A}$ in $(0,G(x(\mu))]$ and
\[ \mathsf{A}''(\mathsf{z}) > 0. \]
By definition of $x(\mu)$ and the fact that $\xi \ge x(\mu)$, we obtain \eqref{z_distance}.

By \Cref{thm:submaster} (i), $\mathsf{z} = G_{\mu^{\boxplus n}}(E_+(\mu^{\boxplus n}))$. By \eqref{eq:Ginv_relation}, we have
\[ \mathsf{A}(u) = G_{\mu^{\boxplus n}}^{[-1]}(u) \]
for $u \in (0,\mathsf{z})$ on which it is strictly decreasing from $\infty$ to $E_+(\mu^{\boxplus n})$. On the other hand
\[ 1 = \frac{d}{dz} \mathsf{A}(G_{\mu^{\boxplus n}}(z)) = G_{\mu^{\boxplus n}}'(z) \mathsf{A}'(G_{\mu^{\boxplus n}}(z)), \quad \quad z \in (0,E_+(\mu^{\boxplus n})). \]
Letting $z \searrow E_+(\mu^{\boxplus n})$, we see that $G_{\mu^{\boxplus n}}'(E_+(\mu^{\boxplus n})) = -\infty$ because $\mathsf{A}'(\mathsf{z}) = 0$.
\end{proof}

\begin{remark} \label{rmk:power_law}
Let $\mu \in \cM$, $G := G_\mu$, and $E_+ := E_+(\mu)$. Using the ideas from the proof of \Cref{thm:high_compression}, we see that if
\begin{align} \label{eq:sq_root_necessary}
n < \left(1 + \frac{G(E_+)^2}{G'(E_+)} \right)^{-1},
\end{align}
then $G_{\mu^{\boxplus n}}'(E_+(\mu^{\boxplus n})) > -\infty$. Then $\mu^{\boxplus n}$ cannot have \emph{square root behavior} at its right edge, that is if $f(x)$ is the density of $\mu^{\boxplus n}$ then it cannot be the case that
\[ C^{-1} < \frac{f(x)}{(E_+(\mu^{\boxplus n}) - x)^{1/2}} < C \]
for $x \in [E_+(\mu^{\boxplus n}) - \e,E_+(\mu^{\boxplus n})]$ and some constant $C \ge 1$. Indeed, if $\mu^{\boxplus n}$ has square root behavior, then one can show $G_{\mu^{\boxplus n}}'(E_+(\mu^{\boxplus n})) = -\infty$. By direct computation, if $d\mu(x) = (1 - x)^p \1_{[0,1]}(x)\,dx$ for $p > 1$, then
\[ \left(1 + \frac{G(1)^2}{G'(1)} \right)^{-1} = p^2. \]
Thus for any given large $n$, one can always find a pair $(n,\mu)$ such that $\mu^{\boxplus n}$ does not exhibit square root behavior, by choosing $p$ correspondingly large enough.
\end{remark}

\begin{proposition} \label{thm:free_convolution_cp}
Let $\bm^{(1)},\bm^{(2)} \in \cM$, $\bm := \bm^{(1)} \boxplus \bm^{(2)}$ and $\omega_1,\omega_2$ be as in \Cref{thm:subord_add}. If
\[ \lim_{z \searrow E_+(\bm^{(i)})} \frac{G_{\bm^{(i)}}'(z)}{G_{\bm^{(i)}}(z)^2} = -\infty, \quad i = 1,2, \]
and
\[ \cA(u) := G_{\bm^{(1)}}^{[-1]}(u) + G_{\bm^{(2)}}^{[-1]}(u) - \frac{1}{u}, \quad \quad u \in \fO_{\bm^{(1)}} \cap \fO_{\bm^{(2)}}, \]
then $\omega_i(E_+(\bm)) > E_+^{(i)}(\bm)$ for $i = 1,2$, and $\fz := G_\bm(E_+(\bm)) \in \left(0, \min_{i=1,2} G_{\bm^{(i)}}(E_+(\bm^{(i)}))\right)$ is the minimal positive critical point of $\cA$. Moreover,
\[ \cA''(\fz) > 0. \]
\end{proposition}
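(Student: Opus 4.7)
Writing $G_i := G_{\bm^{(i)}}$, $G := G_\bm$, $E_+ := E_+(\bm)$, $E_+^{(i)} := E_+(\bm^{(i)})$, and $\psi_i := \omega_i(E_+) \ge E_+^{(i)}$, I first note that neither $\bm^{(i)}$ can be a single atom, since that would give $G_i'/G_i^2 \equiv -1$, violating the hypothesis. By \eqref{eq:Ginv_relation} applied with $n=2$, $\cA = G^{[-1]}$ in a punctured neighborhood of $0$, and combining $\omega_i(z) = G_i^{[-1]}(G(z))$ with \eqref{omega:relation} gives $\cA(G(z)) = z$ for $z \in (E_+,\infty)$, so $\cA = G^{[-1]}$ on $(0,\fz)$ with $\cA(\fz^-) = E_+$. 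Differentiating the subordination relation $z = \omega_1(z) + \omega_2(z) - 1/G(z)$ and using $\omega_i'(z) = G'(z)/G_i'(\omega_i(z))$, division by $G'(z)$ yields the key identity
\[ \frac{1}{G'(z)} = \frac{1}{G_1'(\omega_1(z))} + \frac{1}{G_2'(\omega_2(z))} + \frac{1}{G(z)^2}, \qquad z \in (E_+, \infty). \]

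The principal step is establishing the strict inequality $\psi_i > E_+^{(i)}$, which I plan to do by contradiction. Suppose without loss of generality that $\psi_1 = E_+^{(1)}$. Since $\fz = G_1(\psi_1)$ is finite, the hypothesis $G_1'/G_1^2 \to -\infty$ at $E_+^{(1)}$ forces $G_1'(z) \to -\infty$ as $z \searrow E_+^{(1)}$, and hence $1/G_1'(\omega_1(z)) \to 0$ as $z \searrow E_+$. Taking the limit $z \searrow E_+$ in the key identity, if also $\psi_2 = E_+^{(2)}$ then $1/G'(E_+) = 0 + 0 + 1/\fz^2 > 0$, contradicting $G'(z) < 0$ on $(E_+,\infty)$. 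If instead $\psi_2 > E_+^{(2)}$, the strict form of Lemma~\ref{variance} at $\psi_2$ gives $G_2'(\psi_2) < -G_2(\psi_2)^2 = -\fz^2$, so $1/G_2'(\psi_2) > -1/\fz^2$, and the identity again yields $1/G'(E_+) > 0$, a contradiction.

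By strict monotonicity of $G_i$, $\fz = G_i(\psi_i) < G_i(E_+^{(i)})$, so $\fz$ lies in the interior of the domain of $\cA$ and $\cA$ is analytic at $\fz$. On $(0, \fz)$ we have $\cA'(u) = 1/G'(G^{[-1]}(u)) < 0$, so $\fz$ is the smallest positive real at which $\cA'$ can vanish. To prove $\cA'(\fz) = 0$, I argue by contradiction: if $\cA'(\fz) \neq 0$, the inverse function theorem produces a holomorphic local inverse $\cA^{-1}$ of $\cA$ near $\fz$ which agrees with $G$ on $(E_+, E_+ + \varepsilon)$, and hence (by the identity theorem applied on $V \cap \C^+$) provides an analytic extension of $G$ through $E_+$. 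By the Stieltjes inversion formula this forces the density of $\bm$ to vanish in $(E_+ - \varepsilon, E_+)$, contradicting $E_+ = \sup \supp \bm$.

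Finally, direct differentiation of $\cA$ gives
\[ \cA''(\fz) = \frac{G_1''(\psi_1)}{|G_1'(\psi_1)|^3} + \frac{G_2''(\psi_2)}{|G_2'(\psi_2)|^3} - \frac{2}{\fz^3}. \]
Setting $a_i := |G_i'(\psi_i)|$ and $b_i := G_i''(\psi_i)$, the critical-point identity $\cA'(\fz) = 0$ rewrites as $1/a_1 + 1/a_2 = 1/\fz^2$, while the strict form of Lemma~\ref{variance} at $\psi_i$ (using $G_i(\psi_i) = \fz$) gives $b_i > 2a_i^2/\fz$, so $b_i/a_i^3 > 2/(\fz a_i)$. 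Summing over $i$ and invoking the critical-point identity yields $\cA''(\fz) > (2/\fz)(1/a_1 + 1/a_2) - 2/\fz^3 = 0$. The main obstacle is the strict separation step, where both the strict form of Lemma~\ref{variance} and the hypothesis $G_i'/G_i^2 \to -\infty$ enter essentially; once it is obtained, the critical-point location follows cleanly from the inverse function theorem and the edge non-analyticity of $G$, and the second-derivative bound becomes a clean consequence of Lemma~\ref{variance} applied at the subordination points.
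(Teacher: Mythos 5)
Your structure is sound and parts of your argument are cleaner than the paper's, but there is a genuine gap in the first and most delicate step. To prove $\psi_i := \omega_i(E_+) > E_+^{(i)}$ you begin with ``Since $\fz = G_1(\psi_1)$ is finite\ldots''; however, finiteness of $\fz = G_\bm(E_+)$ is a conclusion you do not yet have, and the hypothesis $G_i'/G_i^2 \to -\infty$ does not preclude $G_i(E_+^{(i)}) = \infty$: a density $\propto (E_+^{(i)} - x)^{-3/4}$ near the edge gives $G_i(z) \sim (z-E_+^{(i)})^{-3/4}$ (divergent) yet $G_i'/G_i^2 \sim -(z-E_+^{(i)})^{-1/4} \to -\infty$. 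In the subcase $\psi_1 = E_+^{(1)}$, $\psi_2 = E_+^{(2)}$ your argument reduces to $1/G'(E_+) = 0 + 0 + 1/\fz^2$, which is not strictly positive if $\fz = \infty$, so there is no contradiction. (The mixed subcase is fine, since $\psi_2 > E_+^{(2)}$ delivers $\fz = G_2(\psi_2) < \infty$ automatically.) The paper avoids this by taking the contradiction at the point $x_1$ defined by $\omega_1(x_1) = E_+^{(1)}$ against the bound $I(z) \le 1$, which never touches $G(E_+)$. A repair within your framework is to multiply your key identity through by $G(z)^2$, obtaining
\[
\frac{G(z)^2}{G'(z)} = \frac{G_1(\omega_1(z))^2}{G_1'(\omega_1(z))} + \frac{G_2(\omega_2(z))^2}{G_2'(\omega_2(z))} + 1,
\]
and apply \Cref{variance} to $\bm$ (not a single atom since neither $\bm^{(i)}$ is) to place the left side in $(-1,0)$ for $z > E_+$, hence the sum of the two subordination terms in $(-2,-1)$; but in this subcase both terms tend to $0$ as $z \searrow E_+$ by hypothesis, a contradiction.

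Once $\psi_i > E_+^{(i)}$ and $\fz < \infty$ are in hand, the remainder of your proof is correct and takes routes genuinely different from the paper's. Your inverse-function-theorem argument for $\cA'(\fz) = 0$ --- that $\cA'(\fz) \ne 0$ would analytically continue $G$ across $E_+$ and kill the mass of $\bm$ on $(E_+-\varepsilon, E_+)$ --- is a clean alternative to the paper's limiting argument that $I(E_+) = 1$; just note that the identity theorem should be applied on the connected open set $V \setminus \supp\bm$ rather than $V \cap \C^+$ (the agreement segment $(E_+, E_+ + \varepsilon)$ lies on the latter's boundary), and that ruling out an atom of $\bm$ at $E_+$ (which $\fz < \infty$ gives for free) is needed before concluding $E_+ \notin \supp\bm$. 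Your derivation of $\cA''(\fz) > 0$ from the critical-point identity $1/a_1 + 1/a_2 = 1/\fz^2$ together with the strict Cauchy--Schwarz bound $b_i > 2a_i^2/\fz$ of \Cref{variance} is more transparent than the paper's manipulation via $F_i = 1/G_i$ and is correct.
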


\begin{proof}
Write $G_i := G_{\bm^{(i)}}, E_\pm^{(i)} := E_\pm(\bm^{(i)})$ for $i = 1,2$, $G := G_\bm$, $E_\pm := E_\pm(\bm)$. Let
\[ I(z) := \left(1 - \frac{\Im z}{\Im \omega_1(z)}\right)\left(1 - \frac{\Im z}{\Im \omega_2(z)}\right), \quad \quad z \in \C \setminus [E_-,E_+]. \]
By \eqref{subord_imz}, we have
\begin{align} \label{eq:variance_bd}
I(z) \le 1, \quad \quad z \in \C \setminus [E_-,E_+].
\end{align}
From the equalities
\begin{gather*}
\frac{\int \frac{d\bm^{(i)}(x)}{|\omega_i(z) - x|^2} - \left| \int \frac{d\bm^{(i)}(x)}{\omega_i(z) - x} \right|^2}{\left| \int \frac{d\bm^{(i)}(x)}{\omega_i(z) - x} \right|^2} = \frac{\Im \frac{1}{G_i(\omega_i(z))}}{\Im \omega_i(z)} - 1 = \frac{\Im \frac{1}{G(z)}}{\Im \omega_i(z)} - 1, \quad \quad i = 1,2, \\
\left( \frac{\Im \frac{1}{G(z)}}{\Im \omega_1(z)} - 1 \right) \left( \frac{\Im \frac{1}{G(z)}}{\Im \omega_2(z)} - 1 \right) = \left( \frac{\Im \omega_2(z)}{\Im \omega_1(z)} - \frac{\Im z}{\Im \omega_1(z)} \right) \left( \frac{\Im \omega_1(z)}{\Im \omega_2(z)} - \frac{\Im z}{\Im \omega_2(z)} \right) = I(z)
\end{gather*}
where the latter follows from \eqref{subord_relation}, we obtain
\begin{align} \label{variance_equality}
I(z) = - \frac{G_i'(\omega_i(z))}{G_i(\omega_i(z))^2} - 1 = - \frac{G_i'(\omega_i(z))}{G(z)^2} - 1, \quad \quad z: \omega_i(z) \in \R \setminus [E_-^{(i)},E_+^{(i)}]
\end{align}

By monotonicity, $\omega_i(E_+) := \lim_{z \searrow E_+} \omega_i(z)$ exists, for $i = 1,2$. We claim that
\begin{align} \label{eq:omega>E}
\omega_i(E_+) > E_+^{(i)}, \quad \quad i = 1,2.
\end{align}
Assume for contradiction that $\omega_1(E_+) \le E_+^{(1)}$. By strict monotonicity, $\omega_1([E_+,\infty)) = [\omega_1(E_+),\infty)$, and there exists a unique $x_1 \in [E_+,\infty)$ such that $\omega_1(x_1) = E_+^{(1)}$. If $\omega_2(E_+) \le E_+^{(2)}$, then similarly there is a unique $x_2 \in [E_+^{(2)},\infty)$ such that $\omega_2(x_2) = E_+^{(2)}$. In this case, assume without loss of generality that $x_1 \ge x_2$. Then, in any case, $\omega_2([x_1,\infty)) \subset [E_+^{(2)},\infty)$. Thus
\[ \lim_{z \searrow x_1} \left( -\frac{G_1'(\omega_1(z))}{G_1(\omega_1(z))^2} - 1 \right)\left( -\frac{G_2'(\omega_2(z))}{G_2(\omega_2(z))^2} - 1 \right) = \infty \]
because, as $z$ approaches $x_1$ from above, both factors are increasing by \Cref{variance}, and the first factor diverges to $\infty$ by our hypothesis. However, this contradicts \eqref{eq:variance_bd} by way of \eqref{variance_equality}. This proves \eqref{eq:omega>E}.

We now know that $G(E_+)$ is finite because $G(E_+) = G_i(\omega_i(E_+)) < G_i(E_+^{(i)})$. By \Cref{thm:domain} (i), $(0,G(E_+)] \subset \fO_{\bm^{(1)}} \cap \fO_{\bm^{(2)}}$. By \eqref{eq:Ginv_relation}, we have
\[ \cA(u) = G^{[-1]}(u) \]
on $(0,G(E_+))$. In particular, $\cA$ is strictly decreasing on $(0,G(E_+))$ by \eqref{eq:G_monotone}. So if $G(E_+)$ is a critical point of $\cA$, it is the minimal positive critical point.

We show that $G(E_+)$ is a critical point of $\cA$. By \eqref{subord_relation}, we may write
\[ \cA(G(z)) = z = \omega_1(z) + \omega_2(z) - \frac{1}{G(z)} \]
valid for $z$ in a neighborhood of $(E_+,\infty)$. Then
\[ \cA'(G(z)) = \frac{\omega_1'(z)}{G'(z)} + \frac{\omega_2'(z)}{G'(z)} + \frac{1}{G(z)^2}, \quad \quad z \in (E_+,\infty) \]
Taking derivatives on either side of \eqref{subord_cauchy} implies
\[ \cA'(G(z)) = \frac{1}{G_1'(\omega_1(z))} + \frac{1}{G_2'(\omega_2(z))} + \frac{1}{G(z)^2}. \]
Thus the critical point equation we seek is
\[ \frac{1}{G_1'(\omega_1(E_+))} + \frac{1}{G_2'(\omega_2(E_+))} + \frac{1}{G(E_+)^2} = 0. \]
This is equivalent to
\begin{align} \label{variance_saturation}
I(E_+) = 1
\end{align}
by \eqref{variance_equality}. In other words, we want to show that \eqref{eq:variance_bd} is saturated at $E_+$.

We now prove \eqref{variance_saturation}. Since $\omega_i(E_+) > E_+^{(i)}$, we may choose $\e > 0$ small enough so that $\omega_i([E_+ - \e, E_+])$ is contained in a compact subset of $\{ \omega \in \C^+ \cup \R: \Re \omega > E_+^{(i)} \}$ for $i = 1,2$. Then by the observation
\begin{align} \label{inversion_subord}
\lim_{t \searrow 0} \Im G(x + \bi t) = \lim_{t \searrow 0} \Im G_i(\omega_i(x + \bi t)), \quad \quad x \in [E_+ - \e, E_+], \quad i = 1,2,
\end{align} 
we have for $x \in [E_+ - \e, E_+]$ that $\omega_1(x + \bi t) \in \R$ if and only if $\omega_2(x + \bi t) \in \R$. Indeed \eqref{inversion_subord} is $0$ exactly when $\omega_i(x)$ is real for $i = 1,2$ --- note that we used the fact that $\omega_i(x)$ is away from the support of $\bm^{(i)}$ when $x \in [E_+ - \e, E_+]$.

We claim there exists a real monotone sequence $\{x_n \}$ in $(E_+ - \e,E_+)$ such that $x_n \nearrow E_+$ as $n \to \infty$ and $\Im \omega_1(x_n) > 0$; therefore $\Im \omega_2(x_n) > 0$ on this sequence as well. Assume for contradiction that such a sequence $\{x_n\}$ does not exist. Then there would exist some $\delta \in (0,\e)$ such that $\omega_1(x) \in \R$ for $x \in [E_+ - \delta, E_+)$. From \eqref{inversion_subord}, we would have
\[ \lim_{t \searrow 0} \Im G(x + \bi t) = 0, \quad \quad x \in [E_+ - \delta,E_+). \]
The inversion formula for the Cauchy transform would then imply $[E_+ - \delta, E_+) \subset \R \setminus \supp \bm$ which can only happen if $\bm$ has an atom at $E_+$. However, this would contradict the finiteness of $G(E_+)$, proving the claim. On this sequence $\{x_n\}$, we have
\[  \lim_{t \searrow 0} \left( 1 -  \frac{t}{\Im \omega_1(x_n + \bi t)} \right) \left( 1 -  \frac{t}{\Im \omega_2(x_n + \bi t)} \right) = 1. \]
Then $I(x_n) = 1$ since $\omega_i(x_n) > E_+^{(i)}$ for $i = 1,2$. By taking $n\to\infty$, we obtain \eqref{variance_saturation}. We have shown that $E_+$ is the minimal positive critical point of $\cA$.

It remains to show that $\cA''(G(E_+)) > 0$. It will be convenient to set
\[ F_i(z) = \frac{1}{G_i(z)}, \quad \quad i = 1,2. \]
Since $G(E_+) \in \fO_{\bm^{(2)}}$ and $\fO_{\bm^{(2)}}$ is open, we have $G_2^{[-1]}$ defined in a neighborhood of $G(E_+)$, and likewise the functional inverse of $F_2$ is defined in a neighborhood of $F(E_+)$. Moreover,
\[ F_2^{-1}(F_1(\omega_1(z))) = G_2^{[-1]}(G_1(\omega_1(z))) = \omega_2(z) \]
in a neighborhood of $E_+$ by \eqref{subord_cauchy}. Then
\[ \cA(G_1(\omega)) = \omega + G_2^{[-1]}(G_1(\omega)) - \frac{1}{G_1(\omega)} = \omega + F_2^{-1}(F_1(\omega)) - F_1(\omega) \]
in a neighborhood of $\omega_1(E_+)$. We have
\[ \frac{d^2}{d\omega^2} \cA(G_1(\omega)) = \frac{F_1''(\omega)}{(F_2'\circ F_2^{-1}\circ F_1)(\omega)} - \frac{F_1'(\omega)^2}{(F_2'\circ F_2^{-1}\circ F_1)(\omega)^3} (F_2''\circ F_2^{-1}\circ F_1)(\omega) - F_1''(\omega). \]
The left hand side can also be expressed as
\[ \frac{d^2}{d\omega^2} \cA(G_1(\omega))  = \cA''(G_1(\omega)) G_1'(\omega)^2 + \cA'(G_1(\omega)) G_1''(\omega). \]
Using \eqref{subord_cauchy} and the fact that $G(E_+)$ is a critical point of $\cA$, we arrive at
\[ \cA''(G(E_+)) G_1'(\omega_1(E_+))^2 = - \frac{F_1''(\omega_1(E_+))}{F_2'(\omega_2(E_+))} \Big(F_2'(\omega_2(E_+)) - 1 \Big) - \frac{F_1'(\omega_1(E_+))^2}{F_2'(\omega_2(E_+))^3} F_2''(\omega_2(E_+)) \]
upon evaluating at $\omega = \omega_1(E_+)$. The right hand side is positive because
\begin{align*}
F_i'(\omega_i(E_+)) - 1 &= \frac{- G_i'(\omega_i(E_+)) - G_i(\omega_i(E_+))^2}{G_i(\omega_i(E_+))^2} > 0 \\
F_i''(\omega_i(E_+)) &= \frac{- G_i''(\omega_i(E_+)) G_i(\omega_i(E_+)) + 2 G_i'(\omega_i(E_+))^2}{G_i(\omega_i(E_+))^3} < 0
\end{align*}
for $i = 1,2$,
where both lines follow from \Cref{variance} and $\omega_i(E_+) > E_+^{(i)}$. Thus $\cA''(G(E_+)) > 0$.
\end{proof}

\begin{proof}[Proof of \Cref{thm:application}]
Our goal is to apply \Cref{thm:submaster} (iii) to
\[ \cA(u) := G_{\bm^{(1)}}^{[-1]}(u) + \cdots + G_{\bm^{(n)}}^{[-1]}(u) + \frac{1 - n}{u} = G_{\mu_1^{\boxplus n_1}}^{[-1]}(u) + \cdots + G_{\mu_k^{\boxplus n_k}}^{[-1]}(u) + \frac{1 - k}{u} \]
where the equality follows from
\[ \sum_{j=n_1 + \cdots + n_{i-1}+1}^{n_1 + \cdots + n_i} G_{\bm^{(j)}}^{[-1]}(u) + \frac{1 - n_i}{u} = n_i G_{\mu_i}^{[-1]}(u) + \frac{1 - n_i}{u} = G_{\mu_i^{\boxplus n_i}}^{[-1]}(u) \]
coming from our assumption and \eqref{eq:Ginv_relation}.

Let $\cG_j := G_{\mu_1^{\boxplus n_1} \boxplus \cdots \boxplus \mu_j^{\boxplus n_j}}$ and $\mathsf{E}_j := E_+(\mu_1^{\boxplus n_1} \boxplus \cdots \boxplus \mu_j^{\boxplus n_j})$ for $j = 1,\ldots,k$. Set
\[ \cA_j(u) := G_{\mu_1^{\boxplus n_1}}^{[-1]}(u) + \cdots + G_{\mu_j^{\boxplus n_j}}^{[-1]}(u) + \frac{1 - j}{u}. \]
We claim that for each $j = 1,\ldots,k$, $\cG_j(\mathsf{E}_j) \in \left(0, \min_{1 \le i \le j} G_{\mu_i^{\boxplus n_i}}(E_+(\mu_i^{\boxplus n_i}))\right)$ is the minimal real positive critical point of $\cA_j$ and
\[ \cA_j''(\cG_j(\mathsf{E}_j)) > 0. \]
We proceed by induction. The claim for $j = 1$ follows from \Cref{thm:high_compression}. Suppose we know the claim holds for $j-1 \in\{1,\ldots,n-1\}$. Since
\[ \cA_{j-1}(u) = \cG_{j-1}^{[-1]}(u) \]
for $u \in (0,\cG_{j-1}(\mathsf{E}_{j-1}))$, we take a derivative and send $u \to \cG_{j-1}(\mathsf{E}_{j-1})$ to see that $\cG_{j-1}'(\mathsf{E}_{j-1}) = -\infty$. Thus
\[ \lim_{z \searrow \mathsf{E}_{j-1}} \frac{\cG_{j-1}'(\mathsf{E}_{j-1})}{\cG_{j-1}(\mathsf{E}_{j-1})^2} = - \infty, \quad \quad \lim_{z \searrow E_+(\mu_j^{\boxplus n_j})} \frac{G_{\mu_j^{\boxplus n_j}}'(z)}{G_{\mu_j^{\boxplus n_j}}(z)^2} = -\infty \]
where the latter follows from \Cref{thm:high_compression}. Writing
\[ \cA_j(u) = \cG_{j-1}^{[-1]}(u) + G_{\mu_j^{\boxplus n_j}}^{[-1]}(u) - \frac{1}{u}, \]
our claim follows for $j$ by  \Cref{thm:free_convolution_cp}. In particular, for $j = k$, we have that $\cA(u)$ has a minimal positive critical point $\fz := G(E_+) \in \left(0, \min_{1 \le i \le k} G_{\mu_i^{\boxplus n_i}}(E_+(\mu_i^{\boxplus n_i}))\right)$ where $G$ is the Cauchy transform of $\mu := \mu_1^{\boxplus n_1} \boxplus \cdots \boxplus \mu_k^{\boxplus n_k}$, $E_+ := E_+(\mu)$, and $\cA''(\fz) > 0$.

By \Cref{thm:high_compression} and the assumption $n_i \ge \tau(\mu_i)$, we have
\[ G_{\mu_i}^{[-1]}(\fz) - E_+(\mu_i) > G_{\mu_i}^{[-1]}(G_{\mu_i^{\boxplus n_i}}(E_+(\mu_i^{\boxplus n_i}))) - E_+(\mu_i) > 4\left( E_+(\mu_i) - E_-(\mu_i) \right) \]
so that
\[ \mathrm{dist}(z,\mu_i) > 4\left( E_+(\mu_i) - E_-(\mu_i) \right), \quad \quad \Re z \ge G_{\mu_i}^{[-1]}(\fz) \]
for each $1 \le i \le n$. Choose 
$\fp_i \in \C \setminus \supp \mu$ so that $|\fp_i - x| \le E_+(\mu_i) - E_-(\mu_i)$ for $x \in \supp \mu_i$. Then
\[ G_{\mu_i}(\{z \in \C: \Re z \ge G_{\mu_i}^{[-1]}(\fz)\}) \subset \fO_{\mu_i,\fp_i} \]
holds by \Cref{thm:domain} (ii). Thus the hypotheses of \Cref{thm:submaster} (iii) are satisfied.
\end{proof}

\subsection{Two Free Summands, Proof of Theorem \ref{thm:general_twosum}} \label{ssec:two_sums}`

\begin{proof}[Proof of \Cref{thm:general_twosum}]
We apply \Cref{thm:submaster} (iii) to
\[ \cA(u) := G_{\bm^{(1)}}^{[-1]}(u) + G_{\bm^{(2)}}^{[-1]}(u) - \frac{1}{u}. \]
Let $\omega_1,\omega_2$ denote the associated subordination functions. Fix $i \in \{1,2\}$. Assume $t > -1$ so that the integral $\int_0^1 (1 - x)^t \, dx$ converges. Given any integer $k \ge 1$, we have
\[ \int_0^1 \frac{(1 - x)^t}{(z - x)^k} \, dx \sim
\begin{cases}
\log|z - 1| & \mbox{if $t = k - 1$}, \\
|z - 1|^{t-k+1} & \mbox{if $t < k - 1$}
\end{cases} \]
as $z$ approaches $1$ with $|\arg(z - 1)| < \pi - c$ for some small constant $c > 0$. Since $\bm^{(i)}$ satisfies \eqref{jacobi_hypothesis} for some exponent $-1 < t_i < 1$, we have
\begin{align*}
\int \frac{d\bm^{(i)}(x)}{(\omega - x)^2} & \sim (\omega - E_+^{(i)})^{t_i-1}, \quad \quad \omega \searrow E_+^{(i)}; \\
\left| \int \frac{d\bm^{(i)}(x)}{\omega - x} \right| & \sim \left\{ \begin{array}{cl}
    \big|\log(\omega - E_+^{(i)})\big| \vee (\omega - E_+^{(i)})^{t_i} & \mbox{if $-1 < t_i \le 0$} \\
1 & \mbox{if $0 < t_i < 1$}
\end{array} \right. , \quad \quad \omega \searrow E_+^{(i)},
\end{align*}
which implies
\[ \lim_{\omega \searrow E_+^{(i)}} - \frac{G_i'(\omega)}{G_i(\omega)^2} \to \infty. \]
Thus \Cref{thm:free_convolution_cp} implies $\omega_i(E_+) > E_+^{(i)}$, $\fz := G(E_+) \in \left( 0, \min_{i=1,2} G_i(E_+^{(i)}) \right)$ is the minimal positive critical point of $\cA$ and satisfies $\cA''(\fz) > 0$. By definition of $\fM$, we have $\bm^{(1)},\bm^{(2)}$ satisfy \eqref{eq:set_inclusion}, thus completing the proof.
\end{proof}

We conclude this section with examples of measures in $\fM$, including a proof of \Cref{thm:two_sum}. Fix $\bm \in \fM$. We require a preliminary discussion about the sublevel sets of $\cS$. Observe that \eqref{jacobi_hypothesis} implies $\Re \cS_u(z)$ extends continuously to $\C \setminus \{E_-\}$. Fix $z \in \C$ with $\Re z > E_+$. We check that
\[ \mathrm{D}_{G(z)}^- := \{w \in \C \setminus \{E_-\}: \Re \cS_{G(z)}(w) < \Re \cS_{G(z)}(z) \} \]
is connected and unbounded. Indeed, $\mathrm{D}_{G(z)}^-$ contains $\cD_{G(z)}^- \subset \C \setminus \supp \bm$ which has a unique unbounded component by \Cref{thm:component}. Thus if $\mathrm{D}_{G(z)}^-$ is not connected then it must have a bounded component and the harmonic principle implies that any bounded component must intersect $\supp \bm$. However, if $x_0$ is in this intersection and $\pm \Im z \ge 0$, then
\[ \Re \cS_{G(z)}(x_0 \pm \bi t) = \Re(x_0 G(z) ) \pm \Re( \bi t G(z) ) - \int \log|x_0 + \bi t - x| d\bm(x) - \Re \log G(z) \]
is strictly decreasing to $-\infty$ as $t \in (0,\infty)$ increases, since $\mp \Im G(z) \ge 0$ and $\Re G(z) > 0$ --- a contradiction. Thus $\mathrm{D}_{G(z)}^-$ must be connected.

The property $\Re z > E_+$ also implies that 
\[ \cS_{G(z)}'(w) \big|_{w=z} = 0, \quad \quad \cS_{G(z)}''(w) \big|_{w=z} = -G'(z) \ne 0. \]
Thus $\cS_{G(z)}(w)$ is locally quadratic at $w = z$, and $z$ is contained in the boundary of $\mathrm{D}_{G(z)}^-$ and two distinct components of
\[ \mathrm{D}_{G(z)}^+ := \{w \in \C \setminus \{E_-\}: \Re \cS_{G(z)}(w) > \Re \cS_{G(z)}(z) \}. \]
By the uniqueness of the unbounded component of $\cD_{G(z)}^+ \subset \mathrm{D}_{G(z)}^+$, we see that one of these components of $\mathrm{D}_{G(z)}^+$ is bounded and must therefore intersect $\supp \bm$.

\begin{lemma} \label{thm:nonpositive_cauchy}
If $\bm \in \cM$ and
\begin{align} \label{eq:D_interval}
\mathrm{D}_{G(z)}^+ \cap \supp \bm
\end{align}
is an interval containing $E_+$ for any $z \in \C$ with $\Re z > E_+$, then $\bm \in \fM$.
\end{lemma}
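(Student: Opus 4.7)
The plan is to verify the two defining properties of $\fM$: the density condition on $\bm$ near $E_+$, and the inclusion condition $G_\bm(\{z : \Re z \ge \xi\}) \subset \fO_\fp$ for each $\xi > E_+$. The density condition is implicit in the hypothesis, being precisely what ensures $\Re \cS_u$ extends continuously to $\C \setminus \{E_-\}$ so that $\mathrm{D}_{G(z)}^+ \cap \supp \bm$ is meaningfully defined in the stated sense; the core of the proof is therefore the inclusion condition.

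Fix $\xi > E_+$ and take $\fp := E_+ + \bi \eta$ for some small $\eta > 0$ to be fixed uniformly in $z$; then $\fp \notin \supp \bm$. For each $z$ with $\Re z \ge \xi$ and $u := G_\bm(z)$, I would verify the three conditions of \Cref{def:suitable} for $u \in \fO_\fp$. Condition (i), that $z$ is the unique saddle satisfying the requirements of $\fO$, follows from the preliminary discussion preceding the lemma: $\mathrm{D}_u^-$ is connected, $z$ is a non-degenerate saddle of $\cS_u$ with $\cS_u''(z) = -G_\bm'(z) \ne 0$, and $z$ lies on the boundary of a unique bounded component $\mathrm{B}_z$ of $\mathrm{D}_u^+$ whose closure meets $\supp \bm$; arguing as in \Cref{thm:level_lines} then gives $u \in \fO$ with $z_u = z$.

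For condition (iii), the hypothesis forces the set $\mathrm{D}_u^+ \cap \supp \bm = [a_z, E_+] \cap \supp \bm$ to lie in $\overline{\mathrm{B}_z}$, since the unbounded component of $\mathrm{D}_u^+$ sits at infinity in the direction of $u$ and is disjoint from the bounded $\supp \bm$. Openness of $\mathrm{D}_u^+$ combined with the continuous extension at $E_+$ yields $\mathrm{B}_z$ containing a $\C$-neighborhood of $E_+$; a compactness argument — handling $\Re z \in [\xi, R]$ for any $R$ by continuity and $\Re z > R$ for large $R$ by direct estimation (using $u = G_\bm(z) \to 0$ as $|z| \to \infty$) — produces a single $\eta > 0$ placing $\fp$ in every $\mathrm{B}_z$. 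Removing the real segment $[a_z, E_+] \cap \supp \bm$ from $\mathrm{B}_z$ preserves connectedness, since one can pass from above the segment to below through the $\C$-neighborhood of $E_+$ in $\mathrm{B}_z$, going around just past $E_+$ off the real axis; hence $\fp$ and $z$ share a connected component of $\cD_u^+ = \mathrm{B}_z \setminus \supp \bm$, with $z \notin \supp \bm$ on its boundary.

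For condition (ii'), I would construct $\gamma$ by perturbing $\partial \mathrm{B}_z$ slightly into $\mathrm{D}_u^-$ and, at the endpoint $a_z$ where $\partial \mathrm{B}_z$ meets $\supp \bm$, detouring to additionally encircle the remainder $[E_-, a_z) \cap \supp \bm$ of the support (which lies in $\mathrm{D}_u^-$ by hypothesis). Since $\mathrm{D}_u^-$ is open, connected, and contains $[E_-, a_z) \setminus \{E_-\}$ together with a punctured neighborhood of $E_-$, the detour can be taken within $\cD_u^-$. This yields a simple closed curve through $z$, positively oriented around $\supp \bm \cup \{\fp\}$, with $\gamma \setminus \{z\} \subset \cD_u^-$. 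The principal delicate step is this geometric construction near $a_z$ and $E_-$ — in particular, verifying that the detoured curve genuinely encircles the full support while remaining in $\cD_u^-$ — together with the uniformity of $\eta$ in $z$; both are routine once the topological picture from the preliminary discussion is in hand.
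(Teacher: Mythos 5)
Your proposal follows essentially the same route as the paper: verify that $G(z) \in \fO_\fp$ for a single $\fp$ chosen near $E_+$, handling the bounded part of $\{\Re z \ge \xi\}$ by compactness and the distant part by an asymptotic argument. The paper's treatment is shorter because it offloads the distant part entirely to \Cref{thm:domain}~(ii), which already gives $G(\mathrm{D}_r) \subset \fO_\fp$ whenever $\fp$ sits within $(E_+-E_-)$ of $\supp\bm$ and $r \ge 4(E_+-E_-)$; combined with the fact that $\{\Re z \ge \xi\} \setminus \mathrm{D}_r$ is relatively compact, nothing further needs to be estimated for large $|z|$. Your ``direct estimation as $G(z)\to 0$'' is doing the same job but would have to re-derive the content of that proposition inline, which is wasted effort since it is already available.

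Two smaller points worth flagging in your argument. First, the hypothesis only says $\mathrm{D}_{G(z)}^+ \cap \supp\bm$ is an interval containing $E_+$; it does \emph{not} place $[E_-,a_z) \cap \supp\bm$ inside $\mathrm{D}_{G(z)}^-$, only outside $\mathrm{D}_{G(z)}^+$, so that segment may touch the level curve $\{\Re\cS_{G(z)} = \Re\cS_{G(z)}(z)\}$. Your detour can still be executed (the preliminary discussion shows $\Re\cS_{G(z)}(x_0 + \bi t)$ is strictly decreasing in $|t|$ for $x_0$ in the open support, so one can slide slightly off the axis into $\mathrm{D}_{G(z)}^-$), but as written ``lies in $\mathrm{D}_u^-$ by hypothesis'' is not correct. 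Second, the construction of $\gamma$ and the verification that $\fp$ sits in the bounded component of $\cD_{G(z)}^+$ adjacent to $z$ is precisely what the preliminary discussion plus \Cref{thm:level_lines} are set up to provide, and the paper leans on that discussion rather than rebuilding the picture; you identify the same steps as ``delicate'' without resolving them, whereas citing those earlier results (together with the openness of $\fO_\fp$ and compactness of $K$) closes the argument cleanly.
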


\begin{proof}
By our discussion above, \eqref{eq:D_interval} implies that $\mathrm{D}_{G(z)}^+$ has a single bounded component containing $E_+$ in its interior and $z$ on its boundary. Thus, we can find $\gamma \subset \C \setminus \supp \bm$ passing through $z$ such that $\gamma \setminus \{z\} \subset \cD_{G(z)}^-$. In view of the discussion above, we have $z \in \fO$ and $z = G^{-1}(G(z))$. Furthermore, for any compact subset $K$ of $\{z \in \C:  \Re z \ge \xi\}$ and any $\xi > E_+$, we can choose $\fp$ close enough to $E_+$ so that $G(K) \subset \fO_\fp$. By \Cref{thm:domain} (ii), we see that \eqref{eq:set_inclusion_xi} holds.
\end{proof}

\begin{example}[Nonpositivity of Cauchy Transform on Support] \label{ex:nonpositive}
If $\bm \in \cM$ and $\Re G_\bm(z)$ extends continuously to a nonpositive function on $(E_-,E_+)$, then $\bm \in \fM$. Indeed, for $\Re z \ge E_+$, we have
\[ \frac{\partial}{\partial x} \Re \cS_{G(z)}(x) = \Re\big( G(z) - G(x) \big) > 0 \]
for all $x \in (E_-,E_+]$. Since $\mathrm{D}_{G(z)}^+$ intersects $(E_-,E_+]$ nontrivially, we must have that $\mathrm{D}_{G(z)}^+ \cap \supp \bm$ is an interval containing $E_+$. By \Cref{thm:nonpositive_cauchy}, $\bm \in \fM$.
\end{example}

We provide a couple concrete examples of measures in $\fM$.

\begin{example}[Jacobi Measure for a Range of Exponents] \label{ex:jacobi}
We consider measures $\bm$ with density of the form
\[ \frac{1}{Z_{a,b}} x^a (1 - x)^b \1_{(0,1)} \]
where $Z_{a,b}$ is a normalization constant. To satisfy integrability, we require $a,b > -1$. We show that $\bm \in \fM$ for $a \ge -1/2$ and $-1 < b \le -1/2$. Since $\fM$ is closed under dilations and translations, this implies measures with density proportional to $(x - \alpha)^a(\beta - x)^b \1_{(\alpha,\beta)}$ with $(a,b) \in [-1/2,\infty)\times (-1,-1/2]$ are contained in $\fM$. Let $G(z)$ denote the Cauchy transform of $\bm$. Then $\Re G(z)$ extends to $\C \setminus \{a,b\}$ coinciding with the principal value integral on $(a,b)$. By \cite[\S 15.2, Eq.(33)]{EMOT54}, this can be exactly evaluated for $z \in (0,1)$ by
\[ \Re G(z) = - z^a (1 - z)^b \pi \cot(b \pi) + \frac{\Gamma(a+1)\Gamma(b)}{\Gamma(a+b+1)} {}_2F_1(-\alpha-\beta,1;1-\beta;1-z) \quad \quad z \in (0,1) \]
where we recall the Gauss hypergeometric function
\[ {}_2F_1(a,b;c;z) = \sum_{k=0}^\infty \frac{(a)_k(b)_k}{(c)_k} \frac{z^k}{k!} \]
for $z \in (0,1)$ and $(x)_k = x(x+1)\cdots(x+k-1)$. We see that both summands are nonpositive if $a \ge -1/2$ and $-1 < b \le -1/2$. Thus, $\Re G(z) \le 0$ for $z \in (0,1)$. By \Cref{ex:nonpositive}, we see that $\bm \in \fM$.
\end{example}

\begin{proof}[Proof of \Cref{thm:two_sum}]
By \Cref{thm:master}, this is a consequence of \Cref{thm:general_twosum} and \Cref{ex:jacobi}.
\end{proof}

Our final example, the uniform measure of an interval, does not satisfy the nonpositivity condition, but satisfies \eqref{eq:set_inclusion_xi}.

\begin{example}[Uniform Measure]
The uniform measure on $[0,1]$ has Cauchy transform
\[ G(z) = \log\left( \frac{z}{z - 1} \right). \]
First, observe that
\[ \frac{\partial}{\partial x}\Re \cS_{G(z)}(x) = \Re \left( G(z) - G(x) \right) = \log \left| \frac{z}{z-1} \right| - \log \left| \frac{x}{x-1} \right|\]
is decreasing for $x \in (0,1)$ which implies $\Re \cS_{G(z)}(x)$ is a unimodal function on $(0,1)$. This implies $\cD_{G(z)}^+ \cap \supp \bm$ is an interval. Thus it remains to show $1 \in \cD_{G(z)}^+$ for $\Re z > 1$. We can see that
\[ \frac{\partial}{\partial z} \left( \cS_{G(z)}(1) - \cS_{G(z)}(z) \right) = (1 - z) G'(z) = \frac{1}{z} \]
Then for $z$ with $\Re z > 1$ and $\Im z \ge 0$, we have
\[ \cS_{G(z)}(1) - \cS_{G(z)}(z) = \int_1^{\Re z} \frac{1}{s} \, ds + \int_0^{\Im z} \frac{\bi}{\Re z + \bi t} \, dt. \]
Then
\[ \Re\big( \cS_{G(z)}(1) - \cS_{G(z)}(z) \big) = \int_1^{\Re z} \frac{1}{s} \, ds + \int_0^{\Im z} \frac{t}{(\Re z)^2 + t^2} \, dt > 0. \]
The argument for $\Im z \le 0$ is similar, or use conjugate symmetry. Thus, $1 \in \cD_{G(z)}^+$ for $\Re z > 1$.

More generally, one can show that the Jacobi measure with parameters $0 \le a \le 1/2$, $-1/2 \le b \le 0$ is contained in $\fM$ using a similar unimodality argument combined with checking $1 \in \cD_{G(z)}^+$ for $\Re z > 1$. We do not a provide a rigorous proof for the sake of brevity.
\end{example}

\section{Decompositions of Representations of the Unitary Group} \label{sec:applications_q}

We conclude with a proof of \Cref{thm:multitensor}. By \Cref{thm:master}, this is an immediate consequence of the following theorem.

Given $\mu \in \cM_1$, let
\begin{align} \label{eq:tau_q}
\tau_q(\mu) := \left( 1 + \frac{(e^{G_\mu(x(\mu))} - 1)^2}{e^{G_\mu(x(\mu))} G_\mu'(x(\mu))} \right)^{-1}
\end{align}
where we recall $x(\mu) = 4(E_+ - E_-) + E_+$.

\begin{theorem} \label{thm:application_q}
Under the hypotheses of \Cref{thm:multitensor} with $\tau_q(\cdot)$ as above, the Schur generating functions for $\lambda$ form an Airy edge appropriate sequence where
\[ A_N(u) = \sum_{i=1}^N G_{\mathrm{m}^{(i)}}^{-1}(u) + \frac{1 - n}{1 - e^{-u}}, \]
$\mathrm{m}^{(i)} := \tfrac{1}{N} \sum_{j=1}^N \delta_{\frac{\lambda_j^{(i)} + N - j}{N}}$, and $\fz_N$ is the minimal positive critical point of $A_N$.
\end{theorem}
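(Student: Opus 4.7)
The plan is to parallel the proof of \Cref{thm:application} with \Cref{thm:submaster_q} in place of \Cref{thm:submaster}, importing the random matrix inputs \Cref{thm:high_compression} and \Cref{thm:free_convolution_cp} into the quantized setting through the Markov-Krein correspondence $\cQ$ of \Cref{thm:MK_correspondence}. For $\mu \in \cM_1$, set $\wt{\mu} := \cQ\mu \in \cM$ and $\wt{G} := G_{\wt{\mu}}$; the defining identity $e^{-G_\mu(z)} = 1 - \wt{G}(z)$ gives $G_\mu^{[-1]}(u) = \wt{G}^{[-1]}(1 - e^{-u})$, forces $G_\mu$ and $\wt{G}$ to share the same singular set so that $E_\pm(\wt{\mu}) = E_\pm(\mu)$ and $x(\wt{\mu}) = x(\mu)$, and by differentiation yields $G_\mu'(z) = \wt{G}'(z)/(1-\wt{G}(z))$. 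A short calculation from these relations produces the crucial identity
\[ -\frac{e^{G_\mu(z)} G_\mu'(z)}{(e^{G_\mu(z)} - 1)^2} = -\frac{\wt{G}'(z)}{\wt{G}(z)^2}, \]
so that $\tau_q(\mu) = \tau(\wt{\mu})$.

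The first ingredient is a quantized analogue of \Cref{thm:high_compression}: for $n \ge \tau_q(\mu)$, the function $\mathsf{A}(u) := nG_\mu^{[-1]}(u) + (1-n)/(1-e^{-u})$ has a minimal positive critical point $\mathsf{z}$ with $\mathsf{A}''(\mathsf{z}) > 0$, satisfying $G_\mu^{[-1]}(\mathsf{z}) - E_+(\mu) \ge 4(E_+(\mu) - E_-(\mu))$ and $G_{\mu^{\otimes n}}'(E_+(\mu^{\otimes n})) = -\infty$. Substituting $u = G_\mu(z)$ and applying the identities above gives
\[ \frac{d}{dz}\mathsf{A}(G_\mu(z)) = n + (n-1)\frac{\wt{G}'(z)}{\wt{G}(z)^2}, \]
and \Cref{variance} applied to $\wt{\mu}$ shows that $-\wt{G}'(z)/\wt{G}(z)^2$ is strictly decreasing on $(E_+(\wt{\mu}),\infty)$ with limit $1$ at $\infty$, while the hypothesis $n \ge \tau_q(\mu) = \tau(\wt{\mu})$ guarantees $-\wt{G}'(x(\mu))/\wt{G}(x(\mu))^2 \ge n/(n-1)$. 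Hence the derivative of $\mathsf{A}\circ G_\mu$ has a unique zero $\xi \in [x(\mu),\infty)$, yielding the distance bound; the positivity of $\mathsf{A}''(\mathsf{z})$ and the edge-derivative statement follow as in the proof of \Cref{thm:high_compression}, using \eqref{eq:Ginv_relation_q}.

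The second ingredient is a quantized analogue of \Cref{thm:free_convolution_cp}. Writing $\wt{\bm}^{(i)} := \cQ\bm^{(i)}$ and $\wt{\cA}(v) := G_{\wt{\bm}^{(1)}}^{[-1]}(v) + G_{\wt{\bm}^{(2)}}^{[-1]}(v) - 1/v$, the algebraic identity $G_{\bm^{(i)}}^{[-1]}(u) = G_{\wt{\bm}^{(i)}}^{[-1]}(1-e^{-u})$ yields $\cA(u) = \wt{\cA}(1-e^{-u})$. Since $u \mapsto 1-e^{-u}$ is a strictly increasing diffeomorphism of $(0,\infty)$ onto $(0,1)$, critical points of $\cA$ on $(0,\infty)$ are in bijection with those of $\wt{\cA}$ on $(0,1)$, preserving minimality, and a quick chain-rule computation shows the second derivatives at critical points have the same sign. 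Applying Markov-Krein one also verifies that the subordination functions coincide, that $E_\pm(\bm^{(1)} \otimes \bm^{(2)}) = E_\pm(\wt{\bm}^{(1)} \boxplus \wt{\bm}^{(2)})$, and that the hypothesis $\lim_{z \searrow E_+(\bm^{(i)})} G_{\bm^{(i)}}'(z)/G_{\bm^{(i)}}(z)^2 = -\infty$ is equivalent to the analogous statement for $\wt{\bm}^{(i)}$. Consequently the conclusion of \Cref{thm:free_convolution_cp} applied to $(\wt{\bm}^{(1)},\wt{\bm}^{(2)})$ transfers directly.

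With these two inputs, the argument follows that of \Cref{thm:application}. Using \eqref{eq:Ginv_relation_q} one regroups $\cA(u) = \sum_{j=1}^k G_{\mu_j^{\otimes n_j}}^{[-1]}(u) + (1-k)/(1-e^{-u})$ and inducts on $j$, with the quantized \Cref{thm:high_compression} as the base case and the quantized \Cref{thm:free_convolution_cp} as the inductive step, producing a minimal positive critical point $\fz \in (0, \min_i G_{\bm^{(i)}}(E_+(\bm^{(i)})))$ of $\cA$ with $\cA''(\fz) > 0$. For the set-inclusion hypothesis \eqref{eq:set_inclusion_q} of \Cref{thm:submaster_q}(iii), the distance bound from the first ingredient combined with $\fz \le G_{\mu_j^{\otimes n_j}}(E_+(\mu_j^{\otimes n_j}))$ yields $G_{\mu_j}^{[-1]}(\fz) - E_+(\mu_j) \ge 4(E_+(\mu_j) - E_-(\mu_j))$, after which \Cref{thm:domain}(ii) provides the inclusion for a suitable choice of $\fp_j$. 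The main delicate point is verifying that the Markov-Krein transport of the edge, variance, and subordination identities is entirely clean at every step; once these identifications are in place, the remainder is a direct translation of the random matrix argument into the quantized language.
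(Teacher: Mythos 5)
Your overall strategy matches the paper's: regroup the inverse Cauchy transforms into $k$ self-tensor blocks, establish a quantized analogue of the high-compression lemma using the Markov--Krein identity, then induct with a transported version of \Cref{thm:free_convolution_cp} and finally invoke \Cref{thm:submaster_q}(iii) with a distance bound feeding \Cref{thm:domain}(ii). This is the same route the paper takes (\Cref{thm:high_restriction}, \Cref{thm:q_to_semi}, iterated \Cref{thm:free_convolution_cp}, then pull back through $1-e^{-u}$).

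However, you make a false unconditional claim at the start that you also lean on later: the identity $e^{-G_\mu(z)} = 1 - G_{\cQ\mu}(z)$ does \emph{not} force $G_\mu$ and $G_{\cQ\mu}$ to share the same singular set, and in general $E_\pm(\cQ\mu) \ne E_\pm(\mu)$. The paper only establishes the one-sided inclusion \eqref{eq:E_inclusion}, and equality is a consequence, not an automatism. A concrete counterexample: for $\mu$ the uniform measure on $[0,1]$ one computes $G_\mu(z)=\log\frac{z}{z-1}$ and $G_{\cQ\mu}(z)=\frac1z$, so $\cQ\mu=\delta_0$ and $E_+(\cQ\mu)=0 < 1 = E_+(\mu)$. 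Consequently $x(\cQ\mu)\ne x(\mu)$ and $\tau_q(\mu)\ne\tau(\cQ\mu)$ in general. Fortunately this does not sink your first ingredient: the actual inequality you need, $-\wt{G}'(x(\mu))/\wt{G}(x(\mu))^2 \ge n/(n-1)$, follows directly from $n\ge\tau_q(\mu)$ and the correct algebraic identity
\[
-\frac{e^{G_\mu(z)}G_\mu'(z)}{(e^{G_\mu(z)}-1)^2}=-\frac{\wt{G}'(z)}{\wt{G}(z)^2},
\]
and the monotonicity from \Cref{variance} holds on $(E_+(\cQ\mu),\infty)\supset(E_+(\mu),\infty)$, so the argument for the quantized high-compression lemma goes through once the spurious intermediary $\tau_q(\mu)=\tau(\cQ\mu)$ is dropped.

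Where the gap actually bites is in the second ingredient: you assert, without argument, that ``Applying Markov--Krein one also verifies\ldots\ that $E_\pm(\bm^{(1)}\otimes\bm^{(2)})=E_\pm(\wt{\bm}^{(1)}\boxplus\wt{\bm}^{(2)})$.'' This is exactly the statement $E_+(\mu)=E_+(\cQ\mu)$ for $\mu=\bm^{(1)}\otimes\bm^{(2)}$, which is \emph{not} a generic consequence of Markov--Krein. The paper establishes it precisely under the square-root-edge condition ($0<G(E_+)<\infty$ and $G'(E_+)=-\infty$) via \Cref{thm:q_to_semi}, or equivalently using the subordination identity $G_{\mu_i^{\otimes n_i}}(\omega_i(z))=G_\mu(z)$ together with $\omega_i(E_+(\cQ\mu))>E^{(i)}$ to see $G_\mu$ is real on $(E_+(\cQ\mu),\infty)$ and then invoke \eqref{eq:cauchy_inversion}. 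You need to supply this argument (or cite \Cref{thm:q_to_semi}); since your earlier false belief $E_\pm(\cQ\mu)=E_\pm(\mu)$ would have made this step look automatic, I would flag it as the main place where your proof as written is incomplete.
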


The proof of \Cref{thm:application_q} relies heavily on \eqref{eq:MK_correspondence}.

For $\mu \in \cM_1$, we note that
\begin{align} \label{eq:E_inclusion}
(E_-(\cQ \mu),E_+(\cQ \mu)) \subset (E_-(\mu),E_+(\mu))
\end{align}
Indeed, $G_\mu(z)$ is positive and real for $z \in (E_+(\mu),\infty)$ so that $G_{\cQ \mu}(z)$ is positive for $z \in (E_+(\mu),\infty)$ by \eqref{eq:MK_correspondence}. This implies $E_+(\cQ \mu) \le E_+(\mu)$ from \eqref{eq:cauchy_inversion}, and we can likewise show $E_-(\cQ \mu) \ge E_-(\mu)$.

\begin{proposition} \label{thm:high_restriction}
If $\tau \ge \tau_q(\mu)$, then
\[ \mathsf{A}(u) := n G_\mu^{[-1]}(u) + \frac{1 - n}{1 - e^{-u}} \]
has a minimal real positive critical point $\mathsf{z} \in (0,G_\mu(E_+(\mu)))$ such that $\mathsf{A}''(\mathsf{z}) > 0$ and
\[ G_\mu^{[-1]}(\mathsf{z}) - E_+(\mu) > 4(E_+(\mu) - E_-(\mu)). \]
Furthermore, $\mathsf{z} = G_{\mu}(E_+(\mu))$ and $G_{\mu}'(E_+(\mu)) = -\infty$.
\end{proposition}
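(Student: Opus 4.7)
The plan is to mirror the proof of \Cref{thm:high_compression}, using the Markov--Krein correspondence \eqref{eq:MK_correspondence} to convert the required quantized monotonicity into one of classical type, where \Cref{variance} applies directly. First I would set $G := G_\mu$ and parametrize $u = G(z)$ for $z \in (E_+(\mu),\infty)$, which gives
\[ \mathsf{A}(G(z)) = nz + \frac{1-n}{1-e^{-G(z)}}. \]
A direct differentiation then shows that critical points of $\mathsf{A}\circ G$ on $(E_+(\mu),\infty)$ correspond to solutions of $F(z) = 1 - 1/n$, where
\[ F(z) := -\frac{(1-e^{-G(z)})^2\, e^{G(z)}}{G'(z)}. \]

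The key observation is that \eqref{eq:MK_correspondence} yields $G_{\cQ\mu}(z) = 1 - e^{-G(z)}$ and $G_{\cQ\mu}'(z) = e^{-G(z)} G'(z)$, so that
\[ F(z) = -\frac{G_{\cQ\mu}(z)^2}{G_{\cQ\mu}'(z)}. \]
Since $\cQ\mu \in \cM$ and $E_+(\cQ\mu) \le E_+(\mu)$ by \eqref{eq:E_inclusion}, \Cref{variance} applied to $\cQ\mu$ shows that $F$ is strictly increasing on $(E_+(\mu),\infty)$ with $F(z) \nearrow 1$ as $z \to \infty$. The exponent $\tau_q(\mu)$ is defined precisely so that $F(x(\mu)) = 1 - 1/\tau_q(\mu)$, hence the hypothesis $n \ge \tau_q(\mu)$ forces $1 - 1/n \ge F(x(\mu))$, and strict monotonicity produces a unique $\xi \in [x(\mu),\infty)$ with $F(\xi) = 1 - 1/n$.

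Setting $\mathsf{z} := G(\xi)$, I would then collect the remaining assertions. Strict monotonicity of $F$ at $\xi$ together with $\mathsf{A}'(\mathsf{z}) = 0$ yields $\mathsf{A}''(\mathsf{z})\, G'(\xi)^2 > 0$, and hence $\mathsf{A}''(\mathsf{z}) > 0$. The same monotonicity rules out critical points of $\mathsf{A}$ in $(0,\mathsf{z})$, while the leading behavior $\mathsf{A}(u) = 1/u + O(1)$ as $u \to 0^+$ makes $\mathsf{A}$ strictly decreasing in a right neighborhood of $0$; thus $\mathsf{z} \in (0, G(x(\mu))] \subset (0, G(E_+(\mu)))$ is the minimal positive critical point of $\mathsf{A}$. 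The distance bound $G_\mu^{[-1]}(\mathsf{z}) - E_+(\mu) = \xi - E_+(\mu) \ge 4(E_+(\mu) - E_-(\mu))$ is immediate from $\xi \ge x(\mu)$.

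Finally, with all the hypotheses of \Cref{thm:submaster_q}(i) now verified by taking $\bm^{(1)} = \cdots = \bm^{(n)} = \mu$ and invoking \eqref{eq:Ginv_relation_q} to identify $\mathsf{A}$ with $G_{\mu^{\otimes n}}^{[-1]}$ on a punctured neighborhood of $0$, the conclusion $\mathsf{z} = G_{\mu^{\otimes n}}(E_+(\mu^{\otimes n}))$ follows; differentiating the identity $\mathsf{A}(G_{\mu^{\otimes n}}(z)) = z$ at $z = E_+(\mu^{\otimes n})$ and using $\mathsf{A}'(\mathsf{z}) = 0$ then forces $G_{\mu^{\otimes n}}'(E_+(\mu^{\otimes n})) = -\infty$. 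The only genuine conceptual step is recognizing the Markov--Krein identity $F = -G_{\cQ\mu}^2 / G_{\cQ\mu}'$; once that is in hand, the quantized monotonicity reduces to a direct application of \Cref{variance} and the remainder of the argument parallels the free-convolution case.
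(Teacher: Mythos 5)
Your proposal is correct and follows essentially the same route as the paper: the paper's proof of \Cref{thm:high_restriction} simply defers to \Cref{thm:high_compression} and observes that the key monotonicity $-G_{\cQ\mu}'/G_{\cQ\mu}^2$ replaces $-G'/G^2$ via the Markov--Krein identity \eqref{eq:MK_correspondence}, which is exactly the reduction $F = -G_{\cQ\mu}^2/G_{\cQ\mu}'$ you identify. You also correctly read the final assertion of the proposition as concerning $\mu^{\otimes n}$ rather than $\mu$ (the paper's statement has a typo, as is clear from its use in the proof of \Cref{thm:application_q} and by analogy with \Cref{thm:high_compression}), and the derivation $\mathsf{A}(G_{\mu^{\otimes n}}(z)) = z$ at $z \searrow E_+(\mu^{\otimes n})$ giving $G_{\mu^{\otimes n}}'(E_+) = -\infty$ matches the paper's argument in \Cref{thm:high_compression}.
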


\begin{proof}
We omit the details as the proof is virtually identical to that of \Cref{thm:high_compression}. The key difference is that \Cref{thm:high_restriction} relies on the strict monotonicity of $-G'/G^2 = (d/dz)(1/G)$. Here, we use the strict monotonicity of
\[ -\frac{e^{G_\mu(z)} G_\mu'(z)}{(e^{G_\mu(z)} - 1)^2} = \frac{d}{dz} \frac{1}{1 - e^{-G_\mu(z)}} \]
which follows from the fact that
\[ \frac{1}{1 - e^{-G_\mu(z)}} = \frac{1}{G_{\cQ\mu}(z)} \]
by \eqref{eq:MK_correspondence}.
\end{proof}

\begin{lemma} \label{thm:q_to_semi}
If $\mu \in \cM_1$, $0 < G_\mu(E_+(\mu)) < \infty$, and $G_\mu'(E_+(\mu)) = -\infty$, then $E_+(\mu) = E_+(\cQ \mu)$, $0 < G_{\cQ\mu}(E_+(\cQ \mu)) < \infty$, and $G_{\cQ\mu}'(E_+(\cQ\mu)) = -\infty$.
\end{lemma}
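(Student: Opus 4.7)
The plan is to exploit the Markov--Krein identity \eqref{eq:MK_correspondence}, which rewrites as $G_{\cQ\mu}(z) = 1 - e^{-G_\mu(z)}$ and (locally, where a branch exists) $G_\mu(z) = -\log(1 - G_{\cQ\mu}(z))$. Two of the three conclusions will drop out immediately from this identity and the chain rule; the real work is in showing the right edges coincide. From \eqref{eq:E_inclusion} I already have $E_+(\cQ\mu) \le E_+(\mu)$, so everything reduces to proving $E_+(\mu) \le E_+(\cQ\mu)$.

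I would argue the edge equality by contradiction. Suppose $E_+(\cQ\mu) < E_+(\mu)$. Then there is a complex disk $B_\varepsilon(E_+(\mu)) \subset \C \setminus \supp \cQ\mu$ on which $G_{\cQ\mu}$ is holomorphic. Evaluating the identity at $E_+(\mu)$ gives $G_{\cQ\mu}(E_+(\mu)) = 1 - e^{-G_\mu(E_+(\mu))} \in (0,1)$ by the hypothesis $0 < G_\mu(E_+(\mu)) < \infty$. By continuity I can shrink $\varepsilon$ so that $|G_{\cQ\mu}(z)|$ stays bounded away from $1$ on the disk; then $1 - G_{\cQ\mu}$ admits a holomorphic logarithm there, and $-\log(1 - G_{\cQ\mu}(z))$ is an analytic extension of $G_\mu$ across $E_+(\mu)$ (uniqueness of analytic continuation forces agreement with $G_\mu$ on $(E_+(\mu),\infty) \cap B_\varepsilon(E_+(\mu))$, where both sides are real by construction).

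The main obstacle is converting this extension into a contradiction with $E_+(\mu) \in \supp\mu$. Because $\mu \in \cM_1$ is absolutely continuous with density $\le 1$ and has no atoms, Stieltjes inversion identifies the density at $x$ with $-\tfrac{1}{\pi} \lim_{t \searrow 0} \Im G_\mu(x + \bi t)$. The extended $G_\mu$ is real on $B_\varepsilon(E_+(\mu)) \cap \R$ (by Schwarz reflection, since it is real on $(E_+(\mu),\infty) \cap B_\varepsilon(E_+(\mu))$), so its boundary values give vanishing density on $(E_+(\mu) - \varepsilon, E_+(\mu) + \varepsilon)$. This forces $\mu\big((E_+(\mu) - \varepsilon, E_+(\mu))\big) = 0$, contradicting $E_+(\mu) = \sup \supp\mu$.

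With $E_+(\mu) = E_+(\cQ\mu)$ in hand, the remaining conclusions are routine. The identity gives $G_{\cQ\mu}(E_+(\cQ\mu)) = 1 - e^{-G_\mu(E_+(\mu))} \in (0,1)$, which is finite and positive. Differentiating the identity on $(E_+(\mu),\infty)$ yields $G_{\cQ\mu}'(z) = e^{-G_\mu(z)} G_\mu'(z)$, and taking $z \searrow E_+(\mu)$ gives $G_{\cQ\mu}'(E_+(\cQ\mu)) = e^{-G_\mu(E_+(\mu))} \cdot (-\infty) = -\infty$, since the exponential factor is strictly positive.
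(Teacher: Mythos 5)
Your proof is correct but differs from the paper's in how it obtains the edge equality $E_+(\mu) = E_+(\cQ\mu)$. Both proofs differentiate the Markov--Krein identity to get $G_{\cQ\mu}'(z) = e^{-G_\mu(z)} G_\mu'(z)$, hence $G_{\cQ\mu}'(E_+(\mu)) = -\infty$, and both deduce the remaining two conclusions identically. For the edge equality, though, the paper's argument is shorter: since $G_{\cQ\mu}$ is analytic (so has finite derivative) on $(E_+(\cQ\mu),\infty)$, the divergence $G_{\cQ\mu}'(E_+(\mu)) = -\infty$ rules out $E_+(\mu) > E_+(\cQ\mu)$, and combined with \eqref{eq:E_inclusion} this gives equality in one line. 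You instead set that derivative fact aside and run a separate analytic-continuation argument: assuming $E_+(\cQ\mu) < E_+(\mu)$, you push holomorphy of $G_{\cQ\mu}$ near $E_+(\mu)$ through a branch of $-\log(1-G_{\cQ\mu})$ to extend $G_\mu$ across $E_+(\mu)$, then use conjugate symmetry and Stieltjes inversion to show $\mu$ has vanishing density there, a contradiction. This is valid --- and notably it never uses the hypothesis $G_\mu'(E_+(\mu)) = -\infty$ for the edge equality, relying only on absolute continuity of $\mu$ and $0<G_\mu(E_+(\mu))<\infty$ --- so it establishes that step under weaker hypotheses, at the cost of an extra layer (the extension through the logarithm and the inversion formula) where the paper closes immediately.
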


\begin{proof}
By \eqref{eq:MK_correspondence}, we have $0 < G_{\cQ \mu}(E_+(\mu)) < \infty$. Taking derivatives of \eqref{eq:MK_correspondence}, we obtain
\[ e^{-G_\mu(z)} G_\mu(z)' = G_{\cQ \mu}'(z) \]
from which we see that $G_{\cQ\mu}'(E_+(\mu)) = -\infty$. Since $G_{\cQ\mu}(z)$ is finite for $z \in (E_+(\cQ \mu),\infty)$ and $E_+(\cQ \mu) \le E_+(\mu)$ by \eqref{eq:E_inclusion}, we must have $E_+(\cQ \mu) = E_+(\mu)$.
\end{proof}

\begin{proof}[Proof of \Cref{thm:application_q}]
We show that we can apply \Cref{thm:submaster_q} (iii). We have
\[ \cA(u) := G_{\bm^{(1)}}^{[-1]}(u) + \cdots + G_{\bm^{(n)}}^{[-1]}(u) + \frac{1 - n}{1 - e^{-u}} = G_{\mu_1^{\otimes n_1}}^{[-1]}(u) + \cdots + G_{\mu_k^{\otimes n_k}}^{[-1]}(u) + \frac{1 - k}{1 - e^{-u}} \]
where the inequality follows from
\[ \sum_{j=n_1 + \cdots + n_{i-1} + 1}^{n_1 + \cdots + n_i} G_{\bm^{(j)}}^{[-1]}(u) + \frac{1 - n_i}{1 - e^{-u}} = n_i G_{\mu_i}^{[-1]}(u) + \frac{1 - n_i}{1 - e^{-u}} = G_{\mu_i^{\otimes n_i}}^{[-1]}(u) \]
coming from our assumption and \eqref{eq:Ginv_relation_q}. By \Cref{thm:high_restriction} and the assumption $n_i \ge \tau(\mu_i)$, we have
\[ 0 < G_{\mu_i^{\otimes n_i}}(E_+(\mu_i^{\otimes n_i})) < \infty, \quad \quad G_{\mu_i^{\otimes n_i}}'(E_+(\mu_i^{\otimes n_i})) = -\infty. \]

Let $\mu = \mu_1^{\otimes n_1} \otimes \cdots \otimes \mu_k^{\otimes n_k}$. Then \Cref{thm:q_to_semi} implies $E^{(i)} := E_+(\mu_i^{\otimes n_i}) = E_+((\cQ\mu_i)^{\boxplus n_i})$,
\[ 0 < G_{(\cQ\mu_i)^{\boxplus n_i}}(E^{(i)}) < \infty, \quad \quad G_{(\cQ\mu_i)^{\boxplus n_i}}'(E^{(i)}) = -\infty. \]
Iterating \Cref{thm:free_convolution_cp} as in the proof of \Cref{thm:application}, we obtain
\[ \cA_\cQ(v) := G_{(\cQ\mu_1)^{\boxplus n_1}}^{[-1]}(v) + \cdots + G_{(\cQ\mu_k)^{\boxplus n_k}}^{[-1]}(v/\tau_n) + \frac{1 - k}{v} \]
has a minimal positive critical point $\fz_\cQ = G_{\cQ\mu}(E_+(\cQ \mu)) \in \left(0,\min_{1 \le i \le n} G_{(\cQ \mu_i)^{\boxplus n_i}}(E^{(i)})) \right)$ and $\cA_\cQ''(\fz_\cQ) > 0$. Moreover, we have subordination functions $\omega_1,\ldots,\omega_n$ satisfying
\[ G_{(\cQ \mu_i)^{\boxplus n_i}}(\omega_i(z)) = G_{\cQ\mu}(z), \quad \quad i = 1,\ldots,n\]
and
\[ \omega_i(E_+(\cQ \mu)) > E^{(i)}. \]
From this, we see that $\omega_i(z)$ is real for $z \in (E_+(\cQ \mu),\infty)$. By \eqref{omega:subord_q}, these same functions satisfy
\begin{align} \label{eq:subord}
G_{\mu_i^{\otimes n_i}}(\omega_i(z)) = G_\mu(z), \quad \quad i = 1,\ldots,n.
\end{align}
It follows that $E := E_+(\cQ \mu) = E_+(\mu)$. Indeed, the inequality $E_+(\cQ\mu) \le E_+(\mu)$ from \eqref{eq:E_inclusion} cannot be strict because \eqref{eq:subord} implies $G_\mu(z)$ is real for $z \in (E_+(\cQ\mu),\infty)$ so that $E_+(\mu) \le E_+(\cQ\mu)$ by \eqref{eq:cauchy_inversion}.

We have that $\fz := G_\mu(E)$ is finite by \eqref{eq:MK_correspondence} and the finiteness of $\fz_\cQ$. Therefore, the map $G_{\cQ\mu}(G_\mu^{[-1]}(u))$ is an invertible map from $(0,\fz]$ to $(0,\fz_\cQ]$. Similarly, $G_{(\cQ \mu_i)^{\boxplus n_i}}(G_{\mu_i^{\otimes n_i}}^{[-1]}(u))$ is an invertible map from $(0,G_{\mu_i^{\otimes n_i}}(E^{(i)})]$ to $(0,G_{(\cQ \mu_i)^{\boxplus n_i}}(E^{(i)})]$ for $1 \le i \le n$. Moreover, \eqref{eq:MK_correspondence} shows that
\[ G_{\cQ\mu}(G_\mu^{[-1]}(u)) = 1 - e^{-u}, \quad \quad G_{(\cQ \mu_i)^{\boxplus n_i}}(G_{\mu_i^{\otimes n_i}}^{[-1]}(u)) = 1 - e^{-u} \]
on their respective domains. In particular, the maps agree. Thus, $\fz \in \left(0,\min_{1 \le i \le n} G_{\mu_i^{\otimes n_i}}(E^{(i)})\right)$. Furthermore,
\[ \cA_\cQ(1 - e^{-u}) = G_{\cQ\mu}^{[-1]}(1 - e^{-u}) = G_\mu^{[-1]}(u) = \cA(u), \quad \quad u \in (0,G_\mu(E_+(\mu))] \]
by \eqref{eq:Ginv_relation} and \eqref{eq:Ginv_relation_q}, so that $\fz$ is the minimal positive critical point of $\cA$ and $\cA''(\fz) > 0$. Finally, we argue as in the proof of \Cref{thm:application} to prove \eqref{eq:set_inclusion_q}. \end{proof}

\appendix

\section{Supersymmetric Lifts} \label{appendix:supersymmetric}

We describe a general family of supersymmetric lifts of the Schur functions subsuming those used in this article and a special case which gives the supersymmetric Schur functions.

Let $\delta_N = (N-1,N-2,\ldots,1,0)$ and $\lambda = (\lambda_1 \ge \cdots \ge \lambda_N) \in \Z^N$. Set $\vec{x} = (x_1,\ldots,x_{N+k})$ and $\vec{y} = (y_1,\ldots,y_k)$. Given some function $f:\Omega \subset \C \to \C$, define
\[ F(\vec{x},\vec{y}) = \left( f(x_i,y_j) \right)_{\substack{1 \le i \le N+k \\ 1 \le j \le k}} \]
and
\[ s_{\lambda,f}(\vec{x}/\vec{y}) := (-1)^{Nk} \frac{D(\vec{x};-\vec{y})}{\Delta(\vec{x}) \Delta(-\vec{y})} \det \begin{pmatrix} F(\vec{x},\vec{y}) & A_{\lambda + \delta_N}(\vec{x}) \end{pmatrix}. \]

\begin{theorem}
If 
\[ \lim_{x \to y} (x - y) f(x,y) = 1, \]
then $s_{\lambda,f}(\vec{x}/\vec{y})$ is a supersymmetric lift of $s_\lambda(x_1,\ldots,x_N)$ on $\Omega$.
\end{theorem}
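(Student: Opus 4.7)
My plan is to verify the three defining properties of a supersymmetric lift in turn: the base case $k=0$, double symmetry and analyticity, and the cancellation identity. The base case is immediate from the alternant formula \eqref{eq:alternant}: when $k=0$ the $F$-block and the $D$- and $\Delta(-\vec{y})$-factors are empty, leaving $\det A_{\lambda+\delta_N}(\vec{x})/\Delta(\vec{x}) = s_\lambda(\vec{x})$.

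For symmetry in $\vec{x}$, swapping $x_i$ and $x_{i'}$ swaps rows of the full $(N+k)\times(N+k)$ matrix and flips the sign of $\Delta(\vec{x})$, while $D(\vec{x};-\vec{y})$ is manifestly symmetric in $\vec{x}$; the two sign flips cancel. Symmetry in $\vec{y}$ is analogous (swapping columns in $F$ and changing sign of $\Delta(-\vec{y})$). For analyticity, the apparent poles of the prefactor at $x_i = x_{i'}$ and $y_j=y_{j'}$ are removable because the determinant vanishes when two rows or two columns of the relevant block coincide. The only other potential singularities come from $f$, which by hypothesis has at worst a simple pole on the diagonal $x=y$; these poles of the entries $f(x_i,y_j)$ are matched by the factors $(x_i - y_j)$ in $D(\vec{x};-\vec{y})$, so $s_{\lambda,f}$ extends holomorphically to $\Omega^{N+k}\times \Omega^k$.

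The main step is the cancellation identity $s_{\lambda,f}(\vec{x}/\vec{y})\bigl|_{x_{N+k}=y_k} = s_{\lambda,f}(\vec{x}'/\vec{y}')$, where primes denote the tuples with the last entry removed; by the symmetries just established, this implies the general case $x_i = y_j$. I would multiply the $k$-th column of the inner matrix by $(x_{N+k}-y_k)$, which multiplies $\det M$ by this factor and, in the limit $x_{N+k}\to y_k$, produces a column whose entries are $(y_k-y_k)f(x_i,y_k)=0$ for $i<N+k$ and $\lim_{x\to y_k}(x-y_k)f(x,y_k)=1$ in position $(N+k,k)$. Expanding the resulting determinant along that column gives $(-1)^{(N+k)+k}\det M' = (-1)^N \det M'$, where $M'$ is the $(N+k-1)\times(N+k-1)$ matrix attached to $(\vec{x}',\vec{y}')$.

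To finish, I compensate the extra $(x_{N+k}-y_k)$ using the prefactor: isolate from $D(\vec{x};-\vec{y})/(\Delta(\vec{x})\Delta(-\vec{y}))$ the factors involving $x_{N+k}$ or $y_k$, which at $x_{N+k}=y_k$ yield
\[
\frac{(x_{N+k}-y_k)\,\prod_{i<N+k}(x_i-y_k)\,\prod_{j<k}(x_{N+k}-y_j)}{\prod_{i<N+k}(x_i-x_{N+k})\,\prod_{j<k}(y_k-y_j)} \cdot \frac{D(\vec{x}';-\vec{y}')}{\Delta(\vec{x}')\Delta(-\vec{y}')},
\]
and the two products on the top cancel pointwise against the denominators. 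Combining with $(-1)^{Nk}(-1)^N = (-1)^{N(k-1)}$ reproduces the definition of $s_{\lambda,f}(\vec{x}'/\vec{y}')$. The one place where care is required is verifying that the limit $x_{N+k}\to y_k$ of $(x_{N+k}-y_k)\det M$ is indeed $\det \tilde M|_{x_{N+k}=y_k}$; this is safe because the entries $f(x_i,y_k)$ for $i<N+k$ are evaluated at distinct arguments and stay finite, so only the single entry $(N+k,k)$ of $M$ is singular.
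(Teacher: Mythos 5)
Your proof is correct and follows essentially the same route as the paper's proof of the analogous statement for $\cB_{\vec{\ell},p}$ (Theorem \ref{thm:ssym_lift}), which it invokes: absorb the factor $(x-y)$ from $D(\vec{x};-\vec{y})$ into the matching column, take the limit using $\lim_{x\to y}(x-y)f(x,y)=1$, and expand the resulting determinant. You carry out the sign bookkeeping and prefactor cancellation explicitly, where the paper leaves these details implicit, but the underlying idea is identical.
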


\begin{proof}
The proof is identical to that of \Cref{thm:ssym_lift}. 
\end{proof}

In the case where $f(x,y) = 1/(x - y)$, we have a connection with the algebra of supersymmetric polynomials which we now briefly define. We refer the reader to \cite[I, 3, Examples 23 \& 24]{Mac}, \cite[Chapter 2]{Moens} for further details. 

Let $\Lambda_n$ denote the algebra of symmetric polynomials in $n$-variables $x_1,\ldots,x_n$ and
\[ \Lambda = \varprojlim_{n} \Lambda_n \]
be the algebra of symmetric functions where the projection $\Lambda_{n+1} \to \Lambda_n$ is the map setting $x_{n+1} = 0$. Denote by $\text{pr}_n:\Lambda \to \Lambda_n$ the resulting projections from $\Lambda$. Since the Newton power sums $p_k$ form an algebraic basis for $\Lambda$, an algebra homomorphism on $\Lambda$ is determined by its action on the Newton power sums. The \emph{algebra of supersymmetric polynomials in ordinary variables $(x_1,\ldots,x_m)$ and dual variables $(y_1,\ldots,y_n)$}, denoted $\Lambda_{m,n}$, is the image of the map $\text{pr}^{\text{super}}_{m,n}: \Lambda \to \C[x_1,\ldots,x_m,y_1,\ldots,y_n]$ determined by
\[ p_0 \mapsto 1, \quad p_k \mapsto \sum_{i=1}^m x_i^k - \sum_{j=1}^n y_j^k. \]
From this definition, it is apparent that setting $x_m = y_n$ cancels these two variables. Thus, if we fix an element $g \in \Lambda$ and a positive integer, then $\text{pr}^{\text{super}}_{n+k,k} g$ for $k \ge 0$ is a supersymmetric lift of $\text{pr}_n g$.

In particular, we consider the Schur function $s_\lambda \in \Lambda$, defined by the projective limit of $s_\lambda(x_1,\ldots,x_n) \in \Lambda_n$. The \emph{supersymmetric Schur function} $s_\lambda(x_1,\ldots,x_m/y_1,\ldots,y_n)$ is the image of $s_\lambda \in \Lambda$ by the map $\text{pr}^{\text{super}}_{m,n}$. Then $s_\lambda(x_1,\ldots,x_{n+k}/y_1,\ldots,y_k)$ for $k \ge 0$ is a supersymmetric lift of $s_\lambda(x_1,\ldots,x_n)$.

\begin{theorem}[{\cite{MJ03}}]
For $\lambda \in \GT_N$, we have
\[ s_\lambda(x_1,\ldots,x_{N+k}/y_1,\ldots,y_k) = s_{\lambda,f}(x_1,\ldots,x_{N+k}/y_1,\ldots,y_k) \]
where $f(x,y) = 1/(x-y)$.
\end{theorem}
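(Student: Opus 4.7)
The plan is to prove the identity by Laplace-expanding the determinant on the right-hand side and matching the result to the Berele--Regev--Sergeev combinatorial expansion of the supersymmetric Schur function. Recall that the right-hand side is an $(N+k) \times (N+k)$ determinant of a block matrix whose first $k$ columns form the Cauchy block $F = (1/(x_i - y_j))$ and whose last $N$ columns form the generalized Vandermonde block $A_{\lambda+\delta_N} = (x_i^{\lambda_j + N - j})$, scaled by $(-1)^{Nk} D(\vec{x};-\vec{y})/(\Delta(\vec{x})\Delta(-\vec{y}))$.

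First I would Laplace-expand this determinant along the first $k$ columns, producing a signed sum over $k$-subsets $I \subseteq \{1,\ldots,N+k\}$ of the form $\pm \det(F|_I) \det(A_{\lambda+\delta_N}|_{I^c})$. Applying the Cauchy determinant identity to $F|_I$ yields $\Delta(\vec{x}_I)\Delta(\vec{y}) / \prod_{i \in I, j}(x_i - y_j)$; the denominator cancels against the corresponding factors in the prefactor $D(\vec{x};-\vec{y})$, leaving the clean polynomial factor $\prod_{i \in I^c, j}(x_i - y_j)$. The residual Vandermonde ratios $\Delta(\vec{x}_I)/\Delta(\vec{x})$ and $\Delta(\vec{y})/\Delta(-\vec{y})$ simplify via standard identities, and after reorganization each summand takes the form $\pm s_\mu(\vec{x}_{I^c}) \cdot (\text{symmetric polynomial in } \vec{y})$, where $\mu$ is a signature determined by $I$ and $\lambda$.

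Next I would match the resulting sum to the supersymmetric Schur function via the Berele--Regev--Sergeev identity $s_\lambda(\vec{x}/\vec{y}) = \sum_{\mu \subseteq \lambda} s_\mu(\vec{x}) \, s_{\lambda'/\mu'}(\vec{y})$, extended from partitions to signatures through the same homomorphism $\mathrm{pr}^{\text{super}}_{m,n}$ used to define the supersymmetric Schur function on the ring of Laurent symmetric functions. The natural bijection sends the subset $I^c$ of selected rows to the subpartition $\mu \subseteq \lambda$ indexed by the exponents $\lambda_j + N - j$ retained in the Vandermonde block. The hard part will be the combinatorial bookkeeping: verifying that the signs produced by the Laplace expansion match those in the Berele--Regev--Sergeev formula, and identifying the residual $\vec{y}$-factor as the skew Schur function $s_{\lambda'/\mu'}(\vec{y})$ via the dual Jacobi--Trudi identity. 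An alternative approach would be to establish a uniqueness theorem for supersymmetric lifts of $s_\lambda$ within a suitable class of rational functions with prescribed pole structure and cancellation behavior, thereby avoiding the explicit combinatorial match; however, proving such a rigidity statement is itself nontrivial and would likely require comparable effort.
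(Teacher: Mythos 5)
The paper does not actually prove this theorem: the appendix simply attributes it to Moens--Van der Jeugt \cite{MJ03} and refers the reader there (and notes that the cited result is in fact more general, allowing $m-n\neq N$). So there is no internal proof for me to compare against; I can only assess your proposal on its own terms.

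There is a structural gap in the Laplace-expansion route as you describe it. Expanding $\det\begin{pmatrix} F & A_{\lambda+\delta_N}\end{pmatrix}$ along the first $k$ columns produces a sum over $k$-element \emph{row} subsets $I \subset \{1,\ldots,N+k\}$, and the complementary minor $\det\bigl(A_{\lambda+\delta_N}|_{I^c}\bigr)$ retains \emph{all} $N$ columns of the Vandermonde block, i.e.\ all exponents $\lambda_j+N-j$. After dividing by $\Delta(\vec{x}_{I^c})$ this gives $s_\lambda(\vec{x}_{I^c})$ --- the \emph{same} signature $\lambda$ in every summand, just in the reduced set of $x$-variables. The "natural bijection sending $I^c$ to a subpartition $\mu\subseteq\lambda$" you propose does not exist: $I$ selects rows, not columns, so no exponents are discarded. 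Consequently the Laplace expansion yields
\[
s_{\lambda,f}(\vec{x}/\vec{y}) \;=\; \sum_{|I|=k} (\pm 1)\,\frac{\prod_{i'\in I^c}\prod_{j=1}^k(x_{i'}-y_j)}{\prod_{i\in I}\prod_{i'\in I^c}(x_i-x_{i'})}\;s_\lambda(\vec{x}_{I^c}),
\]
a Lagrange-interpolation-type sum over variable subsets, whereas the Berele--Regev--Sergeev identity $s_\lambda(\vec{x}/\vec{y})=\sum_{\mu\subseteq\lambda} s_\mu(\vec{x})\,s_{\lambda'/\mu'}(\vec{y})$ is a sum over subpartitions with the \emph{full} alphabets $\vec{x}$ and $\vec{y}$ in each term. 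The two sums are indexed by completely different combinatorial objects, so the "bookkeeping" you defer is not merely tedious --- the proposed matching is not there. You would need a further, non-obvious re-expansion (e.g.\ of the $\prod_{i'\in I^c,j}(x_{i'}-y_j)$ factors in a Schur basis and a Lagrange-interpolation argument on the $x$-side) before any comparison with BRS could even begin.

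Your alternative idea --- characterizing the supersymmetric Schur function among lifts by a rigidity/uniqueness property (degree bounds in $\vec{y}$, prescribed cancellation at $x_i=y_j$, and the base case $k=0$) --- is much closer in spirit to how one actually closes this argument, and to the strategy in \cite{MJ03}. That route avoids any appeal to BRS. As written, though, neither branch of the proposal constitutes a proof: the first has the flaw above, and the second is acknowledged but not carried out.
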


We note that Moens and Van der Jeugt show a more general formula for $s_\lambda(x_1,\ldots,x_m/y_1,\ldots,y_n)$ where they do not require the condition $m - n = N$ for $\lambda \in \GT_N$. Their notation differs from ours by $y_i \mapsto -y_i$.

We are able to obtain contour integral expressions for the supersymmetric Schur functions which resemble the contour integral formula \Cref{thm:ssym_bessel} for the supersymmetric lift of the multivariate Bessel function used in this article. Recall that $\Delta(x_1,\ldots,x_k) = \prod_{1 \le i < j \le k}(x_i - x_j)$.

\begin{theorem}
Suppose $\lambda \in \GT_N$, $\vec{x} = (x_1,\ldots,x_k), \vec{y} = (y_1,\ldots,y_k)$, and $0 < q < 1$. Then
\begin{align} \label{supersymmetric_schur_k}
\frac{s_\lambda(\vec{x},1,q,\ldots,q^{N-1}/\vec{y})}{s_\lambda(1,q,\ldots,q^{N-1})} = \frac{\prod_{i,j=1}^k(x_i - y_j)}{\Delta(\vec{x}) \Delta(-\vec{y})} \det \left( \frac{1}{x_i - y_j} \frac{s_\lambda(x_i,1,q,\ldots,q^{N-1}/y_j)}{s_\lambda(1,q,\ldots,q^{N-1})} \right)_{i,j=1}^k.
\end{align}
If $x,y \in \C \setminus \{1,q,\ldots,q^{N-1}\}$, then
\begin{align} \label{supersymmetric_schur}
\begin{multlined}
\frac{s_\lambda(x,1,q,\ldots,q^{N-1}/y)}{s_\lambda(1,q,\ldots,q^{N-1})} \\
= (x - y) \prod_{i=1}^N \frac{y - q^{i-1}}{x - q^{i-1}} \left( \frac{1}{x-y} + \oint \oint \frac{\log q}{q^{w-z} - 1} \cdot \frac{\pi e^{\sigma \bi \pi w}}{\sin \pi w} \cdot \frac{x^z}{y^{w+1}} \left( \prod_{i=1}^N \frac{q^w - q^{\lambda_i + N - i}}{q^z - q^{\lambda_i + N - i}} \right) \,\frac{dw\,dz}{(2\pi\bi)^2} \right)
\end{multlined}
\end{align}
where $\sigma \in \{+1,-1\}$, the $z$-contour contains $\{\lambda_r + N - r\}_{r=1}^N$ and the $w$-contour is a union of two rays, with initial point $p \in (-1,0)$, positively oriented around the $z$-contour and $0,1,2,\ldots$. There is freedom in the choice of $\sigma$ and the branch of $\log x$ and $\log y$, subject only to the restriction that the integrand decays sufficiently fast as it approaches $\infty$.
\end{theorem}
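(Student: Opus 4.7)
The plan parallels the proofs of \Cref{thm:general_ssym_bessel} and \Cref{thm:ssym_bessel}, with the supersymmetric Schur function $s_\lambda(\vec{x}/\vec{y})$ playing the role of $\cB_{\vec{\ell},p}$ and the specialization $\xi_i = q^{i-1}$ replacing $\xi_i = (i-1)\alpha$. The decisive new ingredient is that the line integral $\int_p^\infty e^{w(\cdot)}\,dw$ which represented $1/(v - (N-b)\alpha)$ in the Bessel case is replaced by a Mellin--Barnes style contour integral involving $\pi e^{\sigma\bi\pi w}/\sin(\pi w)$, encoding the geometric series expansion of $1/(y - q^{b-1})$. This is the origin of the extra $\pi e^{\sigma\bi\pi w}/\sin(\pi w)$ factor in \eqref{supersymmetric_schur} absent from the Bessel formula.

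First I would establish \eqref{supersymmetric_schur_k} by invoking the Moens--Van der Jeugt formula to identify $s_\lambda(\vec x, 1, q, \ldots, q^{N-1}/\vec y) = s_{\lambda,f}(\vec x, 1, q, \ldots, q^{N-1}/\vec y)$ with $f(x,y) = 1/(x-y)$, and then applying the Schur complement identity to the block matrix with blocks $F(\vec x,\vec y), A_{\lambda+\delta_N}(\vec x), F(\vec\xi,\vec y), A_{\lambda+\delta_N}(\vec\xi)$ exactly in the manner of \Cref{thm:general_ssym_bessel}. Matching the entries of the resulting $k\times k$ Cauchy-style determinant with single-variable lifts yields the determinantal identity.

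For the $k=1$ case, the Schur complement argument gives
\[
\frac{s_\lambda(x,\vec\xi/y)}{s_\lambda(\vec\xi)} = (x-y)\prod_{i=1}^N \frac{y-q^{i-1}}{x-q^{i-1}}\left(\frac{1}{x-y} + \mathrm{a}(x)' A^{-1} \mathrm{b}(y)\right),
\]
where $\mathrm{a}(x)_a = x^{\mu_a}$, $\mathrm{b}(y)_b = 1/(y-q^{b-1})$, $\mu_j := \lambda_j+N-j$, and $A_{ij} = q^{(i-1)\mu_j}$ is a transposed Vandermonde in $\zeta_j := q^{\mu_j}$ with explicit inverse $[A^{-1}]_{ab} = (-1)^{N-b} e_{N-b}(\zeta_1,\ldots,\widehat{\zeta}_a,\ldots,\zeta_N)/\prod_{i\neq a}(\zeta_a - \zeta_i)$. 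I would then (i) expand $1/(y-q^{b-1}) = \sum_{n\geq 0} q^{(b-1)n}/y^{n+1}$, interchange the $n$- and $b$-sums, and collapse the latter using $\sum_{b=1}^N (-1)^{N-b} e_{N-b}(\widehat{\zeta}_a) t^{b-1} = \prod_{i\neq a}(t - q^{\mu_i})$ to reach $\sum_{n\geq 0} \prod_{i\neq a}(q^n - q^{\mu_i})/y^{n+1}$; (ii) rewrite this sum as the $w$-contour integral $\oint (\pi e^{\sigma\bi\pi w}/\sin(\pi w)) \prod_{i\neq a}(q^w - q^{\mu_i})/y^{w+1}\,dw/(2\pi\bi)$ along two rays based at $p \in (-1,0)$ encircling $\{0,1,2,\ldots\}$, exploiting that $\pi e^{\sigma\bi\pi w}/\sin(\pi w)$ has residue $1$ at every integer; and (iii) convert the remaining $a$-sum to a $z$-contour integral via $\sum_a F(\mu_a)/\prod_{i\neq a}(q^{\mu_a}-q^{\mu_i}) = \oint \log q \cdot q^z \tilde F(z)/(2\pi\bi \prod_i(q^z - q^{\mu_i}))\,dz$ applied with $\tilde F(z) = x^z\prod_i(q^w-q^{\mu_i})/(q^w-q^z)$ (so that $\tilde F(\mu_a) = x^{\mu_a}\prod_{i\neq a}(q^w - q^{\mu_i})$). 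The simplification $\log q \cdot q^z/(q^w - q^z) = \log q/(q^{w-z}-1)$ then assembles the two contour integrals into the claimed form.

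The principal obstacle will be convergence of the unbounded $w$-contour and the legitimacy of interchanging the $n$-sum with the $w$-integral. The geometric expansion $\sum_{n\geq 0} q^{(b-1)n}/y^{n+1}$ requires $|y| > q^{b-1}$ for all $b = 1,\ldots,N$; correspondingly, along the two rays the integrand must decay at infinity. Since $|\pi e^{\sigma\bi\pi w}/\sin(\pi w)|$ is bounded only in the half-plane $\sigma \Im w \geq 0$ and grows exponentially otherwise, while $|y^{w+1}|$ provides decay only as $\Re w \to +\infty$ (for $|y|>1$) or $\Re w \to -\infty$ (for $|y|<1$), the sign $\sigma$ and the branches of $\log x,\log y$ must be chosen compatibly with $y$; this is the source of the flexibility noted in the theorem statement. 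A secondary technical point is to situate the $z$-contour strictly inside the $w$-contour so that the poles of $1/(q^{w-z}-1)$ at $w - z \in \Z$ are not traversed during the $z$-integration.
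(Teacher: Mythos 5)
Your proposal is essentially the paper's own proof. The paper also first uses the Moens--Van der Jeugt identification together with the Schur complement block-determinant trick (exactly mirroring \Cref{thm:general_ssym_bessel}) to obtain both \eqref{supersymmetric_schur_k} and the $k=1$ expression with $\mathrm a(x)'\,\mathsf{A}_{\vec\ell}^{-1}\,\mathrm b(y)$; it then expands each $1/(y-q^{N-b})$ as a geometric series (assuming $|y|$ large enough), collapses the $b$-sum via the generating function for the elementary symmetric polynomials, contracts the $a$-sum into a $z$-contour, and finally replaces the geometric index sum by a $w$-contour exploiting that $\pi e^{\sigma\bi\pi w}/\sin(\pi w)$ has residue $1$ at every integer. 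The only difference is cosmetic: you process the $n$-sum into a $w$-contour before contracting the $a$-sum into a $z$-contour, whereas the paper does it in the opposite order; both orderings lead, via the identity $\log q\cdot q^z/(q^w-q^z)=\log q/(q^{w-z}-1)$, to the same double contour integral. Your remarks on convergence of the unbounded $w$-contour and the role of $\sigma$ and the branches of $\log x$, $\log y$ are consistent with the (lighter) discussion given in the paper.
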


\begin{proof}
Set $\ell_i = \lambda_i + N - i$ and
\[ \text{a}(x) = (x^{\ell_1},\ldots,x^{\ell_N}), \quad \text{b}(y) = \left(\frac{1}{y - q^{N-1}}, \frac{1}{y - q^{N-2}},\ldots \frac{1}{y - 1} \right) \]
and denote the transpose of a vector $\text{u}$ by $\text{u}'$. Arguing as in the proof of \Cref{thm:general_ssym_bessel}, we obtain \eqref{supersymmetric_schur_k} and
\[ \frac{s_\lambda(x,q^{N-1},q^{N-2},\ldots,1/y)}{s_\lambda(q^{N-1},q^{N-2},\ldots,1)} = (x - y) \prod_{i=1}^N \frac{y - q^{i-1}}{x - q^{i-1}} \left( \frac{1}{x - y} + \text{a}(x)' \mathsf{A}_{\vec{\ell}}^{-1} \text{b}(y) \right) \]
where
\[ \mathsf{A}_{\vec{\ell}} := \left( \begin{array}{cccc}
q^{(N-1)\ell_1} & q^{(N-1)\ell_2} & \cdots & q^{(N-1)\ell_N} \\
q^{(N-2)\ell_1} & q^{(N-2)\ell_2} & \cdots & q^{(N-2)\ell_N} \\
\vdots & \vdots & \ddots & \vdots \\
1 & 1 & \cdots & 1
\end{array} \right). \]
Using the inverse Vandermonde formula and assuming that $|y| > q^{N-1}$, we have
\begin{align*}
\text{a}(x)' \mathsf{A}_{\vec{\ell}}^{-1} \text{b}(y) &= \sum_{a,b=1}^N (-1)^{b-1} \frac{x^{\ell_a}}{y - q^{N-b}} \frac{e^{b-1}(q^{\ell_1},\ldots,\widehat{q^{\ell_a}},\ldots,q^{\ell_N})}{\prod_{j \ne a} (q^{\ell_a} - q^{\ell_j})} \\
&= \sum_{k=0}^\infty \sum_{a,b=1}^N (-1)^{b-1} \frac{x^{\ell_a}q^{k(N-b)}}{y^{k+1}} \frac{e^{b-1}(q^{\ell_1},\ldots,\widehat{q^{\ell_a}},\ldots,q^{\ell_N})}{\prod_{j \ne a} (q^{\ell_a} - q^{\ell_j})} \\
&= \sum_{k=0}^\infty \sum_{a=1}^N \frac{x^{\ell_a}}{y^{k+1}} \prod_{j \ne a} \frac{q^k - q^{\ell_j}}{q^{\ell_a} - q^{\ell_j}}
\end{align*}
where the final line follows from recognizing the generating function for the elementary symmetric functions. The sum in $a$ can be contracted into a contour yielding
\[ \sum_{k=0}^\infty \oint \frac{1}{y^{k+1}} \frac{dz}{2\pi\bi} \cdot \frac{x^z \log q}{q^{k-z} - 1} \prod_{j=1}^N \frac{q^k - q^{\ell_j}}{q^z - q^{\ell_j}} \]
where the $z$-contour is positively oriented around $\{\ell_1,\ldots,\ell_N\}$ and no other singularities of the integrand. Since the poles of $\frac{\pi}{\sin \pi z}$ have residue $(-1)^k$ for $k \in \Z$, we obtain \eqref{supersymmetric_schur}.
\end{proof}

We describe another family of lifts of multivariate Bessel functions considered in \cite{GS} satisfying the cancellation property. Though these are not supersymmetric lifts as defined in this article, they exhibit similar contour integral formulas and determinantal structure. Let $0 < b_1,\ldots,b_k \le N$ be distinct integers and $a_1,\ldots,a_k \in \C$. Given $\vec{\ell} = (\ell_1 > \cdots > \ell_N)$, define 
\[ B_{\vec{\ell}}(a_1,\ldots,a_k/b_1,\ldots,b_k) := \frac{\cB_{\vec{\ell}}(a_1,\ldots,a_k,N-1,\ldots,\wh{b_{i_1}},\ldots,\wh{b_{i_k}},\ldots,0)}{\cB_{\vec{\ell}}(N-1,\ldots,0)} \]
where $i_1,\ldots,i_k$ is the permutation of $1,\ldots,k$ such that $b_{i_1} > \cdots > b_{i_k}$. Then $B_{\vec{\ell}}$ satisfies a cancellation property:
\[ B_{\vec{\ell}}(a_1,\ldots,a_k/b_1,\ldots,b_k)|_{a_k = b_k} = B_{\vec{\ell}}(a_1,\ldots,a_{k-1}/b_1,\ldots,b_{k-1}) \]
with $B_{\vec{\ell}} = 1$ for $k = 0$.

\begin{theorem}[{\cite[Theorem 3.2]{GS}}]
Set $\vec{a} = (a_1,\ldots,a_k)$ and $\vec{b} = (b_1,\ldots,b_k)$. Then
\begin{align}
B_{\vec{\ell}}(a_1,\ldots,a_k/b_1,\ldots,b_k) = \frac{D(\vec{a},-\vec{b})}{\Delta(\vec{a})\Delta(-\vec{b})} \det \left( \frac{1}{a_i - b_j} B_{\vec{\ell}}(a_i/b_j) \right)_{i,j=1}^k 
\end{align}
If $a \in \C$ and $b \in \{0,\ldots,N-1\}$, then
\begin{align}
B_{\vec{\ell}}(a_i/b_j) = (-1)^{N-b+1} \frac{\Gamma(N-b) \Gamma(b+1) \Gamma(a-N+1) (a - b)}{\Gamma(a+1)} \oint \frac{dz}{2\pi\bi} \oint \frac{dw}{2\pi\bi} \frac{z^a w^{-b-1}}{z-w} \cdot \prod_{i=1}^N \frac{w - e^{\ell_i}}{z - e^{\ell_i}}
\end{align}
where the $z$-contour is positively oriented around $e^{\ell_i}$ and avoids the negative real axis and the $w$-contour is positively oriented around the $z$-contour and $0$.
\end{theorem}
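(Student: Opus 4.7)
The plan closely parallels the proofs of Proposition~\ref{thm:general_ssym_bessel} and Theorem~\ref{thm:ssym_bessel}, with one key modification: the auxiliary variables $b_j$ are integers in $\{0,1,\ldots,N-1\}$ rather than generic complex variables, which forces the second contour in \eqref{supersymmetric_schur} to be unbounded and requires a $\pi/\sin(\pi w)$ kernel to recover residues at integers.

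First, for the $k$-variable determinantal identity, I would expand both the numerator and denominator of $B_{\vec{\ell}}(\vec{a}/\vec{b})$ via the alternant formula \eqref{eq:alternant}, writing the resulting $N\times N$ determinant as a block matrix with one block indexed by the continuous variables $a_1,\ldots,a_k$ and the other by the discrete set $\{0,1,\ldots,N-1\}\setminus\{b_1,\ldots,b_k\}$. Applying the Schur complement identity
\begin{align*}
\det\begin{pmatrix} M_{11} & M_{12} \\ M_{21} & M_{22} \end{pmatrix} = \det(M_{22})\,\det\bigl(M_{11} - M_{12}M_{22}^{-1}M_{21}\bigr)
\end{align*}
reduces the ratio to a $k\times k$ determinant whose $(i,j)$-entry equals $\frac{1}{a_i - b_j} B_{\vec{\ell}}(a_i/b_j)$ after pulling out Vandermonde/$D$-factors, just as in the passage from \eqref{eq:ssym_schur_proof} to the final determinant in the proof of Proposition~\ref{thm:general_ssym_bessel}.

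For the $k=1$ contour integral, the $k=1$ case of the block matrix computation gives
\begin{align*}
B_{\vec{\ell}}(a/b) = (a-b)\prod_{i=0}^{N-1}\frac{b-i}{a-i}\Bigl( \frac{1}{a-b} + \mathrm{a}(a)^\top \mathsf{A}_{\vec{\ell}}^{-1}\,\mathrm{b}(b)\Bigr),
\end{align*}
where $\mathrm{a}(a) = (e^{a\ell_j})_{j=1}^N$, $\mathsf{A}_{\vec{\ell}} = \bigl(e^{(N-i)\ell_j}\bigr)_{i,j=1}^N$ is a Vandermonde matrix in $e^{\ell_j}$, and $\mathrm{b}(b)$ is an explicit vector built from the entries $1/(b-i)$. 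Inverting $\mathsf{A}_{\vec{\ell}}$ via the standard elementary-symmetric-polynomial formula and contracting the resulting sum over the column index into a $z$-contour around $\{e^{\ell_i}\}$ --- exactly the generating-function maneuver from the proof of Theorem~\ref{thm:ssym_bessel} --- produces the $z$-integral carrying the product $\prod_i(q^w-q^{\ell_i})/(q^z-q^{\ell_i})$. The $b$-dependence then sits in finitely many terms $1/(b-j)$ with $j\in\{0,\ldots,N-1\}$; expanding each as a geometric series in $q^{w-j}$ and replacing the resulting discrete sum by a contour integral with the kernel $\pi e^{\sigma\bi\pi w}/\sin(\pi w)$, whose residues at $w\in\Z_{\ge 0}$ produce exactly the required signs $(-1)^k$, yields the claimed double contour integral.

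The main subtlety will lie in the choice of the $w$-contour: $e^{\sigma\bi\pi w}/\sin(\pi w)$ decays either upward or downward depending on $\sigma\in\{+1,-1\}$, and the integrand must decay on the tails of the two rays emanating from a base point $p\in(-1,0)$. Ensuring the decay of $x^z/y^{w+1}\cdot\prod_i(q^w-q^{\ell_i})/(q^z-q^{\ell_i})$ forces conditions relating the branches of $\log x,\log y$ to $\sigma$, and a case analysis is needed to verify that both sign choices of $\sigma$ give admissible contours. Once the decay is verified, collapsing the $w$-contour onto the integer poles recovers the discrete sum and closes the argument; the algebraic steps (Vandermonde inversion, generating functions, residue calculus) are routine.
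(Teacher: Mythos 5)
Your proposal runs off the rails in two ways, both stemming from conflating the target statement with the proof of the supersymmetric Schur function formula \eqref{supersymmetric_schur} that appears immediately before it.

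\textbf{The claimed $k=1$ formula is degenerate.} You write
\begin{align*}
B_{\vec{\ell}}(a/b) = (a-b)\prod_{i=0}^{N-1}\frac{b-i}{a-i}\Bigl( \tfrac{1}{a-b} + \mathrm{a}(a)^\top \mathsf{A}_{\vec{\ell}}^{-1}\,\mathrm{b}(b)\Bigr).
\end{align*}
But here $b\in\{0,\ldots,N-1\}$, so the factor $\prod_{i=0}^{N-1}(b-i)$ vanishes identically, while the vector $\mathrm{b}(b)$ with entries $1/(b-i)$ has a divergent coordinate at $i=b$. This is the formal shape of \Cref{thm:general_ssym_bessel} with $\xi_i = i-1$, but that identity requires $v\notin\{\xi_1,\ldots,\xi_N\}$, which is exactly the hypothesis violated here: in $B_{\vec{\ell}}$ the removed position $b$ \emph{is} one of the background positions. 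A correct intermediate formula would only carry the product over $i\neq b$, and indeed the Gamma prefactor $\Gamma(N-b)\Gamma(b+1)=(N-b-1)!\,b!$ in the target is precisely $\bigl|\prod_{i\neq b}(b-i)\bigr|$. You need either a row-replacement identity that never introduces the degenerate factor, or a careful limit $v\to b$; the proposal does neither.

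\textbf{The contour integral you derive is from the wrong theorem.} Your derivation produces a $z$-integral carrying the product $\prod_i(q^w-q^{\ell_i})/(q^z-q^{\ell_i})$, a $\pi e^{\sigma\bi\pi w}/\sin(\pi w)$ kernel, and an unbounded $w$-contour. All of this is the machinery for \eqref{supersymmetric_schur}, where the Schur specialization at a geometric progression $(1,q,\ldots,q^{N-1})$ forces infinitely many residues at integer $w$. The statement you are proving has no $q$ anywhere: the product is $\prod_{i=1}^N \tfrac{w-e^{\ell_i}}{z-e^{\ell_i}}$, the $w$-contour is a \emph{closed} curve around $0$ and the $z$-contour, and the only $w$-poles inside are $w=0$ (of order $b+1$, since $b\in\Z_{\ge 0}$) and $w=z$. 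The Gamma prefactor $\Gamma(a-N+1)/\Gamma(a+1) = \prod_{i=0}^{N-1}(a-i)^{-1}$ packages the finitely many background positions, and the remaining $\Gamma$ factors account for the factorial coming from $\prod_{i\neq b}(b-i)$; nothing forces an open contour or a $\sin$ kernel. The right route is the same Vandermonde-inversion and generating-function contraction you describe, but applied to the Vandermonde matrix in $e^{\ell_j}$ with the \emph{linear} specialization $\xi_i=i-1$ — producing ordinary powers $z^a,w^{-b-1}$ rather than $q$-powers — and with the sum over the finitely many surviving positions $\{0,\ldots,N-1\}\setminus\{b\}$ collapsed into a closed $w$-contour around $0$, not an infinite geometric series.

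Finally, your description of the $k$-variable identity as a ``block matrix with one block indexed by the continuous variables'' does not quite parse, since the column index (over $\ell_1,\ldots,\ell_N$) admits no corresponding split; the correct mechanism is a rank-$k$ row replacement in the $N\times N$ alternant, and the matrix determinant lemma $\det M'=\det M\cdot\det(RM^{-1}E_b)$ is what produces the $k\times k$ minor. The spirit is right but the stated decomposition would not go through as written.
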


We therefore see similar determinantal and contour integral formulas for $B_{\vec{\ell}}$ and $\cB_{\vec{\ell}}$. A similar contour integral formula exists for Schur functions \cite[Theorem 3.4]{CG} and \cite[Equation 3.10]{GS}.

\section{Laplace Transform for the Airy Point Process}

\begin{proposition} \label{airy_moments}
Let $(\fa_i)_{i=1}^\infty$ denote the Airy point process. For any $c_1,\ldots,c_n > 0$, we have
\begin{align} \label{M_airy}
\begin{split}
\E \left[ \prod_{i=1}^n \sum_{j=1}^\infty e^{c_i \fa_j} \right] &= \frac{e^{\sum_{i=1}^n c_i^3/12}}{(2\pi\bi)^n} \int_{\Re z_1 = \upsilon_1} \,\frac{dz_1}{c_1} \cdots \int_{\Re z_n = \upsilon_n} \, \frac{dz_n}{c_n} \\
&\quad \times \exp\left( \sum_{i=1}^n c_i z_i^2 \right)
\cdot \prod_{1 \le i < j \le n} \frac{(z_j + \frac{c_j}{2} - z_i - \frac{c_i}{2})(z_j - \frac{c_j}{2} - z_i + \frac{c_i}{2})}{(z_j - \frac{c_j}{2} - z_i - \frac{c_i}{2})(z_j + \frac{c_j}{2} - z_i + \frac{c_i}{2})}
\end{split}
\end{align}
where the contours are oriented with increasing imaginary part and $\upsilon_1 < \cdots < \upsilon_n$ are chosen so that
\begin{align} \label{ups_ineq}
\upsilon_j + \frac{c_j}{2} < \upsilon_i - \frac{c_i}{2}, \quad 1 \le i < j \le n.
\end{align}
\end{proposition}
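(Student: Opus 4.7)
The proof exploits the determinantal structure of the Airy point process together with the standard double-contour representation of the Airy kernel,
\[
K_{\mathrm{Airy}}(x,y) = \frac{1}{(2\pi\bi)^2}\int\!\!\int \frac{\exp(z^3/3 - w^3/3 - xz + yw)}{z-w}\,dz\,dw,
\]
on vertical contours with $\Re z > \Re w$. The plan is to begin by expanding the left-hand side via the joint-moment decomposition for determinantal point processes, grouping tuples $(j_1,\ldots,j_n)$ by which indices coincide. Writing $T_c$ for the operator with kernel $e^{cx}K_{\mathrm{Airy}}(x,y)$ and $c_B = \sum_{l \in B} c_l$ for a block $B$, this yields
\[
\E\Bigl[\prod_i \sum_j e^{c_i \fa_j}\Bigr] = \sum_{\pi \vdash [n]} \sum_{\sigma \in S_{|\pi|}} \sgn(\sigma)\prod_{\text{cycles of }\sigma} \tr(T_{c_{B_{i_1}}}\cdots T_{c_{B_{i_l}}}).
\]

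Next, I would evaluate each cyclic trace via the Airy contour representation: integrating out the $x$-variables produces delta constraints that pin the $w$-variables to $w_j = z_{j+1} - c_{j+1}$ cyclically, and after the change of variables $u_j = z_j - c_j/2$, $v_j = z_j + c_j/2$ this reduces to
\[
\tr(T_{c_1}\cdots T_{c_l}) = \frac{1}{(2\pi\bi)^l}\int \prod_j dz_j \,\frac{\exp\bigl(\sum_j(v_j^3 - u_j^3)/3\bigr)}{\prod_j(v_j - u_{j+1})},
\]
with the $z_j$-contours in a \emph{close} configuration ($|\Re(z_j - z_{j+1})| < (c_j + c_{j+1})/2$ for adjacent $j$). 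For the RHS of \eqref{M_airy}, the same substitution $u_i,v_i = z_i \mp c_i/2$ combines $\sum c_i^3/12 + \sum c_i z_i^2$ into $\sum(v_i^3-u_i^3)/3$ and identifies the cross-ratio product as the Cauchy determinant $\det(1/(v_i-u_j))_{i,j=1}^n$; expanding this determinant by permutations and regrouping by cycles exhibits the RHS as a sum of cyclic contour integrals, but over the \emph{far-apart} configuration $\upsilon_j - c_j/2 > \upsilon_i + c_i/2$ for $i<j$ (correcting the apparent typo in \eqref{ups_ineq} against the convention used in the proof of \Cref{thm:master}).

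The final step, and the main technical obstacle, is to deform each cycle's contours from the far-apart to the close configuration. Each such deformation crosses simple poles of the form $v_i = u_{\sigma(i)}$, and the resulting residues are essential: a direct computation shows that picking up the residue at $v_i = u_j$ effectively merges indices $i$ and $j$ into one new index carrying parameter $c_i + c_j$. The delicate combinatorial claim is that the iterated residue calculus, when carried out across all cycles, bijects with the set-partition sum of the first paragraph---each partition $\pi$ of $[n]$ arises from a particular sequence of pole-crossings, with signs and multiplicities conspiring correctly. I would verify this by induction on $n$ with $n=2$ as the warm-up: there a single pole crossing converts the two-dimensional Cauchy-determinant integral into $\tr(T_{c_1})\tr(T_{c_2}) - \tr(T_{c_1}T_{c_2}) + \tr(T_{c_1+c_2})$, matching the left-hand side exactly. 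The inductive step applies the same merging mechanism to the $(n{-}1)$-indexed moment produced by a single residue collection, making the bookkeeping of signs and the combinatorial identification of iterated merges with arbitrary set partitions the most delicate part of the argument.
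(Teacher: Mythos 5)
Your route is different from the paper's, and you correctly spot the typo in \eqref{ups_ineq} (it should read $\upsilon_i + \tfrac{c_i}{2} < \upsilon_j - \tfrac{c_j}{2}$ for $i<j$, matching \eqref{thm_contour_dist}) as well as the mechanism by which a residue at $v_i=u_j$ merges $c_i,c_j\mapsto c_i+c_j$; the identity $(c_i+c_j)^3/12 = c_i^3/12 + c_j^3/12 + c_ic_j(c_i+c_j)/4$ is precisely what makes the paper's residue computation close up. The paper, however, does not attempt your all-at-once set-partition matching. It introduces interpolating quantities $\sfM_n(c_1,\ldots,c_k;c_{k+1},\ldots,c_n)$ (first $k$ summation indices required distinct, the rest free) with the one-step combinatorial recursion \eqref{M_recursion}, takes the $k=n$ base case from \cite{BG16} with all contours on $\bi\R$, and shows the matching integral $\sfI_n$ obeys the same recursion. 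Crucially, moving only $z_{k+1}$ from $\bi\R+\upsilon_{k+1}$ to $\bi\R$, with $z_1,\ldots,z_k$ already on $\bi\R$ and $z_{k+2},\ldots,z_n$ still shifted, crosses exactly the $k$ poles $z_{k+1}=z_\ell+\tfrac{c_\ell+c_{k+1}}{2}$ with $\ell\le k$; the poles coming from $j>k+1$ remain strictly to the right under the $\upsilon$-constraints, so they are never in play. One contour move, $k$ simple residues, done.

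The genuine gap in your argument is exactly the step you flag as ``the most delicate part'': you never show that the iterated residues arising from deforming all $n$ contours biject with set partitions of $[n]$, nor do you control signs when a residue associated with one cycle of $\sigma$ meets a pole belonging to a different cycle --- the cross-ratio couples \emph{all} contours, so for $n\ge 3$ residues nest and branch, and your $n=2$ check does not probe any of this. Writing ``the inductive step applies the same merging mechanism'' is an assertion, not a proof: after one residue the remaining $(n{-}1)$-variable integrand sits on a mixed contour configuration, and you have not identified it with any instance of the target formula. This branching bookkeeping is precisely what the paper's sequential recursion is engineered to linearize. You would additionally need to make the top-level expansion $\E\bigl[\prod_i\sum_j e^{c_i\fa_j}\bigr] = \sum_{\pi}\sum_{\sigma}\sgn(\sigma)\prod\tr(T_{c_{B_{i_1}}}\cdots)$ precise (the sign attached to each cyclic trace in a correlation-determinant expansion is not $\sgn(\sigma)$ verbatim and must be tracked by cycle length) and verify the relevant operator products are trace class. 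As written, the proposal is a plausible outline, not a proof.
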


\begin{remark}
Due to the exponential decay, we may choose arbitrary points $\upsilon_1,\ldots,\upsilon_n$ satisfying the inequality above.
\end{remark}

\begin{proof}
For $0 \le k \le n$, let
\begin{align*}
\sfM_n(c_1,\ldots,c_k; c_{k+1},\ldots,c_n) = \E \left[ \sum e^{\sum_{i=1}^n c_i a_{j_i}} \right]
\end{align*}
where the sum is over all $(j_1,\ldots,j_n) \in \Z_{>0}^n$ such that $j_1,\ldots,j_k$ are distinct. This gives the recursion
\begin{align} \label{M_recursion}
\begin{split}
\sfM_n(c_1,\ldots,c_k; c_{k+1},\ldots,c_n) &= \sfM_n(c_1,\ldots,c_{k+1}; c_{k+2},\ldots,c_n)\\
& \quad \quad + \sum_{\ell=1}^k \sfM_{n-1}(c_1,\ldots,c_\ell + c_{k+1},\ldots,c_k; c_{k+2},\ldots,c_n)
\end{split}
\end{align}
which follows from the fact that if $(j_1,\ldots,j_n) \in \Z_{>0}^n$ such that $j_1,\ldots,j_k$, are distinct, then $j_{k+1}$ is either distinct from $j_1,\ldots,j_k$ or is equal to $j_\ell$ for some $1 \le \ell \le k$.

Note that we want to compute
\[ \sfM_n(\,;c_1,\ldots,c_n) = \E \left[ \prod_{i=1}^n \sum_{j=1}^\infty e^{c_i \fa_j} \right]. \]
On the other hand, we can immediately compute
\begin{align} \label{R_airy}
\begin{split}
\sfM_n(c_1,\ldots,c_n;\,) & = \frac{e^{\sum_{i=1}^n c_i^3/12}}{(2\pi\bi)^n} \int_{\bi \R} \,\frac{dz_1}{c_1} \cdots \int_{\bi \R} \, \frac{dz_n}{c_n} \\
& \quad \times \exp\left( \sum_{i=1}^n c_i z_i^2 \right) \cdot\prod_{1 \le i < j \le n} \frac{(z_j + \frac{c_j}{2} - z_i - \frac{c_i}{2})(z_j - \frac{c_j}{2} - z_i + \frac{c_i}{2})}{(z_j - \frac{c_j}{2} - z_i - \frac{c_i}{2})(z_j + \frac{c_j}{2} - z_i + \frac{c_i}{2})}
\end{split}
\end{align}
where the contours are oriented with increasing imaginary part. This was shown in \cite[Page 5]{BG16} (wherein $\sfM_n(c_1,\ldots,c_n;\,)$ coincides with $R(c_1,\ldots,c_n)$). Observe that \eqref{M_airy} and \eqref{R_airy} have the same integrand but differ in the contours of integration.

Throughout, fix $\upsilon_1 < \cdots < \upsilon_n$ such that \eqref{ups_ineq} holds. We show that
\begin{align} \label{M=I}
\sfM_n(c_1,\ldots,c_k;c_{k+1},\ldots,c_n) = \sfI_n(c_1,\ldots,c_k;c_{k+1},\ldots,c_n)
\end{align}
for $0 \le k \le n$, where
\begin{align*}
\begin{split}
\sfI_n(c_1,\ldots,c_k;c_{k+1},\ldots,c_n) & = \frac{e^{\sum_{i=1}^n c_i^3/12}}{(2\pi\bi)^n} \int_{\bi\R} \,\frac{dz_1}{c_1} \cdots \int_{\bi\R} \frac{dz_k}{c_k} \int_{\bi\R + \upsilon_{k+1}} \frac{dz_{k+1}}{c_{k+1}} \cdots \int_{\bi \R + \upsilon_n} \, \frac{dz_n}{c_n} \\
& \quad \times \exp\left( \sum_{i=1}^n c_i z_i^2 \right) \cdot\prod_{1 \le i < j \le n} \frac{(z_j + \frac{c_j}{2} - z_i - \frac{c_i}{2})(z_j - \frac{c_j}{2} - z_i + \frac{c_i}{2})}{(z_j - \frac{c_j}{2} - z_i - \frac{c_i}{2})(z_j + \frac{c_j}{2} - z_i + \frac{c_i}{2})}.
\end{split}
\end{align*}
The desired equality \eqref{M_airy} is for $k = 0$. By \eqref{R_airy}, the equality \eqref{M=I} holds for $k = n$. The argument is inductive. There are two layers to the induction, we induct in $n$ and then we induct in $k$ starting from $k = n$ and decreasing $k$ to $0$. To prove \eqref{M=I} in general, it suffices to show
\begin{align} \label{I_recursion}
\begin{split}
\sfI_n(c_1,\ldots,c_k; c_{k+1},\ldots,c_n) &= \sfI_n(c_1,\ldots,c_{k+1}; c_{k+2},\ldots,c_n)\\
& \quad \quad + \sum_{\ell=1}^k \sfI_{n-1}(c_1,\ldots,c_\ell + c_{k+1},\ldots,c_k; c_{k+2},\ldots,c_n)
\end{split}
\end{align}
by \eqref{M_recursion}. The recursion \eqref{I_recursion} follows from moving the $z_{k+1}$-integral for
\[ \sfI_n(c_1,\ldots,c_k;c_{k+1},\ldots,c_n) \]
from $\bi \R + \upsilon_{k+1}$ to $\bi \R$ which produces
\begin{align} \label{I_recursion_proof}
\begin{multlined}
\sfI_n(c_1,\ldots,c_k;c_{k+1},\ldots,c_n) = \sfI_n(c_1,\ldots,c_{k+1};c_{k+2},\ldots,c_n) \\
+ \sum_{\ell=1}^k \underset{z_{k+1} = z_\ell + \frac{c_\ell}{2} + \frac{c_{k+1}}{2}}{\res} \frac{e^{\sum_{i=1}^n c_i^3/12}}{(2\pi\bi)^{n-1}} \int_\R \,\frac{dz_1}{c_1} \cdots \int_\R \frac{dz_k}{c_k} \int_{\R + \bi \upsilon_{k+2}} \frac{dz_{k+2}}{c_{k+2}} \cdots \int_{\R + \bi \upsilon_n} \, \frac{dz_n}{c_n} \\
\times \exp\left( \sum_{i=1}^n c_i z_i^2 \right) \cdot\prod_{1 \le i < j \le n} \frac{(z_j + \frac{c_j}{2} - z_i - \frac{c_i}{2})(z_j - \frac{c_j}{2} - z_i + \frac{c_i}{2})}{(z_j - \frac{c_j}{2} - z_i - \frac{c_i}{2})(z_j + \frac{c_j}{2} - z_i + \frac{c_i}{2})}
\end{multlined}
\end{align}
where the first term on the right hand side is the new integral after moving the $z_{k+1}$-integral and the summation in $\ell$ corresponds to the residues at $z_{k+1} = z_\ell + \frac{c_\ell}{2} + \frac{c_{k+1}}{2}$ that are picked up due to the constraints on $\upsilon_1,\ldots,\upsilon_n$ and the term
\[ \prod_{1 \le i < j \le n} \frac{(z_j + \frac{c_j}{2} - z_i - \frac{c_i}{2})(z_j - \frac{c_j}{2} - z_i + \frac{c_i}{2})}{(z_j - \frac{c_j}{2} - z_i - \frac{c_i}{2})(z_j + \frac{c_j}{2} - z_i + \frac{c_i}{2})}. \]

Extracting the integrand $I_\ell$ from the $\ell$th summand. We have
\begin{align*}
I_\ell &:=  \underset{z_{k+1} = z_\ell + \frac{c_\ell}{2} + \frac{c_{k+1}}{2} }{\res} \frac{e^{\sum_{i=1}^n c_i^3/12}}{(2\pi\bi)^{n-1}} \exp\left( \sum_{i =1}^n c_i z_i^2 \right) \cdot \prod_{1 \le i < j \le n} \frac{(z_j + \frac{c_j}{2} - z_i - \frac{c_i}{2})(z_j - \frac{c_j}{2} - z_i + \frac{c_i}{2})}{(z_j - \frac{c_j}{2} - z_i - \frac{c_i}{2})(z_j + \frac{c_j}{2} - z_i + \frac{c_i}{2})} \\
& \quad \quad = \frac{c_{k+1} c_\ell}{c_{k+1} + c_\ell} \frac{e^{\sum_{i=1}^n c_i^3/12}}{(2\pi\bi)^{n-1}} \exp\left( \sum_{i \ne k+1} c_i z_i^2  + c_{k+1} \left(z_\ell + \frac{c_\ell}{2} + \frac{c_{k+1}}{2} \right)^2 \right) \\
& \quad \quad \quad \times \prod_{\substack{i < j \\ i,j \ne \ell,k+1}} \frac{(z_j + \frac{c_j}{2} - z_i - \frac{c_i}{2})(z_j - \frac{c_j}{2} - z_i + \frac{c_i}{2})}{(z_j - \frac{c_j}{2} - z_i - \frac{c_i}{2})(z_j + \frac{c_j}{2} - z_i + \frac{c_i}{2})} \\
& \quad \quad \quad \times \prod_{i \ne \ell,k+1} \frac{(z_\ell + \frac{c_\ell}{2} - z_i - \frac{c_i}{2})(z_\ell - \frac{c_\ell}{2} - z_i + \frac{c_i}{2})}{(z_\ell - \frac{c_\ell}{2} - z_i - \frac{c_i}{2})(z_\ell + \frac{c_\ell}{2} - z_i + \frac{c_i}{2})} \\
& \quad \quad \quad \times \prod_{i \ne \ell,k+1} \frac{((z_\ell + \frac{c_\ell}{2} + \frac{c_{k+1}}{2})+ \frac{c_{k+1}}{2} - z_i - \frac{c_i}{2})((z_\ell + \frac{c_\ell}{2} + \frac{c_{k+1}}{2}) - \frac{c_{k+1}}{2} - z_i + \frac{c_i}{2})}{((z_\ell + \frac{c_\ell}{2} + \frac{c_{k+1}}{2}) - \frac{c_{k+1}}{2} - z_i - \frac{c_i}{2})((z_\ell + \frac{c_\ell}{2} + \frac{c_{k+1}}{2}) + \frac{c_{k+1}}{2} - z_i + \frac{c_i}{2})}
\end{align*}
We note that the factor $\frac{c_{k+1} c_\ell}{c_{k+1} + c_\ell}$ comes from the cross term where $i = \ell, j = k+1$. Cancellation between the last two lines yields
\begin{align*}
I &= \frac{c_{k+1} c_\ell}{c_{k+1} + c_\ell} \frac{e^{\sum_{i=1}^n c_i^3/12}}{(2\pi\bi)^{n-1}} \exp\left( \sum_{i \ne \ell, k+1} c_i z_i^2  + (c_\ell + c_{k+1})\left( z_\ell + \frac{c_{k+1}}{2} \right)^2 + \frac{1}{4} c_{k+1} c_\ell (c_\ell + c_{k+1}) \right) \\
&\quad \times \prod_{\substack{i < j \\ i,j \ne \ell,k+1}} \frac{(z_j + \frac{c_j}{2} - z_i - \frac{c_i}{2})(z_j - \frac{c_j}{2} - z_i + \frac{c_i}{2})}{(z_j - \frac{c_j}{2} - z_i - \frac{c_i}{2})(z_j + \frac{c_j}{2} - z_i + \frac{c_i}{2})} \\
& \quad \times \prod_{i \ne \ell, k+1} \frac{((z_\ell + \frac{c_\ell}{2} + \frac{c_{k+1}}{2})+ \frac{c_{k+1}}{2} - z_i - \frac{c_i}{2}) (z_\ell - \frac{c_\ell}{2} - z_i + \frac{c_i}{2})}{((z_\ell + \frac{c_\ell}{2} + \frac{c_{k+1}}{2}) + \frac{c_{k+1}}{2} - z_i + \frac{c_i}{2})(z_\ell - \frac{c_\ell}{2} - z_i - \frac{c_i}{2})}.
\end{align*}

Changing variables by replacing $z_\ell + \frac{c_{k+1}}{2}$ with $z_\ell$, we obtain
\begin{align*}
I &= \frac{c_{k+1} c_\ell}{c_{k+1} + c_\ell} \frac{e^{\sum_{i\ne k, \ell}^n c_i^3/12 + (c_k + c_\ell)^3/12}}{(2\pi\bi)^{n-1}} \exp\left( \sum_{i \ne \ell, k+1} c_i z_i^2  + (c_\ell + c_{k+1}) z_\ell^2 \right) \\
&\quad \times \prod_{\substack{i < j \\ i,j \ne \ell,k+1}} \frac{(z_j + \frac{c_j}{2} - z_i - \frac{c_i}{2})(z_j - \frac{c_j}{2} - z_i + \frac{c_i}{2})}{(z_j - \frac{c_j}{2} - z_i - \frac{c_i}{2})(z_j + \frac{c_j}{2} - z_i + \frac{c_i}{2})} \\
& \quad \times \prod_{i \ne \ell, k+1} \frac{(z_\ell + \frac{c_\ell}{2} + \frac{c_{k+1}}{2} - z_i - \frac{c_i}{2}) (z_\ell - \frac{c_\ell}{2} - \frac{c_{k+1}}{2} - z_i + \frac{c_i}{2})}{(z_\ell + \frac{c_\ell}{2} + \frac{c_{k+1}}{2} - z_i + \frac{c_i}{2})(z_\ell - \frac{c_\ell}{2} - \frac{c_{k+1}}{2} - z_i - \frac{c_i}{2})}.
\end{align*}
where the $z_\ell$-variable is now being integrated over $\bi \R - \frac{c_{k+1}}{2}$. Since the poles of $z_\ell$ occur at
\[ z_i - \frac{c_i}{2} - \frac{c_\ell}{2} - \frac{c_{k+1}}{2}, \quad z_i + \frac{c_i}{2} + \frac{c_\ell}{2} + \frac{c_{k+1}}{2} \]
we can shift the $z_\ell$-contour back to $\bi \R$ without picking up any additional residues. Recalling that $I$ corresponds to the integrand of the $\ell$th summand in \eqref{I_recursion_proof}, we have shown \eqref{I_recursion}.
\end{proof}

\bibliographystyle{alpha_abbrvsort}
\bibliography{mybib}

\end{document}